	\renewcommand{\contentsname}%
	{Contents}%
 \newtheorem{corollary}{Corollary}
\newtheorem{proposition}{Proposition}
\let\hide\iffalse
\let\unhide\fi
\let\e=\varepsilon
\let\p=\partial
\let\O=\Omega
\let\f=\varphi
\renewcommand{\b}{\mathbf{b}}
\renewcommand{\f}{\mathbf{f}}
\newcommand{\be}{\begin{equation}}
	\newcommand{\bm}{\begin{multline}}
		\newcommand{\ee}{\end{equation}}
	\newcommand{\dd}{\mathrm{d}}
	\newcommand{\xb}{x_{\mathbf{b}}}
	\newcommand{\tb}{t_{\mathbf{b}}}
	\newcommand{\vb}{v_{\mathbf{b}}}
		\newcommand{\tB}{t_{\mathbf{B}}}
	\newcommand{\vB}{v_{\mathbf{B}}}
			\newcommand{\xB}{x_{\mathbf{B}}}
	\newcommand{\tF}{{t}_{\mathbf{F}}}
	\newcommand{\tf}{t_{\mathbf{f}}}
	\newcommand{\X}{\mathcal{X}}
	\newcommand{\V}{\mathcal{V}}
 	\newcommand{\Zz}{\mathcal{Z}}
		\newcommand{\w}{\mathfrak{w}} 
	\newcommand{\Bes}{\begin{eqnarray*}}
		\newcommand{\Ees}{\end{eqnarray*}}
	\newcommand{\Be}{\begin{equation}}
		\newcommand{\Ee}{\end{equation}}
	\numberwithin{equation}{section}
	\def\p{\partial}
	\def\O{\Omega}
	\def\R{\mathbb{R}}
	\def\B{\begin{equation}}
		\def\E{\end{equation}}
	\def\BN{\begin{eqnarray*}}
		\def\EN{\end{eqnarray*}}
	\renewcommand\contentsname{}
\title{Nonlinear asymptotic stability of inhomogeneous steady solutions to boundary problems of Vlasov-Poisson equation}
\author{Chanwoo Kim}
\address{Department of Mathematics, University of Wisconsin-Madison, Madison, WI 53706, USA} 
\email{chanwookim.math@gmail.com}
\date{\today}
\begin{document}
	
	\begin{abstract}
	We consider an ensemble of mass collisionless particles, which interact mutually either by an \textit{attraction} of Newton’s law of gravitation or by an electrostatic \textit{repulsion} of Coulomb's law, under a background downward gravity in a horizontally-periodic 3D half-space, whose inflow distribution at the boundary is prescribed. We investigate a \textit{nonlinear asymptotic stability} of its generic steady states in the \textit{dynamical} kinetic PDE theory of the \textit{Vlasov-Poisson} equations. We construct Lipschitz continuous \textit{space-inhomogeneous steady states} and establish exponentially fast asymptotic stability of these steady states with respect to \textit{small perturbation in a weighted Sobolev topology}. In this proof, we crucially use the Lipschitz continuity in the velocity of the steady states. Moreover, we establish well-posedness and regularity estimates for both steady and dynamic problems.  \hide

	First, for given Lipschitz continuous boundary data, we construct steady-states which is \textit{spatial inhomogeneous} in general. Then we construct a global-in-time solution to the dynamic problem in $L^\infty$ around the steady-states. In particular, we prove nonlinear asymptotic stability of such steady-states at an exponential speed as $t \rightarrow \infty$, in which our regularity study of steady-states is crucial.\unhide
		\end{abstract}

	\maketitle






 \hide
 \bigskip
 {
 	\begin{center}
 		\large{I}\footnotesize{NTORDUCTION AND} \large{M}\footnotesize{AIN} \footnotesize{RESULTS}
 	\end{center}
 }
 
 \bigskip
 
 \unhide
 
 \section{Introduction}
 We consider the Vlasov-Poisson equations (\cite{glassey,Rein}) subjected to a vertical downward gravity of a fixed gravitational constant $g>0$ in a 3D half space $(x_1, x_2, x_3) \in \O : =  \T^2 \times (0, \infty )$ with a periodic cube $\T^2 = \{(x_1, x_2) : - \frac{1}{2} \leq x_i < \frac{1}{2},  i=1,2\}$: 
 \begin{align}
 	\p_t F
 	+ v\cdot \nabla_x F
 	-
 	\nabla_x (
 	\phi_F  + g 
 	x_3) \cdot \nabla_v F
 	=0 \ \ &\text{in} \ \R_+ \times  \O \times \R^3,\label{VP_F} \\
 	F(0,x,v)=F_0(x,v)  \ \ &\text{in} \   \O \times \R^3.\label{VP_0}
 \end{align} 
 Here, the potential of an intermolecular force solves Poisson equations as
 \Be\label{Poisson_F}
 \begin{split}
 	\Delta_x \phi_F  = \eta  
 	\int_{\R^3}   F  \dd v \  \ \text{in} \ \O,
 \end{split}
 \Ee
 for either $\eta=+1$ (an attractive potential of the Newton's law of gravitation) or $\eta=-1$ (a repulsive potential of the Coulomb's law). In our paper, 
 all results hold for $\eta=1$ and $\eta=-1$. For the sake of simplicity, we have set physical constants such as masses and size of charges of identical particles to be $1$. 
 At the incoming boundary $\gamma_- := \{(x,v): x_3=0 \ \text{and } v_3>0\}$, the distribution $F$ satisfies an in-flow boundary condition: For a given function $G (x,v) \geq 0$ on $\gamma_-$,
 \Be\label{bdry:F}
 F    = G   \ \ \text{on} \ \  \gamma_-
 := \{(x,v): x_3=0 \ \text{and } v_3>0\}
 ;
 \Ee
 while the potential satisfies the zero Dirichlet boundary condition at the boundary:
 \Be\label{Dbc:F}
 \phi_F  |_{x_3=0} = 0. 
 \Ee
 About the application of this problem, we refer to \cite{CK_VM, CK_KRM, JK1, JK2, JK3, JK4}. For example, see an application in the stellar atmosphere such as the solar wind theory of the Pannekoek-Rosseland condition in \cite{CK_VM} and the references therein.


 In the contents of nonlinear Vlasov systems, constructing steady states (\cite{GuoR, Rein_bdry, EHS,GR}) and studying their stability (\cite{GuoLin, EH}) or instability (\cite{GuoS, Lin}) have been important subjects. Several boundary problems have been studied in \cite{EH,EHS,GR,Guo94,Guo95,HV,Rein}. Among others, we discuss some literature concerning the asymptotic stability of the Vlasov-Poisson system in a confining setting. In \cite{Landau}, Landau looked into analytical solutions of the linearized Vlasov-Poisson system around the Maxwellian and observed that the self-consistent field $\nabla_x \phi_F$ is subject to temporal decay even in the absence of collisions (cf. Boltzmann equation \cite{CKL_VPB}). A rigorous justification of the Landau damping in a nonlinear dynamical sense has been a long-standing major open problem. In \cite{HV,CM}, it was shown that there exist certain analytical perturbations for which the fields decay exponentially at the nonlinear level. Recently, Mouhot-Villani settled the nonlinear Landau damping affirmatively for general real-analytical perturbations of stable space-homogeneous equilibria with exponential decay in \cite{MV}. Bedrossian-Masmoudi-Mouhot establishes the theory in the Gevrey regular perturbations in \cite{BMM}. We also refer to \cite{GNR} for a very recent result in this direction.


 In this celebrated justification of nonlinear Landau damping, the high regularity such as \textit{real}-\textit{analyticity} or \textit{Gevrey regularity} of perturbation seems crucial as some counterexamples are constructed for Sobolev regular perturbations (\cite{LinZeng,Bed}). Moreover, the theories of \cite{MV,BMM,GNR} strictly apply to \textit{space-homogeneous} equilibria but not space-inhomogeneous states. However, in many physical cases, the boundary problems do not allow these two constraints in general. Any steady solution to \eqref{VP_F}-\eqref{Poisson_F}, if exists, is space-inhomogeneous unless the boundary datum $G$ in \eqref{bdry:F} is space-homogeneous. Moreover, derivatives of any solution $F$ to \eqref{VP_F} are singular in general (\cite{Guo95,CK_VM}).

 In this paper, we establish a different stabilizing effect of downward gravity and the boundary in the content of the nonlinear Vlasov-Poisson system. Namely, we construct \textit{space-inhomogeneous steady states} $h(x,v)$, which are Lipschitz continuous, and establish exponentially fast asymptotic stability of these steady states with respect to \textit{small (in a weighted $L^\infty$ topology) perturbation} $f$:
 \Be F (t,x,v) = h (x,v) + f (t,x,v).\label{def:F}
 \Ee
 For the initial datum in \eqref{VP_F}, we set $F_0 (x,v) = h(x,v) + f_0 (x,v)$. \hide
 
 around certain category of space-inhomogeneous steady states $h(x,v)$ in nonlinear dynamic problems:\footnote{
 	We illustrate the stability mechanism of gravity and some Gaussian upper bound ($\nabla_v h$, in particular) in Section \ref{stab_mech}.
 }\unhide

 \hide
 
 \medskip
 
 \newpage
 \noindent\textbf{A. Illustration of Stabilizing effect.} 
 \unhide

 \subsection{Illustration of the Bootstrap argument}\label{sec:SE} We shall illustrate how a strong gravity may stabilize the Vlasov system for a certain class of steady solutions. As far as the author knows, this stability mechanism is \textit{new in the nonlinear contents}. For simplicity, we pick a simplified toy PDE (the real PDE in \eqref{eqtn:f}): for given $E=E(t,x)$, 
 \Be\notag\label{simple:h}
 \begin{split} 
 	\p_t f + v\cdot \nabla_x f - g \p_{v_3} f = E\cdot \nabla_v h  \ \ \text{in} \ \R_+ \times  \O \times \R^3    \ \ &\text{in} \ \R_+ \times  \O \times \R^3.
 \end{split}
 \Ee
 We add $E\cdot \nabla_v h$ to count the nonlinear contribution of the Vlasov-Poisson equation (cf. \cite{JK2}). The natural boundary condition of $f$ is the absorption boundary condition $f=0$ on $\gamma_-$ as in \eqref{bdry:f}.
 

 The characteristics of \eqref{simple:h} is explicitly given by $\dot{\X}= \V$ and $\dot{\V} = - g \mathbf{e}_3$; or $\X_i(s;t,x,v) = x_i- (t-s) v_i - g \delta_{i3} \frac{(t-s)^2}{2} $ and $\V_i(s;t,x,v) = v_i+ g\delta_{i3}  (t-s)$. The unique non-negative time lapse $\tB(t,x,v)\geq 0$, satisfying $\X_3(t-\tB(t,x,v);t,x,v)=0$, is given by $\tB (t,x,v)= \frac{1}{g} \big( \sqrt{|v_3|^2 + 2 g x_3} - v_3\big)$. Therefore, due to a crucial effect of gravity, we can control $\tB$ by the total energy of the particle:
 \Be\label{intro:tb}
 \tB(t,x,v) \leq  \frac{2}{g} \sqrt{|v|^2 + 2g x_3}.
 \Ee
 
 Now we can bound a local density $\varrho (t,x) = \int_{\R^3} f(t,x,v) \dd v$ (see \eqref{def:varrho}) by
 \Be\label{intro:rho}
 \begin{split}
 	|\varrho(t,x)| &\leq  \int_{\R^3}\int^t_{
 		t-\tB(t,x,v)
 	}   | E(s, \X(s;t,x,v))  |
 	\underbrace{|  \nabla_v h(s, \X(s;t,x,v), \V(s;t,x,v))|} \dd s \dd v + \cdots .
 \end{split}
 \Ee
 A major challenge is to achieve an $E\cdot \nabla_v h$-control, which corresponds to the nonlinear contribution for the real problem \eqref{eqtn:f}. Let us impose a crucial condition, namely $\nabla_v h(x,v)$ has some Gaussian upper bound with respect to the total energy: for a universal positive constant C>0, 
 \Be
 |\nabla_v h(x,v)| \leq C e^{-\beta (|v|^2 + 2g x_3)}.\label{intro:Dvh}
 \Ee  
 From the fact that the energy is conserved along the characteristics, we can derive that $|\varrho(t,x)|$ is bounded above by 
 \Be\notag
 \begin{split}
 	e^{-\lambda t }\left( \int_{\R^3}  \frac{ \tB(t,x,v) e^{\lambda  \tB(t,x,v)}  }{e^{\beta(|v|^2 + g x_3) }} \dd v \right)
 	\sup_{(s,x)  
 	}e^{\lambda s}  |  E(s,x)|
 	\sup_{(x,v)
 	}e^{\beta (|v|^2 + 2 gx_3)}| \nabla_v h(x,v)|.
 \end{split}\Ee 
 Now using the crucial bound \eqref{intro:tb} of $\tB$ with respect to the total energy, we can control the term in the parenthesis above by $
 O\Big(\frac{1}{g  \beta^2} e^{C \frac{\lambda^2}{ g^2 \beta  }}  \Big).$ Therefore we might hope that
 \Be\notag
 \begin{split}
 	\sup_{t} e^{\lambda t} \|\varrho(t)\|_{L^\infty(\O)} &\leq \frac{C}{g  \beta^2} e^{C \frac{\lambda^2}{ g^2 \beta  }}   
 	\| e^{\beta (|v|^2 + 2 gx_3)} \nabla_v h \|_{L^\infty(\O \times \R^3)} \sup_{t} e^{\lambda t} \| E(t) \|_ {L^\infty(\O)} + \cdots .
 \end{split}
 \Ee 
 
 On the other hand, for the real nonlinear problem \eqref{eqtn:f}, the Poisson equation \eqref{Poisson_f} of $E= \nabla_x \Psi$ might suggest a control of $ E(t ,x)$ pointwisely (at least locally) by some weighted pointwise bound of $\varrho(t,x)$ mainly. Therefore, as far as we have chosen $g \beta^2$ large enough, depending on our possible control of $\| e^{\beta (|v|^2 + 2 gx_3)} \nabla_v h \|_{L^\infty(\O \times \R^3)} $ and $\lambda$, we find ``a small factor'' in the nonlinear contribution. 
 
 Applying this idea to a real nonlinear problem is challenging as the steady and dynamic characteristics are governed by the different self-contained fields. Moreover, as the total energy is not conserved along the dynamic characteristics, we cannot simply deduce a crucial Gaussian upper bound of the underbraced term in \eqref{intro:rho}, even \eqref{intro:Dvh} is granted. Indeed, we need a fine control of the nonlinear characteristics for both steady and dynamic problems. To realize the idea in the real nonlinear problem, we ought to overcome two major difficulties: nonlinear regularity estimate of steady solutions with a weight of the total energy as \eqref{intro:Dvh}; nonlinear control of characteristics, involving some elliptic estimates in $\T^2 \times (0,\infty)$. These issues are nontrivial as even linear transport equations have a singularity at the grazing set $\gamma_0 = \{ x_3=0 \ \text{and} \ v_3=0\}$ in general (\cite{CK_VM,Guo95,Kim_FPS}).

 \hide
 \medskip
 \newpage
 
 \noindent\textbf{A. Construction of Steady Solutions.} 
 
 \unhide

 \textbf{Notations:} Throughout this paper, we often use the following notations: $C>0$ is a universal positive constant unless it is specified; $x \lesssim y$ means $x \leq C y$.

 \section{Main Results}

 Consider the steady problem (for $h=h(x,v)$): 
 \begin{align}
 	v\cdot \nabla_x h -    \nabla_x (  \Phi + g   x_3 ) \cdot \nabla_v h  =0 \ \  &\text{in} \ \O \times \R^3,\label{VP_h} \\
 	h   = G 
 	\ \  &\text{on} \  \gamma_-
 	:= \{(x,v): x_3=0 \ \text{and } v_3>0\}
 	. \label{bdry:h}
 \end{align}
 We define a steady local density (whenever $h(x, \cdot) \in L^1 (\R^3)$)
 \Be
 \rho (x) = \int_{\R^3} h(x,v) \dd v. \label{def:rho}
 \Ee
 Then the potential of a steady distribution solves (let $\eta \Delta_0^{-1} \rho$ denote $\Phi $)
 \Be\begin{split}
 	\Delta_x \Phi (x)=  \eta \rho(x)
 	\ \   \text{in} \ \O ,\ \    \ \text{and} \  \ 
 	\Phi  =0 
 	\ \   \text{on} \  \p\O .
 	\label{eqtn:Dphi}
 \end{split}\Ee
 
 %
 
 Next we consider the dynamical problem \eqref{VP_F}-\eqref{Dbc:F} as a perturbation $ f (t,x,v)$ in \eqref{def:F} around the steady solution $(h, \Phi)$ to \eqref{VP_h}-\eqref{eqtn:Dphi}:
 \hide
 \Be\begin{split}\notag
 	0=&  \ \big[	\p_t + v\cdot \nabla_x + \frac{1}{m_\pm} \nabla_x (\Phi + \Psi - g m_\pm x_3) \cdot \nabla_v \big] h_\pm\\
 	&+   \big[	\p_t + v\cdot \nabla_x + \frac{1}{m_\pm}  \nabla_x (\phi_F - g m_\pm x_3) \cdot \nabla_v  \big] f_\pm\\
 	= &  \  \frac{1}{m_\pm} \nabla_x \phi_{f} \cdot \nabla_v h_\pm +   \big[	\p_t + v\cdot \nabla_x + \frac{1}{m_\pm} \nabla_x (\phi_F - g m_\pm x_3) \cdot \nabla_v  \big] f_\pm.
 \end{split}\Ee
 Hence we derive an equation for $f_\pm (t,x,v)$ in \eqref{F_pert}:as \unhide
 \begin{align}
 	\p_t  f  + v\cdot \nabla_x   f  - \nabla_x (\Psi + \Phi +g  x_3) \cdot \nabla_v   f   =   \nabla_x \Psi \cdot \nabla_v h   \ \ &\text{in} \ \R_+ \times  \O \times \R^3,  \label{eqtn:f} \\
 	f(0,x,v) = f_0 (x,v)   \ \ &\text{in} \  \O \times \R^3,  \label{f_0} \\
 	f   = 0  
 	\ \ &\text{on}  \  \gamma_-.\label{bdry:f}  
 \end{align}
 We define a local density of the dynamical fluctuation 
 \Be\label{def:varrho}
 \varrho(t,x) = \int_{\R^3} f(t,x,v) \dd v.
 \Ee
 Then the electrostatic potential $\Psi = \eta \Delta_0^{-1} \varrho$ solves
 \begin{align}
 	\Delta_x	\Psi (t,x)
 	= \eta\varrho (t,x)
 	\ \  \text{in} \ \R_+ \times  \O 
 	, \ \text{and }
 	\  
 	\Psi(t,x) =0  \ \   \text{on} \ \R_+ \times \p\O. \label{Poisson_f} 
 \end{align}  
 Often we let $\eta \Delta_0^{-1} \varrho$ denote $\Psi $. The evolution of $\varrho $ is determined by a continuity equation 
 \Be\label{cont_eqtn}
 \p_t\varrho  + \nabla_x \cdot b  =0 \ \ \text{in} \ \R_+ \times \O,
 \Ee 
 where the flux is defined by 
 \Be\label{def:flux}
 b (t,x) := \int_{\R^3} v   f (t,x,v)  \dd v .
 \Ee
 
 \hide \begin{definition}
 	We define the flux 
 	\Be\label{def:flux}
 	b (t,x) := \int_{\R^3} v   f (t,x,v)  \dd v .
 	\Ee
 	The equation \eqref{eqtn:f} yields

 	Note that we should understand   \end{definition}
 \unhide

 \subsection{Lagrangian approach}\label{sec:char}
 
 Consider the characteristics $Z (s;x,v) = (X (s;x,v), V (s;x,v))$ for the steady problem \eqref{VP_h}: 
 \Be
 \begin{split}\label{ODE_h}
 	\frac{dX (s;x,v) }{ds} =V (s;x,v)  ,\ \
 	\frac{dV  (s;x,v) }{ds}  =   - \nabla_x  \Phi(X (s;x,v)) -g   \mathbf{e}_3,
 \end{split}
 \Ee
 where $ \Phi \in C^1 (\bar{\O}) \cap  C^2 (\O)$ solves \eqref{eqtn:Dphi} and $\mathbf{e}_3= (0,0,1)^T$. The data at $s=0$ is given by $Z (0;x,v) = (X (0;x,v) , V (0;x,v)) = (x,v)=z$.
 
 We also define the characteristics $\Zz(s;t,x,v) = (\X(s;t,x,v), \V(s;t,x,v))$ for the dynamical problem \eqref{eqtn:f} solving
 \Be\label{ODE_F}
 \begin{split}
 	\frac{d \X  (s;t,x,v) }{ d s} &= \V(s;t,x,v), \\
 	\frac{d \V (s;t,x,v)}{d s} &= - \nabla_x \Psi  (s, \X(s;t,x,v)) - \nabla_x \Phi (\X(s;t,x,v)) - g  \mathbf{e}_3,
 \end{split}
 \Ee 
 and satisfying $\Zz (t;t,x,v) = (\X (t;t,x,v), \V (t;t,x,v))  = (x,v)=z.$ Here, $\Psi (t, \cdot )\in C^1 (\bar{\O}) \cap  C^2 (\O)$ and $\Phi\in C^1 (\bar{\O}) \cap  C^2 (\O)$ solve \eqref{Poisson_f} and \eqref{eqtn:Dphi}, respectively. Note that the Picard theorem ensures that the unique solutions $Z$ and $\Zz$ to ODEs \eqref{ODE_h} and \eqref{ODE_F} exist, respectively.
 \hide\begin{remark}
 	When $\nabla_x \Phi \in W^{1,p}(\O)$ and $\nabla_x \Psi (t) \in W^{1,p} (\O)$ for $p <\infty$, we should understand our solutions to \eqref{ODE_h} and \eqref{ODE_F} as a regular Lagrangian flow (\cite{DiL_ODE}). In other words, they are absolutely continuous (with respect to $s$) integral solutions, and measure preserving.  
 	
 	On the other hand, when the data (a boundary datum $G$ on $\gamma_-$ or an initial datum $F_0$ on $\O \times \R^3$) are regular as in Theorem \ref{theo:RS} and Theorem \ref{theo:RD} then $\nabla_x \Phi \in W^{1,\infty}(\O)$ and $\nabla_x \Psi (t) \in W^{1,\infty} (\O)$. Therefore the classical Picard theorem ensures that the Lagrangian flow is actually the classical solution of ODEs. 
 \end{remark}\unhide

 \begin{definition}\label{def:tb}(1) Suppose $\nabla_x \Phi \in C^1(\O)$ for $p>2$. Then $Z (s;x,v)$ is well-defined as long as $X (s;x,v) \in \O$. There exists a backward/forward exit time 
 	\Be\label{tb^h}
 	\begin{split}
 		\tb(x,v)& : = \sup\{ s  \in [0,\infty) :	X _{ 3} (-\tau; x,v )>0  \ \text{for all }  \tau \in (0, s)\}  \geq 0,\\ 
 		\tf(x,v)& : = \sup\{ s  \in [0,\infty) :	X _{ 3} ( +\tau; x,v )>0  \ \text{for all }  \tau \in (0, s)\}  \geq 0.
 	\end{split}	\Ee
 	In particular, $X _{ 3} ( -\tb    (x,v); x,v )=0$. Moreover, $Z (s;x,v)$ is continuously extended in a closed interval of $s \in [-\tb(x,v),0]$.  
 	
 	We also define backward exit position and velocity:
 	\Be\label{def:zb^h}
 	\xb(x,v) = X   ( -\tb (x,v); x,v ) \in \p\O, \ \ \vb (x,v) = V  ( -\tb(x,v); x,v ).
 	\Ee
 	\hide
 	Similarly, define $t_{\mathbf{f}} ^{h, \pm}(x,v)\geq 0$ to be the smallest non-negative number satisfying
 	\Be
 	X^{h}_{\pm,3} (t_{\mathbf{f}} ^{h, \pm}(x,v); x,v )=0
 	\Ee
 	and also define 
 	\Be
 	x^{h, \pm}_{\mathbf{f} }(x,v) = X_\pm^h  (  t^{h, \pm }_{\mathbf{f}} (x,v);  x,v ), \ \  v^{h, \pm}_{\mathbf{f}} (x,v) = V^h_\pm  (  t^{h, \pm}_{\mathbf{f}} (x,v);  x,v ).
 	\Ee\unhide
 	
 	(2)	Suppose $\nabla_x \Phi, \nabla_x \Psi(t, \cdot) \in C^1(\O)$ for $p>1$. Then $\Zz(s;t,x,v)$ is well-defined as long as $\X(s;t,x,v) \in \O$. There exists a backward/forward exit time 
 	\Be\label{tb}
 	\begin{split}
 		\tB (t,x,v)
 		&: = \sup \{
 		s \in [0,\infty): \X_3 (t-\tau;t,x,v)>0 \ \text{for all } \tau \in (0,s)
 		\}
 		\geq 0, \\
 		\tF (t,x,v)
 		&: = \sup \{
 		s \in [0,\infty): \X_3 (t+\tau;t,x,v)>0 \ \text{for all } \tau \in (0,s)
 		\}
 		\geq 0,
 	\end{split}
 	\Ee
 	such that $X _{ 3} (t-\tB (t,x,v); t,x,v )=0$ and backward exit position and velocity are defined
 	\Be\label{def:zb}
 	\xB (t,x,v) = \X   ( t-\tB (t,x,v); t,x,v ) \in \p\O, \ \  \vB (t,x,v) = \V (t -\tB (t,x,v); t,x,v ).
 	\Ee
 	Then $\Zz(s;t,x,v)$ is continuously extended in a closed interval of $s \in [t-\tB (t,x,v),t]$. 
 \end{definition}

 \begin{definition}[Mild solution]\label{def:mild}
 	For given $C^2$ potentials and their characteristics, it is well-known that any weak solutions are the Lagrangian solution. For the steady problem \eqref{VP_h}-\eqref{eqtn:Dphi} and dynamic problem \eqref{eqtn:f}-\eqref{Poisson_f}, they take the form of
 	\Be
 	\begin{split}\label{Lform:h}
 		h(x,v)  =  \mathbf{1}_{t \leq t_\b   (x,v)} h(X (-t; x,v),V (-t; x,v) ) 
 		+  \mathbf{1}_{t > t_\b   (x,v)} G(\xb  (x,v), \vb (x,v))) ;
 	\end{split}\Ee
 	and
 	\Be
 	\begin{split}\label{Lform:f}
 		f  (t,x,v) & = \mathbf{1}_{t \leq \tB  (t,x,v)} f  (0,\X (0;t,x,v),\V  (0;t,x,v))
 		\\&+  \int^t_{ \max\{0,t-\tB  (t,x,v)\}} \nabla_x \Psi (s, \X (s;t,x,v)) \cdot \nabla_v h  (  \X (s;t,x,v), \V (s;t,x,v))  \dd s .
 	\end{split}\Ee
 \end{definition}

 As we have described across \eqref{intro:Dvh} in the introduction, Gaussian weight functions have a crucial role in our analysis. 
 \begin{definition}\label{def:w_dyn}
 	For an arbitrary $\beta>0$, we set weight functions for the steady problem \eqref{VP_h}
 	and for dynamic problem \eqref{VP_F} and \eqref{eqtn:f}
 	\begin{align}
 		w (x,v)   &=	w _\beta (x,v) = e^{ \beta \big( |v|^2  + 2 \Phi(x) + 2 g   x_3\big)},\label{w^h}\\
 		\w (t,x,v) & =		\w_\beta  (t,x,v) = e^{ \beta \big( |v|^2 + 2\Phi(x) + 2\Psi(t,x)  + 2g   x_3\big)}.\label{w^F}
 	\end{align}
 \end{definition}

 \begin{remark}Few basic properties: (i) the steady weight $w   (x,v)$ in \eqref{w^h} is invariant along the steady characteristics \eqref{ODE_h}:
 	\Be\label{w:invar}
 	w(X(s;x,v),V(s;x,v)) = w(x,v) \ \ \text{for all} \ s \in [-\tb (x,v),0].
 	\Ee 
 	
 	(ii) The dynamic weight $\w   (t,x,v)$ is not invariant along the dynamic characteristics, as the dynamic total energy is not invariant : 
 	\Be \label{dDTE}
 	\begin{split}
 		&\frac{d}{ds} \big(
 		|\V  (s;t,x,v)|^2 
 		+  2\Phi(\X (s;t,x,v)) 
 		+ 2 \Psi (s, \X (s;t,x,v))
 		+ 2 g  \X _{  3} (s;t,x,v)
 		\big)\\
 		& = 
 		2 \p_t \Psi (s,\X  (s;t,x,v)).  
 	\end{split}
 	\Ee
 	
 	(iii) If the Dirichlet boundary conditions \eqref{eqtn:Dphi} and \eqref{Poisson_f} hold then we have that at the boundary
 	\Be\label{w:bdry}
 	w _\beta (x,v) \equiv \w_\beta  (t,x,v) \equiv e^{\beta |v|^2} \ \ \text{at}  \ \ x_3=0.
 	\Ee
 	
 	(iv) At the initial plan $t=0$, 
 	\Be\label{W_t=0}
 	\w_\beta (0,x,v) = e^{ \beta \big( |v|^2 + 2\Phi(x) + 2\Psi(0,x)  + 2g   x_3\big)}
 	= \w_{\beta,0} (x,v)
 	:= w_{\beta}(x,v) e^{2 \eta \Delta_0^{-1} \int_{\R^3} f_0 (x,v) \dd v }.
 	\Ee
 	Here, $\Delta_0^{-1} \int_{\R^3} f_0 (x,v) \dd v=\int_{\O} G(x,y)\int_{\R^3} f_0 (y,v) \dd v \dd y $ as in \eqref{phi_rho}. 
 \end{remark}
 \hide
 {\color{red}Where should I put this:
 	\begin{lemma}
 		Later we need to prove the convergence of $w^{F^\ell}(t,x,v)  f^\ell(t,x,v)$ where 
 		\Be
 		w^{F^\ell}_{\pm}(x,v) = e^{ \beta \big(\frac{|v|^2}{2} + \phi_{f^\ell}(t,x)  + \phi_h(x) + g m_\pm x_3\big)}
 		\Ee
 		provided 
 		\Be
 		\sup_\ell \|w^{F^\ell}(t,x,v)  f^\ell(t,x,v)\|_{L^\infty_{t,x,v}}< \infty
 		\Ee
 		Then 
 		\Be
 		w^{F^\ell}(t,x,v)  f^\ell(t,x,v) \rightarrow w^{F}(t,x,v)  f(t,x,v)   \ \ L^\infty \  weak \  star
 		\Ee
 	\end{lemma}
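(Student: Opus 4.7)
The plan is to decompose the product $w^{F^\ell} f^\ell$ and obtain (i) strong local convergence of the weights $w^{F^\ell} \to w^F$ from elliptic regularity for the Poisson equation, then (ii) passage to the weak-$*$ limit in $f^\ell$ from the uniform $L^\infty$ bound via Banach--Alaoglu. First, the hypothesis $\sup_\ell \|w^{F^\ell} f^\ell\|_{L^\infty} < \infty$, together with the Gaussian factor $e^{\beta |v|^2/2}$ built into $w^{F^\ell}$ and the a priori bound $\|\phi_{f^\ell}\|_{L^\infty}+\|\phi_h\|_{L^\infty} \lesssim g$ (consistent with \eqref{Bootstrap}), yields a pointwise control
\Be
|\varrho^\ell(t,x)|   =   \bigg| \int_{\R^3} f^\ell(t,x,v)\,\dd v \bigg|  \lesssim  e^{-\beta g x_3}\int_{\R^3} e^{-\beta |v|^2/2}\,\dd v  \lesssim  \frac{e^{-\beta g x_3}}{\beta^{3/2}}.
\Ee
In particular, $\varrho^\ell$ is uniformly bounded in $L^\infty(\O)$ and decays exponentially in $x_3$.

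Next, elliptic regularity for the Dirichlet problem $\Delta \phi_{f^\ell} = \eta \varrho^\ell$ on $\O = \mathbb{T}^2 \times (0,\infty)$ with $\phi_{f^\ell}|_{\p\O}=0$, exploiting the Green's function structure that underlies \eqref{est:nabla_phi}, delivers uniform bounds on $\phi_{f^\ell}$ in $C^{1,\alpha}_{\mathrm{loc}}(\bar\O)$. An Arzel\`a--Ascoli argument combined with identification of the limit through Poisson's equation then yields, along a subsequence, $\phi_{f^\ell} \to \phi_f$ locally uniformly on $\bar\O$. By continuity of the exponential, this promotes to locally uniform convergence $w^{F^\ell} \to w^F$ on $[0,T]\times \bar\O \times \R^3$.

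To deduce weak-$*$ convergence of the product, I split
\Be
w^{F^\ell} f^\ell  -  w^F f  =  (w^{F^\ell}-w^F)\, f^\ell  +  w^F (f^\ell - f).
\Ee
Testing against $\varphi \in L^1$ with compact support in $(t,x,v)$, the first term is bounded by $\|w^{F^\ell}-w^F\|_{L^\infty(\mathrm{supp}\,\varphi)}\, \|f^\ell\|_{L^\infty}\, \|\varphi\|_{L^1}$, which tends to zero because $f^\ell$ is uniformly bounded in $L^\infty$ (since $w^{F^\ell}$ is bounded below by a positive constant on compact sets). The second term vanishes by the weak-$*$ convergence $f^\ell \rightharpoonup^* f$ extracted via Banach--Alaoglu from the uniform bound. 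A density argument, exploiting the Gaussian decay in $v$ and the exponential decay in $x_3$ encoded by $w^{F^\ell}$, extends the conclusion to all $L^1$ test functions.

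The principal obstacle I anticipate is transferring between topologies: promoting local uniform convergence of the weights into a global weak-$*$ statement for the product relies on strong compactness of the fields $\phi_{f^\ell}$, which in turn rests on the exponential decay (in both $x_3$ and $v$) barely supplied by the weighted $L^\infty$ bound. Equally, identifying the weak-$*$ limit of $\varrho^\ell$ with $\int_{\R^3} f\,\dd v$ depends on whatever equicontinuity is furnished by the approximation scheme generating $f^\ell$; in any concrete construction, such as the Picard iteration underpinning Theorem \ref{theo:CD}, one must verify that dominated convergence under the Gaussian weight is applicable in order to pass the limit inside the velocity integral.
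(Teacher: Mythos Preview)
The paper leaves this lemma unproved: it sits in a hidden block with the placeholder proof ``[[LATER]]''. The closest analogue the paper actually carries out is Step~2 of the proof of Theorem~\ref{theo:CS} (the steady version $w^{\ell+1}h^{\ell+1}\rightharpoonup^* wh$). There the paper extracts a weak-$*$ limit $g$ of $w^{\ell+1}h^{\ell+1}$, argues $\Phi^{\ell}\to\Phi$ a.e., and then asserts that the weak-$*$ convergence \eqref{weakconv_hst} implies $h^{\ell+1}\to h$ a.e., identifying $g=wh$ pointwise.

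Your route is different and in fact more careful. Upgrading $\phi_{f^\ell}\to\phi_f$ to locally uniform convergence via elliptic regularity and Arzel\`a--Ascoli, and then splitting
\[
w^{F^\ell}f^\ell - w^F f = (w^{F^\ell}-w^F)f^\ell + w^F(f^\ell-f),
\]
sidesteps the paper's inference of a.e.\ convergence from weak-$*$ convergence, which is false in general. The first piece vanishes by strong convergence of the weights against the uniform $L^\infty$ bound on $f^\ell$; the second by weak-$*$ convergence of $f^\ell$ tested against $w^F\varphi\in L^1$ (compactly supported $\varphi$, then density). The identification of $\lim\varrho^\ell$ with $\int f\,\dd v$ that you flag as the main obstacle is exactly what the paper handles in Step~3 of the same proof: truncate at $|v|\le N$, use weak-$*$ convergence on the compact part, and control the tail uniformly via the Gaussian weight in $w^{F^\ell}$. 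You should invoke the same truncation here.
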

 	\begin{proof}
 		
 		[[LATER]]
 	\end{proof}
 }\unhide

 As we have described across \eqref{intro:tb} in the introduction, the next lemma is crucial in our analysis.
 \hide
 For a given function $E(t,x)$, let $Z (s;t,x,v) = (X  (s;t,x,v), V (s;t,x,v))$ be a flow of 
 \Be\label{ODE_XV}
 \begin{split}
 	\frac{d X (s;t,x,v)}{ds} = V (s;t,x,v), \ \ \ 
 	m\frac{d V (s;t,x,v)}{ds} = E(s, X (s;t,x,v)) - g m  \mathbf{e}_3,\\
 	Z (s;t,x,v) |_{s=t} = (x,v).
 \end{split}
 \Ee\unhide

 \begin{lemma}\label{lem:tb}
 	(i) Recall the steady characteristics \eqref{ODE_h} and its self-consistent potential $\Phi\in C^1 (\bar{\O}) \cap  C^2 (\O)$ in \eqref{eqtn:Dphi}. Suppose the condition \eqref{Uest:DPhi} holds.  
 	Then the backward exit time \eqref{tb^h} is bounded above as  \hide	
 	Then we have that, for all $(t,x,v) \in \R_+ \times \bar \O \times \R^3$, \Be
 	\Ee\unhide
 	\Be \label{est:tb^h}
 	\tb  (x,v) \leq  
 	\frac{2}{g} \min \Big\{\sqrt{|v_3|^2 + g x_3} -  v_3,  \sqrt{|v_{\mathbf{b}, 3} (x,v)|^2 - g x_3 } +  v_{\mathbf{b}, 3} (x,v)  \Big\}
 	. 
 	\Ee
 	(ii) Recall the dynamic characteristics \eqref{ODE_F} and its self-consistent potentials $\Phi  \in C^1 (\bar{\O}) \cap  C^2 (\O)$ and $\Psi(t,\cdot) \in C^1 (\bar{\O}) \cap  C^2 (\O)$ in \eqref{eqtn:Dphi} and \eqref{Poisson_f}, respectively. Suppose the  condition \eqref{Bootstrap} holds.
 	
 	Then the backward/forward exit time \eqref{tb} is bounded above as, for all $(t,x,v) \in [0,T] \times  \bar\O \times \R^3$, 
 	\Be\label{est:tB}
 	\begin{split}
 		&\tB(t,x,v)   
 		\leq \frac{2}{g} \min \Big\{\sqrt{|v_3|^2 + g x_3} -  v_3,  \sqrt{|v_{\mathbf{B}, 3} (t,x,v)|^2 - g x_3 }+ v_{\mathbf{B}, 3} (t,x,v)  \Big\} ,\\
 		& \tB(t,x,v) + \tF (t,x,v) \leq \frac{4}{g} \sqrt{|v_3|^2 + g x_3}.
 	\end{split}
 	\Ee

 	
 \end{lemma}\vspace{-10pt}
 \begin{proof}We only prove the dynamical part \eqref{est:tB} as the steady part \eqref{est:tb^h} can be proved similarly. From the bootstrap assumption \eqref{Bootstrap}, the vertical acceleration is bounded from above as
 	\Be\label{upper_dotV_3}
 	\frac{d}{ds}    V_ { 3} (s;t, x,v)   \leq - {g } / {2}  .
 	\Ee
 	\vspace{-4pt}
 	Note that \Be
 	\begin{split}\label{exp:X_3}
 		X_3(s;t,x,v) 
 		= x_3 +  \int^s_t  \Big( v_3 + \int^\tau_{t} \frac{d V_ { 3} (\tau^\prime;t, x,v)}{d\tau^\prime} \dd \tau^\prime \Big)  \dd \tau \leq  x_3 -v_3 (t-s)     - \frac{g}{4}|t-s|^2.
 	\end{split}
 	\Ee
 	The zeros of the above quadratic form are $
 	\{-2 v_3 \pm 2 \sqrt{|v_3|^2 + g x_3}\}/{g}.$
 	Then, from the definition of $\tB$ at \eqref{tb}, we can prove that 
 	\Be\label{est1:tb}
 	\tB(t,x,v)   
 	\leq 2 \big( \sqrt{|v_3|^2 + g x_3} -  v_3 \big)/g.
 	\Ee

 	By expanding \eqref{exp:X_3} at $ t-\tB(t,x,v)$ and using \eqref{upper_dotV_3}, we derive that 
 	\Be\label{zero_quad2}
 	\begin{split}
 		&	X_3 (t-\tB(t,x,v);t,x,v)  \\
 		& 	= x_3+   \int^ {t-\tB(t,x,v)}_{t}  \Big( v_{\mathbf{B}, 3} (t,x,v)  + \int^\tau_{t-\tB(t,x,v)} 
 		\frac{dV_3 (\tau^\prime;t,x,v)}{d \tau^\prime}
 		\dd \tau^\prime \Big) \dd \tau\\
 		& \leq  x_3 -  v_{\mathbf{B}, 3} (t,x,v)  \tB    + \frac{g}{4} |  \tB |^2.
 	\end{split}
 	\Ee
 	The zeros of the above quadratic form (of $\tB$) are 
 	$	\left\{ 2v_{\mathbf{B}, 3} (t,x,v) \pm 2\sqrt{|v_{\mathbf{B}, 3} (t,x,v)|^2 - g x_3 } \right\} / {g}.$
 	Hence we conclude\vspace{-10pt}
 	\Be\label{est2:tb}
 	\tB(t,x,v)   
 	\leq \frac{2}{g}   \Big( \sqrt{|v_{\mathbf{B}, 3} (t,x,v)|^2 - g x_3 }+ v_{\mathbf{B}, 3} (t,x,v)  \Big).
 	\Ee
 	
 	Combining \eqref{est1:tb} and \eqref{est2:tb} together, we conclude that the first bound of \eqref{est:tB}. Following the same argument we can have the bound for $\tF(t,x,v)$. 
 	\hide \eqref{est:tb^h}.
 	For $v_3\geq 0$, \eqref{zero_quad} implies that

 	For $v_3 \leq 0$, 
 	\Be
 	\tb(t,x,v)   \leq  \frac{ 2 |v_3| + 2 \sqrt{|v_3|^2 + g x_3}}{g} \leq \frac{2}{g} (\sqrt{|v_3|^2 + g x_3} + |v_3|)
 	\Ee

 	Hence
 	\Be\notag
 	\begin{split}
 		0&=	X_{\pm, 3} (t- \tb^{ \pm} (t,x,v);t,x,v)  = x_3 -\int_{t- \tb^{  \pm}(t,x,v)}^t V_ {\pm, 3} (\tau; t, x,v) \dd \tau  \\
 		& = x_3 - \tb^{  \pm}(t,x,v) v_{\b,3}^{  \pm} (t,x,v)  + \int_{t- \tb^{  \pm}(t,x,v)}^t \int^\tau _{t- t_\b^{  \pm}(t,x,v)}   (-1)	\frac{d V_ {\pm, 3}(s ;t,x,v)}{d s} \dd  s \dd \tau \\
 		& \geq  x_3- \tb^{  \pm} (t,x,v) v_{\b,3}^{ \pm} (t,x,v)   +\frac{|\tb^{  \pm}(t,x,v)  |^2}{2} \frac{g }{2}.
 	\end{split}
 	\Ee
 	This implies that there exists $\tb^\pm (t,x,v)<\infty$ for any $(t,x,v) \in \R_+ \times \bar \O \times \R^3$. From the quadratic formula, 
 	\Be\notag
 	\tb^\pm (t,x,v) \leq \frac{ v_{\b, 3}^\pm (t,x,v) + \sqrt{ |v_{\b, 3}^\pm (t,x,v)| ^2 - 4\frac{g}{4} x_3 }}{2 \frac{g}{4}} \leq \frac{4}{g}v_{\b, 3}^\pm (t,x,v) .
 	\Ee
 	
 	Similarly, we have 
 	\Be\notag
 	\begin{split}
 		0&=	X_{\pm, 3} (t- \tb^{ \pm} (t,x,v);t,x,v) \\
 		& = x_3 +\int^{t- \tb^{  \pm}(t,x,v)}_t V_ {\pm, 3} (\tau; t, x,v) \dd \tau  \\
 		& 
 		= x_3 +\int^{t- \tb^{  \pm}(t,x,v)}_t 
 		\left(
 		v_3 + \int^\tau_t 	\frac{d V_ {\pm, 3}(\tau^\prime ;t,x,v)}{d\tau^\prime} \dd  \tau^\prime
 		\right)
 		\dd \tau
 		\\
 		& = x_3 - \tb^{  \pm}(t,x,v) v_3 + \int^{t- \tb^{  \pm}(t,x,v)}_t \int_t^\tau   	\frac{d V_ {\pm, 3}(\tau^\prime ;t,x,v)}{d\tau^\prime} \dd  \tau^\prime\dd \tau \\
 		& \leq  x_3- \tb^{  \pm} (t,x,v) v_3  -\frac{|\tb^{  \pm}(t,x,v)  |^2}{2} \frac{g }{2}.
 	\end{split}
 	\Ee
 	Moreover, from the quadratic formula, we have that 
 	\Be\notag
 	\tb^{  \pm} (t,x,v) \leq  \frac{  v_3  + \sqrt{| v_3 |^2 + 4  \frac{ g}{4} x_3 } }{2 \frac{g}{4}} \leq \frac{4}{g}\sqrt{| v_3 |^2 + g x_3 }  .
 	\Ee \unhide \end{proof}

 \subsection{Asymptotic Stability Criterion}As the main purpose of this paper, we establish a bootstrap machinery of starting with linear decay due to gravity effect to prove nonlinear decay. \hide Weight functions play an important role in the analysis. 
 \begin{definition}
 	For an arbitrary $\beta>0$, we define a weight function for the steady problem \eqref{VP_h}
 	\Be
 	w (x,v)   =	w _\beta (x,v) = e^{ \beta \big( |v|^2  + 2 \Phi(x) + 2 g   x_3\big)}.\label{w^h}
 	\Ee
 \end{definition}\unhide


 \begin{theorem}[Asymptotic Stability Criterion]\label{theo:AS}
 	Suppose $(h, \Phi)$ solves \eqref{VP_h}-\eqref{eqtn:Dphi}, and $(f, \varrho,   \Psi)$ solves \eqref{eqtn:f}-\eqref{Poisson_f} globally-in-time in the sense of Definition \ref{def:mild}. Suppose $\nabla_x \cdot b \in L^\infty_{{loc}} (\R_+ \times \O)$.
 	
 	Assume that the following three conditions hold, for $g, \beta>0$ 
 	\begin{align}
 		\| w_\beta \nabla_v h \|_{L^\infty(\O)}	 &< \infty ,\label{condition:Dvh} \\
 		\sup_{t \geq 0} \|
 		e^{ \frac{\beta}{2} (|v|^2 + g x_3)}
 		f(t) \|_{L^\infty(\O \times \R^3)}
 		&\leq \frac{ ( \ln 2 )^{\frac{1}{2}}g^{\frac{1}{2}} \beta^{\frac{3}{2}}}{64 \pi  (1+ \frac{1}{\beta g})},\label{Bootstrap_f} \\
 		\| \nabla_x \Phi \| _{L^\infty(\O)}   +	\sup_{t \geq 0 }  \|  \nabla_x \Psi (t) \|_{L^\infty(\O)}   &\leq \frac{g}{2} .\label{Bootstrap} 
 	\end{align}
 	


 	Then there exists a computable number $\lambda_\infty= \lambda_\infty ( g, \beta,\| w_\beta \nabla_v h \|_{L^\infty(\O)}	 )>0$ such that $(f(t), \varrho(t))$ decays exponentially fast as $t \rightarrow \infty$: 
 	\begin{align}
 		\sup_{ t \geq 0 } e^{ \lambda_\infty t} \| \varrho (t)\|_{L^\infty(\O)} &\lesssim 
 		\|\w_{\beta, 0 }   f_{0  } \|_{L^\infty (\O \times \R^3)} ,\label{decay:varrho}\\
 		\sup_{ t \geq 0 }e^{\lambda_\infty t}	\| e^{ \frac{\beta}{8}  (|v|^2+  g x_3)} f(t) \|_{L^\infty (\O \times \R^3)}  
 		&\lesssim 
 		\left(
 		1+ 
 		\| w_\beta \nabla_v h \|_{L^\infty (\O  )}
 		\right)
 		\| \w_{\beta,0} f_0 \|_{L^\infty (\O \times \R^3)}  .\label{decay_f}
 	\end{align} 
 	
 \end{theorem}

 \begin{remark}
 	An exponent, which we will derive in \eqref{lambda_infty}, depends on $g$ and $\beta$ as $\lambda_\infty \sim g^2 \beta$ roughly. This is somewhat intuitive: larger $\beta$ implies lesser particles of high momentum while a large gravity $g$ would trap the particles rapidly. 
 \end{remark}

 \subsection{Construction of a steady solution}{\color{black}
 	\hide	To carry out the idea of stabilizing effect in Section \ref{sec:SE}, it is important to construct steady solutions satisfying the same in-flow boundary condition \eqref{bdry:F} so that the perturbation exactly satisfies the zero in-flow boundary condition. Despite some previous construction in bounded domains (\cite{Rein_bdry, GR}), there seems no result in the half-space, which relates to the solar wind model (e.g. corona-heating problem). In general, uniqueness theorem plays an important role in the asymptotic stability. We prove the uniqueness of solution to the nonlinear problem by establishing the regularity theorem. Moreover, in a proof of asymptotic stability, it is crucial to establish some Gaussian upper bound of derivatives of the steady solutions (see an explanation across \eqref{intro:Dvh} in the introduction). Generally speaking regularity estimate is difficult, as the derivatives blow up at the grazing set $\gamma_0 = \{x_3=0 \ \text{and} \ v_3 =0\}$. To overcome this difficulty, we introduce a kinetic distance (\cite{CK_VM, ChK}).\unhide
 	
 	To carry out the idea of stabilizing effect in Section \ref{sec:SE}, it is important to construct steady solutions that satisfy the same in-flow boundary condition \eqref{bdry:F} as the perturbation, so that the zero in-flow boundary condition is exactly satisfied. Although some previous constructions have been made in bounded domains (\cite{Rein_bdry, GR}), there seems to be no result in the half-space, which is relevant to the solar wind model (e.g., corona-heating problem). In general, the uniqueness theorem plays an important role in asymptotic stability. We prove the uniqueness of the solution to the nonlinear problem by establishing the regularity theorem. Moreover, in a proof of asymptotic stability, it is crucial to establish some Gaussian upper bound of the derivatives of the steady solutions (see an explanation across \eqref{intro:Dvh} in the introduction). Generally speaking, the regularity estimate is difficult, as the derivatives blow up at the grazing set $\gamma_0 = \{x_3=0 \ \text{and} \ v_3 =0\}$. 

 	\hide
 	We specify a definition of weak solutions to the steady problem that we are about to construct. 
 	\begin{definition}\label{weak_sol}We say $(h, \nabla_x  \Phi) \in L^2_{loc}(\O \times \R^3) \cap L^2_{loc}(\p\O \times \R^3; \dd \gamma) \times L^2_{loc}(\O \times \R^3)$ is a weak solution if all terms below are bounded and they satisfy the following equality, for any $\psi \in C^\infty_c (\bar \O \times \R^3)$, 
 		\Be\label{weak_form}
 		\begin{split}
 			&	\iint_{\O \times \R^3}	 	h(x,v) v\cdot\nabla_x \psi(x,v)  \dd v \dd x \\
 			&
 			- \iint_{\O \times \R^3}	  h(x,v)  \nabla_x (\Phi (x)  + g x_3) \cdot \nabla_v  \psi (x,v)  \dd v \dd x \\
 			&
 			=\int_{ \p\O \times \{v_3<0\}} h (x,v) \psi(x,v) \dd \gamma  - \int_{ \p\O \times \{v_3>0\}} G(x,v) \psi(x,v)  \dd \gamma;
 		\end{split}
 		\Ee
 		and $(h, \rho) \in L^2_{loc}(\O \times \R^3) \times L^2_{loc}(\O  )$ satisfies \eqref{def:rho}; and $(\rho, \Phi) \in L^2_{loc}(\O  ) \times W^{1,2}_{loc} (\O)$ is a weak solution of \eqref{eqtn:Dphi}. Here, the boundary measure is denoted by $\dd \gamma := |n(x) \cdot v| \dd S_x \dd v$.
 	\end{definition}
 	\unhide

 	\hide
 	\begin{theorem}[Construction of Steady Solutions]\label{theo:CS}Given $G  (x,v) \geq 0$
 		\Be
 		\| w(|v|^2) G_\pm  \|_\infty \lesssim gm_\pm,
 		\Ee
 		there exists at least one solution $h(x,v ) \geq 0$ to \eqref{VP_h}-\eqref{eqtn:Dphi} in the sense of Definition \ref{weak_sol}. 
 		
 		Moreover, it satisfies 
 		\Be
 		\| \p_{x_3} \phi_h \|_{L^\infty (\O)} \leq \frac{g  \min_{i=\pm} m_i}{2}
 		\Ee
 		Moreover, 
 	\end{theorem}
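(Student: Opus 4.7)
The approach is a Schauder fixed point argument for the self-consistent potential $\Phi$. The cornerstone is that the total energy $\mathcal{E}(x,v) = \tfrac{1}{2}|v|^2 + \Phi(x) + g x_3$ is conserved along the steady Hamiltonian flow $\dot{\X}=\V,\ \dot{\V}=-\nabla_x(\Phi+gx_3)$, so the weight $w$ defined in \eqref{w^h} is exactly constant along characteristics. This turns a bound on $wG$ at the boundary into the pointwise interior estimate $h(x,v) \leq \|wG\|_\infty / w(x,v)$, independently of how long a trajectory takes to arrive from $\gamma_-$.

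I would set up the iteration on the closed convex subset $\mathcal{A} = \{\Phi \in C(\bar\O) : \Phi|_{\p\O}=0,\ \|\p_{x_3}\Phi\|_\infty \leq g/2\}$ of admissible potentials. For $\Phi \in \mathcal{A}$, the effective vertical force $-(g+\p_{x_3}\Phi)$ stays in $[-3g/2,-g/2]$, so each backward characteristic reaches $\gamma_-$ in a finite time $\tb(x,v) \lesssim g^{-1}\sqrt{\mathcal{E}(x,v)}$ in analogy with \eqref{intro:tb}. Setting $h[\Phi](x,v):=G(\X(-\tb),\V(-\tb))$ gives a natural candidate, which inherits non-negativity from $G \geq 0$; verification of the weak form in Definition \ref{weak_sol} reduces to Liouville's theorem along the measure-preserving Hamiltonian flow together with Green's identity, once the grazing set $\gamma_0$ is shown to be negligible by an Ukai-type non-degeneracy argument.

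I would then form $\rho[\Phi](x):=\int_{\R^3} h[\Phi](x,v)\,\dd v$ and solve the Dirichlet Poisson problem $T\Phi := \eta \Delta_0^{-1}\rho[\Phi]$. The pointwise bound $h \leq \|wG\|_\infty/w$ and integrability of $e^{-\beta|v|^2}$ in $v$ yield $\|\rho\|_\infty \lesssim \beta^{-3/2}\|wG\|_\infty$, and standard elliptic estimates for the Dirichlet Laplacian on the slab $\bar\O$ give $\|\p_{x_3}T\Phi\|_\infty \lesssim \|\rho\|_\infty$. The smallness assumption $\|w\,G_\pm\|_\infty \lesssim g m_\pm$ is precisely calibrated so that $\|\p_{x_3}T\Phi\|_\infty \leq g\min_i m_i/2 \leq g/2$, simultaneously closing the invariance $T(\mathcal{A})\subseteq\mathcal{A}$ and delivering the claimed bound on $\|\p_{x_3}\phi_h\|_\infty$ at any fixed point.

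For the fixed point itself I would apply Schauder--Tychonoff on $\mathcal{A}$ endowed with uniform convergence on compacta. Compactness of $T(\mathcal{A})$ is immediate from uniform $C^{1,\alpha}$ bounds on $T\Phi$ coming from the uniformly bounded source. \emph{The main obstacle} is the continuity of the map $\Phi \mapsto h[\Phi]$: perturbing $\Phi$ perturbs the entire Hamiltonian flow, hence $\tb$ and the trace point $(\X_b,\V_b)$, yet the ODE is driven by a force that is only $L^\infty$. A Gronwall-type stability estimate for characteristics off the grazing set, combined with a proof that the set of $(x,v)$ whose backward trajectory comes tangent to $\p\O$ has zero Lebesgue measure and is stable under $\Phi_n \to \Phi$, would be required. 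This grazing-set analysis, already flagged after \eqref{intro:Dvh} as the source of generic singularities in Vlasov transport, is the technical heart of the argument.
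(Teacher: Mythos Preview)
Your a priori estimates are exactly right and match the paper's: conservation of the steady energy along characteristics gives the pointwise bound $h\le \|wG\|_\infty/w$, integration in $v$ yields the exponential decay $|\rho(x)|\lesssim \beta^{-3/2}\|wG\|_\infty e^{-\beta g x_3}$, and the smallness hypothesis is precisely what closes $\|\nabla_x\Phi\|_\infty\le g/2$. The paper also uses this loop verbatim (Proposition~\ref{lemma:Unif_steady}).

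Where you diverge is in how the solution is produced. You propose Schauder--Tychonoff on the potential space $\mathcal A$, and you correctly flag the sore point: continuity of $\Phi\mapsto h[\Phi]$ when the force is merely $L^\infty$, which forces a grazing-set stability analysis. The paper sidesteps this entirely. It runs a direct iteration $(h^{\ell+1},\rho^\ell,\Phi^\ell)$, proves the same uniform bounds by induction on $\ell$, and then extracts a weak-$*$ limit in $L^\infty$. No continuity of the solution map is needed; instead one passes to the limit in the weak formulation \eqref{weak_form}. The only nonlinear term, $\int h^{\ell+1}\nabla_x\Phi^\ell\cdot\nabla_v\psi$, is handled by splitting into $(\Phi^\ell-\Phi)$ and $(h^{\ell+1}-h)$ pieces and using that the uniform $W^{2,p}$ bound on $\Phi^\ell$ (from \eqref{CZ_infty}) gives \emph{local strong} compactness of $\nabla_x\Phi^\ell$ via Rellich--Kondrachov on the compact support of the test function. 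The weighted bound \eqref{Uest:wh^k} is also used to truncate the $v$-integral when identifying $\rho=\int h\,\dd v$.

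So your route is viable but buys you an extra obligation (the grazing analysis) that the paper's weak-compactness argument simply does not incur. One minor correction: $\O=\T^2\times(0,\infty)$ is a periodic half-space, not a slab, and the relevant elliptic estimate $\|\nabla_x\Phi\|_\infty\lesssim \|\rho\|_\infty(1+1/(\beta g))$ is not entirely off-the-shelf---the paper derives it from an explicit Green's function construction (Lemma~\ref{lemma:G} and Lemma~\ref{lem:rho_to_phi}), which is where the exponential decay of $\rho$ in $x_3$ is actually used.
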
\unhide
 	
 	\begin{theorem}[Construction of Steady Solutions]\label{theo:CS}
 		Suppose the inflow boundary data satisfy 
 		$$\| e^{\beta |v|^2} G \|_{L^\infty (\gamma_-)}+ \|   e^{{\tilde \beta } |v|^2}   \nabla_{x_\parallel, v} G(x,v)\|_{L^\infty (\gamma_-)}< \infty,$$
 		for $\beta ,\tilde{\beta}>0$. For $g>0$, assume that $\beta> \tilde{\beta}> \max\{1,\frac{4}{g}\}$. We also assume that  
 		\Be\label{condition:beta}
 		\mathfrak{C} \frac{\pi^{3/2}}{\beta^{3/2}} \Big(1 + \frac{1}{\beta g}\Big)
 		\| e^{\beta |v|^2} G \|_{L^\infty (\gamma_-)}
 		\leq \frac{g}{2},
 		\Ee
 		\vspace{-10pt}
 		\Be\label{condition:G}
 		\frac{\mathfrak{C}_1}{\beta^{3/2}} 
 		\| e^{\beta |v|^2} G \|_{L^\infty (\gamma_-)} \bigg\{
 		\frac{1}{g \beta }  +
 		\log \bigg(
 		e+ 	\frac{1}{\tilde{\beta}}\Big(1+ \frac{1}{\tilde{\beta}^{1/2}}+ \frac{1}{ g \tilde{\beta}}\Big) 
 		\| e^{\tilde \beta |v|^2} \nabla_{x_\parallel, v}  G \|_{L^\infty (\gamma_-)}
 		\bigg)\bigg\} \leq \frac{\tilde{\beta}g^2}{16}.
 		\Ee
 		Here, $\mathfrak{C}, \mathfrak{C}_1>0$ are the computable constants, which appeared in \eqref{est:phi_C1} and \eqref{est:phi_C2}. For sufficiently small $\e_1>0$, suppose the following bound also hold:
 		\Be\label{cond:stability}
 		\frac{   \mathfrak{C}
 		}{g  \tilde{\beta}^{2} } 
 		\left\{ 1+ \frac{1}{g \tilde\beta }	\right\} 	\left(1+	\frac{1}{g {\tilde\beta}^{1/2}} 
 		\right)
 		\|   e^{\tilde \beta |v|^2}  \nabla_{x_\parallel,v} G    \|_{L^\infty(\gamma_-)} \leq \e_1.
 		\Ee 
 		\hide
 		\Be\label{cond:stability}
 		\frac{ 2^{7/2} \pi^{3/2} \mathfrak{C}
 		}{g  \tilde{\beta}^{2} } 
 		\left\{ 1+ \frac{8}{\tilde\beta g}	\right\} 	\left(1+	\frac{1}{g {\tilde\beta}^{1/2}} 
 		\right)
 		\|   e^{\tilde \beta |v|^2}  \nabla_{x_\parallel,v} G    \|_{L^\infty(\gamma_-)} \leq \e_1.
 		\Ee

 		and $g, \beta>0$ satisfy

 		Suppose all assumptions of Theorem \ref{theo:CS} hold. Moreover we assume that for $\tilde \beta >0$. Furthermore, let $\beta,\tilde \beta >0$ satisfy  
 		Assume that

 		\hide	\Be\label{condition:beta}
 		\beta \geq 
 		(8 \pi^{3/2} \mathfrak{C})^{2/5} 
 		\| e^{\beta |v|^2}  G \|_{L^\infty (\gamma_-)}^{2/5}
 		g^{-4/5}.
 		\Ee
 		\unhide
 		Here, a constant $\mathfrak{C}>0$ appears in \eqref{est:nabla_phi}, which is related to a Green function of the Laplacian in our domain $\O= \T^2 \times (0,\infty)$. 
 		
 		\unhide	
 		
 		Then there exists a unique strong solution $(h, \rho, \Phi)$ to \eqref{VP_h}-\eqref{eqtn:Dphi}. Moreover, we have 
 		\begin{align}
 			\|    h    \|_{L^\infty ( \bar \O \times \R^3)   } &\leq \|    G \|_{L^\infty (\gamma_-)},
 			\label{Uest:h}
 			\\
 			\| w_\beta  h    \|_{L^\infty (  \bar \O \times \R^3)} &\leq \|  e^{\beta |v|^2} G \|_{L^\infty (\gamma_-)},
 			\label{Uest:wh}
 			\\
 			\|  e^{  \beta g x_3 } \rho  \|_{L^\infty (\bar \O)} &
 			\leq   \frac{\pi ^{3/2}}{   \beta^{ 3/2} }	\| w_\beta   h    \|_{L^\infty ( \bar  \O \times \R^3)}
 			\leq \frac{\pi ^{3/2}}{   \beta^{ 3/2} } \|  e^{\beta |v|^2} G \|_{L^\infty(\gamma_-)},
 			\label{Uest:rho}
 			\\
 			\| \nabla_x \Phi \|_{L^\infty (\bar{\O})} &\leq  \frac{g}{2} . \label{Uest:DPhi}
 		\end{align}
 		Furthermore, 
 	\begin{align}
 		e^{  \frac{\tilde \beta g}{2} x_3} | \p_{x_i} \rho   (x) |
 		& 	\lesssim	_{g  , \tilde \beta }
 		\Big(
 		1+  	\delta_{i3}    \mathbf{1}_{|x_3| \leq 1}  |\ln    ( |x_3|^2 + g x_3 )|   
 		\Big) 
 		\|  e^{\tilde \beta |v|^2}\nabla_{x_\parallel, v} G \|_{L^\infty ({\gamma_-})}   ,\label{theo:rho_x} \\
 		\| \nabla_x^2 \Phi  \|_{L^\infty (\bar \O)}
 		&\lesssim _{g, \tilde \beta }  \Big(
 		1+ \ln \big(e + \|  e^{\tilde \beta |v|^2}  \nabla_{x_\parallel, v}  G \|_{L^\infty (\gamma_-)}  \big)
 		\Big) \| e^{ \beta |v|^2}  G \|_{L^\infty (\gamma_-)} , 
 		\label{theo:phi_C2}
 		\\ 
 		\| e^{ \frac{\tilde \beta}{2}|v|^2} e^{  \frac{\tilde \beta g}{2} x_3 }	\nabla_v h  \|_{L^\infty (\O \times \R^3)} &  \lesssim_{g  , \tilde \beta }
 		\big(1+  \| \nabla_x \Phi \|_\infty \big)  \|   e^{\tilde \beta |v|^2}  \nabla_{x_\parallel,v} G    \|_{L^\infty(\gamma_-)}
 		,	 \label{theo:hk_v}\\ 
 		e^{ \frac{\tilde \beta}{2}|v|^2} e^{  \frac{\tilde \beta g}{2} x_3} |	\p_{x_i}h (x,v) | &	\lesssim _{g  , \tilde \beta }   \Big(1+ \frac{\delta_{i3}}{\alpha(x,v)}\Big) 
 		\| e^{\tilde \beta |v|^2} \nabla_{x_\parallel,v}  G \|_{L^\infty(\gamma_-)}
 		.\label{theo:hk_x}
 	\end{align} 
 
 	\hide	\Be\label{Uest:DPhi}
 	\| \nabla_x \Phi \|_{L^\infty( \bar \O)} \leq \frac{g}{2}.
 	\Ee
 	\unhide
 	Here, a kinetic distance for a steady problem is defined as 
 	\Be\label{alpha_k}
 	\alpha
 	(x,v) =\sqrt{ |v_3|^2 +    |x_3|^2  + 2\p_{x_3} \Phi
 		(x_\parallel , 0) x_3 + 2g 
 		x_3 }.
 	\Ee
 	In particular, $\alpha (x,v) =  |v_3|$ when $x \in \p\O$ (i.e. $x_3=0$). 
\end{theorem}


\hide

\medskip

\noindent\textbf{B. Regularity and Uniqueness.}

\subsection{Regularity and Uniqueness}

\begin{theorem}[Regularity Estimate]\label{theo:RS}
{\color{black}Suppose all assumptions of Theorem \ref{theo:CS} hold.} Moreover we assume that $\|   e^{{\tilde \beta } |v|^2}   \nabla_{x_\parallel, v} G(x,v)\|_{L^\infty (\gamma_-)}< \infty$ for $\tilde \beta >0$. Furthermore, let $\beta,\tilde \beta >0$ satisfy  
\Be\label{condition:G}
\begin{split}
	& \| e^{ \beta |v|^2}  G \|_{L^\infty (\gamma_-)}
	\\ &\times  \log \bigg(
	e+ \Big( \frac{ 1+ g^{-1} \| \nabla_x\Phi \|_\infty + g^{-2} \tilde{\beta}^{-1}}{g \tilde{\beta}^2} 
	+  
	\frac{1+ \tilde{\beta}^{1/2} 
		\| \nabla_x \Phi \|_\infty
	}{\tilde{\beta}^{3/2}} 
	\Big)\|  e^{\tilde \beta |v|^2}  \nabla_{x_\parallel, v}  G \|_{L^\infty (\gamma_-)} 
	\bigg) \\
	& \leq  \frac{1}{16} g^2 \tilde \beta  \beta^{3/2}.
\end{split}	\Ee 



\hide

\Be
\frac{16}{g^2} \| \nabla_x^2 \Phi  \|_\infty
\leq 
\tilde \beta   < \beta.
\Ee
\unhide
{\color{black}Then, a solution $h,\rho,$ and $\nabla_x \Phi$ constructed in Theorem \ref{theo:CS} are locally Lipschitz continuous and the following bounds hold: }
\begin{align}
	e^{  \frac{\tilde \beta g}{2} x_3} | \p_{x_i} \rho   (x) |
	& 	\lesssim	_{g  , \tilde \beta }
	\Big(
	1+  	\delta_{i3}    \mathbf{1}_{|x_3| \leq 1}  |\ln    ( |x_3|^2 + g x_3 )|   
	\Big) 
	\|  e^{\tilde \beta |v|^2}\nabla_{x_\parallel, v} G \|_{L^\infty ({\gamma_-})}   ,\label{theo:rho_x} \\
	\| \nabla_x^2 \Phi  \|_{L^\infty (\bar \O)}
	&\lesssim _{g, \tilde \beta }  \Big(
	1+ \ln \big(e + \|  e^{\tilde \beta |v|^2}  \nabla_{x_\parallel, v}  G \|_{L^\infty (\gamma_-)}  \big)
	\Big) \| e^{ \beta |v|^2}  G \|_{L^\infty (\gamma_-)} , 
	\label{theo:phi_C2}
	\\ 
	\| e^{ \frac{\tilde \beta}{2}|v|^2} e^{  \frac{\tilde \beta g}{2} x_3 }	\nabla_v h  \|_{L^\infty (\O \times \R^3)} &  \lesssim_{g  , \tilde \beta }
	\big(1+  \| \nabla_x \Phi \|_\infty \big)  \|   e^{\tilde \beta |v|^2}  \nabla_{x_\parallel,v} G    \|_{L^\infty(\gamma_-)}
	,	 \label{theo:hk_v}\\ 
	e^{ \frac{\tilde \beta}{2}|v|^2} e^{  \frac{\tilde \beta g}{2} x_3} |	\p_{x_i}h (x,v) | &	\lesssim _{g  , \tilde \beta }   \Big(1+ \frac{\delta_{i3}}{\alpha(x,v)}\Big) 
	\| e^{\tilde \beta |v|^2} \nabla_{x_\parallel,v}  G \|_{L^\infty(\gamma_-)}
	.\label{theo:hk_x}
\end{align} 

\end{theorem}
We give a proof at the end of Section \ref{sec:RS}. 

\hide
\begin{corollary}Assume \eqref{condition:G} holds. Then a solution $(h, \rho,  \Phi)$ constructed in Theorem \ref{theo:CS} satisfies \eqref{theo:rho_x}, \eqref{theo:phi_C2}, \eqref{theo:hk_v}, and \eqref{theo:hk_x}.
\end{corollary}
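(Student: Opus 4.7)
This corollary is a direct chaining of the two preceding theorems; no fresh analysis is required, and the plan is purely one of verifying that the output of Theorem~\ref{theo:CS} feeds cleanly into the hypothesis list of Theorem~\ref{theo:RS}.

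First I would invoke Theorem~\ref{theo:CS} to produce a weak solution $(h,\rho,\Phi)$ to \eqref{VP_h}--\eqref{eqtn:Dphi} in the sense of Definition~\ref{weak_sol}, together with the full set of a priori bounds \eqref{Uest:h}--\eqref{Uest:DPhi}. The critical piece of output for the present corollary is the pointwise gradient bound \eqref{Uest:DPhi}, since $\|\nabla_x\Phi\|_{L^\infty(\bar\O)}\leq g/2$ is precisely the structural hypothesis needed to define the kinetic distance $\alpha(x,v)$ in Definition~\ref{def:alpha_k} (i.e.\ to guarantee that the radicand in \eqref{alpha_k} is nonnegative) and is also the standing hypothesis of Theorem~\ref{theo:RS}.

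With this solution in hand, I would next apply Theorem~\ref{theo:RS} directly. The weak-solution property is provided by Theorem~\ref{theo:CS}, the hypothesis \eqref{Uest:DPhi} has just been verified, and finiteness of $\|e^{\tilde\beta|v|^2}\nabla_{x_\parallel,v}G\|_{L^\infty(\gamma_-)}$ is implicit in the corollary's standing assumption \eqref{condition:G} (otherwise the logarithm on the left-hand side of \eqref{condition:G} diverges, which would force the datum to vanish identically). The quantitative smallness condition \eqref{condition:G} is itself the corollary's hypothesis. All four hypotheses of Theorem~\ref{theo:RS} are therefore satisfied, and the estimates \eqref{theo:rho_x}, \eqref{theo:phi_C2}, \eqref{theo:hk_v}, \eqref{theo:hk_x} follow as immediate consequences.

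The only point meriting a brief calculation, and therefore the main (albeit minor) obstacle, is a consistency check: to invoke Theorem~\ref{theo:CS} in the first step one must know that \eqref{condition:beta} also holds. I expect this to be a routine consequence of \eqref{condition:G} together with a mild comparison such as $\tilde\beta\leq\beta$: discarding the logarithmic factor, \eqref{condition:G} yields $\|e^{\beta|v|^2}G\|_{L^\infty(\gamma_-)}\lesssim g^2\tilde\beta\beta^{3/2}$, and combining this with $\tilde\beta\leq\beta$ gives $\|e^{\beta|v|^2}G\|_{L^\infty(\gamma_-)}\lesssim g^2\beta^{5/2}$, which is \eqref{condition:beta} rewritten. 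This elementary algebraic reconciliation is the sole nontrivial verification in the proof.
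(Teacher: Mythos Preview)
Your approach is correct and matches the paper's: the corollary is indeed an immediate chaining of Theorem~\ref{theo:CS} (which supplies the weak solution together with the crucial bound \eqref{Uest:DPhi}) into Theorem~\ref{theo:RS}. The paper gives no separate proof for this corollary, treating it as self-evident.

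One small simplification: your ``main obstacle'' of verifying that \eqref{condition:G} implies \eqref{condition:beta} is unnecessary. The corollary's phrasing, ``a solution $(h,\rho,\Phi)$ constructed in Theorem~\ref{theo:CS},'' already presupposes that such a solution exists, so the hypotheses of Theorem~\ref{theo:CS} (in particular \eqref{condition:beta}) are implicitly in force. The assumption \eqref{condition:G} is added on top of this, not in place of it. With that reading, the proof collapses entirely to checking that \eqref{Uest:DPhi} and \eqref{condition:G} are the hypotheses of Theorem~\ref{theo:RS}, which they are.
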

\unhide
\unhide

\begin{remark}
An exponential decay-in-$(x,v)$ result of \eqref{theo:hk_v} is crucially important in our later proof of an asymptotic stability of a dynamical perturbation. 
\end{remark}



\hide

{\color{black}
\begin{theorem}\label{theo:US1}Suppose all assumptions of Theorem \ref{theo:CS} and Theorem \ref{theo:RS} hold. Then the solution $(h, \rho,   \Phi)$ constructed in Theorem \ref{theo:CS} is unique.
\end{theorem}}
We prove this theorem in Section \ref{sec:US}.

\unhide

\hide
\medskip

\noindent\textbf{C. Asymptotic Stability.} 

\unhide



\hide

\Be\label{form:f}
\begin{split}
\int_{\R^3}|	f  (t,x,v)|   \dd v
&\leq \mathcal{I} (t,x,v) + \mathcal{N}  (t,x,v), 
\end{split}
\Ee where 
\begin{align} 
\mathcal{I} (t,x,v) 	& :=   \mathbf{1}_{t <  \tB (t,x,v)} |	f _{  0} ( \Zz   (0;t,x,v) )| ,\label{form:I} \\
\mathcal{N} (t,x,v) 	&  := \int^t_
{\max\{0, t- \tB (t,x,v) \}} 
|	\nabla	\Psi
(s, \X (s;t,x,v)) || \nabla_v h 
( \Zz(s;t,x,v) )|
\dd s.\label{form:N}
\end{align}


\medskip

\newpage

\noindent\textbf{D. Dynamic Solutions: Construction, Regularity, Stability, and Uniqueness.}

\unhide

\subsection{
Construction of a global-in-time dynamical solution and Asymptotic stability
}  \hide For data, we set ($\beta, \tilde{\beta}>0$)
\begin{align}
M _{\beta} (F_0, G )&:=  
\| \w_{\beta, 0 } F_{0} \|_{L^\infty (\bar\O \times \R^3)}
+  \| e^{\beta |v|^2}
G  \|_{L^\infty  (\gamma_-)} ,\label{set:M} \\
L_{\tilde{}}(F_0, G )&:=  \| \w_{\tilde \beta, 0}   \nabla_{x,v} F_0  \|_{L^\infty (\bar\O \times \R^3)}+ \|  e^{\tilde{\beta} | v|^2}   \nabla_{x_\parallel,v} G   \|_{L^\infty (\gamma_-)}  .\label{set:L}
\end{align} \unhide
\begin{theorem}[Construction of Dynamic Solutions]\label{theo:CD}
Assume a compatibility condition:
\Be\label{CC}
F_0 (x,v) = G(x,v) \   \ \text{on} \  \ (x,v) \in \gamma_-.
\Ee
For $\beta ,\tilde{\beta}, g>0$, assume that $\beta> \tilde{\beta}> \max\{1,\frac{4}{g}\}$. \hide

For $\beta, g>0$, suppose 
\Be\label{choice:g}
M\leq  \frac{\sqrt{\ln 2}}{2^{17/2}  \pi  } g^{3/2} \beta^{5/2}.
\Ee
\hide	Let
\Be\label{choice:M}
M = 4\max\big\{  \| w_\beta  h \|_{L^\infty (\O)}, 
\| \w_{\beta, 0 } F_{0} \|_{L^\infty (\O \times \R^3)}
+  \| e^{\beta |v|^2}
G  \|_{L^\infty  (\O \times \R^3)} \big\}.
\Ee\unhide
In addition, suppose all assumptions in Theorem \ref{theo:CS} hold. \unhide
Suppose $\e>0$ is sufficiently small such that 
\Be \label{bootstrapC1_dyn:thrm}
\frac{1}{\beta^{3/2}} 
\big\{   \| \w_{\beta, 0 } F_{0} \|_{L^\infty (\O \times \R^3)}
+  \| 
e^{\beta|v|^2}
G  \|_{L^\infty (\gamma_-)}\big\}  \leq \e g 
,  
\Ee
\Be
\label{bootstrapC2_dyn:thrm} 
\frac{1}{\tilde{\beta}^3}	\| \w_{\tilde \beta, 0}   \nabla_{x,v} F_0  \|_{L^\infty (\O \times \R^3)} 
+
\frac{1}{\tilde{\beta}^{5/2}} 
\|  e^{\tilde{\beta} | v|^2}   \nabla_{x_\parallel,v} G   \|_{L^\infty (\gamma_-)}  	\leq   \e {g^{1/2}   }  ,
\Ee
\Be	\label{bootstrapC3_dyn:thrm} 
\begin{split}
&	\frac{1
}{\beta^{2}} 
\big\{   \| \w_{\beta, 0 } F_{0} \|_{L^\infty (\O \times \R^3)}
+  \| 
e^{\beta|v|^2}
G  \|_{L^\infty (\gamma_-)}\big\}  \\
& \times 
\log \bigg(
e+ 
\frac{1}{\tilde{\beta}^{3/2}} \| \w_{\tilde \beta, 0}   \nabla_{x,v} F_0  \|_{L^\infty (\O \times \R^3)}
+	\frac{1}{\tilde{\beta}}
\| e^{\tilde \beta |v|^2} \nabla_{x_\parallel, v}  G \|_{L^\infty (\gamma_-)}
\bigg)
\leq \e  g  .
\end{split}
\Ee 
Then there exists a unique global-in-time strong solution $(f, \varrho, \Psi)$ to \eqref{eqtn:f}-\eqref{Poisson_f}. Moreover, for all $(t,x,v) \in \R_+ \times \bar{\O} \times \R^3$,
\Be\label{Uest:wf:dyn}
\begin{split} 
e^{  \frac{
		\beta}{2} ( |v|^2 + g x_3 ) }
|	f  (t,x,v) |
+\frac{1}{\beta^{ 3/2}}     e^{  \frac{
		\beta}{2} g x_3}
|	\varrho    (t,x) | 
\lesssim 
\hide	e^{ \frac{
		C	\beta}{ (\tilde{\beta}g)^{1/2}}
} \unhide
\| \w_{\beta, 0 } F_{0} \|_{L^\infty (\O \times \R^3)}
+  \| 
e^{\beta|v|^2}
G  \|_{L^\infty (\gamma_-)}
,
\end{split}	\Ee
\Be
\begin{split}\label{est_final:F_v:dyn1}
e^{\frac{\tilde \beta}{4} (|v|^2 + g x_3)} |  \nabla_v F(t,x,v) |   
\lesssim  
\| \w_{\tilde \beta, 0}   \nabla_{x,v} F_0  \|_{L^\infty (\O \times \R^3)}  
+\Big(1+ \frac{1}{g\tilde \beta ^{1/2}} 
\Big)  
\|  e^{\tilde{\beta} | v|^2}   \nabla_{x_\parallel,v} G   \|_{L^\infty (\gamma_-)},
\end{split}\Ee 
\Be
\label{est_final:F_x:dyn} 
\begin{split}
&  e^{\frac{\tilde \beta}{4} (|v|^2 + g x_3)} |\p_{x_i} F(t,x,v)|  
\lesssim 
\| \w_{\tilde \beta, 0}   \nabla_{x,v} F_0  \|_{L^\infty (\O \times \R^3)} \\
& \ \  \ \ \ \ \ \ \ \ \ \ \ \ \ \ \ \ \ \ \ \  \  \ 
+ \left[ 
\left(1+ \frac{1}{g \tilde{\beta}^{1/2}}\right)
+ \left(1+ \frac{1}{\tilde{\beta}^{1/2}}\right) \frac{\delta_{i3}}{\alpha_{F} (t,x,v)}
\right]
\|   e^{\tilde \beta |v|^2}  \nabla_{x_\parallel,v} G    \|_{L^\infty(\gamma_-)} ,
\end{split}
\Ee 
\Be	\label{est_final:phi_F1}
|\nabla_x \phi_{F^{\ell+1}}(t,x) |  \lesssim   \frac{1}{\beta^{3/2}} \Big(1 + \frac{1}{\beta g}\Big)
\big\{   \| \w_{\beta, 0 } F_{0} \|_{L^\infty (\O \times \R^3)}
+  \| 
e^{\beta|v|^2}
G  \|_{L^\infty (\gamma_-)}\big\}   \leq \frac{g}{2}
,  
\Ee
and 
{\small\Be
\begin{split}
	\label{est_final:phi_F}
	&   \sup_{t \geq 0 }\|\nabla_x^2  \phi_{F }(t) \|_{L^\infty(\O)}
	\leq 
	\frac{\mathfrak{C}_1}{\beta^{3/2}} 
	\big\{   \| \w_{\beta, 0 } F_{0} \|_{L^\infty (\O \times \R^3)}
	+  \| 
	e^{\beta|v|^2}
	G  \|_{L^\infty (\gamma_-)}\big\}  \\
	& \times \bigg\{
	\frac{1}{g \beta }  +
	\log \bigg(
	e+ 
	\frac{1}{\tilde{\beta}^{3/2}} \| \w_{\tilde \beta, 0}   \nabla_{x,v} F_0  \|_{L^\infty (\O \times \R^3)}
	+	\frac{1}{\tilde{\beta}}\Big(1+ \frac{1}{\tilde{\beta}^{1/2}}+ \frac{1}{ g \tilde{\beta}}\Big) 
	\| e^{\tilde \beta |v|^2} \nabla_{x_\parallel, v}  G \|_{L^\infty (\gamma_-)}
	\bigg)\bigg\}.
\end{split}	\Ee}\hide

\hide\Be
\sup_{t\geq 0}	\|	e^{  \frac{
	\beta}{2}|v|^2}
e^{  \frac{
	\beta}{2} g x_3} f  (t)\|_{L^\infty ( \O \times \R^3)}  
\leq 4M
,\label{Uest:wf:dyn}
\Ee\unhide
\begin{align} 
\sup_{ t \geq 0}	\| \nabla_x \Psi (t ) \|_{L^\infty(\O  )}  + \| \nabla_x \Phi \|_{\infty}
&\leq   \frac{g }{2},\label{Bootstrap:dyn}\\
e^{\frac{64
		\beta}{g}
	\sup_{ t \geq 0}	\| \Delta_0^{-1} (\nabla\cdot b )(t, \cdot ) \|_{L^\infty (  \O)}^2
}  &\leq 2.\label{est:e^b:dyn} 
\end{align}\hide
Here, $b(t,x)$ is defined in \eqref{def:flux}.

Then the solution constructed in Theorem \ref{theo:CD} satisfies that, for any $T>0$ and $0 \leq t \leq T$, 
\Be
\begin{split}\label{est_final:F_v:dyn1}
\| e^{\frac{\tilde \beta}{4} (|v|^2 + g x_3)} \nabla_v F (t)\|_{L^\infty(\O \times \R^3)}  
\lesssim  
\| \w_{\tilde \beta, 0}   \nabla_{x,v} F_0  \|_{L^\infty (\O \times \R^3)} 
+  \|  e^{\tilde{\beta} | v|^2}   \nabla_{x_\parallel,v} G   \|_{L^\infty (\gamma_-)},
\end{split}\Ee 
\Be
\label{est_final:F_x:dyn} 
\begin{split}
e^{\frac{\tilde \beta}{4} (|v|^2 + g x_3)} |\nabla_x F (t,x,v)|  
& \lesssim  
\| \w_{\tilde \beta, 0}   \nabla_{x,v} F_0  \|_{L^\infty (\O \times \R^3)} \\
&
+ \Big( 1+\frac{1}{\alpha_F (t,x,v)} \Big) \|  e^{\tilde{\beta} | v|^2}   \nabla_{x_\parallel,v} G   \|_{L^\infty (\gamma_-)} ,
\end{split}
\Ee  
\Be
\begin{split}\label{est_final:phi_F}
\| \nabla_x^2 \phi_F (t) \|_{L^\infty (\bar \O)} + 
\| \p_t  \nabla_x  \phi_F (t) \|_{L^\infty (\bar \O)} 
\lesssim 	\| \w_{\tilde \beta, 0}   \nabla_{x,v} F_0  \|_{L^\infty (\O \times \R^3)} 
+  \|  e^{\tilde{\beta} | v|^2}   \nabla_{x_\parallel,v} G   \|_{L^\infty (\gamma_-)}.
\end{split}
\Ee\unhide
\unhide
Here, $\alpha_F(t,x,v)= \big[ |v_3|^2 +    |x_3|^2  + 2\p_{x_3} \phi_F
(t,x_\parallel , 0) x_3 + 2g 
x_3 \big]^{1/2} $ is a kinetic distance for a dynamical problem.

\end{theorem}
As a direct consequence of Theorem \ref{theo:AS}, Theorem \ref{theo:CS} and Theorem \ref{theo:CD}, we conclude the following dynamical asymptotic stability.
\begin{theorem}[Asymptotic Stability]\label{cor_stability}
Suppose all conditions in Theorem \ref{theo:CS} and Theorem \ref{theo:CD} hold. Then the dynamical solution $(F(t), \phi_F(t)) \rightarrow (h, \Phi)$ converges exponentially fast to the steady solution of Theorem \ref{theo:CD} as in \eqref{decay:varrho}-\eqref{decay_f}.
\end{theorem}
\hide
Next we establish the uniqueness theory, by proving the nonlinear stability estimate (Theorem \ref{theo:UD}). For that we prove a regularity estimate with regular data: for $\tilde \beta \geq 2 \beta >0$,
\begin{align}
L&:= \max\big\{
\| \w_{\tilde \beta, 0}   \nabla_{x,v} F_0  \|_{L^\infty (\O \times \R^3)} , 
\|  e^{\tilde{\beta} | v|^2}   \nabla_{x_\parallel,v} G   \|_{L^\infty (\gamma_-)} 
\big\}.\label{set:L}
\end{align}

\begin{theorem}[Regularity Estimate]
Assume all the conditions in Theorem \ref{theo:CD}.

Choose $g, \beta ,\tilde \beta$ large enough so that 
\Be\label{choice_ML}
M \ll 
\min \{g \tilde \beta^{1/2} \beta^{3/2}, g \beta^{5/2}\} \  \ \text{and} \ \ 
L \ll  \min \{\tilde\beta^{3/2} , g^2 \tilde \beta^{5/2}\}.
\Ee

\end{theorem}
We will prove the theorem at the end of Section \ref{sec:RD}.

Now we move on the stability. The difference of solutions $(F_1, \nabla_x \phi_{F_1})$ and $(F_2, \nabla_x \phi_{F_2})$ to  \eqref{VP_F}, \eqref{Poisson_F}, \eqref{bdry:F}, \eqref{Dbc:F}  actually solves
\Be\label{eqtn:F1-F2}
\begin{split}
&	\big[
\p_t + v\cdot \nabla_x - \nabla_x (\phi_{F_1} + g x_3) \cdot \nabla_v 
\big] (\w_1 (F_1-F_2))\\
&= -2 \beta  \Delta_0^{-1} (\nabla_x \cdot b_1) \w_1 (F_1-F_2)
+ \nabla_x (\phi_{F_1} - \phi_{F_2}) \cdot \w_1 \nabla_v F_2,
\end{split}
\Ee
where we used the continuity equation \eqref{cont_eqtn} for $\varrho_1(t,x) = \int_{\R^3} (F_1(t,x,v) - h(x,v) ) \dd v$, $b_1 (t,x) = \int_{\R^3} v(F_1(t,x,v) - h(x,v)) \dd v $, and the evolution of total energy:
\Be\label{dEC1}
\begin{split}
& 		\big[
\p_t + v\cdot \nabla_x - \nabla_x (\phi_{F_1} + g x_3) \cdot \nabla_v 
\big]  \left(
|v|^2 + 2 \phi_{F_1} (t,x) + 2 g x_3
\right)   
= 
2 \p_t \phi_{F_1} (t,x) \\
& = 2  \Delta_0^{-1} \p_t \varrho_1 (t,x)  = -2  \Delta_0^{-1} ( \nabla_x \cdot b_1)  (t,x).
\end{split}
\Ee
We will give a full proof at the end of Section \ref{Sec:SU}.

Equipped with \eqref{Bootstrap:dyn} and Theorem \ref{theo:RD}, it is straightforward to see the bound of the second line of \eqref{est:F1-2}.
\begin{corollary}[Uniqueness Theorem] Assume all conditions in Theorem \ref{theo:CD} and Theorem \ref{theo:RD}. Then the solution constructed in Theorem \ref{theo:CD} is unique in a class of Theorem \ref{theo:RD}. 
\end{corollary}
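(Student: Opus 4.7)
The plan is to apply the stability estimate \eqref{est:F1-2} of Theorem \ref{theo:UD} directly. Let $F_1$ and $F_2$ be two solutions in the class of Theorem \ref{theo:RD}, both starting from the same initial datum $F_0$ and satisfying the same inflow condition $G$ on $\gamma_-$. Since $F_1|_{t=0} = F_2|_{t=0} = F_0$, we have $\w_1(F_1 - F_2)(0) \equiv 0$, so the right-hand side of \eqref{est:F1-2} is zero provided the Gronwall-type prefactor is finite on every bounded interval $[0,T]$. The conclusion $F_1 = F_2$ a.e.\ then follows, and $\nabla_x \phi_{F_1} = \nabla_x \phi_{F_2}$ is immediate from uniqueness for \eqref{Poisson_F} with zero Dirichlet data.

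It therefore suffices to show both prefactor terms are finite. The first, $\sup_{s \in [0,t]} \|\Delta_0^{-1}(\nabla_x \cdot b_1(s))\|_{L^\infty(\O)}$, is already controlled by Theorem \ref{theo:CD}; indeed \eqref{est:e^b:dyn} provides a uniform bound. The second, $\sup_{s \in [0,t]} \big\| \|\w_1 \nabla_v F_2(s)\|_{L^p(\R^3)} \big\|_{L^3(\O)}$, is the substantive one and demands using the regularity \eqref{est_final:F_v:dyn} of Theorem \ref{theo:RD}.

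Here the plan is to select the free parameter $\beta$ in Theorem \ref{theo:UD} small relative to $\tilde \beta$ (for instance $\beta \leq \tilde\beta/16$). Using \eqref{Bootstrap:dyn} to bound $|\phi_{F_1}(t,x)| \lesssim g\,x_3$ on $\bar\O$ yields $\w_1(t,x,v) \lesssim e^{2\beta(|v|^2 + g x_3)}$, which combined with the pointwise bound $|\nabla_v F_2(s,x,v)| \lesssim L\, e^{-\frac{\tilde\beta}{4}(|v|^2 + g x_3)}$ from \eqref{est_final:F_v:dyn} gives a Gaussian in $v$ that is integrable for any $1<p<\infty$ with a remaining prefactor in $x$ of the form $L\, e^{-c\,\tilde\beta g x_3}$. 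Integrating this over $\T^2 \times (0,\infty)$ with $L^3$ norm yields a finite quantity uniformly in $s$.

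The main obstacle is the compatibility of parameter choices: the large $\tilde\beta$ required by \eqref{choice_ML} for Theorem \ref{theo:RD} must still leave room for an admissible $\beta$ in \eqref{est:F1-2} producing Gaussian integrability of $\w_1 \nabla_v F_2$, while the stability statement itself is valid for any $\beta>0$ so such a choice is always available. A secondary subtlety is the identification of the weak solution in Definition \ref{weak_sol} with the mild/Lagrangian solution to which Theorem \ref{theo:UD} applies; this is exactly the content of Remark \ref{remark:W=L}, which is justified here by the local Lipschitz regularity of $\nabla_x \phi_{F_i}$ delivered by \eqref{est_final:phi_F} of Theorem \ref{theo:RD}, so that the characteristic ODE is uniquely solvable.
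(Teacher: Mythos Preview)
Your proposal is correct and follows exactly the approach the paper intends: the corollary is placed immediately after Theorem~\ref{theo:UD} without a written proof, so it is meant to be read as a direct application of the stability estimate \eqref{est:F1-2} with vanishing initial difference, together with the finiteness of the Gronwall factors supplied by \eqref{est:e^b:dyn} and \eqref{est_final:F_v:dyn}. Your handling of the two prefactor terms and your observation that the weight parameter $\beta$ in Theorem~\ref{theo:UD} is free (and may be taken small relative to $\tilde\beta$) are exactly what is needed; the minor arithmetic in the bound $\w_1 \lesssim e^{2\beta(|v|^2+gx_3)}$ (one actually gets $e^{\beta(|v|^2+3gx_3)}$ from $|\phi_{F_1}|\le \tfrac{g}{2}x_3$) does not affect the argument.
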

\unhide

\hide
Note that 
\Be
[ v\cdot \nabla_x - \nabla_x (\phi_h + g m_{\pm } x_3 ) \cdot \nabla_v] (w^h_{\pm} h_\pm) =0.
\Ee
If $t_{\b,\pm}(x,v)<\infty$ for each $(x,v) \in \O \times \R^3$ (no need to have a uniform upper bound), then 
\Be
w_\pm h_\pm (x,v) = w_\pm G_\pm (x_{\mathbf{b},\pm}, v_{\mathbf{b},\pm})
\Ee
As long as $\| w_\pm G_{\pm} \|_{L^\infty (\gamma_-)}<\infty $, we can derive that 
\Be
\| w_\pm h_\pm \|_{L^\infty (\O \times \R^3)} \leq \| w_\pm G_{\pm} \|_{L^\infty (\gamma_-)}.
\Ee\unhide

\hide

\bigskip

\noindent\textbf{E. Key Stability mechanism of Gravity and Gaussian bound\footnote{
The notations used here (including $f,h,\varrho, \tB,\X, \V $) are all temporal. These temporal definitions should be nullified outside the introduction.}.}

\medskip

We shall illustrate how a strong gravity may stabilize the Vlasov system for a certain class of steady solutions. For simplicity, we pick a simplified toy model of \eqref{eqtn:f}: for given $E=E(t,x)$, 
\Be\label{simple:h}
\begin{split} 
\p_t f + v\cdot \nabla_x f - g \p_{v_3} f = E\cdot \nabla_v h  \ \ \text{in} \ \R_+ \times  \O \times \R^3    \ \ &\text{in} \ \R_+ \times  \O \times \R^3,
\end{split}
\Ee
with the boundary condition \eqref{bdry:f}.

The characteristics of \eqref{simple:h} is explicitly given by $\dot{\X}= \V$ and $\dot{\V} = - g \mathbf{e}_3$; or $\X_i(s;t,x,v) = x_i- (t-s) v_i - g \delta_{i3} \frac{(t-s)^2}{2} $ and $\V_i(s;t,x,v) = v_i+ g\delta_{i3}  (t-s)$. The unique non-negative time lapse $\tB(t,x,v)\geq 0$ satisfying $\X_3(t-\tB(t,x,v);t,x,v)=0$ is given by $\tB (t,x,v)= \frac{1}{g} \big( \sqrt{|v_3|^2 + 2 g x_3} - v_3\big)$. 

Then the local density \eqref{def:varrho} is bounded as 
\Be\notag
|\varrho(t,x)| \leq  \int_{\R^3}\int^t_{\max \{0, t-\tB(t,x,v) \}}   | E(s, \X(s;t,x,v))  ||  \nabla_v h(s, \X(s;t,x,v), \V(s;t,x,v))| \dd s \dd v .
\Ee

Let us try to impose a crucial condition, namely $\nabla_v h(x,v)$ has some Gaussian upper bound with respect to the total energy: $|\nabla_v h(x,v)| \lesssim e^{\beta (|v|^2 + 2g x_3)}$. Using the fact that the energy is conserved along the characteristics, we can derive that $|\varrho(t,x)|$ is bounded above by 
\Be\notag
\begin{split}
e^{-\lambda t }\left( \int_{\R^3}  \frac{ \tB(t,x,v) e^{\lambda  \tB(t,x,v)}  }{e^{\beta(|v|^2 + g x_3) }} \dd v \right)
\sup_{(s,x)
}e^{\lambda s}  |  E(s,x)|
\sup_{(x,v)
}e^{\beta (|v|^2 + 2 gx_3)}| \nabla_v h(x,v)|.
\end{split}\Ee 
Using the form of $\tB$, we can bound the term in the parenthesis above by $
O\Big(\frac{1}{g \sqrt{\beta}}\Big)
\int_{\R^3} e^{- \frac{\beta}{2} (|v|^2 + 2g x_3)} \dd v \leq O\Big(\frac{1}{g  \beta^2}\Big).$ Therefore we might be able to conclude 
\Be\notag
\sup_{t} e^{\lambda t} \|\varrho(t)\|_{L^\infty(\O)} \leq   O\Big(\frac{1}{g  \beta^2}\Big)
\| e^{\beta (|v|^2 + 2 gx_3)} \nabla_v h \|_{L^\infty(\O \times \R^3)} \sup_{t} e^{\lambda t} \| E(t) \|_ {L^\infty(\O)}.
\Ee 

On the other hand, for the nonlinear problem \eqref{eqtn:f}, the Poisson equation \eqref{Poisson_f} of $E= \nabla_x \Psi$ suggests a control of $ E(t ,x)$ pointwisely (at least locally) by $\|\varrho(t)\|_{L^\infty(\O)}$. In the end, as far as we choose $g \beta^2$ large enough, depending on our possible control of $\| e^{\beta (|v|^2 + 2 gx_3)} \nabla_v h \|_{L^\infty(\O \times \R^3)} $, we find ``a small factor'' in the nonlinear contribution.

\medskip

\noindent\textbf{E. About this work and related results.}

\newpage

\noindent\textbf{F. Structure of the rest of paper.} 

\vspace{-0pt}

\renewcommand\contentsname{}

\begingroup
\let\clearpage\relax
\vspace{-1cm} 
\tableofcontents
\endgroup

\vspace{-10pt}

\unhide


\textbf{Structure of the paper:} In Section 3, we construct a Green function of the Poisson equation \eqref{eqtn:Dphi} in $\O$; in Section 4, we prove the asymptotic stability criterion (Theorem \ref{theo:AS}) using the Lagrangian proof; in Section 5, we establish the steady theorem (Theorem \ref{theo:CS}); and in Section 6, we prove the dynamic theorem (Theorem \ref{theo:CD}). 

\vspace{-15pt}

\Be\label{notation}
\text{\textbf{Notation:} We will use an abbreviation of $\|  g \|_{L^\infty_{t,x}}= \sup_{\tau \in [ 0, t]} \| g(t) \|_{L^\infty (\O)}$ for some $t \geq 0$.}
\Ee

\section{Green function}\label{Sec:Green} In this section, we construct and study the Green's function $G(x,y)$ of the following Poisson equation in the horizontally-periodic 3D half-space:
\Be
\begin{split}\label{Dphi}
\Delta \phi (x) = \rho(x)  \ \ &\text{in} \ x \in \O : = \T^2 \times [0, \infty),\\
\phi (x) = 0  \ \  &\text{on} \ x \in \p \O : =  \T^2 \times \{0\},
\end{split}
\Ee
such that $\phi$ solving \eqref{Dphi} takes the form of  
\Be\label{phi_rho}
\phi (x)  = \int_{\T^2 \times [0, \infty)} G(x,y) \rho (y ) \dd y .
\Ee
We will construct $G(x,y)$ and prove its properties in Proposition \ref{lemma:G}.

The 2D Green's function in $\T \times [0,\infty)$ has an explicit formula. It is so-called the Green's function for the one-dimensional grating in $\R^2$ (see \cite{AmmariL}). However, there seems no known explicit form in 3D.  In this section, we utilize a classical argument of multiple Fourier series (e.g. Theorem 2.17 in \cite{SW}) to study the 3D problem.

\begin{theorem}\label{lemma:G} 
The Green's function for \eqref{Dphi} takes a form of  
\Be\label{GreenF}
G(x,y) =
\frac{|x_3- y_3|}{2} - \frac{|x_3+ y_3|}{2} 
- \mathcal{G}  (x,y), \ \ \text{for} \ x,y \in\O: = \T^2 \times [0, \infty).
\Ee

When $|x_3 - y_3| \geq 1$ and $x, y \in \O$, $\mathcal{G}    (x,y)$ satisfies  
\Be	\label{b0:1}
| \nabla_{x_\parallel, y_\parallel}^k\p_{x_3}^i \p_{y_3}^j  \mathcal{G}    ( x_\parallel , x_3, y_\parallel , y_3)|
\lesssim e^{-|x_3 - y_3|} \ \ \text{for $0 \leq k, i+j \leq 2$ and $|x_3 - y_3| \geq 1$}.
\Ee
When $|x_3 - y_3| \leq 1$ and $x, y \in \O$, 
\begin{align}
\mathcal{G}  (x,y) : = \frac{c_2}{|x-y|} - \frac{c_2}{|\tilde{x} - y|} + \mathcal{G}   _0(x,y)   \ \ \text{for $|x_3 - y_3| \leq 1$,} \label{b0:3}  \\
\text{$\mathcal{G}  _0 (\cdot, x_3, \cdot, y_3 ), \partial_{x_3} \mathcal{G}  _0  (\cdot, x_3, \cdot, y_3 ),  \partial_{y_3} \mathcal{G}  _0  (\cdot, x_3, \cdot, y_3 ) \in C^\infty (\T^2 \times \T^2)$ for $|x_3 - y_3| \leq 1$.}\label{b0:4}
\end{align}
Here, $\tilde x  = (x_1, x_2, - x_3)$, and $c_2 := \frac{1}{2\pi^{3/2}} \Gamma( 3/2)$ with the Gamma function $\Gamma$. Moreover, 
\Be
\begin{split}
\p_{x_3}^2 \mathcal{G}  _0 (x,y ) = \delta_0 (x_3- y_3)  \mathcal{G}  _1 (x,y) + \mathcal{G}  _2 (x,y)
\ \ \text{  for $|x_3 - y_3| \leq 1$}
,
\label{b0:2}
\\
\text{$\mathcal{G}  _1 (x_3, \cdot, y_3, \cdot ), \mathcal{G}  _2 (x_3, \cdot, y_3, \cdot ) \in C^\infty (\T^2 \times \T^2)$ for $|x_3 - y_3| \leq 1$}.
\end{split}
\Ee
\hide	where .

\begin{align} 
\bullet& \  \  | \nabla_{x_\parallel, y_\parallel}^k\p_{x_3}^i \p_{y_3}^j  b ( x_\parallel , x_3, y_\parallel , y_3)|
\lesssim e^{-|x_3 - y_3|} \ \ \text{for $0 \leq k, i+j \leq 2$ and $|x_3 - y_3| \geq 1$}
;
\label{b0:1}\\
\bullet &  \ \ b(x,y) : = \frac{c_2}{|x-y|} - \frac{c_2}{|\tilde{x} - y|} + d(x,y) + r(x,y) \ \ \text{for $|x_3 - y_3| \leq 1$}
\\
& 
\\
\bullet& \   \   \p_{x_3}^2 b (x,y) = \delta_0(x_3- y_3) b_2 (x-y) + b_3 (x,y)\notag \\ 
& 	 \ \ 
\text{where}  \  b_2 (\cdot , x_3 ) 	\in C^\infty (\T^2  ), \  b_3 (\cdot , x_3, \cdot, y_3)
\in C^\infty (\T^2 \times \T^2);
\label{b0:2}\\
\bullet& \ \ 
\| b_0(\cdot, x_3, \cdot, y_3)\|_{C^2(\T^2 \times \T^2)} \lesssim e^{-|x_3 -y_3|}  , \   \| \p_{x_3}b_0(\cdot, x_3, \cdot, y_3)\|_{C^2(\T^2 \times \T^2)} \lesssim e^{-|x_3 -y_3|}  , \notag\\
& \ \   \| \p_{x_3}^2b_0(\cdot, x_3, \cdot, y_3)\|_{C^2(\T^2 \times \T^2)}  \lesssim e^{-|x_3 -y_3|}  
\text{ for } \   | x_3 - y_3|\geq 1 
;\label{b0:3}\\
\bullet& \ \ 
\| b_0(\cdot, x_3, \cdot , y_3)\| _{C^2(\T^2 \times \T^2)}  \lesssim 1  ,    \  \| \p_{x_3}b_0(\cdot, x_3, \cdot , y_3)\|_{C^2(\T^2 \times \T^2)}  \lesssim 1, \notag\\
& \ \   \| \p_{x_3}^2b_2(\cdot, x_3)\|_{C^2(\T^2)}+ \| \p_{x_3}^2b_3(\cdot, x_3, \cdot , y_3)\|_{C^2(\T^2 \times \T^2)} \lesssim 1.
\label{b0:4}
\end{align}
\unhide

\end{theorem}

Once we have the following lemma, the proof of Theorem \ref{lemma:G} is straightforward. 
\begin{lemma}\label{lemma:tildeG} 
We construct a function in $ \T^2 \times \R $, which solves the following equation 
\be \label{g1}
\Delta \tilde G (x) = \sum\limits_{m  \in \Z^2} \delta_0 (x + (m,  0))
= \sum_{m_1, m_2 \in \Z} \delta_0 (x_\parallel+ m, x_3) 
\ \ \text{in $ \T^2 \times \R $.}
\ee
This Green's function takes the form of 
\be \label{G2}
\begin{split}
\tilde  G (x_\parallel, x_3)  
= \frac{1}{2} |x_3| + c - \tilde{\mathcal{G}  }(x_\parallel, x_3) .
\end{split}
\ee
Here, $\tilde{\mathcal{G}  }$ is defined in \eqref{G1} and $c$ is an arbitrary constant. When $|x_3| \geq 1$, $\tilde{\mathcal{G}  }$ satisfies that 
\Be\label{est1:tildeB}
| \nabla_{x_\parallel}^n  \p_{x_3}^i \tilde{\mathcal{G}  } (x_\parallel, x_3) | \lesssim e^{- |x_3|}
\ \ \text{for $|x_3| \geq 1$, and $n \in \mathbb{N}$, $i=0,1,2.$}
\Ee
When $|x_3| \leq 1$, the function can be decomposed as 
\Be\begin{split} \label{dec1:tildeb}
\tilde{\mathcal{G}  }(x )  = \frac{c_2}{(|x_\parallel|^2 + |x_3|^2)^{1/2}} + \tilde{d}  (x ) + \tilde{r}(x ), \ \ \text{for} \  |x_3| \leq 1.
\end{split}\Ee
Then $\tilde{d}$ and $\tilde{r}$ satisfy 
\begin{align}
\text{$ \tilde{d}(\cdot , x_3), \p_{x_3}  \tilde{d} (\cdot, x_3) \in C^\infty(\R^2)$;}\label{prop1:b1} \\
\text{$\p_{x_3}^2  \tilde{d} ( \cdot, x_3) = \delta_0 (x_3)  \tilde{d}_1 (\cdot, x_3)+  \tilde{d}_2 (\cdot, x_3)$, $ \tilde{d}_1(\cdot , x_3),   \tilde{d}_2(\cdot, x_3) \in C^\infty(\R^2)$
	;}\label{prop2:b1}\\
\text{$ \sum_{j=0,1} | \nabla_{x_\parallel}^n \p_{x_3}^j \tilde{d}(x_\parallel, x_3)| 
	+  \sum_{i=1,2} | \nabla_{x_\parallel}^n   \tilde{d}_i(x_\parallel, x_3)|  \lesssim_{n,N} (1+  |x_\parallel|)^{-N} $ for $|x_3| \leq 1$,}	\label{prop3:b1}
\end{align}
and 
\Be \label{est:tilder}
\begin{split}
\nabla_{x_\parallel}^n \p_{x_3}^2\tilde{r} (x_\parallel, x_3) = \delta_0 (x_3) \tilde{r}_1 (x_\parallel, x_3)  + \tilde{r}_2 (x_\parallel, x_3) ,&\\
\sum_{j=0,1}| \nabla_{x_\parallel}^n \p_{x_3}^j\tilde{r} (x_\parallel, x_3)| + 
\sum_{i=1,2}| \nabla_{x_\parallel}^n  \tilde{r}_i (x_\parallel, x_3)|  \lesssim_{n} e^{-|x_3|}.&
\end{split}
\Ee

\end{lemma}

\begin{proof}[\textbf{Proof of Theorem \ref{lemma:G}}]
With the Green's function $ \tilde G (x) $ to \eqref{g1} in our hand, it is straightforward to construct the Green's function of \eqref{Dphi} by setting
\Be\label{GtildeG}
G(x,y) =   \tilde G (x-y) -   \tilde G (\tilde x-y) , 
\Ee
where $\tilde x  = (x_1, x_2, - x_3)$. We can easily show \eqref{b0:1}-\eqref{b0:2} from Lemma \ref{lemma:tildeG}.
\end{proof}

We postpone the proof of Theorem \ref{lemma:G} and first study some elliptic estimates.

\begin{lemma}\label{lem:rho_to_phi}
Suppose $| \rho  (x) |  \leq 
A e^{-B x_3}
$ for $A>0$ and $B>1$. 

Then $\phi(x)$ in \eqref{phi_rho} satisfies that, for some $\mathfrak{C}>0$, 
\begin{equation}\label{est:nabla_phi}
|\p_{x_j}   \phi (x)|   \leq \mathfrak{C} A \left\{
e^{- B \min\{0, x_3 -1 \}}  +  \frac{ e^{- {B}  x_3}}{B}   +  \min\left\{
B^{-1},   \frac{ \mathbf{1}_{B>1} }{B-1} e^{- x_3} 
\right\} 
\right\}
\ \ \text{for  } \  x \in \T^2 \times [0, \infty).
\end{equation}

Moreover, for any $\delta>0$, $\phi(x)$ in \eqref{phi_rho} satisfies that
\Be
\begin{split}
\| \nabla_x^2 \phi \|_{L^\infty(\O)} &\lesssim_\delta  \|\rho \|_{L^\infty(\O)} \log (e+ [\rho ]_{C^{0,\delta}(\O)})
+ AB^{-1}
.\label{est:nabla^2phi}
\end{split}
\Ee  \hide

Moreover, for $1<p < \infty$, 
\Be\label{CZ_infty}
\| \nabla_x^2 \phi \|_{L^p (\O)} \lesssim _p \frac{A}{B ^{1/p}}.
\Ee

\unhide
\end{lemma}

\begin{proof}\textit{Proof of \eqref{est:nabla_phi}.}
From \eqref{GreenF} and \eqref{phi_rho}, we have 
\Be\label{est:p3phi}
\begin{split}
|	\p_{x_j} \phi  (x)| &= | (\p_{x_j} G* \rho )(x)| \leq I_1+ I_2 + I_3 + I_4  \\
&=  \delta_{j3}  \int_0^\infty \int_{\T^2} \left| \p_{x_3} \frac{|x_3 - y_3| }{2}- \p_{x_3} \frac{|x_3 + y_3|}{2}  \right| 
A e^{- B y_3}   \dd y_\parallel \dd y_3\\
& \ \ + \int_0^ {\infty}\int_{\T^2}  
\mathbf{1}_{|x_3 - y_3 | \leq 1 }
\left|  	\p_{x_j}\frac{c_2}{|x-y|}   - 	\p_{x_j}  \frac{c_2}{|\tilde{x}-y|} 
\right|
A e^{- B y_3}
\dd y_\parallel \dd y_3\\
& \ \ 
+ \int_0^\infty  \int_{\T^2}  
\mathbf{1}_{|x_3- y_3| \geq 1} |\p_{x_j} b (x, y) | 	A e^{- B y_3} 
\dd y_\parallel \dd y_3
\\
& \ \ + \int_0^ {\infty}\int_{\T^2}  
\mathbf{1}_{|x_3 - y_3 | \leq 1 }| \p_{x_j} b_0 (x,y)|
A e^{- B y_3}
\dd y_\parallel \dd y_3
.\end{split}
\Ee
The first term can be easily bounded as 
\Be
\begin{split}\label{est:I_1}
I_1   
& =\frac{ \delta_{j3} }{2} \int_{\T^2} \int_0^\infty 
\Big|
\mathbf{1}_{x_3> y_3} - \mathbf{1}_{x_3 < y_3}- \mathbf{1}_{x_3> - y_3} + \mathbf{1}_{x_3< - y_3}
\Big|
A e^{- B y_3}   \dd y_3 \dd y_\parallel \\
& =  \delta_{j3}  \int_{\T^2} \int^\infty_{x_3} 	A e^{- B y_3}   \dd y_3 \dd y_\parallel
\leq\delta_{j3} AB^{-1} e^{- {B}  x_3}.
\end{split}
\Ee

\hide
From \eqref{elliptic_est:C1}, 
\Be\begin{split}
&| \Delta_0^{-1} \rho (x_\parallel, x_3)|\\
&= \Big| \Delta_0^{-1} \rho (x_\parallel, x_3=0)
+ \int_0^{x_3}   \p_{x_3} \Delta_0^{-1} \rho (x_\parallel, y_3) \dd y_3 \Big|\\
& \leq 0+ 10 \frac{g}{20}x_3  = \frac{g}{2} x_3.
\end{split}	\Ee
Then we 
\Be
w_h (x,v) \geq e^{\frac{\beta |v|^2}{2}} e^{\beta (g - \frac{g}{2} ) x_3}
= e^{\frac{\beta |v|^2}{2}} e^{\beta   \frac{g}{2}  x_3}\label{lower:wh}
\Ee
Hence we deduce that 
\Be\begin{split}
| \rho_h(x)| &\leq  \int_{\R^3} \frac{1}{w_h (x,v)} |w_h (x,v)[ h_+ (x,v)- h_- (x,v)] |\dd v \\
& \leq  \frac{e^{- \frac{\beta g}{2} x_3  }}{\beta} \| w_h [h_+ - h_-] \|_{L^\infty}  .
\end{split}\Ee
\unhide

For the second term, using $|\tilde{x} - y| \geq |x-y|$ for $x_3, y_3 \geq 0$, we derive that
\Be\begin{split} \label{est:I_2}
I_2 
&\leq 
2 \int_{x_3-1}^ {x_3 + 1}\int_{\T^2}  \frac{1}{|x-y|^2} A e^{- B y_3}
\dd y_\parallel \dd y_3
\leq  
2 \int_{x_3-1}^ {x_3 + 1} A e^{- B y_3}  \int_0^{\frac{1}{\sqrt{2}}}  \frac{2 \pi r \dd r}{r^2 + |x_3 - y_3|^2} \dd y_3  
\\
&= 2 \pi  A  \int_{x_3-1}^ {x_3 + 1} e^{- B y_3}	   \ln \left(1+ \frac{1}{2 |x_3 - y_3|^2}\right)
\dd y_3 
\lesssim A e^{- B \min\{0, x_3 -1 \}}.
\\
\hide
& = 2 \pi  A \left\{
\int_{\max\{0, x_3-1\}}^{ x_3+1}  + \int_{0}^{\max\{0, x_3-1\}}
\right\},
\\
&
\| w [G_+ -   G_-] \|_{L^\infty_{\gamma_-}}
\int_{0}^\infty   \frac{(2\pi)^{3/2}}{\beta^{3/2}} e^{- \frac{\beta g}{2} y_3 } 	\int_{\T^2}  \frac{1}{|x_\parallel-y_\parallel|^2 + |x_3- y_3|^2}\dd y_\parallel \dd y_3 \\
& \leq
\| w [G_+ -   G_-] \|_{L^\infty_{\gamma_-}}
\int_{0}^\infty   \frac{(2\pi)^{3/2}}{\beta^{3/2}} e^{- \frac{\beta g}{2} y_3 }  	\int_{0}^2  \frac{1}{r + |x_3- y_3|^2}\dd r\dd y_3\\
&=  \frac{(2\pi)^{3/2}}{\beta^{3/2}}  \| w [G_+ -   G_-] \|_{L^\infty_{\gamma_-}}   \int_{0}^\infty  e^{- \frac{\beta g}{2} y_3 }   
\ln \Big( 1+ \frac{2}{|x_3- y_3|^2} \Big) \dd y_3 
\\ & 
=  \frac{(2\pi)^{3/2}}{\beta^{3/2}}  \| w [G_+ -   G_-] \|_{L^\infty_{\gamma_-}}   \left\{  \int_{0}^{\frac{x_3}{2}}  +  \int_{ 2 x_3}^{\infty}  +  \int^{ 2 x_3}_{\frac{x_3}{2}}
\right\}
\unhide
\end{split}\Ee

For the third term, using \eqref{b0:1}, we derive that 
\Be
\begin{split}\label{est:I_3}
I_3 & \lesssim  \int_0^\infty  \int_{\T^2}  
\mathbf{1}_{|x_3- y_3| \geq 1}  e^{-|x_3- y_3|}	A e^{- B y_3} 
\dd y_\parallel \dd y_3\\
& \lesssim \int_{0}^{x_3-1 }  e^{-(x_3- y_3)}	A e^{- B y_3}   \dd y_3 
+  \int^{\infty}_{x_3+1 }   e^{-(y_3-x_3)}  	A e^{- B y_3}   \dd y_3 
\\
& 
\lesssim \min\left\{
AB^{-1},  A \frac{ \mathbf{1}_{B>1} }{B-1} e^{- x_3} 
\right\}
+ \frac{A}{(B+1)  e^{( B+1)}} e^{- B x_3 }
. 
\end{split}
\Ee

Lastly, using \eqref{b0:4}, we derive that 
\Be
\begin{split}\label{est:I_4}
I_4 
\lesssim   \int_{x_3-1}^{x_3+1}
A e^{-B y_3} \dd y_3
\lesssim  A e^{- B \min \{0,x_3-1\}}.
\end{split}
\Ee
Combining \eqref{est:I_1}-\eqref{est:I_4}, we conclude \eqref{est:nabla_phi}.

\textit{Proof of \eqref{est:nabla^2phi}. }First we note that 
\Be
\begin{split}\label{ext:Grho}
\int_{\T^2 \times [0, \infty)}G (x,y) \rho (y) \dd y   =  	\int_{-\infty}^\infty \int_{\T^2} 
G(x,y) \tilde{\rho} (y) 
\dd y_\parallel  \dd y_3 \ \ \text{in $x \in \T^2 \times [0, \infty),$} 
\end{split}
\Ee
where $\tilde{\rho}$ is defined in $y\in \T^2 \times \R$ as 
\Be\label{tilde_rho}
\tilde{\rho} (y_\parallel, y_3) := \frac{1}{2} \left\{
[	\rho (y_\parallel, y_3 ) - \rho(y_\parallel, 0)] - [	\rho (y_\parallel, -y_3 )- \rho(y_\parallel, 0 )
]	\right\} .
\Ee		
Then we apply a standard result of the potential theory (\cite{Rein} or Section 4.2.5 of \cite{glassey}). The last term in \eqref{est:nabla^2phi} comes from $\|\rho\|_{L^1 (\O)} \lesssim AB^{-1}$. \hide

For \eqref{est:nabla^2phi}, we follow 

Note that $\nabla_x^2 \left( \frac{C_2}{4} \frac{1}{|x-y|} - \frac{C_2}{4} \frac{1}{ |\tilde x - y|}\right)$ belongs to the Calder\'on-Zygmund kernel. Using the form of $G(x,v)$ in \eqref{GreenF} and \eqref{b0:2}, we get that, for $1< p <\infty$, 
\Be\label{CZ}
\| \nabla_x^2 \phi \|_{L^p(\O)} 
\lesssim_p \| \rho \|_{L^p(\O)} .
\Ee 

Next we prove \eqref{CZ_infty}. This estimate is a standard result of the potential theory when the kernel in \eqref{phi_rho} is $\frac{C_2}{4} \frac{1}{|x-y| } - \frac{C_2}{4} \frac{1}{|\tilde x - y|}$. A contribution of $b_0 (x,y)$ in \eqref{phi_rho} is also bounded as \eqref{CZ_infty} due to \eqref{b0:1}- \eqref{b0:4}. Finally note that 
\Be\notag
\p_{x_3}^2 \left( \frac{|x_3 - y_3|}{2} - \frac{|x_3+ y_3|}{2}\right) = \delta_0 (x_3- y_3) \ \ \text{for} \ (x,y) \in \O \times \O.
\Ee
Hence this contribution in  \eqref{phi_rho} is also bounded as \eqref{CZ_infty}.\unhide\end{proof}

\hide

Then 
\Be
\begin{split}
&|\phi_h| = |\Delta^{-1}_0 \int_{\R^3} (e_+ h_+ + e_- h_-) \dd v|\\
&
\leq \max \{e_+, e_-\} \Delta^{-1}_0  \int_{\R^3} \frac{  \dd v }{w(\frac{|v|^2}{2} + \frac{g}{2} m_\pm x_3)}
\end{split}
\Ee 
This should verify the bootstrap hypothesis \eqref{Bootstrap}. For example, if $w(s) = e^{\sqrt{s}}$ then 
\Be
\frac{1}{w(\frac{|v|^2}{2} + \frac{g}{2} m_\pm x_3)} \lesssim e^{-|v|/\sqrt{2}} e^{-C \sqrt{x_3}}
\Ee
Then this should give us $ \int_{\R^3} \frac{  \dd v }{w(\frac{|v|^2}{2} + \frac{g}{2} m_\pm x_3)} \lesssim e^{-C \sqrt{x_3}}$. Solving the Poisson equation with Dirichlet BC (Using Lemma of $G(x,y)= \cdots$), we might get
\Be
\begin{split}
|\nabla_x \phi(x) | \lesssim \int_{\T^2 \times \R_+} \frac{1}{|x-y|^2} e^{- \sqrt{y_3}} \dd y
+ \int_{\T^2} \int_0^\infty \nabla_x   G_0 (x,y)e^{- \sqrt{y_3}}  dd y_3 \dd y_\parallel\\
{\color{red} \lesssim  \int_{\T^2 \times \R_+} \frac{1}{|x-y|^2} e^{- \sqrt{y_3- x_3}}  e^{- \sqrt{x_3}} \dd y
+ \| \nabla_x G_0 \|_\infty \int_{\T^2} \int_0^\infty   e^{- \sqrt{y_3 -x _3}} e^{- \sqrt{x_3}}  dd y_3 \dd y_\parallel  \ \ \ \ \  (Wrong!!)  }\\
\lesssim  \langle  x_3 \rangle^{-2}
\end{split}
\Ee

This should give us very good estimate when $x_3 \gg 1$. For $x_3 \lesssim 1$, we have, for $p>3$,  
\Be
\| \nabla_x \phi \|_\infty \lesssim \| \nabla_x \phi \|_{W^{1,p}} \lesssim   \| \rho \|_{L^p} \ll g m_{\pm} \ \ (small \ or \ nearly \ neutral). 
\Ee
Here, 
\Be\begin{split}
\rho (x) &= \int_{\R^3}  (h_+ (x,v) - h_- (x,v)) \dd v \\
& =  \int_{\R^3}  (G_+ (x^\b,v^\b) - G_- (x^\b,v^\b)) \dd v 
\end{split}
\Ee
For simplicity assume $m_+ = m_-$ (e.g. proton and electron) then $x^\b_+ = x^\b_-$ and $v^\b_+ = v^\b_-$. Then 
\Be
\begin{split}
\| \rho \|_{L^p (\O)} &\leq \Big\| \int_{\R^3}  (wG_+ (x^\b(x,v),v^\b(x,v)) - wG_- (x^\b(x,v),v^\b(x,v))) \frac{1}{w(x^\b(x,v),v^\b(x,v)) } \dd v \Big\|_{L^p(\O)}\\
& \leq  \| w (G_+- G_-) \|_{L^p(\gamma_-)}
\end{split} \Ee
where we have used 
\Be
\begin{split}
&\| f( x^\b (x,v), v^\b (x,v)) \frac{1}{w ( x^\b (x,v), v^\b (x,v)} \|_{L^1 (\O \times \R^3)}\\
&=\| f (x^\b, v^\b) \frac{t^\b}{w(x^\b, v^\b)}  |n(x^\b) \cdot v^\b| \| _{L^1  (t^\b, x^\b , v^\b) }
\end{split}
\Ee
from 
\Be
\frac{\p (t^\b, x^\b , v^\b)}{\p (x,v)} = \frac{1}{n(x^\b, v^\b)}
\Ee

{\color{red} (WRONG!!)Of course, this argument has a plenty of room by choosing a gravitation and $w$: For example if $w(\tau) = \tau^{\frac{3}{2}+}$ then 
\Be\begin{split}
\int_{\R^3} \frac{1}{w (\frac{|v|^2}{2} + \frac{g}{2} m_\pm x_3)}\dd v \lesssim 
\int_{\R^3} \frac{1}{  (\frac{|v|^2}{2} + \frac{g}{2} m_\pm x_3)^{\frac{3}{2}+}}\dd v\\
\lesssim \int \frac{dr}{
	r^{1+} + |x_3|^{\frac{3}{2}+}} \lesssim \frac{1}{1+ |x_3|^{\frac{3}{2}+}} (??)
\end{split}
\Ee
This may deduce that 
\Be
\nabla_x \phi (x) \lesssim  \frac{1}{1+ |x_3|^{\frac{3}{2}+}}. (??)
\Ee}

\unhide

\begin{proof}[\textbf{Proof of Lemma \ref{lemma:tildeG}}]
\textit{Step 1. }We claim that $\tilde{G}$ takes the following form: for some constant $c$, 
\be \label{G1}
\begin{split}
\tilde  G (x_\parallel, x_3)  
= \frac{1}{2} |x_3| + c - \tilde{\mathcal{G}  }(x_\parallel, x_3) \ \ \text{with  } \ \  \tilde{\mathcal{G}  }(x_\parallel, x_3) := \sum_{ 
	\substack{
		m \in \mathbb{Z}^2
		\\
		|m|>0}} \frac{ e^{- 2 \pi |m| |x_3|}}{4 \pi |m|} e^{i 2\pi m \cdot  x_\parallel}. 
\end{split}
\ee


For any $x = (x_1, x_2, x_3) \in \T^2 \times\R$, we have
\be
\begin{split} \notag 
\sum\limits_{m_1, m_2 \in \Z} \delta_0 (x + (m_1, m_2, 0))
= \sum\limits_{m_1 \in \Z} \sum\limits_{m_2 \in \Z}  \delta_0 (x_1 + m_1) \delta_0 (x_2 + m_2) \delta_0 (x_3).
\end{split}
\ee
Recall the Poisson summation formula $ \sum\limits_{n \in \Z}  \delta_0 (y + n) 
= \sum\limits_{n \in \Z} e^{i 2\pi n y}$ for $y \in \R$. Thus, we have
\be
\begin{split} \label{g1 2}
\sum\limits_{m_1 \in \Z} \sum\limits_{m_2 \in \Z}  \delta_0 (x_1 + m_1) \delta_0 (x_2 + m_2) \delta_0 (x_3) 
= \sum\limits_{m_1, m_2 \in \Z} \delta_0 (x_3) e^{i 2\pi m_1 x_1} e^{i 2\pi m_2 x_2}.
\end{split}
\ee

Now we try the following Ansatz to solve \eqref{g1}: With unknown functions $w_m = w_{m_1,m_2} : \R \mapsto \R$, 
\be \label{g1 3}
\tilde G (x) = \sum\limits_{m_1, m_2 \in \Z} w_{m_1,m_2} (x_3) e^{i 2\pi m_1 x_1} e^{i 2\pi m_2 x_2}.
\ee
By inserting \eqref{g1 3} in \eqref{g1}, we compute that 
\be
\begin{split}\notag
\Delta \tilde  G (x) 
& = \sum\limits_{m_1, m_2 \in \Z} 
\Big( w_{m_1,m_2}^{\prime\prime} (x_3) + \big( (i 2\pi m_1)^2 + (i 2\pi m_2)^2 \big) w_{m_1,m_2} (x_3) \Big)
e^{i 2\pi m_1 x_1} e^{i 2\pi m_2 x_2}
\\& = \sum\limits_{m_1, m_2 \in \Z} 
\Big( w_{m_1,m_2}^{\prime\prime} (x_3) + (i 2\pi)^2 (m_1^2 + m_2^2)w_{m_1,m_2} (x_3)  \Big) 
e^{i 2\pi m_1 x_1} e^{i 2\pi m_2 x_2}.
\end{split}
\ee
To solve \eqref{g1}, we ought to solve a second order linear ODE with the Dirac delta source term:
\be\notag
w_{m_1,m_2}^{\prime\prime} (x_3) + (i 2\pi)^2 (m_1^2 + m_2^2)  w_{m_1,m_2} (x_3) = \delta_0 (x_3).
\ee
Explicit solutions are given by
\be \label{wmn}
w_m (x_3) =w_{m_1,m_2} (x_3) :=
\begin{cases}
\frac{1}{2} |x_3| + c,  & \text{if } m_1 = m_2 = 0, \\
\frac{-e^{- 2 \pi |m| |x_3|}}{4 \pi |m|} =	\frac{-e^{- 2 \pi \sqrt{m_1^2 + m_2^2} |x_3|}}{4 \pi \sqrt{m_1^2 + m_2^2}} , & \text{otherwise},
\end{cases}
\ee
where $c$ is a constant. 
Finally, inserting \eqref{wmn} in \eqref{g1 3}, we complete the proof of \eqref{G1}.

\smallskip


\textit{Step 2. }
Define $\iota= \iota(x_\parallel) = \iota( |x_\parallel| ) \in C^\infty (\R^2)$ such that 
\Be\label{iota}
\iota (x_\parallel) = \begin{cases}
0 & \text{for } |x_\parallel| \leq 1/2,\\
\text{an increasing function of $|x_\parallel|$}
& \text{for } 1/2 \leq  |x_\parallel| \leq 1,\\
1& \text{for } |x_\parallel| \geq 1.
\end{cases}
\Ee  
Define $Q$ and its inverse horizontal Fourier transform $q$: for $(x_\parallel, x_3) \in \R^2 \times \R,$
\begin{align}
Q(x_\parallel, x_3)  : = \iota(x_\parallel)  \frac{  e^{- 2 \pi |x_\parallel| |x_3|}}{4 
	\pi |x_\parallel|} ,  \ \ \ 
q(x_\parallel, x_3)   := \int_{\R^2} Q(\xi, x_3) e^{2 \pi i \xi \cdot x_\parallel} \dd \xi. \label{def:Q}
\end{align}
From the Poisson summation formula and \eqref{G1}, \eqref{def:Q}, we obtain that 
\Be\label{q-Q}
\tilde{\mathcal{G}  }(x_\parallel, x_3)
=  \sum_{m \in \Z^2} Q(m, x_3) e^{2 \pi i x_\parallel \cdot m} 
= \sum_{m \in \Z^2} q(x_\parallel+ m, x_3)  .
\Ee

\smallskip


\textit{Step 3. }We claim that 
\begin{align}
| \nabla^n_{x_\parallel} q(x_\parallel, x_3)| & \lesssim_{n, N }  |x_\parallel|^{-N}e^{- |x_3|}  \ 
\text{ for 
	all $n , N \in \N^2$ such that $N \geq |n|+2$,}\label{est1:q}\\
| \nabla^n_{x_\parallel} \p_{x_3} q(x_\parallel, x_3)| & \lesssim_{n, N }  |x_\parallel|^{-N}e^{- |x_3|}  \ 
\text{ for 
	all $n , N \in \N^2$ such that $N \geq |n|+3$.}\label{est2:q}
\end{align}
As $Q$ vanishes for $|\xi | \leq 1/2$, we take derivatives to \eqref{def:Q} and derive that, for $n^\prime = (n^\prime_1, n^\prime_2) \in \mathbb{N}^2,$ 
\Be \label{DTq}
(- 2 \pi i x_\parallel)^{n^\prime}  \nabla_{x_\parallel}^{n} q(x_\parallel, x_3)
= \int_{\R^2}  \underline{ \p_{\xi_1}^{n_1^\prime} \p_{\xi_2}^{n_2^\prime} \left(
\iota (\xi) \frac{ e^{-2 \pi |\xi| |x_3|}}{ 4 
	\pi |\xi|}
\right)  }
\nabla_{x_\parallel}^{n } 	e^{2 \pi i \xi \cdot x_\parallel}
\dd \xi.
\Ee
Using the fact that $\iota (\xi )=0$ if $|\xi | \leq 1/2$ and $\nabla_{\xi}\iota (\xi )=0$ if $|\xi| \geq 1$ from \eqref{iota}, we bound the above underlined term in \eqref{DTq} by
\Be
\begin{split}\notag
&C (n^\prime)\|\iota\|_{C^{|n^\prime|}(\R^2)} 	\mathbf{1}_{|\xi| \geq \frac{1}{2}}  \left\{
\frac{1    }{|\xi|^{ |n^\prime| }}  + \frac{ (|\xi||x_3|)^{ |n^\prime|} }{|\xi|^{ |n^\prime|}}  + \mathbf{1}_{|\xi| \leq  1}
\sum_{m=0}^{|n^\prime|-1}  \Big( 
\frac{1}{|\xi|^m} + \frac{ (|\xi||x_3|)^{ m} }{|\xi|^{ m}} 
\Big)  
\right\} \frac{e^{- 2\pi |\xi||x_3|}}{|\xi|} \\
&	\lesssim_{n^\prime, \iota}    
\mathbf{1}_{|\xi| \geq \frac{1}{2}} \left(	\frac{ 1 }{|\xi|^{|n^\prime|+1}}  +
\frac{ \mathbf{1}_{|\xi| \leq  1} }{|\xi|}
\right)e^{- |x_3|}.
\end{split}
\Ee
Suppose $|n^\prime| \geq |n|+2$, then 
\Be\label{bound:q}
|x_\parallel ^{n^\prime}| |\nabla_{x_\parallel}^{n } q(x_\parallel, x_3)| \lesssim_{n, n^\prime, \iota} 
{e^{- |x_3|}}  
\left(
\int_{|\xi| \geq \frac{1}{2}}  \frac{ \dd \xi }{|\xi|^{|n^\prime| - |n|+1}}  
+ \int_{ \frac{1}{2} \leq |\xi| \leq 1}  \frac{|\xi|^{|n|}}{|\xi| }    \dd \xi  
\right)
\lesssim_{n, n^\prime, \iota}  {e^{- |x_3|}}  
.
\Ee
Summing \eqref{bound:q} over all possible $n^\prime \in \Z^2$ such that $|n^\prime| = \max\{N, |n|+2\}$, we conclude \eqref{est1:q}. 

From \eqref{DTq} we compute that  \Be \label{DTtq}
\begin{split}
(- 2 \pi i x_\parallel)^{n^\prime}  \nabla_{x_\parallel}^{n}  \p_{x_3} q(x_\parallel, x_3)
=&-     \int_{\R^2} 
\underline{
	\p_{\xi_1}^{n_1^\prime} \p_{\xi_2}^{n_2^\prime} \left(
	\iota (\xi) 
	\frac{x_3}{|x_3|}
	\frac{ e^{ -2 \pi |\xi| |x_3|}}{ 2 
	}
	\right)  
	\nabla_{x_\parallel}^{n } 	e^{2 \pi i \xi \cdot x_\parallel}}
\dd \xi.
\end{split}
\Ee
We bound the underlined terms of \eqref{DTtq} respectively by 
\Be
\begin{split}\label{est:DTtq}
& {\text{\small	$C_{n^\prime, n, \iota} \mathbf{1}_{|\xi| \geq \frac{1}{2}} \left\{
		\frac{ (|\xi|	|x_3|)^{|n^\prime|}}{|\xi|^{|n^\prime| -| n|}}
		+ \mathbf{1}_{|\xi| \leq 1 } \sum_{m=0}^{|n^\prime|-1} \frac{ (|\xi|	|x_3|)^{m}}{|\xi|^{|m| -| n|}}
		\right\}e^{-2 \pi |\xi| |x_3|}
		\lesssim  \left( \frac{
			\mathbf{1}_{|\xi| \geq \frac{1}{2}}
		}{|\xi|^{|n^\prime| - |n|}}
		+ \mathbf{1}_{\frac{1}{2} \leq |\xi| \leq 1 }\right)			e^{- |x_3|}
		.$}}
\end{split}
\Ee
Choose $|n^\prime| = \max \{N, |n|+3\}$. Then the above upper bounds are integrable-in-$\xi$ in $\R^2$. This allows us to prove \eqref{est2:q}.

\smallskip


\textit{Step 4. }We claim that 
\Be\begin{split}
\nabla^n_{x_\parallel} \p_{x_3}^2 q(x_\parallel, x_3) &= \delta_0 (x_3)
q_1 (x_\parallel, x_3)  + q_2 (x_\parallel, x_3) ,
\\
| \nabla^n_{x_\parallel}   q_i(x_\parallel, x_3)| &\lesssim_{n, N }  |x_\parallel|^{-N}e^{- |x_3|}  \ 
\text{ for $i=1,2,$ and 
	all $n , N \in \N^2$, $N \geq |n|+4$.}\label{est3:q}
\end{split}\Ee

From \eqref{DTtq} we compute that  \Be \label{DTt2q}
\begin{split} 
(- 2 \pi i x_\parallel)^{n^\prime}  \nabla_{x_\parallel}^{n}  \p_{x_3}^2 q(x_\parallel, x_3)
=&-    \delta_0(x_3)   \int_{\R^2} \underline{ \p_{\xi_1}^{n_1^\prime} \p_{\xi_2}^{n_2^\prime}  
	\left(\iota (\xi)  
	e^{ - 2 \pi |\xi| |x_3|} \right)
	\nabla_{x_\parallel}^{n } 	e^{2 \pi i \xi \cdot x_\parallel}}
\dd \xi\\
& + \pi    \int_{\R^2} \underline{ \p_{\xi_1}^{n_1^\prime} \p_{\xi_2}^{n_2^\prime} \left(
	\iota (\xi)  |\xi| 
		e^{ -2 \pi |\xi| |x_3|}
	\right)  
	\nabla_{x_\parallel}^{n } 	e^{2 \pi i \xi \cdot x_\parallel}}
\dd \xi.
\end{split}
\Ee
Following the argument of the previous step, we bound first underlined term by \eqref{est:DTtq}; and bound the second underlined term of \eqref{DTt2q} by 
\Be
\begin{split}\notag
&	{\text{\small	$C_{n^\prime, n, \iota} \mathbf{1}_{|\xi| \geq \frac{1}{2}} \left\{
		\frac{ (|\xi|	|x_3|)^{|n^\prime|}}{|\xi|^{|n^\prime| -| n|-1}}
		+ \mathbf{1}_{|\xi| \leq 1 } \sum_{m=0}^{|n^\prime|-1} \frac{ (|\xi|	|x_3|)^{m}}{|\xi|^{|m| -| n|-1}}
		\right\}e^{-2 \pi |\xi| |x_3|} \lesssim  \left( \frac{
			\mathbf{1}_{|\xi| \geq \frac{1}{2}}
		}{|\xi|^{|n^\prime| - |n| - 1}}
		+ \mathbf{1}_{\frac{1}{2} \leq |\xi| \leq 1 }\right)			e^{- |x_3|}.$}}
\end{split}
\Ee
Choose $|n^\prime| = \max \{N, |n|+4\}$. Then the above upper bounds are integrable-in-$\xi$ in $\R^2$. This allows us to prove \eqref{est3:q}. 

\smallskip

\textit{Step 5. }Define \vspace{-10pt}
\Be\tilde{r} (x_\parallel, x_3) := \sum_{|m|>0} q(x_\parallel + m , x_3) . \label{def:tilder}
\Ee
From \eqref{est1:q}, \eqref{est2:q}, and \eqref{est3:q}, we conclude that the series \eqref{def:tilder} is absolutely convergent and hence \eqref{est:tilder} holds.

\smallskip

\textit{Step 6. }We claim that when $|x_3| \leq 1$, we can decompose $	\tilde{b}$ as \eqref{dec1:tildeb} where $\tilde{r}$ satisfies \eqref{def:tilder}-\eqref{est:tilder}, and \eqref{prop1:b1}-\eqref{prop3:b1} hold. 
Recall the following horizontal Fourier transform: 
\hide
\Be\label{Fourier_e}
\int_{\R^2} 
\frac{e^{- 2 \pi a |x_\parallel|}}{4 \pi |x_\parallel|} 
e^{- 2\pi i x_\parallel \cdot \xi } \dd x_\parallel 
= \frac{c_2  }{\left( a^2 + |\xi|^2  \right)^{1/2}}
\ \ \text{with} \ \ 
c_2 = \frac{1}{2\pi^{3/2}} \Gamma\Big(\frac{3}{2}\Big) \ \ \text{for} \ \ \text{Re} (a)>0,
\Ee  
where the Gamma function at $\frac{3}{2}$ is given by $\Gamma\big(\frac{3}{2}\big)=\int^\infty_0 t^{\frac{3}{2}-1} e^{-t} \dd t $. We set $a = |x_3|$ in \eqref{Fourier_e}. From the duality of the Fourier transform
, we derive \unhide
\Be\label{Fourier_inv}
\frac{e^{- 2\pi |x_3||\xi|}}{ 4 
\pi |\xi|} = 
\int_{\R^2} 
\frac{c_2 }{\left( |x_\parallel|^2 + |x_3|^2\right)^{1/2}} e^{- 2\pi i x_\parallel \cdot \xi}
\dd x_\parallel .
\Ee
Here, $c_2 = \frac{1}{2\pi^{3/2}} \Gamma( 3/2)$ where $\Gamma$ is the Gamma function.

We decompose $q$ and use the duality of Fourier transform with \eqref{Fourier_inv} to get that 
\Be
\begin{split}\label{def:b1}
q(x_\parallel, x_3) 
=   \frac{c_2}{\left( |x_\parallel|^2 + |x_3|^2\right)^{1/2}}  +  \tilde{d}  (x_\parallel, x_3),\ \
\tilde{d}  (x )	
: =  
\int_{\R^2} (\iota(\xi) - 1) \frac{ e^{- 2\pi |\xi| |x_3|}}{4
	\pi |\xi|} e^{2 \pi i \xi \cdot x_\parallel} \dd \xi. 
\end{split}\Ee

Now we only need to prove the properties of $\tilde{d}$, which are \eqref{prop1:b1}-\eqref{prop3:b1}. 
Note that $\tilde{d} (\cdot, x_3)$ is the inverse horizontal Fourier transforms of 
an integrable function with bounded support in $ \R^2$ horizontally. Hence $\tilde{d} (\cdot, x_3) \in C^\infty(\R^2)$. Next, 
we compute its derivatives of $\tilde{d}$. For any $n = (n_1, n_2) \in \N^2$ and $n^\prime = (n^\prime_1, n^\prime_2) \in \N^2$,
\Be
\begin{split}\label{b_1}
(-2 \pi i x_\parallel)^{n^\prime}\nabla_{x_\parallel}^{n}	\p_{x_3} \tilde{d}  (x_\parallel, x_3) & =  \underline{\int_{\R^2}
	\nabla_\xi^{n^\prime} \left((\iota(\xi)-1)  \frac{ x_3}{-2 
		|x_3|} e^{-2 \pi |\xi||x_3|}  \right)
	\nabla_{x_\parallel}^{n } e^{2 \pi i \xi \cdot x_\parallel} \dd \xi},\\
(-2 \pi i x_\parallel)^{n^\prime}\nabla_{x_\parallel}^{n}	\p_{x_3}^2 \tilde{d}  (x_\parallel, x_3) & 
= 	\delta_0 (x_3) (-2 \pi i x_\parallel)^{n^\prime}\nabla_{x_\parallel}^{n} \tilde{d}_1(x_\parallel, x_3)  + (-2 \pi i x_\parallel)^{n^\prime}\nabla_{x_\parallel}^{n} \tilde{d}_2 (x_\parallel, x_3) \\
&= 
- 
\delta_0 (x_3) \underline{\int_{\R^2} 
	\nabla_\xi^{n^\prime} \left(
	(\iota(\xi)-1)   e^{-2 \pi |\xi||x_3|}  \right) \nabla_{x_\parallel}^n  e^{2 \pi i \xi \cdot x_\parallel} \dd \xi } \\
& \ \ \ + \pi  
\underline{  \int_{\R^2}  \nabla_\xi^{n^\prime} \left((\iota(\xi)-1) |\xi|  e^{-2 \pi |\xi||x_3|}  \right) \nabla_{x_\parallel}^n e^{2 \pi i \xi \cdot x_\parallel} \dd \xi}.
\end{split}
\Ee
Here, we have used two functions defined as 
\begin{align}
\tilde{d}_1 (x_\parallel, x_3) &:=- 
\int_{\R^2} 
(\iota(\xi)-1)   e^{-2 \pi |\xi||x_3|}     e^{2 \pi i \xi \cdot x_\parallel} \dd \xi, \label{def:b2}
\\
\tilde{d}_2 (x_\parallel, x_3) &:= 
\pi \int_{\R^2} 
(\iota(\xi)-1) |\xi|  e^{-2 \pi |\xi||x_3|}
e^{2 \pi i \xi \cdot x_\parallel} \dd \xi.
\label{def:b3}
\end{align}
Note that the three underlined integrals in the right hand side of \eqref{b_1} are the inverse horizontal Fourier transform of 
integrable functions with bounded support in $\R^2$ horizontally.  By summing \eqref{b_1} over $|n^\prime| \leq  N$ in $n^\prime \in \Z^2$, we 
conclude \eqref{prop1:b1}-\eqref{prop3:b1}.

\smallskip


\textit{Step 7. }We consider the case of $|x_3| \geq  1$. We claim that \eqref{est1:tildeB} holds.

We first prove that 
\Be\label{est2:q}
\text{
$| \nabla^n_{x_\parallel} \p_{x_3}^i q(x_\parallel, x_3)|\lesssim  (1+|x_\parallel|)^{-N}e^{- |x_3|}  $ for $|x_3| \geq1$ 
and $n, N \in \N$.}
\Ee
We compute that, for $n^\prime = (n^\prime_1, n^\prime_2) \in \mathbb{N}^2$,
\Be\label{Dq}
\begin{split}
(-2 \pi i x_\parallel)^{n^\prime} 
\nabla_{x_\parallel}^{n }	\p_{x_3} q(x) &=\frac{x_3}{- 2 
	|x_3|}  \int_{\R^2}  \underline{  \nabla_\xi^{n^\prime}
	\Big( \iota(\xi)  e^{- 2\pi |\xi| |x_3|}\Big) \nabla_{x_\parallel}^ne^{2 \pi i \xi \cdot x_\parallel} }_{I}\dd \xi,\\
(-2 \pi i x_\parallel)^{n^\prime} 
\nabla_{x_\parallel}^{n }	\p_{x_3}^2 q(x) 
&= \int_{\R^2}  \underline{
	\pi
	\nabla_\xi^{n^\prime}
	\Big( \iota(\xi)  |\xi|  e^{-2 \pi |\xi| |x_3|} \Big) \nabla_{x_\parallel}^n e^{2 \pi i \xi \cdot x_\parallel} }_{II}\dd \xi.
\end{split}
\Ee 
We bound each of underlined terms in \eqref{Dq} as follows: for $|x_3|\geq 1$,
\begin{align}
I & \lesssim_{n, 
	\iota} \mathbf{1}_{|\xi| \geq \frac{1}{2}}
|\xi|^{|n|}  (1+ |x_3|^{|n^\prime|})  e^{- 2\pi |\xi| |x_3|}
\lesssim _{n, 
	\iota}  e^{-|\xi| } e^{- |x_3|},\notag
\\
II & \lesssim _{n, n^\prime, \iota}   \mathbf{1}_{|\xi| \geq \frac{1}{2}}   |\xi|^{|n|  } 
(1+   |\xi||x_3|^{|n^\prime|})
e^{- 2\pi |\xi| |x_3|}\lesssim _{n, n^\prime, \iota}  e^{-|\xi| } e^{- |x_3|}.\notag
\end{align}
Then we follow the argument to prove \eqref{bound:q} and derive that 
\Be\notag
| \nabla_{x_\parallel}^n  \p_{x_3} q(x)| 
+ | \nabla_{x_\parallel}^n  \p_{x_3}^2 q(x)| 
\lesssim_{n, n^\prime, \iota} 
|x_\parallel|^{-|n^\prime|}  e^{- |x_3|} \ \ \text{for  $|x_3|\geq 1$.}
\Ee
Now by choosing $|n^\prime| = N$, we conclude \eqref{est2:q}. 

Using \eqref{est2:q} with $N\geq 3$, we conclude that the summation \eqref{q-Q} is absolutely convergent. Using this together with \eqref{G1} and \eqref{est2:q}, we conclude \eqref{est1:tildeB}.\hide

Finally we prove

Now we claim that  
\Be\label{claim_Fourier} \sum\limits_{ |m|>0} \frac{  e^{- 2 \pi |m| |x_3|}}{
4	{\color{red} \cancel{8}
}
\pi |m|} e^{i 2\pi m \cdot  x_\parallel}
= \frac{c_2}{  {\color{red}\cancel{4}}} \frac{1}{\left( |x_\parallel|^2 + |x_3|^2\right)^{1/2}} + b(x_\parallel, x_3),
\Ee
where $b(x_\parallel, x_3)$, which will be defined in \eqref{def:b} precisely, satisfies 
\begin{align} 
\bullet& \  \    b( \cdot  , x_3),  \p_{x_3} b(  \cdot , x_3) \in C^\infty (\T^2) ;
\label{b:1}\\
\bullet& \   \   \p_{x_3}^2 b(x_\parallel, x_3) = \delta_0(x_3) b_2 (x_\parallel , x_3) + b_3 (x_\parallel , x_3) \ \ \text{where} \ \  b_2 (\cdot, x_3), b_3 (\cdot , x_3)
\in C^\infty (\T^2);
\label{b:2}\\
\bullet& \ \ 
\| b(\cdot, x_3)\|_{C^2(\R^2)} ,  \| \p_{x_3}b(\cdot, x_3)\|_{C^2(\R^2)}, \| \p_{x_3}^2b(\cdot, x_3)\|_{C^2(\R^2)} \lesssim e^{-|x_3|}  \  \text{ for } \   |x_3|\geq 1 
;\label{b:3}\\
\bullet& \ \ 
\| b(\cdot, x_3)\|_{C^2(\R^2)} ,    \| \p_{x_3}b(\cdot, x_3)\|_{C^2(\R^2)},  \| \p_{x_3}^2b_2(\cdot, x_3)\|_{C^2(\R^2)}+ \| \p_{x_3}^2b_3(\cdot, x_3)\|_{C^2(\R^2)} \lesssim 1.
\label{b:4}
\end{align}
For the proof, we adopt a classical argument of multiple Fourier series (e.g. Theorem 2.17 in \cite{SW}).


Define $\iota= \iota(x_\parallel) \in C^\infty (\R^2)$ and $\iota (x_\parallel)=1$ for $|x_\parallel| \geq 1$, and $\iota$ vanishes in a neighborhood of $x_\parallel =0$: $\iota (x_\parallel)=0$ if $|x_\parallel| \leq 1/2$. 
Define 
\Be\label{def:Q}
Q(x_\parallel, x_3) : = \iota(x_\parallel)  \frac{  e^{- 2 \pi |x_\parallel| |x_3|}}{4 {\color{red}\cancel{8}} \pi |x_\parallel|}  \ \ \text{for} \ \ (x_\parallel, x_3) \in \R^2 \times \R. 
\Ee

We claim that there exists a function $q(\cdot, x_3) \in L^1(\R^2)$ such that 
\begin{itemize}
\item[\it{(q1)}] $Q(\xi,x _3) =\hat{q} (\xi, x_3) : = \int_{\R^2} q(x_\parallel, x_3) e^{-2 \pi i \xi \cdot x_\parallel} \dd x_\parallel   ;$
\item[\it{(q2)}] $q(x_\parallel, x_3)= \frac{c_2}{2{\color{red}\cancel{4}}} \frac{1}{\left( |x_\parallel|^2 + |x_3|^2\right)^{1/2}}   + b_1(x_\parallel, x_3)$ where $b_1 (x_\parallel, x_3) :=   \int_{\R^2} (\iota(\xi) - 1) \frac{ e^{- 2\pi |\xi| |x_3|}}{
	4 {\color{red}\cancel
		8}
	\pi |\xi|} e^{2 \pi i \xi \cdot x_\parallel} \dd \xi$;
\item[\it{(q3)}] $b_1(\cdot , x_3), \p_{x_3} b_1 (\cdot, x_3) \in C^\infty(\R^2)$; $\p_{x_3}^2 b_1 ( \cdot, x_3) = \delta_0 (x_3) b_2 (\cdot, x_3)+ b_3 (\cdot, x_3)$ with $b_2(\cdot , x_3),  b_3(\cdot, x_3) \in C^\infty(\R^2)$ defined in \eqref{def:b2} and \eqref{def:b3}; for $|x_3 | \leq 1$ and $i=1,2,3$, $| \nabla_{x_\parallel}^n \p_{x_3} b_i(x_\parallel, x_3)| \lesssim_{n,N} |x_\parallel|^{-N} $ and $| \nabla_{x_\parallel}^n \p^2_{x_3} b_i(x_\parallel, x_3)| \lesssim_{n,N}  |x_\parallel|^{-N} $ for all $N \in \N$ and $n \in \N^2$;
\item[\it{(q4)}] $| \nabla^n_{x_\parallel} q(x_\parallel, x_3)|\lesssim  |x_\parallel|^{-N}e^{- |x_3|}  $ 
for every positive integer $N \in \N$ and $n \in \N^2$;  for $i=1,2$, we have that $| \nabla^n_{x_\parallel} \p_{x_3}^i q(x_\parallel, x_3)|\lesssim  |x_\parallel|^{-N}e^{- |x_3|}  $ for $|x_3| \geq1$ 
for every positive integer $N \in \N$ and $n \in \N^2$.
\end{itemize}

Let $q$ be the inverse horizontal Fourier transform of $Q$ in the sense of tempered distributions. Using the decomposition $Q(x)=   \frac{  e^{- 2 \pi |x_\parallel| |x_3|}}{ 4 \cancel{{\color{red}8}} \pi |x_\parallel|}  
+ \left(  \iota(x_\parallel)- 1 \right)  \frac{  e^{- 2 \pi |x_\parallel| |x_3|}}{4\cancel{{\color{red}8}} \pi |x_\parallel|} $ and the duality of Fourier transform with \eqref{Fourier_inv}, we decompose $q$ and rewrite it as 
\Be
\begin{split}\label{def:Q}
q(x_\parallel, x_3) &:= \int_{\R^2} Q(\xi, x_3) e^{2 \pi i \xi \cdot x_\parallel} \dd \xi\\
& =  \int_{\R^2}  \frac{ e^{- 2\pi |\xi| |x_3|}}{ 4{\color{red}\cancel{8}} \pi |\xi|} e^{2 \pi i \xi \cdot x_\parallel} \dd \xi
+  \int_{\R^2} (\iota(\xi) - 1) \frac{ e^{- 2\pi |\xi| |x_3|}}{4{\color{red}\cancel{8}} \pi |\xi|} e^{2 \pi i \xi \cdot x_\parallel} \dd \xi\\
& = \frac{c_2}{2{\color{red}\cancel{4}} }  \frac{1}{\left( |x_\parallel|^2 + |x_3|^2\right)^{1/2}}  +  b_1 (x_\parallel, x_3).
\end{split}\Ee
This proves (q1) and (q2).


Now we prove (q4). As $Q$ vanishes for $|\xi_\parallel| \leq 1/2$, we take derivatives to \eqref{def:Q} and derive that
\Be\begin{split}\notag
(- 2 \pi i x_\parallel)^{n^\prime}  \nabla_{x_\parallel}^{n} q(x_\parallel, x_3)
= \int_{\R^2}  \underline{\p_{\xi_1}^{n_1^\prime} \p_{\xi_2}^{n_2^\prime} \left(
	\iota (\xi) \frac{ e^{-2 \pi |\xi| |x_3|}}{ 4 {\color{red}\cancel{8}} \pi |\xi|}
	\right) }
\nabla_{x_\parallel}^{n } 	e^{2 \pi i \xi \cdot x_\parallel}
\dd \xi.
\end{split}\Ee
Using the fact that $\iota (\xi_\parallel)=0$ if $|\xi_\parallel| \leq 1/2$, we bound the underlined term by
\Be
\begin{split}\notag
\mathbf{1}_{|\xi| \geq \frac{1}{2}}C (n^\prime) \|\iota\|_{C^{|n^\prime|}(\R^2)}
\Big( \frac{1}{|\xi|^{ |n^\prime|}}+   |x_3|^{|n^\prime|} 
\Big) \frac{e^{- 2\pi |\xi||x_3|}}{|\xi|}
\lesssim_{n^\prime, \iota}    
\frac{  \mathbf{1}_{|\xi| \geq \frac{1}{2}} }{|\xi|^{|n^\prime|+1}} e^{- |x_3|}.
\end{split}
\Ee
Hence, we have that, for $|x_\parallel|\geq 1$,  
\Be\label{bound:q}
|x_\parallel ^{n^\prime}| |\nabla_{x_\parallel}^{n } q(x_\parallel, x_3)| \lesssim_{n, n^\prime, \iota} 
{e^{- |x_3|}}  \int_{\R^2}  \frac{  \mathbf{1}_{|\xi| \geq \frac{1}{2}} }{|\xi|^{|n^\prime| - |n|+1}}  \dd \xi.
\Ee
Summing \eqref{bound:q} over all possible $n^\prime \in \Z^2$ such that $|n^\prime| = \max\{N, |n|+2\}$, we conclude the first statement in (q4) for all $x_3 \in \R$. 

For the next statements in (q4), we compute that, for $|x_3| \geq 1$ 
\Be\label{Dq}
\begin{split}
(-2 \pi i x_\parallel)^{n^\prime} 
\nabla_{x_\parallel}^{n }	\p_{x_3} q(x) &=\frac{x_3}{- 2 {\color{red}\cancel4} |x_3|}  \int_{\R^2}  \underline{  \nabla_\xi^{n^\prime}
	\Big( \iota(\xi)  e^{- 2\pi |\xi| |x_3|}\Big) \nabla_{x_\parallel}^ne^{2 \pi i \xi \cdot x_\parallel} }_{I}\dd \xi,\\
(-2 \pi i x_\parallel)^{n^\prime} 
\nabla_{x_\parallel}^{n }	\p_{x_3}^2 q(x) 
&= \int_{\R^2}  \underline{\frac{ \pi}{{\color{red}\cancel{2}}}  \nabla_\xi^{n^\prime}
	\Big( \iota(\xi)  |\xi|  e^{-2 \pi |\xi| |x_3|} \Big) \nabla_{x_\parallel}^n e^{2 \pi i \xi \cdot x_\parallel} }_{II}\dd \xi.
\end{split}
\Ee 
We bound each of underlined terms in \eqref{Dq} as follows: for $|x_3|\geq 1$,
\begin{align}
I & \lesssim_{n, 
	\iota} \mathbf{1}_{|\xi| \geq \frac{1}{2}}
|\xi|^{|n|}  (1+ |x_3|^{|n^\prime|})  e^{- 2\pi |\xi| |x_3|}
\lesssim _{n, 
	\iota}  e^{-|\xi| } e^{- |x_3|},\notag
\\
II & \lesssim _{n, n^\prime, \iota}   \mathbf{1}_{|\xi| \geq \frac{1}{2}}   |\xi|^{|n|  } 
(1+   |\xi||x_3|^{|n^\prime|})
e^{- 2\pi |\xi| |x_3|}\lesssim _{n, n^\prime, \iota}  e^{-|\xi| } e^{- |x_3|}.\notag
\end{align}
Then we follow the argument to prove \eqref{bound:q} and derive that 
\Be\notag
| \nabla_{x_\parallel}^n  \p_{x_3} q(x)| 
+ | \nabla_{x_\parallel}^n  \p_{x_3}^2 q(x)| 
\lesssim_{n, n^\prime, \iota} 
|x_\parallel|^{-|n^\prime|}  e^{- |x_3|} \ \ \text{for  $|x_3|\geq 1$.}
\Ee
Now by choosing $|n^\prime| = N$, we conclude the second statement in (q4).

\medskip

\textit{Step 3. }We are ready to prove \eqref{claim_Fourier}. From the Poisson summation formula and \eqref{def:Q}, we obtain that 
\Be\label{q-Q}
\sum_{m \in \Z^2} q(x_\parallel+ m, x_3) = \sum_{m \in \Z^2} Q(m, x_3) e^{2 \pi i x_\parallel \cdot m} = \sum_{|m|>0} \frac{  e^{-2 \pi |m||x_3|}}{4 {\color{red}\cancel 8}\pi |m|} e^{2 \pi i x_\parallel \cdot m} .
\Ee 
Note that (q4) guarantees the absolute convergence of the first series above, and hence these identities are valid. 

On the other hand, from \eqref{def:Q}, 
\Be\label{q-b1}
\begin{split}
\sum_{m \in \Z^2}  q(x_\parallel+ m, x_3) &= q(x_\parallel, x_3) +  \sum_{|m |>0}  q(x_\parallel+ m, x_3)\\
&=  \frac{c_2}{2 {\color{red}\cancel 4}}  \frac{1}{\left( |x_\parallel|^2 + |x_3|^2\right)^{1/2}}  +  b_1 (x_\parallel, x_3)+  \sum_{|m |>0}  q(x_\parallel+ m, x_3).
\end{split} \Ee
We define (recall the definition of $q(x)$ and $b_1(x)$ in \eqref{def:Q}) 
\Be\label{def:b}
b(x):= b_1 (x) +  \sum_{|m |>0}  q(x_\parallel+ m, x_3).
\Ee
Recall again that $\sum_{|m |>0}  q(x_\parallel+ m, x_3)$ is absolutely convergent due to (q4). For the sake of convenience, we record our decomposition of $\tilde G$:
\Be\label{dec:tildeG}
\begin{split}
\tilde{G} (x_\parallel, x_3) &  = \frac{|x_3|}{2} - \sum_{m \in \mathbb{Z}^2: |m| >0} Q(m, x_3) e^{i 2 \pi m \cdot x_\parallel} \\
&= \frac{|x_3|}{2} - \sum_{ m \in \mathbb{Z}^2} q(x_\parallel + m, x_3)\\
&=  \frac{|x_3|}{2} -  \frac{c_2}{2 }  \frac{1}{\left( |x_\parallel|^2 + |x_3|^2\right)^{1/2}}  - \Big\{ b_1 (x_\parallel, x_3)  +  \sum_{|m |>0}  q(x_\parallel+ m, x_3) \Big\}\\
& =  \frac{|x_3|}{2} -  \frac{c_2}{2  }  \frac{1}{\left( |x_\parallel|^2 + |x_3|^2\right)^{1/2}} - b(x_\parallel, x_3).
\end{split}
\Ee
Here, we have used \eqref{G1} and \eqref{def:Q} at the first line; used \eqref{q-Q} at the second line; used \eqref{q-b1} at the third line; and have used \eqref{def:b} at the last line.

For all $x_3 \in \R$, the first statement of (q4) implies that {\color{red}[$\| b(\cdot, x_3) \|_{C^2 (\T^2)} \lesssim e^{-|x_3|}$. ALERT!]}

Now we study $\p_{x_3} b$. When $|x_3| \geq 1$, we use (q4). Then we have $\| \p_{x_3}b(\cdot, x_3)\|_{C^2(\R^2)}+ \| \p_{x_3}^2b(\cdot, x_3)\|_{C^2(\R^2)} \lesssim e^{-|x_3|}$ for $|x_3|\geq 1$. If $|x_3| \leq 1$, we should use the second statement of (q3). Hence, we derive that, for $i=1,2,3$, $\| \p_{x_3}b_i(\cdot, x_3)\|_{C^2(\R^2)}+ \| \p_{x_3}^2b_i(\cdot, x_3)\|_{C^2(\R^2)} \lesssim 1$ for $|x_3|\leq 1$. These concludes \eqref{b:2}.
\unhide
%
%
\end{proof}

\hide
From the evenness of $\cos(\cdot)$ and the oddness of $\sin(\cdot)$, we derive the following identities for $I_i$ in \eqref{G1}:
\begin{align}
I_1 &= 4 \sum_{ m,n \geq 1} w_{m,n}\cos (2 \pi m x_1) \cos (2 \pi n x_2)  
+ 2 \sum_{m=1}^\infty w_{m,0} (x_3) e^{i 2\pi m x_1 } 
+ 2 \sum_{n=1}^\infty w_{0,n} (x_3) e^{i 2\pi n x_2 } 
,\label{form:G_cc}\\
I_2 & = - \sum_{m,n \gtrless 0} w_{m,n}  \sin (2 \pi m x_1) \sin (2 \pi n x_2)
+\sum_{m  \gtrless 0 \gtrless n}  w_{m,n} \sin (2 \pi m x_1) \sin (2 \pi (-n) x_2)\notag\\
& =  - \sum_{m,n \gtrless 0} w_{m,n}  \sin (2 \pi m x_1) \sin (2 \pi n x_2)
+\sum_{m,n \gtrless 0}  w_{m,n}  \sin (2 \pi m x_1) \sin (2 \pi n x_2)=0,\label{form:G_ss}\\
I_3 &  = i 2 
\sum_{\substack{m \in \N \\n>0 } } 
w_{m,n}  \cos (2 \pi m x_1) \sin (2 \pi n x_2)
+ i 2\sum_{\substack{m \in \N  \\n<0 } }w_{m,n} \cos (2 \pi m x_1)  (-1)\sin (2 \pi  (-n) x_2)\notag \\
& = i2  \sum_{\substack{m \in \N \\n>0 } }  w_{m,n}  \cos (2 \pi m x_1) \sin (2 \pi n x_2)- i2  \sum_{\substack{m \in \N  \\n>0 } } w_{m,n} \cos (2 \pi m x_1) \sin (2 \pi n x_2)=0,\label{form:G_cs}\\
I_4 &  = 0,\label{form:G_sc}
\end{align}
where at \eqref{form:G_sc} we follow the similar process of \eqref{form:G_cs} but exchanging the role of $m$ and $n$.

Now we assume $|x_3|>1$. Since $m^2 + n^2 \geq 2 mn$ and $m^2 + n^2 \geq \frac{1}{2} (m+n)^2$ for $m,n \in \N$, we have 
\Be\notag
w_{m,n}(x_3) \leq \frac{1}{4 \sqrt{2} \pi mn } e^{-   \pi |x_3|m}e^{-   \pi |x_3| n}
\leq 
\frac{1}{4 \sqrt{2} \pi }e^{-   \pi |x_3|m}e^{-   \pi |x_3| n}
\ \text{for} \ m,n \in \N \backslash \{0\}.
\Ee
Then using the summation formula for a geometric series, we can easily derive that 
\Be\label{bound:I1}
|I_1| \leq \frac{1}{\sqrt{2}  \pi} \left( \frac{e^{- \pi |x_3|}}{1- e^{- \pi |x_3|}}\right)^2+ \frac{1}{\sqrt{2} \pi} \left( \frac{e^{- \pi |x_3|}}{1- e^{- \pi |x_3|}}\right) \leq 10 e^{- \pi |x_3|} \ \ \text{for} \  |x_3|\geq 1. 
\Ee

We summarize the bounds of $\bar{G}(x)$ as 
\Be\label{G:away_0}
\Big| \bar{G}(x) - \frac{|x_3|}{2}\Big| \leq 10 e^{- \pi |x_3|} \ \ \text{for} \ |x_3|\geq 1. 
\Ee

\hide

Now we focus on \eqref{form:G_cc}. Recall a formula (as an anti-derivative of some geometric series)
\Be \notag
\sum_{m=1}^\infty  \frac{1}{2 \pi m}e^{- 2\pi m |x_3|} \cos (2 \pi m x_1  )
= \frac{1}{2}|x_3| - \frac{\ln 2}{2 \pi} - \frac{1}{4 \pi } \ln \big( \sinh^2 | \pi x_3| + \sin^2  (\pi x_1)\big).
\Ee
Then we write \eqref{form:G_cc} as 
\Be
\begin{split}
I_1 &= - \sum_{m=1}^\infty \frac{e^{- 2\pi m |x_3|}}{2\pi m} \cos (2 \pi m x_1) - \sum_{n=1}^\infty  \frac{e^{- 2\pi n |x_3|}}{2\pi n} \cos (2 \pi n x_2)  \\
&  +  4 \sum_{ m,n \geq 1} w_{m,n}\cos (2 \pi m x_1) \cos (2 \pi n x_2) \\
& = -   |x_3|
\end{split}\Ee

From \eqref{G1}, \eqref{form:G_cc}- \eqref{form:G_sc} and a straightforward rearrangement, we derive that 
\Be
\begin{split}
\bar G(x) & = \frac{|x_3|}{2} + c + \sum_{\substack{m\geq 0, n \geq 0,\\ (m,n) \neq (0,0)} }  	\frac{ 1}{  \pi \sqrt{m^2 + n^2}} e^{- 2 \pi \sqrt{m^2 + n^2} |x_3|} \cos (2 \pi m x_1) \cos (2 \pi n x_2)
\end{split}
\Ee

\unhide 

\unhide
\hide
Then by usual reflecting argument, we derive the Green function in $\T^2 \times [0, \infty)$ in Proposition \ref{lemma:G}.

Define $ {G}:  \O   =  \T^2 \times [0, \infty) \mapsto \R$ as the $\T^2$-periodic Green function in the half-space satisfying Dirichlet BC:
\be \label{g2}
\Delta G (x, y) = \sum\limits_{m, n \in \Z} \delta_0 (x - y + (m, n, 0)) - \sum\limits_{m, n \in \Z} \delta_0 (\tilde{x} - y + (m, n, 0)).
\ee

\begin{proposition} \label{lem: g2}
Let  	
\textcolor{red}{  
\be \label{g1 4}    
\begin{split}
	G (x)  
	& = \frac{1}{2} |x_3| + c - \sum\limits_{(m, n) \neq (0, 0)} \frac{1}{4 \pi \sqrt{m^2 + n^2}} e^{- 2 \pi \sqrt{m^2 + n^2} |x_3|} e^{i 2\pi m x_1} e^{i 2\pi n x_2}
	\\& = \frac{1}{2} |x_3| + c - \sum\limits_{(m, n) \neq (0, 0)} \frac{1}{4 \pi \sqrt{m^2 + n^2}} e^{- 2 \pi \sqrt{m^2 + n^2} |x_3|} \cos (2\pi m x_1 + 2\pi n x_2).
\end{split}
\ee
}

For an arbitrary function $\rho$ defined in $\T^2 \times (-\infty, \infty)$ and periodic horizontally in $\T^2$, define 
\Be
\phi_\rho(x) : = \int_{\T^2}G(x-y) \rho(y) \dd y. 
\Ee
Then $\phi_\rho(x)$ is well-defined in $\T^2 \times (-\infty, \infty)$ and periodic horizontally in $\T^2$. Moreover, 
\Be
\Delta \phi_\rho = \rho \ \ \text{in}  \ \  \T^2 \times [0, \infty).
\Ee

\end{proposition}

\begin{proof}

Suppose $x = (x_1, x_2, x_3) \in \T^2 \times [0, \infty)$. From Lemma \ref{lem: g1}, we have
\be
\begin{split} \label{g2 1}
& \sum\limits_{m, n \in \Z} \delta_0 (x - y + (m, n, 0)) - \sum\limits_{m, n \in \Z} \delta_0 (\tilde{x} - y + (m, n, 0))
\\& = \sum\limits_{m, n \in \Z} \delta_0 (x_3 - y_3) e^{i 2\pi m (x_1 - y_1)} e^{i 2\pi n (x_2 - y_2)} - \sum\limits_{m, n \in \Z} \delta_0 (- x_3 - y_3) e^{i 2\pi m (x_1 - y_1)} e^{i 2\pi n (x_2 - y_2)}
\end{split}
\ee

Therefore, we can write $\tilde{G} (x, y)$ as
\be \label{g2 2}
\begin{split}
\tilde{G} (x, y) 
& = \frac{1}{2} |x_3 - y_3| - \sum\limits_{(m, n) \neq (0, 0)} \frac{1}{4 \pi \sqrt{m^2 + n^2}} e^{- 2 \pi \sqrt{m^2 + n^2} |x_3 - y_3|} e^{i 2\pi m (x_1 - y_1)} e^{i 2\pi n (x_2 - y_2)}
\\& \ \ \ \ - \frac{1}{2} |-x_3 - y_3| + \sum\limits_{(m, n) \neq (0, 0)} \frac{1}{4 \pi \sqrt{m^2 + n^2}} e^{- 2 \pi \sqrt{m^2 + n^2} |-x_3 - y_3|} e^{i 2\pi m (x_1 - y_1)} e^{i 2\pi n (x_2 - y_2)}
\\& = \frac{1}{2} |x_3 - y_3| - \frac{1}{2} |-x_3 - y_3|
\\& \ \ \ \ - \sum\limits_{(m, n) \neq (0, 0)} \frac{1}{4 \pi \sqrt{m^2 + n^2}} e^{- 2 \pi \sqrt{m^2 + n^2} |x_3 - y_3|} \cos (2\pi m (x_1 - y_1) + 2\pi n (x_2 - y_2))
\\& \ \ \ \  + \sum\limits_{(m, n) \neq (0, 0)} \frac{1}{4 \pi \sqrt{m^2 + n^2}} e^{- 2 \pi \sqrt{m^2 + n^2} |-x_3 - y_3|} \cos (2\pi m (x_1 - y_1) + 2\pi n (x_2 - y_2)).
\end{split}
\ee
where the last equality follows from the symmetry.

\textcolor{red}{stop here!}

\end{proof}

\subsection{Green function on $\T^2 \times (-\infty, \infty)$}

\begin{lemma} \label{lem: g1}
\be
G(x) = \textcolor{red}{???}.
\ee
\end{lemma}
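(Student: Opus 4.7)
The plan is to build $G$ by the classical method of images: first construct a horizontally-periodic fundamental solution $\tilde G$ on $\T^2 \times \R$ satisfying $\Delta \tilde G = \sum_{m \in \Z^2} \delta_0(x + (m, 0))$, and then set $G(x,y) = \tilde G(x-y) - \tilde G(\tilde x - y)$ with $\tilde x = (x_1, x_2, -x_3)$. The Dirichlet condition at $x_3 = 0$ is automatic because the two reflected copies coincide there. For $\tilde G$ itself, I would expand in horizontal Fourier modes, $\tilde G(x) = \sum_{m \in \Z^2} w_m(x_3) e^{2\pi i m \cdot x_\parallel}$, and use the Poisson summation formula $\sum_m \delta_0(x_\parallel + m) = \sum_m e^{2\pi i m \cdot x_\parallel}$ to decouple the equation into one-dimensional ODEs $w_m''(x_3) - 4\pi^2 |m|^2 w_m(x_3) = \delta_0(x_3)$, whose bounded solutions are $w_0(x_3) = |x_3|/2$ and $w_m(x_3) = -e^{-2\pi|m||x_3|}/(4\pi|m|)$ for $|m|>0$. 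Reflecting then accounts for the first two terms $\tfrac{|x_3-y_3|}{2} - \tfrac{|x_3+y_3|}{2}$ of the claimed decomposition.

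The hard part is extracting the Coulomb singularity $\tfrac{C_2}{4|x-y|} - \tfrac{C_2}{4|\tilde x - y|}$ out of the remaining Fourier series $\sum_{|m|>0} \tfrac{e^{-2\pi|m||x_3|}}{8\pi|m|} e^{2\pi i m \cdot x_\parallel}$. The key identity is that, up to the constant $c_2/4$, the function $e^{-2\pi|\xi||x_3|}/(8\pi|\xi|)$ is the horizontal Fourier transform of $1/|x|$; this follows by Fourier-transforming $e^{-2\pi a|x_\parallel|}$ on $\R^2$ and antidifferentiating the result in the parameter $a=|x_3|$. Introducing a smooth cutoff $\iota$ with $\iota(\xi)=1$ for $|\xi|\geq 1$ and $\iota\equiv 0$ near $\xi = 0$, I would set $Q(\xi, x_3) := \iota(\xi)\, e^{-2\pi|\xi||x_3|}/(8\pi|\xi|)$, let $q(x_\parallel, x_3)$ be its inverse horizontal Fourier transform, and use the compact support of $\iota - 1$ to decompose $q = \tfrac{c_2}{4|x|} + b_1$ with $b_1(\cdot, x_3)$ smooth. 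Applying Poisson summation to $q$ yields
\[
\sum_{m\in\Z^2} q(x_\parallel + m, x_3) = \sum_{m\in\Z^2} Q(m, x_3)\, e^{2\pi i m \cdot x_\parallel} = \sum_{|m|>0} \frac{e^{-2\pi|m||x_3|}}{8\pi|m|} e^{2\pi i m \cdot x_\parallel},
\]
where the $m = 0$ term drops thanks to $\iota(0) = 0$. Peeling the $\tfrac{c_2}{4|x|}$ piece off from $q(x_\parallel, x_3)$ on the left produces the desired decomposition with remainder $b(x) := b_1(x) + \sum_{|m|>0} q(x_\parallel + m, x_3)$, and finally $b_0(x,y) := b(x-y) - b(\tilde x - y)$.

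The main obstacle I anticipate is verifying the regularity claims \eqref{b0:1}--\eqref{b0:4} for this remainder. Smoothness in the horizontal variables and rapid decay as $|x_3|\to\infty$ should come by integration by parts in $\xi$: multiplying by a monomial $x_\parallel^{n'}$ transfers to $\nabla_\xi^{n'}$ acting on $\iota(\xi) e^{-2\pi|\xi||x_3|}$, and because this symbol is supported in $|\xi|\geq 1/2$ one obtains any polynomial decay $|x_\parallel|^{-N}$ together with the factor $e^{-|x_3|}$ once $|x_3|\geq 1$; the periodized sum $\sum_{|m|>0} q(\cdot + m, x_3)$ is then absolutely convergent and inherits the same regularity. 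The genuinely delicate step is $\partial_{x_3}^2$: because $e^{-2\pi|\xi||x_3|}$ has a corner at $x_3 = 0$, its distributional second $x_3$-derivative produces a $-4\pi|\xi|\,\delta_0(x_3)$ Dirac piece, which is precisely the source of the $\delta_0(x_3 - y_3)\, b_2(x-y)$ contribution in \eqref{b0:2}. One must therefore split $\partial_{x_3}^2$ into its Dirac and absolutely continuous parts before performing any Fourier estimate; once this splitting is made, each piece is an inverse Fourier transform of a compactly supported smooth symbol, and the sup-norm bounds in \eqref{b0:3}--\eqref{b0:4} follow routinely from the $L^1$ norm of the corresponding symbol.
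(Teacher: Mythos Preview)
Your proposal is correct and follows essentially the same approach as the paper: the paper likewise builds $\tilde G$ by a horizontal Fourier series ansatz with the Poisson summation formula, solves the resulting ODEs for $w_m(x_3)$, then extracts the Coulomb singularity via the same Fourier identity and the same cutoff $Q(\xi,x_3) = \iota(\xi)\,e^{-2\pi|\xi||x_3|}/(8\pi|\xi|)$, and applies Poisson summation to its inverse transform $q$ to produce the remainder $b(x) = b_1(x) + \sum_{|m|>0} q(x_\parallel + m, x_3)$ before reflecting. Your identification of the delicate point---the $\delta_0(x_3)$ contribution in $\partial_{x_3}^2$ coming from the corner of $e^{-2\pi|\xi||x_3|}$ at $x_3=0$, which must be split off before estimating---matches the paper's treatment in \eqref{b_1} exactly.
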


\begin{proof}\textit{Step 1. } 	The Green function in 2D is derived in \cite{AmmariL}.  
In this step, following the argument of its proof, we claim that
\Be\label{G2}
\begin{split}
G(x) =& \frac{|x_3|}{2} + c + \sum_{m =0}^\infty \cos (2\pi m x_1)  \sum_{n=1}^\infty \frac{e^{- 2 \pi \sqrt{m^2 +  n^2} |x_3|}}{\pi \sqrt{m^2+ n^2}}  \cos (2\pi n x_2)\\
&+  \sum_{n =0}^\infty  \cos (2\pi n x_2) \sum_{m=1}^\infty \frac{e^{- 2 \pi \sqrt{m^2 +  n^2} |x_3|}}{\pi \sqrt{m^2+ n^2}}   \cos (2\pi m x_1).
\end{split}	\Ee

Finally we prove our claim \eqref{G1}, using \eqref{G1} and \eqref{form:G_cc}-\eqref{form:G_sc}.

\medskip

\textit{Step 2. } Using the residue theorem (Chapter 70 in \cite{BC}), we compute that 
\Be
\begin{split}
\sum_{n_1=0}^{+\infty} \cos(2 \pi n_1 x_1 ) \sum_{n_2 =1}^{+\infty} \frac{
	e^{- 2\pi  |x_3| \sqrt{n_1^2 + n_2^2 } }
}{\sqrt{n_1^2 + n_2^2}}\cos (2 \pi n_2 x_2)
\end{split}
\Ee

Using branches of $z^{1/2}$

On the complex plane, 
\Be
(z^2-1)^{1/2} =(z-1)^{1/2}(z+1)^{1/2}
\Ee
By specify the branches for $(z-1)^{1/2}$ and $(z+1)^{1/2}$, we obtain a specific branch of $(z^2-1)^{1/2} $. 

For given $n \in \N$, $x_2 \in \R$, $|x_3|\geq 0$, we are seeking a branch of the following form 
\Be
\frac{e^{-2 \pi |x_3| \sqrt{n^2 + z^2} + i 2 \pi x_2 z} }{\sqrt{n^2+ z^2}}\frac{ 1}{e^{i 2\pi z} -1 } .
\Ee
Note that this expression is not single valued hence not analytic. However, this function is analytic (single-valued, in particular) away from zeros of $e^{i 2\pi z} -1$ and a branch cut of $\sqrt{n^2 + z^2}$ (see Chapter 98 in \cite{BC}). Clearly the zeros of $e^{i 2\pi z} -1$ are all real integer numbers $\Z$. 

For $\sqrt{n^2 + z^2}$, we choose a branch cut to be the line segment connecting $+i |n|$ and $- i |n|$ in $\mathbb{C}$:
\Be\label{br_cut}
BC(|n|):= \{z \in \mathbb{C}: \text{Re} (z) =0 \ \text{and} \ -|n | \leq \text{Im} (z) \leq |n|  \}.
\Ee
Away from this branch cut, we specify a branch of $\sqrt{n^2 + z^2}$ as follows: for $z \in \mathbb{C} \backslash BC(N)$, 
\Be\label{branch}
\sqrt{n^2 + z^2} = \sqrt{r_1 r_2} e^{i\frac{\theta_1+ \theta_2}{2}} \ \ \text{for} \ 
r_1, r_2>0,   r_1 + r_2 >2,  \ \text{and} \ - \frac{\pi}{2} \leq \theta_1, \theta_2< \frac{3 \pi}{2},
\Ee
where 
\Be\label{r&theta}
\begin{split}
r_1 &= \big| z- i |n| \big|>0, \\
r_2  &= \big| z+ i |n| \big|>0,\\
\tan \theta_1 &= \frac{\text{Im} (z- i |n|)}{\text{Re}(z- i |n|)}= \frac{\text{Im} (z)-  |n|}{\text{Re}(z )}  \ \ \text{and} \ \ - \frac{\pi}{2} \leq \theta_1< \frac{3 \pi}{2},\\ 
\tan \theta_2 &= \frac{\text{Im} (z+ i |n|)}{\text{Re}(z+ i |n|)}= \frac{\text{Im} (z)+ |n|}{\text{Re}(z )}  \ \ \text{and} \ \  -\frac{\pi}{2} \leq \theta_2< \frac{3 \pi}{2}.
\end{split}
\Ee

First we define a dumbbell-like open set containing the branch cut $BC(N)$:
\Be\begin{split}\label{BC^e}
BC^\e(|n|) =& \{z \in \mathbb{Z}: \big|z- i |n|\big|<\e\} \cup \{ z \in \mathbb{Z}:  \big|z+ i |n|\big|<\e\} \\
&\cup \Big\{ z \in \mathbb{Z}:\exists \tilde{z} \in BC(N) \text{ such that } |z- \tilde{z}| < \frac{\e}{2} \Big\}.
\end{split}\Ee
Now we define 
\Be\label{U(N)}
U(N, |n|) = \Big\{z \in \mathbb{C}: |z| < N+ \frac{1}{2}\Big\}   \backslash BC^\e(|n|) .
\Ee
\begin{lemma}
Let $n \in \N$, $x_2 \in \R$, and $|x_3|\geq 0$. We define 
\Be\label{GreenF}
\mathfrak{g} (z;n, x_2, |x_3|) := \frac{e^{-2 \pi |x_3| \sqrt{n^2 + z^2} + i 2 \pi x_2 z} }{\sqrt{n^2+ z^2}}\frac{ 1}{e^{i 2\pi z} -1 } \ \ \text{in} \ \  \mathbb{C} \backslash BC^\e (|n|),
\Ee
where a branch of $\sqrt{n^2 + z^2}$ is given in \eqref{branch}. Then $\mathfrak{g} (z;n, x_2, |x_3|)$ is analytic in $\mathbb{C} \backslash BC^\e (|n|)$. Moreover, $\mathfrak{g} (z;n, x_2, |x_3|)$ only has singular points at 
\Be\label{pole}
\{k \in \mathbb{N}: 0<|k|   \}.
\Ee 
All singular points are simple poles and the residues at each pole is given by 
\Be\label{Res}
\text{Res}_{z=k}\mathfrak{g} (z;n, x_2, |x_3|) = \frac{1}{i 2\pi }  \frac{e^{-2 \pi |x_3| \sqrt{n^2 + k^2} + i 2 \pi x_2 k} }{\sqrt{n^2+ k^2}} \ \ \text{for } \ k \in \mathbb{N}, \ 0< |k|  . 
\Ee
\end{lemma}

From the residue theorem (Chapter 70 in \cite{BC}), we derive that 
\Be
\begin{split}
\int_{\mathcal{C}_1(N, |n|, \e) + \cdots + \mathcal{C}_{12}} 
\mathfrak{g} (z;n, x_2, |x_3|) \dd z  &= i 2 \pi  \sum_{ k \in \mathbb{N}:  0<|k| \leq N}\text{Res}_{z= k} \mathfrak{g} (z;n, x_2, |x_3|) \\
& = 2\sum_{k=1}^{N}  \frac{e^{-2 \pi |x_3| \sqrt{n^2 + k^2} + i 2 \pi x_2 k} }{\sqrt{n^2+ k^2}} .
\end{split}
\Ee

Now we compute $ \int_{\mathcal{C}_1 + \cdots + \mathcal{C}_{12}} 
\mathfrak{g} (z;n, x_2, |x_3|) \dd z$.

\Be
\int_{\mathcal{C}_4(N, |n|, \e)} = \int 
\Ee

\Be
\int_{\mathcal{C}_{10}(N, |n|, \e)} =
\Ee

where the second last equality follows from the symmetry.

Here, we first consider $x_3 > 0$,
\be \label{g1 5}
\begin{split}
& \ \ \ \ \sum\limits_{m, n \geq 0} \frac{1}{\sqrt{m^2 + n^2}} e^{- 2 \pi \sqrt{m^2 + n^2} |x_3|} \cos (2\pi m x_1 + 2\pi n x_2)
\\& \leq \sum\limits_{(m, n) \neq (0, 0)} \frac{1}{\sqrt{m^2 + n^2}} e^{- 2 \pi \sqrt{m^2 + n^2} x_3} 
\\& = \sum\limits_{m > 0} \frac{1}{m} e^{- 2 \pi m x_3} + \sum\limits_{n > 0} \frac{1}{n} e^{- 2 \pi n x_3} 
+ \sum\limits_{m, n > 0} \frac{1}{\sqrt{m^2 + n^2}} e^{- 2 \pi \sqrt{m^2 + n^2} x_3}
\\& = 2 \underbrace{\sum\limits_{m > 0} \frac{1}{m} e^{- 2 \pi m x_3}}_{\eqref{g1 5}^*}
+ \underbrace{\sum\limits_{m, n > 0} \frac{1}{\sqrt{m^2 + n^2}} e^{- 2 \pi \sqrt{m^2 + n^2} x_3}}_{\eqref{g1 5}^{**}}.
\end{split}
\ee

Using the summation identity, we deduce
\be
\sum\limits_{m > 0} \frac{1}{m} e^{- 2 \pi m x_3}
= \frac{1}{2} x_3 - \frac{\ln (2)}{2 \pi} - \frac{1}{4 \pi} \ln (\sinh^2 (\pi x_3)).
\ee
For $0 < x_3 \ll 1$, from the Taylor expansion, we have
\be
\eqref{g1 5}^*
\lesssim - \frac{1}{2 \pi} \ln (\pi x_3).
\ee
On the other hand, for $x_3 \gg 1$,
\be
\begin{split}
\sum\limits_{m > 0} \frac{1}{m} e^{- 2 \pi m x_3}
\leq \sum\limits_{m > 0} e^{- 2 \pi m x_3}
= \frac{e^{- 2 \pi x_3}}{1 - e^{- 2 \pi x_3}}.
\end{split}
\ee
Thus, we have
\be
\eqref{g1 5}^{*}
\lesssim - \ln (x_3) \times \mathbf{1}_{0 < x_3 \ll 1} + e^{- 2 \pi x_3} \times \mathbf{1}_{x_3 \gg 1}.
\ee

Since $2 m^2 + 2 n^2 \geq (m + n)^2$, we obtain
\be
\begin{split}
\eqref{g1 5}^{**} 
& \leq \sum\limits_{m, n > 0} \frac{1}{\sqrt{2mn}} e^{- \sqrt{2} \pi (m+n) x_3}
\\& \leq \Big( \sum\limits_{m > 0} \frac{1}{\sqrt{m}} e^{- \sqrt{2} \pi m x_3} \Big) \times
\Big( \sum\limits_{n > 0} \frac{1}{\sqrt{n}} e^{- \sqrt{2} \pi n x_3} \Big)
\\& = \Big( \sum\limits_{m > 0} \frac{1}{\sqrt{m}} e^{- \sqrt{2} \pi m x_3} \Big)^2.
\end{split}
\ee
Under direct computation,
\be
\begin{split}
\sum\limits_{m > 0} \frac{1}{\sqrt{m}} e^{- \sqrt{2} \pi m x_3}
& \leq \int^{\infty}_{0} \frac{1}{\sqrt{m}} e^{- \sqrt{2} \pi m x_3} \dd m
\\& \lesssim \frac{1}{\sqrt{x_3}} \int^{\infty}_{0} \frac{1}{\sqrt{t}} e^{- t} \dd t = \frac{\Gamma (\frac{1}{2})}{\sqrt{x_3}},
\end{split}
\ee
On the other hand, for $x_3 \gg 1$,
\be
\begin{split}
\sum\limits_{m > 0} \frac{1}{\sqrt{m}} e^{- \sqrt{2} \pi m x_3}
& \leq \sum\limits_{m > 0} e^{- \sqrt{2} \pi m x_3}
= \frac{e^{- \sqrt{2} \pi x_3}}{1 - e^{- \sqrt{2} \pi x_3}}.
\end{split}
\ee
Thus, we have
\be
\eqref{g1 5}^{**}
\lesssim \frac{1}{x_3} \times \mathbf{1}_{0 < x_3 \ll 1} + e^{- 2 \pi x_3} \times \mathbf{1}_{x_3 \gg 1}.
\ee

\end{proof}
\unhide

\hide
\subsection{Poisson equation}
Recall that $\frac{-1}{4 \pi} \frac{1}{|x|}$ is the Green function of the Laplacian in $\R^3$ (Section 2.2.1. of Evans). Following Section 2.2.4 of Evans, the Green function of the Laplacian in the half space $\R^3_+= \{(x_1,x_2, x_3) \in \R^3: x_3 >0\}$ is 
\Be
G(x,y) =- \frac{1}{4 \pi} \frac{1}{|x-y|} + \frac{1}{4 \pi} \frac{1}{|\tilde{x}-y|} {\color{red}(CHECK)}
\Ee
where $\tilde x = (x_1, x_2, - x_3)$. 

Now we find the form of $\phi$ when $\rho$ is periodic in $\T^2$:
\Be
\Delta \phi = \rho  \ \ in \ \R^3_+ \ \ \ \phi|_{x_3=0}=0\Ee

\textit{[[Formally 
\Be
\phi  (x) = \int_{\R^3_+} \Big(- \frac{1}{4 \pi} \frac{1}{|x-y|} + \frac{1}{4 \pi} \frac{1}{|\tilde{x}-y|} \Big) \rho (y)  \dd  y .  
\Ee
Since $\rho$ is periodic in $(x_1, x_2) \in \T^2$ and 
\Be
\phi  (x) = \int_{\R^3_+} \Big(- \frac{1}{4 \pi} \frac{1}{|y|} + \frac{1}{4 \pi} \frac{1}{|y  - 2x_3|} \Big) \rho (x-y)  \dd  y ,
\Ee
$\phi  (x)$ is periodic in $(x_1, x_2) \in \T^2$ . However this is not rigorous as the integration is not summable.]]}

Instead we should follow the idea of Batt-Rein 1993. See their Proposition 2.1. I guess using their scheme, we might be able to obtain 
\begin{lemma}{\color{red}(CHECK)}
and 
\Be
\phi(x) = \int_{\T^2 \times (0, \infty)} G(x,y) \rho(y) \dd y 
\Ee
A key is that the integration will be taken in $\T^2 \times (0, \infty)$ not $\R_+^3$. Check that the above integral form is well-defined under some decay condition of $\rho(y)$ as $y_3 \rightarrow \infty$. 
\end{lemma}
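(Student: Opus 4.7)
\medskip

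\noindent\textbf{Proof plan for Lemma \ref{lem:rho_to_phi}.} The plan is to exploit the explicit decomposition of the Green function in Lemma \ref{lemma:G},
\[
G(x,y) = \underbrace{\tfrac{|x_3-y_3|}{2} - \tfrac{|x_3+y_3|}{2}}_{=: G_0(x_3,y_3)} \; + \; \underbrace{\tfrac{C_2}{4|x-y|} - \tfrac{C_2}{4|\tilde x - y|}}_{=: G_1(x,y)} \; + \; b_0(x,y),
\]
and to treat the three pieces separately for each of the three estimates. The exponential bound $|\rho(x)|\le A e^{-Bx_3}$ will be used to gain a factor $1/B$ from integrals in $y_3$.

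\medskip

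\noindent\emph{Step 1: the pointwise gradient bound \eqref{est:nabla_phi}.} Since $G_0$ depends only on the vertical variables, $\partial_{x_j}(G_0*\rho)$ vanishes unless $j=3$; when $j=3$ one obtains $-\int_{\T^2}\int_{x_3}^\infty \rho(y_\parallel,y_3)\,dy_3\,dy_\parallel$, which the hypothesis bounds by $\delta_{j3}(A/B)e^{-Bx_3}$. For $G_1$ one has $|\nabla_x G_1(x,y)|\lesssim |x-y|^{-2}$, so
\[
|\partial_{x_j}(G_1*\rho)(x)| \lesssim A\int_0^\infty e^{-By_3}\int_{\T^2}\frac{dy_\parallel}{|x_\parallel-y_\parallel|^2+|x_3-y_3|^2}\,dy_3,
\]
and after the $y_\parallel$-integration (which is $O(\ln(1+|x_3-y_3|^{-2}))$ uniformly in $x_\parallel$ because $\T^2$ is bounded) the $y_3$-integral is split into $|y_3-x_3|\le 1$, $y_3\ge x_3+1$, and $0\le y_3\le x_3-1$, giving contributions $O(A)$ and $O(A/B)$. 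For $b_0$ one uses \eqref{b0:3}–\eqref{b0:4}: near-diagonal $|x_3-y_3|\le 1$ yields $O(A)$, and off-diagonal exponential decay combined with $e^{-By_3}$ yields $O(A/B)$. Summing these produces \eqref{est:nabla_phi}.

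\medskip

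\noindent\emph{Step 2: the $L^p$ bound \eqref{CZ_infty}.} The derivatives $\nabla_x^2 G_1$ are Calderón–Zygmund kernels (one full-space Newton potential and its reflection), so their convolution is bounded on $L^p$ for $1<p<\infty$. The $\delta_0(x_3-y_3)$ that appears when taking $\partial_{x_3}^2 G_0$ gives, by Minkowski or Young, an $L^p$ bound of the horizontal convolution which is again controlled by $\|\rho\|_{L^p}$. Finally \eqref{b0:1}–\eqref{b0:4} show $\nabla_x^2 b_0$ is (up to the Dirac mass in $x_3-y_3$, which is handled identically) an integrable kernel with $\|\cdot\|_{L^1_y}\lesssim 1$, also yielding an $L^p$ bound. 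Thus $\|\nabla_x^2\phi\|_{L^p(\Omega)}\lesssim_p \|\rho\|_{L^p(\Omega)}$. The exponential decay of $\rho$ gives $\|\rho\|_{L^p}^p\le A^p\int_0^\infty e^{-pBy_3}dy_3=A^p/(pB)$, which leads to $\|\rho\|_{L^p}\lesssim A/B^{1/p}$ and proves \eqref{CZ_infty}.

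\medskip

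\noindent\emph{Step 3: the log-Hölder $L^\infty$ bound \eqref{est:nabla^2phi}.} This is the classical Log-Lipschitz/BMO-type estimate for the second derivative of a Newton potential; I will reproduce the standard splitting. Write $\nabla_x^2\phi = K*\rho$ where $K$ is the (mean-zero) CZ-kernel coming from $G_1$, plus the smoother pieces from $G_0$ and $b_0$ which are already $L^\infty$-bounded by $\|\rho\|_{L^\infty}$. For the CZ-part, split the convolution into $|x-y|<r$ and $|x-y|\ge r$. Using $\rho(y)=\rho(x)+(\rho(y)-\rho(x))$ and the mean-zero property of $K$ on spheres, the near piece is bounded by $r^\delta\|\rho\|_{C^{0,\delta}}\int_{|z|<r}|z|^{-3+\delta}dz\lesssim r^\delta\|\rho\|_{C^{0,\delta}}/\delta$, and the far piece by $\|\rho\|_{L^\infty}\int_{r\le|z|\le D}|z|^{-3}dz\lesssim \|\rho\|_{L^\infty}\log(D/r)$ for a domain diameter $D$. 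Choosing $r=\min\{1,\|\rho\|_{C^{0,\delta}}^{-1/\delta}\}$ balances the two terms into $\|\rho\|_{L^\infty}\log(e+\|\rho\|_{C^{0,\delta}})$, which is \eqref{est:nabla^2phi}.

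\medskip

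\noindent\emph{Main obstacle.} The bookkeeping in Step 1 is where care is really needed: one must keep the constant $\mathfrak C$ independent of $A,B$ and isolate the $e^{-Bx_3}$ factor that only arises from the vertical ($G_0$) piece. The Newton-potential contribution, even though we integrate against an exponentially decaying $\rho$, is only $O(A/B+A)$ and does \emph{not} inherit the $e^{-Bx_3}$ decay in the horizontal directions; this is exactly why the bound \eqref{est:nabla_phi} has the Kronecker $\delta_{3j}$ on the decaying term and only $1+1/B$ on the rest. Step 3 is standard but one must verify that the $b_0$-pieces (in particular the $\delta_0(x_3-y_3)b_2(x-y)$ term from \eqref{b0:2}) have second derivatives in $x$ that are still CZ-type, so that the same log-Hölder argument applies without new boundary contributions; this follows from \eqref{b0:1}–\eqref{b0:4}.
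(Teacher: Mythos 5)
Your proposal is correct and follows essentially the same route as the paper: the same three-way splitting of $G$ into the vertical kernel, the Newton/reflection kernel, and $b_0$, the same $I_1,I_2,I_3$ bookkeeping for \eqref{est:nabla_phi}, the Calder\'on--Zygmund bound plus $\|\rho\|_{L^p}\lesssim A/B^{1/p}$ for \eqref{CZ_infty}, and the standard potential-theoretic log estimate for \eqref{est:nabla^2phi}. The only difference is that you write out the classical splitting argument for the log-H\"older bound, which the paper simply cites as standard.
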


Note that when $x,y \in \O = \T^2 \times \R_+$,
\Be
|x-y|  \leq  |\tilde x - y|
\Ee
Hence 
\Be
\frac{1}{|x-y|} \geq \frac{1}{  |\tilde x - y|}
\Ee
Therefore 
\Be
G(x,y) \leq \frac{1}{2 \pi } \frac{1}{|x-y|} + G_0 (x,y)
\Ee

\begin{lemma}\label{lem:elliptic_est:C^1}
\Be\label{elliptic_est:C1}
\|\nabla_x \Delta_0^{-1} \rho_f (s,\cdot ) \|_{L^\infty_x} \lesssim 
\| \rho_f (s,\cdot) \|_{L^\infty_x} ,
\Ee 
\end{lemma}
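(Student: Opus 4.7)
The starting point is the explicit Green's function representation of Section \ref{Sec:Green}: with $\phi := \Delta_0^{-1}\rho_f(s,\cdot)$, one has $\phi(x)=\int_{\T^2\times[0,\infty)} G(x,y)\rho_f(s,y)\,\dd y$, and Lemma \ref{lemma:G} decomposes $G$ into three pieces: the one-dimensional reflector $R(x_3,y_3):=\tfrac12(|x_3-y_3|-|x_3+y_3|)$, the image-charge Newtonian kernel $N(x,y):=\tfrac{C_2}{4}\bigl(|x-y|^{-1}-|\tilde x-y|^{-1}\bigr)$, and the smooth remainder $b_0(x,y)$. The plan is to differentiate under the integral and bound each of the three resulting kernels separately against $\|\rho_f\|_{L^\infty_x}$. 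Note that $\nabla_{x_\parallel} R\equiv 0$, so the horizontal derivatives only feel $N$ and $b_0$.

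For the Newtonian kernel I would split the $y$-integral into near-field $\{|x-y|\le 1\}$ and far-field $\{|x-y|>1\}$. The near-field is handled by the pointwise bound $|\nabla_x N(x,y)|\le C|x-y|^{-2}+C|\tilde x-y|^{-2}$ together with $\int_{B_1(x)}|x-y|^{-2}\,\dd y\lesssim 1$; in the far field, horizontal periodicity reduces the $y_\parallel$-integration to a single period of $\T^2$, after which an explicit $y_\parallel$-integration produces a factor $\log(1+|x_3-y_3|^{-2})$ that one integrates in $y_3$ exactly as in the derivation of \eqref{est:I_2}. The smooth remainder $b_0$ is even easier: the estimates \eqref{b0:3}--\eqref{b0:4} give $|\nabla_x b_0(x,y)|\lesssim \min\{1,e^{-|x_3-y_3|}\}$, so its contribution is a one-dimensional convolution against an $L^1(\R)$ envelope and is bounded by $\|\rho_f\|_{L^\infty_x}$.

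The main obstacle is the vertical derivative of the reflector, $\p_{x_3} R(x_3,y_3)=-\mathbf 1_{y_3>x_3}$, which generates the tail integral $-\int_{\T^2}\!\int_{x_3}^\infty \rho_f(s,y)\,\dd y_3\,\dd y_\parallel$; on the half-space this tail simply cannot be dominated by the $L^\infty_x$ norm alone, so some vertical decay of $\rho_f$ is needed. The resolution consistent with the ambient setup is that $\rho_f=\int_{\R^3} f\,\dd v$ inherits exponential vertical decay $|\rho_f(s,y)|\lesssim e^{-\beta g y_3/2}$ from the Gaussian weight $\w_\beta$ carried by $f$, exactly as in \eqref{Uest:rho}. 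Inserting this decay makes the tail integral a harmless bounded quantity, and the full estimate collapses to the same three-piece argument used for \eqref{est:nabla_phi}. Thus my plan is to invoke the ambient decay of $\rho_f$ (which is available for free in this paper) and then to reuse the Green's-function decomposition strategy of Lemma \ref{lem:rho_to_phi}, the only new observation being that the horizontal derivatives require no decay and are handled purely by Calder\'on--Zygmund-type bounds on the near-field of $N$ plus the periodic-cell reduction in the far-field.
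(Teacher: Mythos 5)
Your proposal is correct and, if anything, more careful than the paper's own proof of this statement. The paper's argument is a single rough computation: it bounds the differentiated kernel by a multiple of $|x-y|^{-2}$ and evaluates $\int_{\T^2}\int_0^\infty \frac{\dd y_3\,\dd y_\parallel}{|x_3-y_3|^2+|y_\parallel|^2}$ directly, integrating first in $y_3$ over the whole half-line (an arctangent, giving $\pi/|y_\parallel|$) and then $|y_\parallel|^{-1}$ over the periodic cell; no near-field/far-field splitting and no decay of $\rho_f$ is invoked, because only the Newtonian piece of the Green's function of Lemma \ref{lemma:G} is considered there. Your near/far split of $N$ and your treatment of $b_0$ via \eqref{b0:3}--\eqref{b0:4} reach the same conclusion for those two pieces by a slightly longer but equivalent route. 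The genuine difference is the reflector term: you correctly observe that $\p_{x_3}\bigl(\tfrac12|x_3-y_3|-\tfrac12|x_3+y_3|\bigr)=-\mathbf{1}_{y_3>x_3}$ produces the tail $-\int_{\T^2}\int_{x_3}^\infty\rho_f(s,y)\,\dd y_3\,\dd y_\parallel$, which cannot be controlled by $\|\rho_f(s,\cdot)\|_{L^\infty_x}$ alone (take $\rho_f\equiv 1$), whereas the paper's proof of this lemma simply omits that contribution. Your fix---importing the exponential vertical decay that $\rho_f$ actually enjoys in this setting---is precisely how the paper handles the issue in its quantitative replacement, Lemma \ref{lem:rho_to_phi}, where the hypothesis $|\rho(x)|\le Ae^{-Bx_3}$ is assumed and the bound \eqref{est:nabla_phi} carries the corresponding factors $1/B$. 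The only caveat is bookkeeping: with your patch the right-hand side is effectively the weighted norm $\|e^{\frac{\beta g}{2}x_3}\rho_f(s,\cdot)\|_{L^\infty_x}$, with a constant depending on $\beta g$, rather than the unweighted $L^\infty_x$ norm, so you should either state the estimate with that norm or record the decay hypothesis explicitly, as Lemma \ref{lem:rho_to_phi} does.
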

\begin{proof}
Roughly we need to bound the following term:
\Be
\begin{split}
&	\| \rho_f (s,\cdot) \|_{L^\infty_x} \int_{\T^2} \int_{\R_+}	 \frac{\dd y_3 }{|x-y|^2}\dd y_\parallel  \\
& =	\| \rho_f (s,\cdot) \|_{L^\infty_x} \int_{\T^2} \int_{0}^\infty  \frac{ \dd y_3}{
	|x_3- y_3|^2 + |y_\parallel |^2 
} dy_\parallel \\
& \leq  2\| \rho_f (s,\cdot) \|_{L^\infty_x}  \int_{\T^2}  \frac{1}{|y_\parallel|} \tan^{-1} \frac{y_3}{|y_\parallel|}\Big|_{y_3=0}^{y_3=\infty}   \dd y_\parallel\\
& = \pi\| \rho_f (s,\cdot) \|_{L^\infty_x}   \int_{\T^2}  \frac{ \dd y_\parallel}{|y_\parallel|} \\
&\leq  10 \| \rho_f (s,\cdot) \|_{L^\infty_x} .
\end{split}	\Ee

\end{proof}

\subsection{Bootstrap argument} First we assume that

For simplicity we assume that $m_+ =1= m_-$ and $e_+ =1= -e_-$, so that $Z_+ = Z_-$ and $x_{\b}^+ = x_{\b}^-$.

The key is to estimate $\rho_h$. Note that 
\begin{align}
h_\pm (x,v) &= G_\pm (x_{\b}^{h, \pm} (x,v) , v^{h , \pm}_{\b } (x,v) ),\\
\rho_h(x) & = \int_{\R^3}  [ w^h  h_+ (x,v)  - w^h h_- (x,v)] \frac{1}{w^h (x,v)} \dd v \\
& \leq  
\int_{\R^3}  \Big|  w^h G_+ (x_{\b }^{h, + } (x,v) , v_{\b}^{h, +} (x,v) ) -  w^h G_- (x_{\b}^{h,-} (x,v) , v_{\b }^{h,-} (x,v) )\Big| \frac{1}{w^h (x,v)}  \dd v\\
& \leq \| w^h  G_+ - w^h G_-  \|_{L^\infty(\gamma_-)}
\int_{\R^3}  \frac{1 }{w^h  (x,v)}\dd v.
\end{align}

Recall $w^h  (x,v)$ in \eqref{w^h}. If \eqref{Bootstrap} and \eqref{eqtn:Dphi} hold then 
\Be
\begin{split}
|\phi_h (x_\parallel, x_3)|  =   \Big|\phi_h  (x_\parallel, 0) + \int ^{x_3}_0 \p_{3} \phi_h (x_\parallel, y_3) \dd y_3 \Big|    \leq  \frac{g}{2}x_3. 
\end{split}
\Ee
Therefore we deduce that 
\Be
w^h (x,v)  \geq e^{ \beta \big(\frac{|v|^2}{2} - |\phi_h(x)| + g   x_3\big)} 
\geq e^{ \beta \big(\frac{|v|^2}{2}   + \frac{g}{2}   x_3\big)} ,
\Ee
which implies 
\Be
\int_{\R^3}  \frac{1 }{w^h  (x,v)}\dd v  \leq  \int_{\R^3}  
e^{- \beta \big(\frac{|v|^2}{2}   + \frac{g}{2}   x_3\big)}
\dd v  = \frac{(2\pi)^{3/2}}{\beta^{3/2}} e^{- \frac{\beta g}{2} x_3 }.
\Ee
\hide
\begin{align}
&\int_{\R^3}  \frac{1}{w_h (x_\b (x,v), v_\b (x,v))}\dd v\\
&= \int_{\R^3}  e^{- \frac{\beta}{2} | v_{\b,3} (x,v)|^2 } e^{- \frac{\beta}{2} | v_{\b,\parallel} (x,v)|^2 }  \dd v 
\end{align}
Recall \eqref{est:tB}. Following the estimate below \eqref{est:1/w}, we might have 
\Be\label{est:v_b,||^h}
- |v_{\b, \parallel}^h (x,v)|^2 \leq - \frac{|v_\parallel|^2}{2} + \frac{16 |v_{\b, 3}^h|^2}{g^2} \| \nabla_x \phi_h \|_\infty^2.
\Ee
Now we return to  and derive that 
\begin{align}
&\int_{\R^3}  \frac{1}{w_h (x_\b (x,v), v_\b (x,v))}\dd v\\
&= \int_{\R} \int_{\R^2}  e^{- \frac{\beta}{2}
\big( 1-  \frac{16  \| \nabla_x \phi_h \|_\infty^2}{g^2}  \big)
| v_{\b,3}^h (x,v)|^2 } e^{- \frac{\beta}{4} | v_{\parallel}|^2 }  \dd v_\parallel  \dd v_3\\
& \lesssim \frac{1}{\beta} \int_{\R}  e^{- \frac{\beta}{2}
\big( 1-  \frac{16  \| \nabla_x \phi_h \|_\infty^2}{g^2}  \big)
| v_{3} |^2 }  \dd v_3\\
& \lesssim \frac{1}{\beta^{3/2}} 	\Big( 1-  \frac{16  \| \nabla_x \phi_h \|_\infty^2}{g^2}  \Big)^{-1/2},
\end{align}
where we have used $|v_3| \leq  | v_{\b,3}^h (x,v)|$.\unhide

\unhide

\section{Asymptotic Stability Criterion}\label{sec:AS}


The goal of current section is to give a proof of Theorem \ref{theo:AS}. In this section we always assume all conditions of Theorem \ref{theo:AS} hold. For example, global-in-time self-consistent potentials $\Phi(\cdot), \Psi(t, \cdot )  \in C^{1}(\bar \O) \cap C^2 (\O)$, and $f(t,x,v)$ is a global-in-time Lagrangian solution in the sense of Definition \ref{def:mild} and \eqref{Lform:f}. We also assume that $\nabla_v h \in L^\infty$. Recall the Lagrangian formulation of $f $ solving \eqref{eqtn:f}-\eqref{Poisson_f}:
\Be\label{form:f}
\begin{split}
f   (t,x,v)   
&= \mathcal{I}  (t,x ,v) + \mathcal{N}  (t,x,v ), 
\end{split}
\Ee 
\vspace{-16pt}
where 
\begin{align} 
\mathcal{I}(t,x,v ) 	& :=  
\mathbf{1}_{t <  \tB (t,x,v)}	f _{ 0} ( \Zz   (0;t,x,v) ) 
,\label{form:I} \\
\mathcal{N}  (t,x,v ) 	&  :=
\int^t_
{\max\{0, t- \tB  (t,x,v) \}} 
\nabla_x  \Psi  
(s, \X  (s;t,x,v)) \cdot \nabla_v h 
( \Zz    (s;t,x,v) )
\dd s 
.\label{form:N}
\end{align}

Recall $b (t,x)$ in \eqref{def:flux} and the continuity equation \eqref{cont_eqtn}. Assume that $\nabla_x \cdot b \in L^\infty_{{loc}} (\R_+ \times \O)$. Then a weak solution $\varrho$ of the continuity equation is absolutely continuous in time. Therefore we can take a time derivative to the Poisson equation \eqref{Poisson_f}. This leads  to
\Be
\begin{split}\label{identity:Psi_t}
\p_t \Psi (t,x)  = \eta \Delta_0^{-1} \p_t \varrho (t,x)  = -  \eta \Delta_0^{-1}  (\nabla_x \cdot b ) (t,x).\\
\end{split}
\Ee

Recall the dynamic weight function $\w_\beta$ in \eqref{w^F}. 
Using \eqref{dDTE} and \eqref{identity:Psi_t}, we have that 
\Be\label{dEC}
\begin{split}
&\frac{d}{ds} \big(
|\V  (s;t,x,v)|^2 
+  2\Phi(\X (s;t,x,v)) 
+ 2 \Psi (s, \X  (s;t,x,v))
+ 2 g  \X _{  3} (s;t,x,v)
\big)\\
& = 
2 \p_t \Psi (s,\X  (s;t,x,v))  = 2  \Delta_0^{-1} \p_t \varrho (s,\X  (s;t,x,v))  = -2  \Delta_0^{-1} ( \nabla_x \cdot b )(s,\X  (s;t,x,v)).
\end{split}
\Ee

The forcing term $-2  \Delta_0^{-1} ( \nabla_x \cdot b )(s,\X  (s;t,x,v))$ is bounded pointwisely if a distribution $f$ decays fast with respect to $v$ and $x_3$ in $L^\infty$: 
\begin{lemma}\label{lem:Ddb}Assume that $f$ and $b$ are related as in  \eqref{def:flux}. Suppose $b \in L^\infty_{loc} (\R_+; C^1 (\bar{\O}))$. Then we have that 
\vspace{-10pt}
\Be\label{est:D^-1 Db}
\begin{split}
\|	 \Delta_0^{-1}  (\nabla_x \cdot b  (t,x)) \|_{L^\infty (\O)} \lesssim 
\frac{1+ \frac{1}{  \beta g}}{  \beta^2 }   \|   e^{  \frac{\beta}{2} (|v|^2 + gx_3)}  f (t) \|_{L^\infty(\bar\O)} .
\end{split}
\Ee
\end{lemma}

\begin{proof}
Recall the Green function $G(x,y)$ constructed in Lemma \ref{lemma:G}. By the integration by parts, we derive that 	\vspace{-10pt}	\Be
\begin{split}\label{D^-1 Db}
\Delta_0^{-1}  (\nabla  \cdot b ) (t,x)   &=    \int_{\T^2 \times \R_+} G(x,y) \nabla_y  \cdot b (y) \dd y \\
&=  -\int_{\T^2 \times \R_+} b (y)   \cdot  \nabla_y G(x,y)     \dd y
- \int_{\T^2} G(x, y_\parallel, 0) b_3(y_\parallel, 0) \dd y_\parallel
.
\end{split}
\Ee

From \eqref{GreenF}, we have $G(x, y_\parallel, 0)= - \mathcal{G}(x,y_\parallel, 0)$ at $y_3=0$. From \eqref{b0:3}, when $|y_3| \leq 1$ then $ \mathcal{G}(x,y_\parallel, 0) =  \mathcal{G}_0(x,y_\parallel, 0)$. We also	note that 
\Be\begin{split}\notag
| b (t,x) |  & 
\leq \int_{\R^3} |v| |f (t,x,v)| \dd v 
\leq  \left(e^{-\frac{  \beta  }{2} g x_3} 
\int_{\R^3} |v| e^{- \frac{\beta}{2}|v|^2} \dd v \right)
\|   e^{  \frac{\beta}{2} (|v|^2 + gx_3)}  f (t) \|_{L^\infty (\bar \O)}\\
& \leq  \frac{ 8 \pi e^{-\frac{  \beta  }{2} g x_3}  }{\beta^2}
\|   e^{  \frac{\beta}{2} (|v|^2 + gx_3)}  f (t) \|_{L^\infty (\bar \O)}.
\end{split}\Ee
Now we utilize Lemma \ref{lemma:G}, and follow the proof of Lemma \ref{lem:rho_to_phi} to conclude the lemma.
\end{proof}

In the stability analysis, it is important to compare weight functions along the characteristics.
\begin{lemma}\label{lem:w/w}Suppose the assumption \eqref{Bootstrap} holds. Recall $w_\beta(x,v)$ in \eqref{w^h}, $\w_\beta (t,x,v )$ in \eqref{w^F}, and $(\X  , \V )$ solving \eqref{ODE_F}. Then, for $s , s^\prime\in [t-\tB (t,x,v), t +t_{\mathbf{F} } (t,x,v)
]$,  
\Be
\begin{split}\label{est:1/w_h}
\frac{\w  _\beta  (s^\prime,\Zz  (s^\prime;t,x,v) )}{\w   _\beta (s ,\Zz  (s ;t,x,v))}
&\leq 
e^{ \frac{8\beta}{g}  	\| \Delta_0^{-1} (\nabla_x \cdot b ) \|_{L^\infty_{t,x}}   \sqrt{|v_3|^2 + g x_3}
},
\\
\frac{1}{\w  _\beta  (s,\Zz   (s;t,x,v))}  
&	\leq
e^{  \frac{64 \beta}{g} 	\| \Delta_0^{-1} (\nabla_x \cdot b ) \|_{L^\infty_{t,x}}  ^2 }
e^{-\frac{\beta}{2}|v|^2}  
e^{-\frac{\beta}{2} g x_3}
,
\end{split}\Ee
and \vspace{-5pt}
\Be\label{est:1/w}
\frac{1}{w  _\beta  ( \Zz (s;t,x,v))}   \leq 
e^{\frac{16^2 \beta }{2 g^2}
\| \Delta_0^{-1} (\nabla_x \cdot b ) \|_{L^\infty_{t,x}}^2	
}
e^{- \frac{\beta}{4}|v|^2 } e^{- \frac{\beta g}{4} x_3}.
\Ee
Here, we have used the notation $L^\infty_{t,x}$ defined in \eqref{notation}.
\end{lemma} 
\begin{proof} The proof follows \eqref{dEC} and \eqref{est:D^-1 Db}. For the detail, we refer to the proof of Lemma \ref{lem:w/w_ell}.
\end{proof}

\hide

Taking a $v-$integration to the form of $f_+^{\ell+1} - f_-^{\ell+1}$ in \eqref{form:f^ell}, we get that 
\begin{align}\label{form:rho}
\rho ^{\ell+1} (t,x)=\mathcal{I}^{\ell+1}(t,x) + \mathcal{N}^{\ell+1} (t,x)
,
\end{align}
where
\Be
\begin{split}
\mathcal{I} ^{\ell+1} (t,x) 
=  \int_{\R^3} 
\mathbf{1}_{t \leq t_\b^{\ell+1, \pm} (t,x,v)}
[f_{0,+ } - f_{0,- }]
(  \mathcal{Z}^{\ell+1}(0;t,x,v) ) \dd v, \label{rhof1}
\end{split}\Ee 
\Be
\begin{split}
&\mathcal{N} ^{\ell+1} (t,x)\\
& = \int_{\R^3} \int^t_{{\max\{0, t- \tb^{\ell+1 } (t,x,v) \}}   } \nabla_x\Delta^{-1}_0 \rho ^\ell  (s, \X^{\ell+1}  (s;t,x,v)) \cdot  [ \nabla_v h_+ - \nabla_v h_-]  
( \mathcal{Z} ^{\ell+1}  (s;t,x,v) ) 
\dd s 
\dd v.\label{rhof2}
\end{split}\Ee
\unhide


\begin{lemma}\label{prop:decay}
Suppose \eqref{Bootstrap} and \eqref{Bootstrap_f} hold. Set
\Be\label{lambda_infty}
\lambda_\infty = \frac{g^2 \beta}{2^6} \ln \left(2+ \frac{g \beta^2}{2^{\frac{17}{2} + \frac{2}{g}}
\pi^{\frac{3}{2}} \| w_\beta \nabla_v h \|_{L^\infty(\O)}	
}\right).
\Ee \hide

$\lambda_\infty$ in \eqref{lambda_infty}. and 
\Be\label{Bootstrap_f}
\sup_{0 \leq t \leq T} \|
e^{ \frac{\beta}{2} (|v|^2 + g x_3)}
f(t) \|_{L^\infty(\O \times \R^3)}
\leq \frac{ ( \ln 2 )^{\frac{1}{2}}g^{\frac{1}{2}} \beta^{\frac{3}{2}}}{  64 \pi  (1+ \frac{1}{\beta g})}.
\Ee\unhide
Then   
\begin{align}
\mathcal{I}  (t,x,v)  &\leq 	2 e^{-  \frac{\beta}{4}  ( |v|^2 + g  x_3)} 
e^{- \lambda_\infty t}
e^{ \frac{16 \lambda_\infty^2}{\beta g^2}}	  \|\w_{\beta, 0 }   f_{0  } \|_{L^\infty (\O \times \R^3)}  ,\label{est:I}\\
\mathcal{N}  (t,x,v)  & 
\leq   	 e^{- \frac{\beta}{8} |v|^2} e^{- \frac{\beta g}{8} x_3}
e^{-\lambda_\infty t}  
\frac{8 \cdot 4^{1/g}}{g \beta^{1/2}}  
e^{ \frac{16^2 \lambda_\infty^2}{ 4 g^2 \beta}}  	\sup_{s \in [0, t] } \| e^{\lambda_\infty s} \varrho (s) \|_{L^\infty(\O)} \| w_\beta \nabla_v h \|_{L^\infty (\O \times \R^3)}
.\label{est:N}
\end{align}

\hide
\Be
\mathcal{I}^{\ell+1}  (t,x) \leq e^{- \frac{\beta}{4} \frac{g^2 t^2}{C^2}}  \frac{1}{\beta^{3/2}}
\Ee \unhide
\end{lemma}
\begin{proof} 
Using Lemma \ref{lem:Ddb}, applying \eqref{D^-1 Db} to \eqref{Bootstrap_f}, we derive that 
\Be\label{bound:e^Ddb}
\begin{split}
e^{ \frac{64 \beta}{g}  \sup_{0 \leq t \leq T}	\| \Delta_0^{-1} (\nabla_x \cdot b ) \|_{L^\infty   (\O)}  ^2  }
&  \leq 
e^{\frac{64  (8 \pi )^2 (1+ \frac{1}{\beta g})^2}{g\beta^3} 
	\sup_{0 \leq t \leq T}	 \|  e^{\frac{\beta}{2} (|v|^2+ g x_3) }
	f(t) 
	\|_{L^\infty (  \O  \times \R^3)}  ^2}	
\leq 
2.
\end{split}
\Ee

First we prove \eqref{est:I}. For that, we will apply Lemma \ref{lem:tb}, Lemma \ref{lem:w/w} (since \eqref{Bootstrap} holds), and Lemma \ref{lem:Ddb}. Then we derive that \hide
\Be
\begin{split}
\eqref{form:f^ellI}& \leq \|\w_{\beta, 0 }   f_0 \|_{L^\infty}  \int_{\R^3}  \frac{ \mathbf{1}_{t \leq t_\mathbf{B}^{\ell+1}  (t,x,v)}}{w_{ {\beta} / {2}} (\Zz^{\ell+1} (0;t,x,v) )  }  \dd v\\
& \leq    \|\w_{\beta, 0 }   f_0 \|_{L^\infty}  \int_{\R^3}  
\mathbf{1}_{  
	t \leq \frac{2}{g} \big( \sqrt{|v_3|^2 + g x_3} - v_3\big)
}
\dd v\\
& \leq \frac{1}{\beta^{3/2}} e^{- \frac{\beta g}{4}  x_3 }
\mathbf{1}_{ gt/4 \leq   \sqrt{|v_3|^2 + g x_3}}
e^{- \frac{\beta}{4}  ( |v|^2 + g  x_3)}\\
& \leq  \frac{1}{\beta^{3/2}} e^{- \frac{\beta g}{4}  x_3 } e^{-   \frac{ \beta g^2 }{64} t^2} 
\end{split}	\Ee
\unhide
\Be
\begin{split}\label{bound:tB/w}
&	\mathcal{I} (t,x,v)	/  \|\w_{\beta, 0 }   f_{0  } \|_{L^\infty (\O \times \R^3)}   \leq    \frac{ \mathbf{1}_{t \leq \tB  (t,x,v)}}{\w _\beta (0,\X  (0;t,x,v),\V  (0;t,x,v))}  \\
& \leq 
\mathbf{1}_{ gt/4 \leq   \sqrt{|v_3|^2 + g x_3}}
e^{  \frac{64 \beta}{g} 	\| \Delta_0^{-1} (\nabla_x \cdot b ) \|_{L^\infty_{t,x}}  ^2 }
e^{-\frac{\beta}{2}|v|^2}  
e^{-\frac{\beta}{2} g x_3} 
\leq  2 
e^{- \frac{g^2 \beta }{64} t^2}	e^{- \frac{\beta}{4}  ( |v|^2 + g  x_3)}
.
\end{split}	\Ee
We have used \eqref{est:tB} and \eqref{est:1/w_h} from the first to second line; and from the second to third line we have used \eqref{bound:e^Ddb} and the fact that if $t \leq \frac{4}{g} \sqrt{|v_3|^2 + g x_3}\leq \frac{4}{g} \sqrt{|v |^2 + g x_3} $ then 
\Be\notag
\begin{split}
\mathbf{1}_{t \leq \frac{4}{g} \sqrt{|v_3|^2 + g x_3}} e^{- \frac{\beta}{2} \big(|v|^2 + gx_3\big)} \leq \mathbf{1}_{t \leq \frac{4}{g} \sqrt{|v |^2 + g x_3}} e^{- \frac{\beta}{2} \big(|v|^2 + gx_3\big)}  \leq e^{- \frac{g^2\beta}{64}  t^2} e^{- \frac{\beta}{4} \big(|v|^2 + gx_3\big)}.
\end{split}
\Ee
Now we have, by completing the square, 
\Be
\begin{split}\notag
e^{- \frac{\beta g^2}{64} t^2} &= e^{- \frac{\beta g^2}{64} \big(t- \frac{32 \lambda_\infty}{\beta g^2}\big)^2} e^{\frac{16 \lambda_\infty^2}{\beta g^2}} e^{-\lambda_\infty t} \leq e^{\frac{16 \lambda_\infty^2}{\beta g^2}} e^{-\lambda_\infty t}.
\end{split}
\Ee
Combining this with \eqref{bound:tB/w}, we derive \eqref{est:I}.

\hide
Since $w^F (X^F(s;t,x,v),V^F (s;t,x,v))$ is invariant with respect to $s \in [ t - t^\b  _{F}(t,x,v)  ,  t+ t^\f  _{F}(t,x,v)]$, the denominator equals, for $C=4$, 
\Be
\begin{split}\label{est:1/w}
&\frac{\mathbf{1}_{ gt/C \leq   |v_{\f ,3}^F(t,x,v)|  }}{ w^F (X^F(t+ t_\f ^{F};t,x,v),V^F (t+t_\f^F;t,x,v))}\\
&= \mathbf{1}_{ gt/C \leq   |v_{\f ,3}^F(t,x,v)|  }
e^{- \beta \frac{| v_{\f ,3}^F(t,x,v)|^2}{2}}e^{- \beta \frac{| v_{\f,\parallel}^F (t,x,v)|^2}{2}}\\
& \leq e^{- \frac{\beta}{4} \frac{g^2 t^2}{C^2}}e^{- \beta \frac{| v_{\f,3}^F (t,x,v)|^2}{4}}e^{- \beta \frac{| v_{\f,\parallel}^F (t,x,v)|^2}{2}}\\
& \leq e^{- \frac{\beta}{4} \frac{g^2 t^2}{C^2}}
e^{- \frac{\beta}{4} \Big( 1- \frac{2  C^2 \| \nabla _x \phi_F \|_\infty^2}{  g^2}  \Big)  | v_{\f,3}^F (t,x,v)|^2}  
e^{- \beta \frac{| v_{ \parallel} |^2}{4}} \\
& \leq e^{- \frac{\beta}{4} \frac{g^2 t^2}{C^2}}
e^{- \frac{\beta}{4} \Big( 1- \frac{2  C^2 \| \nabla _x \phi_F \|_\infty^2}{  g^2}  \Big)  | v_3 |^2 } 
e^{- \beta \frac{| v_{ \parallel} |^2}{4}} 
\end{split}
\Ee
where we have used the following inequality to get the fourth line of \eqref{est:1/w}, for $C=4$,
\Be
\begin{split}\notag
&-|v_{\f, \parallel}^F (t,x,v)|^2\\
&\leq  -\Big| |v_\parallel | -  t_\f^F(t,x,v) \| \nabla_x \phi_F \|_\infty \Big|^2 \\
& 
\leq  - \Big| |v_\parallel |  - \frac{C |v^F_{\f, 3} (t,x,v)|}{ g} \| \nabla_x \phi_F \|_\infty\Big|^2\\
& = - |v_\parallel|^2 + 2\frac{C |v_{\f, 3}^F|}{g} \| \nabla_x  \phi_F \|_\infty | v_\parallel| - \frac{C^2 |v_{\f, 3}^F|^2}{g^2} \| \nabla_x \phi _F \|_\infty^2\\
&  \leq  - |v_\parallel|^2  + \frac{|v_\parallel|^2}{2} + 2\frac{C^2 |v_{\f, 3}^F|^2}{g^2} \| \nabla_x  \phi_F \|_\infty^2 - \frac{C^2 |v_{\f, 3}^F|^2}{g^2} \| \nabla_x \phi _F \|_\infty^2\\
& = -\frac{|v_\parallel|^2}{2}  +  \frac{C^2 |v_{\f, 3}^F|^2}{g^2} \| \nabla_x \phi _F \|_\infty^2
\end{split}
\Ee
and have used $|v_{\f,3}^F (t,x,v)| \geq |v_3|$ at the last line of \eqref{est:1/w}. 

Therefore 
\Be\begin{split}
& \int_{\R^3}  \frac{ \mathbf{1}_{ gt/C \leq   |v_{F,3}^\f (t,x,v)|  }}{w^F (X^F(0;t,x,v),V^F (0;t,x,v))}  \dd v \\
& \leq e^{- \frac{\beta}{4} \frac{g^2 t^2}{C^2}}  \frac{8}{\beta^{3/2}} \Big( 1- \frac{2  C^2 \| \nabla _x \phi_F \|_\infty^2}{  g^2}  \Big)^{-1} 
\end{split}
\Ee
\unhide

Next we prove \eqref{est:N}. Using \eqref{est:1/w} and \eqref{bound:e^Ddb}, we obtain 
\Be\label{h_v}
\begin{split}
&	\nabla_v h (\Zz (s;t,x,v) ) 
\leq \frac{ 1}{w _\beta (\X (s;t,x,v), \V (s;t,x,v))} \| w_\beta \nabla_v h \|_\infty
\\ 
& \leq  	e^{\frac{16^2 \beta }{2 g^2}
	\| \Delta_0^{-1} (\nabla_x \cdot b^\ell) \|_{L^\infty_{t,x}}^2	
}
e^{- \frac{\beta}{4}|v|^2 } e^{- \frac{\beta g}{4} x_3}  \| w_\beta \nabla_v h \|_\infty    \leq 4^{1/g}
e^{- \frac{\beta}{4}|v|^2 } e^{- \frac{\beta g}{4} x_3}  \| w_\beta \nabla_v h \|_\infty ,
\end{split}
\Ee 
where we have used that $
e^{\frac{16^2 \beta }{2 g^2}
\| \Delta_0^{-1} (\nabla_x \cdot b^\ell) \|_{L^\infty_{t,x}}^2	
}
\leq e^{\frac{2}{g} \ln 2} \leq  4^{\frac{1}{g}}$ from \eqref{bound:e^Ddb}.

Using \eqref{h_v}, we now bound $\mathcal{N}  $:
\Be\begin{split}\notag
|	\mathcal{N}  (t,x,v)| 
& \leq  4^{1/g} \int^t_{t-\tB (t,x,v)}
e^{- \lambda_\infty s}  
\sup_{s \in [0, t] } \| e^{\lambda_\infty s} \varrho  (s) \|_\infty
e^{- \frac{\beta}{4} |v|^2} e^{- \frac{\beta g}{4} x_3} \| w_\beta \nabla_v h \|_\infty \dd s \\
& \leq  \underline{\tB (t,x,v)
	e^{\lambda_\infty  {\tB (t,x,v)}}
	e^{- \frac{\beta}{4} |v|^2} e^{- \frac{\beta g}{4} x_3} } \times 4^{1/g}   e^{-\lambda_\infty t} 
\sup_{s \in [0, t] } \| e^{\lambda_\infty s} \varrho^\ell (s) \|_\infty  \| w_\beta \nabla_v h \|_\infty   .
\end{split}\Ee
Then using \eqref{est:tB} for $\tB (t,x,v)$, we bound the above underlined term as 
\Be
\begin{split}\notag
&\tB (t,x,v)
e^{\lambda_\infty  {\tB  (t,x,v)}}
\leq  \frac{4}{g} \sqrt{|v_3|^2 + g x_3} 
e^{ \frac{4 \lambda_\infty}{g} (|v_3| + \sqrt{g x_3})}
\\
& \leq \frac{8}{g \beta^{1/2}} e^{- \frac{\beta}{8} |v|^2 + \frac{4 \lambda_\infty}{g} |v|  } 
e^{- \frac{\beta g}{8} x_3 + \frac{4 \lambda_\infty}{g} \sqrt{g x_3}} 
e^{  \frac{\beta}{8} |v|^2} e^{  \frac{\beta g}{8} x_3}
\leq  \frac{8}{g \beta^{1/2}} e^{ \frac{16^2 \lambda_\infty^2}{ 4 g^2 \beta}}
e^{ \frac{\beta}{8} |v|^2} e^{ \frac{\beta g}{8} x_3} .
\end{split}
\Ee
Therefore we get \eqref{est:N}.\hide
\Be\label{est:N1}
\begin{split}
|	\mathcal{N}  (t,x,v)|  
\leq  \frac{8}{g \beta^{1/2}}   
4^{1/g}
e^{ \frac{16^2 \lambda_\infty^2}{ 4 g^2 \beta}}  	\sup_{s \in [0, t] } \| e^{\lambda_\infty s} \varrho (s) \|_\infty  \| w_\beta \nabla_v h \|_\infty
e^{-\lambda_\infty t}  e^{- \frac{\beta}{8} |v|^2} e^{- \frac{\beta g}{8} x_3}.
\end{split}\Ee
Finally we use \eqref{est:D^-1 Db} to conclude \eqref{est:Nell}.\unhide \hide
\Be\label{est:N1}
\begin{split}
& |	\mathcal{N} ^{\ell+1}(t,x,v)| \\
& \leq  \frac{8}{g \beta^{1/2}}   e^{
	\frac{16^2  C^2 (1+ \frac{1}{\delta \beta g})^2}{2 g^2 \delta^2 \beta } \| \w^\ell_{\frac{\delta \beta}{8}} f^\ell \|_\infty^2	
}    e^{ \frac{16^2 \lambda_\infty^2}{ 4 g^2 \beta}}  	\sup_{s \in [0, t] } \| e^{\lambda_\infty s} \varrho^\ell (s) \|_\infty  \| w_\beta \nabla_v h \|_\infty
e^{-\lambda_\infty t}  e^{- \frac{\beta}{8} |v|^2} e^{- \frac{\beta g}{8} x_3}
\end{split}\Ee

\Be\begin{split}\label{est:N}
&	\frac{e^{-\lambda_\infty t}}{\lambda_\infty} e^{\frac{16^2 \beta}{2 g^2} \| \Delta_0^{-1} (\nabla \cdot b^\ell) \|_\infty^2}    \| w_\beta \nabla_v h \|_\infty  
e^{- \frac{\beta}{8} |v|^2 }e^{- \frac{\beta}{8}  gx_3 }
e^{ - \frac{\beta}{8} 
	\left(
	|v|^2- \frac{64\lambda_\infty}{g \beta} |v|
	\right)
}
e^{ - \frac{\beta}{8} 
	\left(
	g x_3- \frac{64 \lambda_\infty}{g \beta} \sqrt{g x_3}
	\right)
}\\
&  \leq \frac{e^{-\lambda_\infty t}}{\lambda_\infty} e^{\frac{32^2 \beta}{4 g^2} \| \Delta_0^{-1} (\nabla \cdot b^\ell) \|_\infty^2}  e^{ \frac{16^2 \lambda_\infty^2}{ 2 g^2 \beta } }   \| w_\beta \nabla_v h \|_\infty  
\\
&	  \leq \frac{e^{-\lambda_\infty t}}{\lambda_\infty} e^{
	\frac{   C^2 32^2 \beta  (1+ \frac{1}{\delta \beta g} )^2}{ 4  \delta^2 \beta^2 g^2 }  \| \w^\ell_{\frac{\delta \beta}{8}} f^\ell   \|^2_{L^\infty_{t,x}}
}  e^{ \frac{16^2 \lambda_\infty^2}{ 2 g^2 \beta } }   \| w_\beta \nabla_v h \|_\infty  .
\end{split}\Ee

\unhide\end{proof}

\hide
\begin{proposition}\label{prop:decay_N}
Choose $p>3$. Assume there exist $0< \lambda_\infty < \lambda_p<\infty$ and $\mathfrak{C}_{\rho, p}, \mathfrak{C}_{\rho, \infty}  \in (0 , \infty)$ such that 
\begin{align}
\sup_{t \geq 0 }\| e^{\lambda_p t} \rho  (t)\|_{L^p(\O)}< \mathfrak{C}_{\rho, p} ,\\
\sup_{t \geq 0 }\| e^{\lambda_\infty t} \rho  (t)\|_{L^\infty(\O)}< \mathfrak{C}_{\rho, \infty} .
\end{align}
and $ \| w_h \nabla_v h \|_{L^\infty_{x,v}}< \mathfrak{C}_h < \infty$. Then 
\Be\begin{split}
&\Big\| \mathcal{N}[\rho_f, h ,Z^F] (t, \cdot ) \Big\|_{L^p_x} \\
&\leq
\end{split}\Ee
and 
\Be
\sup_{t\geq 0}
\Big\|	e^{\lambda_\infty t} \mathcal{N}[\rho_f, h ,Z^F] (t,\cdot)\Big\|_{L^\infty_x} \leq    \frac{\mathfrak{C}_h}{g  \beta^{2}} e^{\frac{32 \lambda _\infty^2  }{  g^2\beta}} \mathfrak{C}_{\rho, \infty}  e^{-\lambda_\infty t}. 
\Ee
\end{proposition}

\begin{proof}

\Be
\begin{split}\label{consistence} 
&\Big\|	 \mathcal{N} ^{\ell+1}(t,\cdot)\Big\|_{L^\infty_x}  \\
& \leq  \Big\| 
\tB^{\ell+1} (t,x,v) 
e^{\frac{16^2 \beta }{2 g^2}
	\| \Delta_0^{-1} (\nabla_x \cdot b^\ell) \|_{L^\infty_{t,x}}^2	
}
e^{- \frac{\beta}{4}|v|^2 } e^{- \frac{\beta g}{4} x_3}  \| w \nabla_v h \|_\infty 
\\
& \ \ \ \ \  \times 
\sup_{s \in [ t- \tB^{\ell+1} (t,x,v),t]}| \nabla_x \Delta_0^{-1} \varrho^\ell  (s) | 
\dd v
\Big\|_{L^\infty_x}
\\
&\leq \mathfrak{C}_h \Big\| \int_{\R^3}  \frac{4}{g}|v_{\b,3}^F(x,v)| 
e^{- \frac{\beta}{2} \Big(1 - 	  \frac{16 }{g}	\big(2+\frac{8\| \nabla_x \phi_F \|_\infty }{g} \big) - \frac{32 \| \nabla_x \phi_F \|_\infty^2}{g^2}
	\Big) |v_{\b,3}^F (t,x,v)|^2} e^{- \frac{\beta}{2} 
	\Big(1 - 
	\frac{16}{g} 
	\| \nabla_x \phi_f \|_\infty 
	\Big)
	|v_\parallel|^2} 
\\
& \ \ \ \ \  \times 
\mathfrak{C}_{\rho, \infty}  e^{-\lambda_\infty\Big(t- 4\frac{|v_{\b,3}^F(t,x,v)|}{g}\Big)}  
\dd v\Big\|_{L^\infty_x}
\\
& \leq \mathfrak{C}_h \mathfrak{C}_{\rho, \infty}   e^{-\lambda_\infty t} \int_{\R^3} e^{\lambda_\infty  \frac{ 4 |v_{\b,3}^F (t,x,v)|}{g}}  \frac{1}{\sqrt{\beta}}  \sqrt{\beta} \frac{ 4| v_{\b,3}^F(t,x,v) |}{g} e^{- \frac{\beta}{3} |v_{\b,3}^F (t,x,v)|^2 } e^{- \frac{\beta}{4} |v_\parallel|^2}  \dd v\\
&\leq  \mathfrak{C}_h\mathfrak{C}_{\rho, \infty}  
e^{-\lambda_\infty t}\frac{1}{g\beta^{3/2}} 
\int_{\R}
e^{- \frac{\beta}{4} \{|v^F _{\b,3}(t,x,v)|^2 - \frac{16 \lambda_\infty}{g \beta} |v^F_{\b,3}(t,x,v)| + \frac{ 16 \lambda_\infty^2 }{g^2\beta^2} \}}
e^{\frac{4 \lambda _\infty^2 }{g^2\beta}} \dd v_3\\
& \leq \frac{\mathfrak{C}_h \mathfrak{C}_{\rho, \infty}  }{g  \beta^{3/2}} e^{\frac{32 \lambda _\infty^2  }{  g^2\beta}} e^{-\lambda_\infty t}
\int_{\R} e^{ - \frac{\beta}{8} |v_{\b, 3}^F (t,x,v)|  ^2} \dd v_3\\
& \leq \frac{\mathfrak{C}_h \mathfrak{C}_{\rho, \infty}  }{g  \beta^{3/2}} e^{\frac{32 \lambda _\infty^2  }{  g^2\beta}} e^{-\lambda_\infty t}
\int_{\R} e^{ - \frac{\beta}{8} |v_3|  ^2} \dd v_3\\
& \leq \frac{\mathfrak{C}_h \mathfrak{C}_{\rho, \infty}  }{g  \beta^{2}} e^{\frac{32 \lambda _\infty^2  }{  g^2\beta}} e^{-\lambda_\infty t}
\end{split}
\Ee

\Be
\begin{split}\label{consistence}
&\Big\|	 \mathcal{N}[\rho_f, h ,Z^F] (t,\cdot)\Big\|_{L^\infty_x}  \\
& \leq  \Big\|
\int_{\R^3} 
\tB^{\ell+1} (t,x,v) \mathfrak{C}_h  e^{- \frac{\beta}{2} \Big(1 - 	  \frac{16 }{g}	\big(2+\frac{8\| \nabla_x \phi_F \|_\infty }{g} \big) - \frac{32 \| \nabla_x \phi_F \|_\infty^2}{g^2}
	\Big) |v_{\b,3}^F (t,x,v)|^2} e^{- \frac{\beta}{2} 
	\Big(1 - 
	\frac{16}{g} 
	\| \nabla_x \phi_f \|_\infty 
	\Big)
	|v_\parallel|^2}  \\
& \ \ \ \ \  \times 
\sup_{s \in [ t- t_\b^{F} (t,x,v),t]}| \nabla_x \Delta_0^{-1} \rho_f (s) | 
\dd v
\Big\|_{L^\infty_x}
\\
&\leq \mathfrak{C}_h \Big\| \int_{\R^3}  \frac{4}{g}|v_{\b,3}^F(x,v)| 
e^{- \frac{\beta}{2} \Big(1 - 	  \frac{16 }{g}	\big(2+\frac{8\| \nabla_x \phi_F \|_\infty }{g} \big) - \frac{32 \| \nabla_x \phi_F \|_\infty^2}{g^2}
	\Big) |v_{\b,3}^F (t,x,v)|^2} e^{- \frac{\beta}{2} 
	\Big(1 - 
	\frac{16}{g} 
	\| \nabla_x \phi_f \|_\infty 
	\Big)
	|v_\parallel|^2} 
\\
& \ \ \ \ \  \times 
\mathfrak{C}_{\rho, \infty}  e^{-\lambda_\infty\Big(t- 4\frac{|v_{\b,3}^F(t,x,v)|}{g}\Big)}  
\dd v\Big\|_{L^\infty_x}
\\
& \leq \mathfrak{C}_h \mathfrak{C}_{\rho, \infty}   e^{-\lambda_\infty t} \int_{\R^3} e^{\lambda_\infty  \frac{ 4 |v_{\b,3}^F (t,x,v)|}{g}}  \frac{1}{\sqrt{\beta}}  \sqrt{\beta} \frac{ 4| v_{\b,3}^F(t,x,v) |}{g} e^{- \frac{\beta}{3} |v_{\b,3}^F (t,x,v)|^2 } e^{- \frac{\beta}{4} |v_\parallel|^2}  \dd v\\
&\leq  \mathfrak{C}_h\mathfrak{C}_{\rho, \infty}  
e^{-\lambda_\infty t}\frac{1}{g\beta^{3/2}} 
\int_{\R}
e^{- \frac{\beta}{4} \{|v^F _{\b,3}(t,x,v)|^2 - \frac{16 \lambda_\infty}{g \beta} |v^F_{\b,3}(t,x,v)| + \frac{ 16 \lambda_\infty^2 }{g^2\beta^2} \}}
e^{\frac{4 \lambda _\infty^2 }{g^2\beta}} \dd v_3\\
& \leq \frac{\mathfrak{C}_h \mathfrak{C}_{\rho, \infty}  }{g  \beta^{3/2}} e^{\frac{32 \lambda _\infty^2  }{  g^2\beta}} e^{-\lambda_\infty t}
\int_{\R} e^{ - \frac{\beta}{8} |v_{\b, 3}^F (t,x,v)|  ^2} \dd v_3\\
& \leq \frac{\mathfrak{C}_h \mathfrak{C}_{\rho, \infty}  }{g  \beta^{3/2}} e^{\frac{32 \lambda _\infty^2  }{  g^2\beta}} e^{-\lambda_\infty t}
\int_{\R} e^{ - \frac{\beta}{8} |v_3|  ^2} \dd v_3\\
& \leq \frac{\mathfrak{C}_h \mathfrak{C}_{\rho, \infty}  }{g  \beta^{2}} e^{\frac{32 \lambda _\infty^2  }{  g^2\beta}} e^{-\lambda_\infty t}
\end{split}
\Ee 
where we have used Lemma \ref{lem:elliptic_est:C^1}
\Be
\|\nabla_x \Delta_0^{-1} \rho_f (s,\cdot ) \|_{L^\infty_x} \lesssim 
\| \rho_f (s,\cdot) \|_{L^\infty_x} ,
\Ee 
\Be
\sqrt{\beta}|v_{\b,3}^F| e^{- \frac{ \beta}{3} |v_{\b,3}^F|^2 } \leq e^{- \frac{\beta}{4} |v_{\b,3}^F|^2 } ,
\Ee 
\Be
- \frac{16 \lambda_\infty}{g \beta} |v_{\b,3}^F (t,x,v)| \geq  - \frac{1}{2}|v_{\b,3}^F (t,x,v)| ^2 - \frac{128 \lambda_\infty^2}{g^2 \beta^2},
\Ee
and $|v^\b _{F,3} (t,x,v)| \geq |v_3|$.

\hide

Hence 
\Be\begin{split}
& \int_{\R} e^{ - \frac{\beta}{2} ( |v_{F, 3}^\b (t,x,v)| - \frac{aC}{g \beta })^2} \dd v_3
\int_{\R^2}  e^{- \frac{\beta}{2} |v_\parallel|^2}\dd v _\parallel
\\
&\leq  \Big\{\int_{|v_3| \geq  \frac{aC}{g \beta }}e^{- \frac{\beta}{2} (|v_3| - \frac{aC}{g \beta})^2} \dd v_3 + \int_{|v_3| \leq \frac{aC}{g \beta }} \dd v_3\Big\} 	\int_{\R^2}  e^{- \frac{\beta}{2} |v_\parallel|^2}\dd v _\parallel\\
& \leq \Big\{ \int_{\tau\geq 0} e^{- \frac{\beta}{2} \tau^2} \dd \tau + \frac{a C}{g \beta }\Big\} 	\int_{\R^2}  e^{- \frac{\beta}{2} |v_\parallel|^2}\dd v _\parallel\\
& \leq \Big\{ \frac{1}{\sqrt{\beta}} + \frac{a C}{g \beta }\Big\} \frac{1}{\beta}.
\end{split}\Ee

Back to \eqref{consistence}:
\Be
\Big\|	 \mathcal{N}[\rho_f, h ,Z^F] (t,\cdot)\Big\|_{L^\infty_x}  \lesssim e^{-a_\infty t} \frac{1}{g \sqrt{\beta}} e^{\frac{a^2 C^2}{2 g^2\beta}}  \Big\{ \frac{1}{\sqrt{\beta}} + \frac{a C}{g \beta }\Big\} \frac{1}{\beta}
\Ee\unhide

Choosing $g \gg_\beta 1$ we might expect to have an exponential decay. 

Using the control of $e^{\frac{\beta}{2} |v|^2} e^{ \frac{\beta g}{2} x_3} |\nabla_v h_\pm (x,v)| \leq \mathfrak{C}_h \| w_G^{\pm} \nabla_{x_\parallel, v} G_\pm \|_{L^\infty_{\gamma_-}}$ in \eqref{est:hk_v}, 
\Be\begin{split}
&	\left| \int^t_{{\max\{0, t- \tb^{\ell+1,\pm } (t,x,v) \}}   } \nabla_x\Delta^{-1}_0 \rho ^\ell  (s, \X^{\ell+1} _\pm (s;t,x,v)) \cdot    \nabla_v h_\pm 
( \mathcal{Z}_\pm ^{\ell+1}  (s;t,x,v) ) 
\dd s   \right|\\
& \leq   \int^{t}_{ t- \tb^{\ell+1,\pm } (t,x,v)}\mathfrak{C}_{\rho, \infty} e^{-\lambda_\infty s}
\mathfrak{C}_h \| w_G^{\pm} \nabla_{x_\parallel, v} G_\pm \|_{L^\infty_{\gamma_-}}
e^{-\frac{\beta}{4} | v |^2} e^{- \frac{\beta g}{4} x_3}
\\
& \leq  \mathfrak{C}_{\rho, \infty}  e^{- \lambda_\infty t}   \tb^{\ell+1,\pm }  e^{ \lambda_\infty  \tb^{\ell+1,\pm }  }\mathfrak{C}_h \| w_G^{\pm} \nabla_{x_\parallel, v} G_\pm \|_{L^\infty_{\gamma_-}}
e^{-\frac{\beta}{4} | v |^2} e^{- \frac{\beta g}{4} x_3}\\
& \leq  \mathfrak{C}_{\rho, \infty} \mathfrak{C}_h \| w_G^{\pm} \nabla_{x_\parallel, v} G_\pm \|_{L^\infty_{\gamma_-}}   e^{- \lambda_\infty t} 
\frac{4   }{g}  \sqrt{|v_3|^2 + g x_3}  e^{ \frac{4 \lambda_\infty }{g}  \sqrt{|v_3|^2 + g x_3}  }e^{-\frac{\beta}{4} | v |^2} e^{- \frac{\beta g}{4} x_3}\\
& \leq \left(	\mathfrak{C}_h \| w_G^{\pm} \nabla_{x_\parallel, v} G_\pm \|_{L^\infty_{\gamma_-}} 
\frac{16 e^{-1/2}}{g \beta^{1/2} }  
e^{\frac{32\lambda_\infty^2}{ g^2 \beta }} 
\right)\mathfrak{C}_{\rho, \infty} e^{- \lambda_\infty t} ,
\end{split}
\Ee
where we have used  
\Be\notag
\begin{split}
&	\frac{4   }{g}  \sqrt{|v_3|^2 + g x_3}  e^{ \frac{4 \lambda_\infty }{g}  \sqrt{|v_3|^2 + g x_3}  }e^{-\frac{\beta}{4} | v |^2} e^{- \frac{\beta g}{4} x_3} \\
& \leq 	\frac{4}{g}  \sqrt{|v|^2 +   g x_3} 
e^{- \frac{\beta}{4} (|v|^2 +   g x_3)} 
e^{ \frac{4 \lambda_\infty}{g}\sqrt{|v|^2 +   g x_3}  }\\
&= 	\frac{4}{g \beta^{1/2}}  \sqrt{ \beta (|v|^2 + gx_3)}
e^{- \frac{1}{4}\sqrt{ \beta (|v|^2 + gx_3)}^2 }	e^{ \frac{4 \lambda_\infty}{g}\sqrt{|v|^2 +   g x_3}  }\\
& \leq \frac{4}{g \beta^{1/2}} \frac{2}{e^{1/2}} 	e^{- \frac{ \beta}{8}\sqrt{  |v|^2 + gx_3}^2 }e^{ \frac{4 \lambda_\infty}{g}\sqrt{|v|^2 +   g x_3}  }\\
& = \frac{4}{g \beta^{1/2}} \frac{2}{e^{1/2}} 
e^{- \frac{\beta}{8} \left(
	\sqrt{  |v|^2 + gx_3} - \frac{16 \lambda_\infty}{ g \beta }
	\right)^2} e^{\frac{32\lambda_\infty^2}{ g^2 \beta }}.
\end{split}
\Ee

\hide\Be
\begin{split}
e^{- \frac{\beta}{4} |v|^2} e^{ \frac{4 \lambda_\infty}{g} |v|} &= e^{- \frac{\beta}{4} \left( |v| - \frac{8 \lambda_\infty}{ g \beta}\right)^2} 
e^{ \frac{16 \lambda_\infty^2}{   g^2 \beta}}	, \\
e^{- \frac{\beta}{4}  g x_3} e^{ \frac{4 \lambda_\infty}{g} \sqrt{gx_3}} &= e^{- \frac{\beta}{4} \left(  \sqrt{gx_3} - \frac{8\lambda_\infty}{ g \beta}\right)^2} 
e^{ \frac{16  \lambda_\infty^2}{  g^2 \beta}}	.
\end{split}
\Ee\unhide

\end{proof}

\unhide

\begin{proof}[\textbf{Proof of Theorem \ref{theo:AS}}]
Taking $v$-integration to \eqref{form:f} and using \eqref{est:I}-\eqref{est:N}, we derive that 
\Be\begin{split}\label{est:varrho}
& e^{  \lambda_\infty t}	|\varrho (t,x)|  \leq  e^{  \lambda_\infty t}	 \left\{\int_{\R^3} | \mathcal{I}  (t,x,v)| \dd v +  \int_{\R^3} |\mathcal{N} (t,x,v) |\dd v\right\} \\
& \leq  \frac{ 1}{\beta^{3/2}}  \left\{ (4 \pi )^{3/2}  2  e^{-  \frac{\beta}{4}  g  x_3 } 
e^{ \frac{16 \lambda_\infty^2}{\beta g^2}}	  \|\w_{\beta, 0 }   f_{0  } \|_{L^\infty_{x,v}}  \right.\\
&\left. \ \ \ \ \ \ \  \ \ \ \ \  + 
(8 \pi )^{3/2} 
e^{- \frac{\beta }{8} g x_3}
\frac{8 \cdot 4^{1/g}}{g \beta^{1/2}}  
e^{ \frac{16^2 \lambda_\infty^2}{ 4 g^2 \beta}}   \| w_\beta \nabla_v h \|_\infty	\sup_{s \in [0, t] } \| e^{\lambda_\infty s} \varrho^\ell (s) \|_\infty \right\}\\
& \leq
\frac{
	16	   \pi^{3/2} 
}{\beta^{3/2}}	e^{ \frac{16 \lambda_\infty^2}{\beta g^2}}	  \|\w_{\beta, 0 }   f_{0  } \|_{L^\infty_{x,v}} 
+ \frac{1}{2} 	\sup_{s \in [0, t] } \| e^{\lambda_\infty s} \varrho^\ell (s) \|_\infty.
\end{split}	\Ee
Here, at the last line, we have used \eqref{condition:Dvh}. By absorbing the last term, we conclude \eqref{decay:varrho}. We can prove \eqref{decay_f} using \eqref{decay:varrho} and \eqref{est:I}-\eqref{est:N}.\hide

Suppose 
\Be
\max_{\ell  \leq  k}	\sup_{t\in [0, \infty) } \| e^{\lambda_\infty t} \varrho^\ell (t) \|_\infty \leq L.
\Ee
Then using \eqref{est:varrho} and our choice of $g$ in \eqref{condition:g}, we derive that 
\Be
\begin{split}
\sup_{t \in [0, \infty) }	e^{  \lambda_\infty t}	 \|  \varrho^{k+1} (t)\|_{\infty}  
& \leq   
\frac{2C}{\beta^{3/2}} 	e^{ \frac{16 \lambda_\infty^2}{\beta g^2}}	  \|w_{0 }   f_{0  } \|_{L^\infty_{x,v}}   +\left( 
\frac{ C 8^2 4^{1/g} }{g^2 \beta^{5/2} }  
e^{ \frac{16^2 \lambda_\infty^2}{ 4 g^2 \beta}}   \| w_\beta \nabla_v h \|_\infty \right)L \\
&\leq L  .
\end{split}
\Ee

\unhide	\end{proof}

\hide

Next, we record the trace theorem (\cite{
CIP,Ukai}). 

\begin{lemma} Suppose $E= E(x) \in L^\infty (\O)$. 
If $h
\in L^\infty(\O \times \R^3) $ and $ v\cdot \nabla_x h
+ (E - g m_\pm \mathbf{e}_3) \cdot \nabla_v h
\in L^\infty(\O \times \R^3)$ then the trace exists $h
\in L^\infty 
(\gamma_-  )$ 
such that 
\Be\label{strace:infty}
\| h 
\|_{L^\infty(\gamma_- )} \lesssim 
\| h 
\|_{L^\infty  (\O \times \R^3
	) 
} + 
\|	v\cdot \nabla_x h 
+ (E- g m_\pm \mathbf{e} _3 )\cdot \nabla_v h 
\|_{L^\infty(\O \times \R^3)}
.
\Ee\end{lemma}

\begin{lemma} Suppose $E= E(t,x) \in L^\infty ([0,T] \times \O)$. 
If $f
\in L^\infty([0,T] \times \O \times \R^3)  $ and $\p_t f+  v\cdot \nabla_x f+ (E - g m_\pm \mathbf{e}_3) \cdot \nabla_v f \in L^\infty([0,T] \times \O \times \R^3)$ then the trace exists $f  \in L^\infty  ( [0,T] \times  \gamma_-  )$  such that 
\Be\label{dtrace:infty}
\begin{split}
	\sup_{t \in [0,T]}\| f  (t) \|_{L^\infty(\gamma_- )}  
	&\lesssim   	\sup_{t \in [0,T]}  	\| f  (t)\|_{L^\infty ( \O \times \R^3  )
	}  \\
	&+ 
	\sup_{t \in [0,T]}  	\| [\p_t   + 	v\cdot \nabla_x   + (E- g m_\pm \mathbf{e} _3 )\cdot \nabla_v] f (t)  \|_{L^\infty(\O \times \R^3)}  .
\end{split}
\Ee
\end{lemma}
We note that the trace theorem in $L^\infty$ is special. In $L^p$ for $1 \leq p < \infty$, the trace is only insured locally in general (see Theorem 5.1.1 and Theorem  5.1.2. in \cite{Ukai}).

\unhide

\section{Steady solutions} 
For the construction of a solution to the steady problem \eqref{VP_h}-\eqref{eqtn:Dphi}, we use a sequence of solutions. For an arbitrary number $\ell \in \mathbb{N}$, we suppose that  
\begin{align}
\Phi^\ell \in C^2 (\O) \cap C^1 (\bar \O), \ 	\Phi^\ell   =0\label{bdry:phik}  \ \ \text{on} \ \p\O ,&\\
| \nabla_x \Phi^ \ell  |  \leq  {g} / 4  
\ \ \text{in} \ \bar \O: = \O \cup \p\O.&
\label{Uest:DPhi^k}
\end{align}
Then we can solve the characteristics $(X^{\ell+ 1}(t;x,v), V^{\ell+ 1}  (t;x,v)) $, as in \eqref{ODE_h}, to
\Be\label{ODE_k}
\frac{dX^{\ell+1} }{dt}=V^{\ell+1 } , \ \ \ \frac{dV^{\ell+1} }{dt} = - \nabla_x  \Phi^\ell(X^{\ell+1} ) + g   \mathbf{e}_3,
\Ee
with $X^{\ell+1}  (t;x,v) |_{t=0} = x$ and $V^{\ell+1} (t;x,v) |_{t=0} =v$ and $\mathbf{e}_3= (0,0,1)^T$. A continuous-in-$(t,x,v)$ solution exists uniquely due to the Picard theorem. 
As long as $(X^{\ell+1}, V^{\ell+1}) \in \O \times \R^3$ exists, then   
\Be
\begin{split}\label{form:XV}
V^{\ell+1} (t;x,v) &= v + \int^t_0 \Big(- \nabla_x  \Phi ^\ell(X^{\ell+1}(s;x,v)) + g  \mathbf{e}_3\Big)  \dd s ,\\
X^{\ell+1} (t;x,v) 
&= x+ vt + \int^t_0\int^s_0 \Big(- \nabla_x  \Phi^{\ell} (X^{\ell+1}(\tau;x,v)) + g  \mathbf{e}_3 \Big)  \dd \tau   \dd s.
\end{split}
\Ee

With $(X^{\ell+1}, V^{\ell+1})$, we define the backward exit time, position, and velocity as in Definition \ref{def:tb}:
\Be 
\begin{split}\label{def:tb_k}
&	\tb^{\ell+1}(x,v) : = \sup\{ s  \in [0,\infty) :	X _{ 3}^{\ell+ 1} (-\tau; x,v )  \ \text{for all }  \tau \in (0, s)\}  \geq 0, \\
&\xb^{\ell+ 1}(x,v) = X^{\ell+ 1} (-\tb^{\ell+ 1}(x,v); x,v ) ,\ \ 
\vb^{\ell+ 1} (x,v)= V^{\ell+ 1} (-\tb^{\ell+ 1}(x,v); x,v ).
\end{split}
\Ee
From \eqref{Uest:DPhi^k}, it is easy to check that $\tb^{\ell+ 1}(x,v) <\infty$ for each $(x,v) \in \bar{\O} \times \R^3$. 

Now we define that, for a given in-flow boundary datum $G$ in \eqref{bdry:F}, 
\Be\label{form:h^k}
\begin{split}
h^{\ell+1} (x,v)  = 
G (\xb^{\ell +1  }(x,v) ,\vb^{\ell+1} (x,v) ) . 
\end{split}
\Ee
Note that this $h^{\ell+1} (x,v)$ is a unique solution for given $\Phi^\ell$ to 
\begin{align}
v\cdot \nabla_x h^{\ell+1}   - \nabla_x (\Phi^\ell+ g   x_3) \cdot \nabla_v h^{\ell+1} =0& \ \ \text{in} \ \O, \label{eqtn:hk}\\
h^{\ell+1}   = G &
\ \  \text{on} \ \gamma_-
\label{bdry:hk}.
\end{align} 
Then we define the density 
\Be
\rho^{\ell+1} (x)= \int_{\R^3}    h ^{\ell+1} (x,v) \dd v    \ \  \text{in} \ \O. \label{eqtn:rhok} 
\Ee

\hide

Since a weak solution (Remark \ref{remark:W=L}), a weak solution $h^{\ell} $ to a linear problem \eqref{eqtn:hk}-\eqref{bdry:hk} with a given $ \nabla_x \Phi^\ell$ satisfies

With these $(\Phi^\ell, X^\ell, V^\ell)$ we

we consider $h^\ell$ solves
\hide\begin{align}
v\cdot \nabla_x h^{\ell+1}  - \nabla_x (\Phi^\ell + g  m_\pm x_3) \cdot \nabla_v h^{\ell+1}  =0,\\
h^{\ell+1}  |_{\gamma_-} = G ,\\
\Delta  \Phi^\ell = \int_{\R^3} (e_+ h_+^\ell + e_- h^\ell_-) \dd v,\\
\Phi^\ell |_{\p\O} =0.
\end{align}

From our simplification, we have \unhide

\hide First we set $(\rho^0,  \nabla_x \Phi^0) = (0,0)$. Then we construct $h^1$ solving \eqref{eqtn:hk}-\eqref{bdry:hk} using the Lagrangian formulation of a straight characteristics $( X^1(s;t,x,v), V^1(s;t,x,v))= (x-(t-s)v ,v)$. Then we define $\rho^1 $ by \eqref{eqtn:rhok} (as long as it is well-defined) and solve $ \nabla_x \Phi^1$ by \eqref{eqtn:phik} using the Green's function as in \eqref{phi_rho}. We iterate the process to construct $(h^{\ell+1}, \rho^\ell, \nabla_x \Phi^\ell)$ for $\ell=0,1,2,\cdots$ solving \eqref{eqtn:hk}-\eqref{bdry:phik}. More precise way of construction follows.\unhide

we have 
a representation along the characteristics for $h^{\ell+1}(x,v)$.

\hide

We could understand \eqref{ODE_k} as (see the regular Lagrangian flow in \cite{DiL_ODE}) 
\Be
\begin{split}\label{form:XV_k}
V^{\ell+1}_\pm (t;x,v) &= v + \int^t_0 \left(- \nabla_x  \Phi^{\ell}(X^{\ell+1}_\pm(s;x,v)) + g 
\mathbf{e}_3\right)  \dd s ,\\
X^{\ell+1}_\pm (t;x,v) 
&= x+ vt + \int^t_0\int^s_0 \left(- \nabla_x  \Phi^\ell(X^{\ell+1}(\tau;x,v)) + g 
\mathbf{e}_3\right)  \dd \tau   \dd s.
\end{split}
\Ee

\unhide

\unhide

\hide
It is a well-defined map for each $ \Phi $ satisfying \eqref{bdry:phik}, \eqref{Uest:DPhi^k}, and \eqref{ODE_k}:  $\Phi \mapsto \eta \Delta_0^{-1} \rho $. We try to find a fixed point of this map, which solves \eqref{VP_h}-\eqref{eqtn:Dphi} (note that \eqref{eqtn:Dphi} is equivalent to $\Delta \Phi = \eta \rho$). To apply the Schaefer's fixed point theorem, we show that the map is a continuous and compact map between a Banach space $\mathfrak{S} \subset W^{2, \infty}_{\text{loc} } (\O)$ with an associate norm: 
\Be\label{spaceS}
\begin{split}
\mathfrak{S}  := \left\{
\Phi  \text{ is Lipschitz continuous}:   \nabla_x^2 \Phi \in L^\infty (\O) , \  \| \nabla_x \Phi^ \ell  \|_{L^\infty (\bar \O)}  \leq  \frac{g}{2} ,   \ 	\Phi^\ell   =0  \  \text{on} \ \p\O
\right\},& \\
\| \Phi \|_{\mathfrak{S}}  : = 2g^{-1} \|  \langle x_3 \rangle^{-1} \Phi \|_{L^\infty (\O)} + 2g^{-1}
\| \nabla_x \Phi \|_{L^\infty (\O)}  + \| \nabla_x^2 \Phi \|_{L^\infty (\O)} .&
\end{split}
\Ee
We will also show that the following set is bounded with respect to $\| \cdot \|_{\mathfrak{S}}$:
\Be
\left\{\Phi^\ell  :   \Phi^\ell    = \frac{\eta}{1+ 1/\ell}  \Delta_0^{-1} \rho^\ell \ \text{for all} \ \ell  \in [0, \infty] \right\} 
\Ee

\Be
\Delta  \Phi^\ell  = \eta^\ell \rho^\ell:= \frac{\eta}{1+ 1/\ell} \rho^\ell  \text{in} \ \O. \label{eqtn:phik}	
\Ee
\unhide
Next, as \eqref{w^h}, we define a weight function which is invariant along the characteristics
\Be\label{def:w^k}
w^{\ell +1} _\beta (x,v)   = e^{\beta \big( |v|^2  + 2\Phi^{\ell} (x) + 2g x_3\big)}.
\Ee
Note that, as \eqref{w:bdry}, at the boundary 
\Be\label{wk:bdry}
w^{\ell +1}_\beta(x,v ) = e^{\beta |v|^2} \ \ \text{on} \ x \in \p\O.
\Ee
Using \eqref{form:h^k}, \eqref{bdry:hk}, and \eqref{wk:bdry}, as long as $\tb^{\ell+1} (x,v) < \infty$, then we have  
\Be
\begin{split}\label{form:h^k_G}
w^{\ell +1}_\beta(x,v)	h^{\ell +1}  (x,v) 
&= w^{\ell +1}_\beta  (x_{\b}^{\ell +1} (x,v) , v^{\ell +1 }_{\b } (x,v) )  G  (x_{\b}^{\ell+1} (x,v) , v^{\ell+1 }_{\b } (x,v) )\\
&
=  e^{\beta | v^{\ell+1}_{\b } (x,v)|^2}  G  (x_{\b}^{\ell+1} (x,v) , v^{\ell+1}_{\b } (x,v) ).
\end{split}	\Ee

\begin{lemma}\label{lemma:Unif_steady}
For an arbitrary $g>0$ and a given $\nabla_x  \Phi^\ell$, we assume that \eqref{bdry:phik} and \eqref{Uest:DPhi^k} hold. 
Then $h^{\ell+1 }$ solving \eqref{eqtn:hk}-\eqref{bdry:hk} and $\rho^{\ell+1}$ defined \eqref{eqtn:rhok} satisfy the following estimates:
\begin{align}
\|  h^{\ell+1 }   \|_{L^\infty (\bar \O \times \R^3)} &\leq \|  G \|_{L^\infty (\gamma_-)},
\label{Uest:h^k} \\
\| w^{\ell +1} _\beta h^{\ell +1}   \|_{L^\infty (\bar \O \times \R^3)} &\leq \| w_\beta  G \|_{L^\infty (\gamma_-)},
\label{Uest:wh^k}
\\
\|  e^{  \beta g x_3 } \rho^{\ell+1}  \|_{L^\infty (\bar \O)} & \leq    \pi ^{3/2}  \beta^{-3/2} \| w_\beta G \|_{L^\infty(\gamma_-)}.
\label{Uest:rho^k}
\end{align} 
Furthermore, $\Phi^\ell/ \langle x_3 \rangle  \in L^\infty $ and 
\Be\label{lower_w_st}
w_\beta^ {\ell+1 }(x,v)  
\geq e^{ \beta \big( |v|^2    + g   x_3\big)} .
\Ee

\end{lemma}

\begin{proof}
From \eqref{form:h^k} and \eqref{form:h^k_G}, we prove that \eqref{Uest:h^k} and \eqref{Uest:wh^k}, respectively.


Using \eqref{eqtn:rhok} and \eqref{Uest:wh^k}, we have
\begin{align}\notag
|	\rho^{\ell}(x)|   = \Big|\int_{\R^3}  w^{\ell}_\beta  h^{\ell} (x,v)    \frac{1}{w_\beta^{\ell} (x,v)} \dd v\Big|  
\leq \| w_\beta G\|_{L^\infty(\gamma_-)}
\int_{\R^3}  \frac{\dd v }{w^{\ell}_\beta  (x,v)}.
\end{align}
Since \eqref{Uest:DPhi^k} holds, using  \eqref{bdry:phik} we derive that 
\Be
\begin{split}\notag
|\Phi ^{\ell} (x_\parallel, x_3)|  =   \Big| \underbrace{\Phi^{\ell}   (x_\parallel, 0)}_{=0} + \int ^{x_3}_0 \p_{3} \Phi^{\ell}  (x_\parallel, y_3) \dd y_3 \Big|    \leq  \frac{g}{2}x_3. 
\end{split}
\Ee
Therefore we deduce that 
\Be\notag
w^ {\ell }_\beta(x,v)  \geq e^{ \beta \big( |v|^2  - 2 |\Phi^{\ell}(x)| +2  g   x_3\big)} 
\geq e^{ \beta \big( |v|^2    + g   x_3\big)} ,
\Ee
which implies 
\Be\notag
\int_{\R^3}  \frac{1 }{w_\beta^{\ell}  (x,v)}\dd v  \leq  \int_{\R^3}  
e^{- \beta \big( |v|^2  + g   x_3\big)}
\dd v  = \frac{ \pi ^{3/2}}{\beta^{3/2}} e^{-  \beta g x_3 }.
\Ee
Therefore we conclude \eqref{Uest:rho^k}. In addition, we have derived \eqref{lower_w_st}. 
\hide
Note that $\Phi^{k+1}$ is given by \eqref{phi_rho} so that $\Phi^{k+1}\in H^1_0 (\O)$ is the weak solution to \eqref{eqtn:phik}-\eqref{bdry:phik}. We apply Lemma \ref{lem:rho_to_phi}: Set $A=\frac{\pi^{3/2}}{\beta^{3/2}}  \| w_\beta G \|_{L^\infty (\gamma_-)} $ and $B = \beta g$. Using \eqref{est:nabla_phi} with such $A$ and $B$, we derive that  
\Be\label{est:nabla_Phi}
|\p_{x_j}   \Phi^{k+1} (x)|   \leq  \frac{\mathfrak{C} \pi^{3/2}}{\beta^{3/2}}  \| w_\beta G \|_{L^\infty (\gamma_-)}  \left\{
1+  \frac{1}{ \beta g}+ \delta_{3j} \frac{e^{-   \beta g  x_3} }{ \beta g}
\right\}.
\Ee
\hide
\Be
\begin{split}
|\nabla_x \Phi ^k (x) | &\leq \mathfrak{C} \left\{\| wG \|_{L^\infty (\gamma_-)} \frac{\pi^{3/2}}{\beta^{3/2}} 
\frac{|\ln  \beta g| }{ \beta g} e^{- \frac{\beta g}{2} x_3}
+  \| wG \|_{L^\infty (\gamma_-)} \frac{\pi^{3/2}}{\beta^{3/2}} \frac{e^{- \beta g x_3} - e^{-x_3}}{1-\beta g }\right\} \\
& \leq  \mathfrak{C} \| wG \|_{L^\infty (\gamma_-)}\frac{\pi^{3/2}}{\beta^{3/2}} 
\left\{
\frac{ |\ln  \beta g|  }{ \beta g}  + (\beta g)^{- \frac{\beta g}{ \beta g -1}}
\right\},
\end{split}\Ee
where at the last line we have used the remark below \eqref{est:nabla_phi}.\unhide From the assumption of $\beta$ in \eqref{condition:beta}, we conclude that \eqref{Uest:DPhi^k} holds for $\ell=k+1$. 

Using \eqref{est:tb^h} and Lemma \ref{lem:tb}, since $\| \nabla_x \Phi^{k} \|_{L^\infty (\bar \O)} \leq \frac{g}{2}$ holds, we derive that $\tb^{k+1}(x,v) \leq  \frac{2}{g} (\sqrt{|v_3|^2 + g x_3} - v_3)< \infty$ for each $(x,v)$. Hence we have \eqref{form:h^k_G} and therefore we prove \eqref{Uest:h^k} and \eqref{Uest:wh^k} for $\ell=k+1$. The proof is now completed by the mathematical induction. \unhide
\end{proof}

Next, we move to construct $h^{\ell+2}$. The starting point is defining $\Phi^{\ell+1} (x): = \eta \int_{\O} G(x,y) \rho^{\ell+1} (y) \dd y $ as \eqref{phi_rho}. From \eqref{Dphi}, we deduce that $\Phi^{\ell+1}$ is a weak solution to 
\Be\label{Dphi_ell}
\Delta \Phi^{\ell+1} = \eta \rho^{\ell+1 } \ \ \text{in $\O$, } \ \Phi^{\ell+1} =0 \ \ \text{on $\p\O$.}
\Ee
To repeat the process to construct $h^{\ell+2}$ as in \eqref{ODE_k}, we verify that $\Phi^{\ell+1} \in C^2 (\O) \cap C^1 (\bar \O)$. We achieve this by establishing the regularity estimate of $h^{\ell+1}$ and $\rho^{\ell+1}$ and then using an elliptic estimate to \eqref{Dphi_ell}.


\subsection{Regularity Estimate}\label{sec:RS}
In this section, we establish a regularity estimate of $(h^{\ell+1}, \rho^{\ell+1}  )$ which are given in \eqref{form:h^k} and \eqref{eqtn:rhok}. We utilize the kinetic distance function \eqref{alpha_k}:
\Be\label{alpha_ell}
\alpha^{\ell+1}
(x,v) =\sqrt{ |v_3|^2 +    |x_3|^2  + 2\p_{x_3} \Phi^{\ell}
(x_\parallel , 0) x_3 + 2g 
x_3 }.
\Ee

\begin{lemma}\label{VL}Suppose a condition \eqref{Uest:DPhi^k} holds. For all $(x,v) \in \O \times \R^3$ and $t  \in  [    - \tb^{\ell+1}  (x,v),0] $, 
\Be\label{est:alpha}
\begin{split}
&	\alpha^{\ell+1} (X^{\ell+1} (t;x,v),V^{\ell+1} (t;x,v)) \\
&\leq  \alpha^{\ell+1} (x,v) e^{   (1+ \| \p_{x_3}^2   \Phi^\ell   \|_{L^\infty({\O})} 	 ) |t| }e^{ \frac{1}{g  } \|  \nabla_{x_\parallel} \p_{x_3} \Phi ^\ell  \|_{L^\infty(\p\O)} 
	\int_0^{|t|} |V_\parallel^{\ell+1} (s;x,v)| \dd s  } ,\\
&\alpha^{\ell+1}  (X^{\ell+1}  (t;x,v),V^{\ell+1}  (t;x,v)) \\
&\geq  \alpha^{\ell+1}  (x,v) e^{  - (1+ \| \p_{x_3}^2   \Phi  ^\ell \|_{L^\infty({\O})} 	 ) |t| }e^{ \frac{-1}{g  } \|  \nabla_{x_\parallel} \p_{x_3} \Phi ^\ell  \|_{L^\infty(\p\O)} 
	\int^0_{|t|} |V^{\ell+1}  _\parallel (s;x,v)| \dd s  }.
\end{split}\Ee
In particular, the last inequality implies that 
{\small\Be	
|v_{\b, 3}^{\ell+1} (x,v)| 
\geq 
\alpha^{\ell+1}  (x,v) e^{  - (1+ \| \p_{x_3}^2   \Phi ^{\ell}  \|_{L^\infty(\bar{\O})} 	 ) t_{\b} ^{\ell+1}   (x,v) }e^{ \frac{-1}{g   } \|  \nabla_{x_\parallel} \p_{x_3} \Phi  ^{\ell}  \|_{L^\infty(\p\O)} 
	\int_{-t_{\b} ^{\ell+1}  (x,v) }^0 |V _\parallel^{\ell+1}  (s;x,v)| \dd s  }.\label{est1:alpha}
\Ee}
\end{lemma}
\begin{proof}
Note that 
\Be\notag
\begin{split}
&[v\cdot \nabla_x  - \nabla_x (\Phi^{\ell} (x)
+ g 
x_3) \cdot \nabla_v ] \sqrt{ {|v_3|^2} +  {|x_3|^2}  + 2\p_{x_3} \Phi^{\ell}  (x_\parallel , 0) x_3 + 2g  x_3 }\\
& = \frac{-  (\p_{x_3} \Phi^{\ell}  (x)+ g  )  v_3 +  v_3 x_3+   \p_3 \Phi^{\ell}  (x_\parallel, 0) v_3+  g v_3+  v_\parallel \cdot \nabla_{x_\parallel} \p_{3} \Phi  ^{\ell}  (x_\parallel, 0) x_3}{\sqrt{ {|v_3|^2} +  {|x_3|^2}   +   \p_{x_3} \Phi  ^{\ell}  (x_\parallel , 0) x_3 + 2g  x_3 }}\\
& \leq \frac{   \big(1 +  \| \p_3\p_3 \Phi^{\ell}    \|_{L^\infty (\O)}\big) |v_3| |x_3|+   |v_\parallel| \| \nabla_\parallel \p_3 \Phi  ^{\ell}  \|_{L^\infty(\p\O)}|x_3|}{\sqrt{ {|v_3|^2} +  {|x_3|^2}  +  2\p_{x_3} \Phi ^{\ell}  (x_\parallel , 0) x_3 + 2g    x_3  }},
\end{split}
\Ee
where we have used $- \p_{3} \Phi^{\ell}   (x_\parallel, x_3) + \p_3 \Phi ^{\ell}  (x_\parallel, 0) =\int^0_{x_3} \p_3 \p_3 \Phi  ^{\ell} (x_\parallel, y_3) \dd y_3$.

Using \eqref{Uest:DPhi}, we deduce that 
\Be\notag
\begin{split}
&\big|[v\cdot \nabla_x  - \nabla_x (\Phi^{\ell} (x)  + g   x_3) \cdot \nabla_v ] \alpha^{\ell+1}  (x,v)\big|\\
& \leq  \big(1+ \| \p_{x_3} \p_{x_3} \Phi^{\ell}  \|_{L^\infty( \O)} 
+ \frac{1}{g  }|v_\parallel| \|  \nabla_\parallel \p_{x_3} \Phi^{\ell}   \|_{L^\infty(\p\O)} 
\big) \alpha^{\ell+1}   (x,v)  .
\end{split}\Ee
By the Gronwall's inequality, we conclude both inequalities of \eqref{est:alpha}. The inequality \eqref{est1:alpha} is a direct consequence of \eqref{est:alpha}.\end{proof}

\begin{lemma}Suppose a condition \eqref{Uest:DPhi^k} holds. Then  
\Be 
\tb^{\ell+1}  (x,v) \leq  
\frac{2}{g} \min \Big\{\sqrt{|v_3|^2 + g x_3} -  v_3,  \sqrt{|v^{\ell+1}_{\mathbf{b}, 3} (x,v)|^2 - g x_3 } +  v^{\ell+1}_{\mathbf{b}, 3} (x,v)  \Big\}, \label{est:tb^ell}
\Ee 
{\small 	\Be 
|v_{\b, 3}^{\ell+1} (x,v)|  
\geq {\alpha ^{\ell+1}  (x,v) }
e^{ -\frac{4}{g} (1+ \| \p_{x_3}^2 \Phi ^{\ell}  \|_{L^\infty (\O)} ) |v_{\b, 3}^{\ell+1}  |}
e^{- \frac{4}{g^2} \| \nabla_{x_\parallel} \p_{x_3} \Phi ^{\ell}  \|_{L^\infty (\p \O)}
	(1+ \frac{2}{g} \| \nabla_x \Phi^{\ell}   \|_{L^\infty (\O)})
	|\vb^{\ell+1}  |^2	
}.\label{est1:xb_x/w}
\Ee}
\end{lemma}
\begin{proof}
The proof of \eqref{est:tb^ell} follows one for \eqref{est:tb^h}. Now we prove \eqref{est1:xb_x/w}. Using \eqref{est1:alpha}, \eqref{ODE_k}, and \eqref{est:tb^ell}, we have 
\Be\notag
\int_{-\tb^{\ell+1} (x,v) }^0 |V_\parallel ^{\ell+1} (s;x,v)| \dd s  \leq 
|\vb ^{\ell+1}  | \tb ^{\ell+1} +  \frac{|\tb^{\ell+1}  |^2}{2} \| \nabla_x \Phi ^{\ell} \|_{L^\infty (\O)}
\leq  \frac{4}{g} \Big(1+ \frac{2}{g} \| \nabla_x \Phi ^{\ell} \|_{L^\infty (\O)} \Big)|\vb^{\ell+1}   | ^2 ,
\Ee
and therefore we prove \eqref{est1:xb_x/w}. 
\end{proof}

\hide
Assume some decay condition (which can be verified by the bootstrap argument of the previous section):
\Be
\nabla_x \phi (x) \lesssim \text{some decay function} (x_3)
\Ee

Note that 
\Be
h_\pm(x,v)  = G(x_\b^{h, \pm}(x,v), v_\b^{h, \pm} (x,v))
\Ee
Taking $\nabla_v$ to this, we obtain that 
\Be
\p_{v_i} h_\pm(x,v)  = \p_{v_i} x_\b^{h, \pm} \cdot  \nabla_{x_\parallel} G(x_\b^{h, \pm}, v_\b^{h, \pm})
+\p_{v_i} v_\b^{h, \pm} \cdot  \nabla_{v } G(x_\b^{h, \pm}, v_\b^{h, \pm})
\Ee \unhide

\begin{lemma}\label{lem:XV_xv}Suppose \eqref{form:XV} holds for all $(t,x,v)$. Then 
\Be
\begin{split}\label{XV_x}
\p_{x_i} X^{\ell+1}  _{  j}(t;x,v)   &= \delta_{ij} +  \int^t_0 \p_{x_i} V ^{\ell+1} _{  j} (s;x,v) \dd s 	\\
&
=  \delta_{ij} - \int^t_0 \int^s_0  \p_{x_i} X^{\ell+1}  (\tau;x,v) \cdot \nabla_x   \p_{x_j} \Phi   (X^{\ell+1}  (\tau;x,v)) \dd \tau \dd s ,\\
\p_{x_i} V^{\ell+1}  _j  (t;x,v) &=   - \int^t_0  \p_{x_i} X^{\ell+1}  (s;x,v) \cdot \nabla_x   \p_{x_j} \Phi^\ell   (X^{\ell+1}  (s;x,v)) \dd s,
\end{split}
\Ee
and
\Be
\begin{split}\label{XV_v}
\p_{v_i} X ^{\ell+1} _{  j} (t;x,v)   &= \int^t_0 \p_{v_i} V^{\ell+1}  _{  j} (s;x,v) \dd s \\
&
= t \delta_{ij} - \int^t_0 \int^s_0  \p_{v_i} X^{\ell+1}  (\tau;x,v) \cdot \nabla_x   \p_{x_j} \Phi  ^{\ell}  (X ^{\ell+1} (\tau;x,v)) \dd \tau \dd s ,\\
\p_{v_i} V^{\ell+1}  _{  j}  (t;x,v) &= \delta_{ij} - \int^t_0  \p_{v_i} X^{\ell+1}  (s;x,v) \cdot \nabla_x   \p_{x_j} \Phi  ^{\ell}  (X^{\ell+1}  (s;x,v)) \dd s.
\end{split}
\Ee
Moreover,
\begin{align}
|\nabla_{x} X^{\ell+1}   (t;x,v)| &\leq   \min \big\{  e^{\frac{t^2}{2} \| \nabla_x^2 \Phi^{\ell}   \|_ {L^\infty (\O)} },
e^{ (1+ \| \nabla_x ^2 \Phi^{\ell}    \|_ {L^\infty (\O)})t}
\big\},\label{est:X_x}\\
|\nabla_{x} V ^{\ell+1}  (t;x,v)| &\leq   \min \big\{   t \| \nabla_x^2 \Phi^{\ell}   \|_ {L^\infty (\O)}e^{\frac{t^2}{2} \| \nabla_x^2 \Phi ^{\ell}   \|_ {L^\infty (\O)} },
e^{ (1+ \| \nabla_x ^2 \Phi \|_ {L^\infty (\O)})t}
\big\},\label{est:V_x}\\
|\nabla_{v} X^{\ell+1}   (t;x,v)| &\leq   \min \big\{ te^{ \frac{t^2}{2} \| \nabla_x^2 \Phi ^{\ell}  \|_ {L^\infty (\O)} },
e^{ (1+ \| \nabla_x ^2 \Phi  \|_ {L^\infty (\O)})t}
\big\},\label{est:X_v}\\
|\nabla_{v} V ^{\ell+1} (t;x,v)| &\leq  \min \big\{ e^{ \frac{t^2}{2} \| \nabla_x^2 \Phi ^{\ell}   \|_ {L^\infty (\O)} },
e^{ (1+ \| \nabla_x ^2 \Phi ^{\ell}   \| _ {L^\infty (\O)})t}
\big\}.\label{est:V_v}
\end{align}
\end{lemma}
\begin{proof}
From \eqref{form:XV}, we directly compute \eqref{XV_x} and \eqref{XV_v}.

For $\p_{x_i} X ^{\ell+1}_{ j}$ and $\p_{v_i} X ^{\ell+1}_{  j}$, we change the order of integrals in each last double integral to get
\Be\notag
\begin{split}
\p_{x_i} X_{  j} ^{\ell+1} &= \delta_{ij}   - \int^t_0(t-\tau) \p_{x_i} X^{\ell+1}  (\tau;x,v) \cdot \nabla_x   \p_{x_j} \Phi ^{\ell} (X^{\ell+1} (\tau;x,v)) \dd \tau  , \\
\p_{v_i} X_{  j}^{\ell+1}   &= t \delta_{ij} - \int^t_0(t-\tau) \p_{v_i} X ^{\ell+1} (\tau;x,v) \cdot \nabla_x   \p_{x_j} \Phi ^{\ell}(X^{\ell+1} (\tau;x,v)) \dd \tau  .
\end{split}
\Ee  
Now applying the Gronwall's inequality, we derive that 
\Be\label{est1:X_z}
\begin{split}
|\p_{x_i} X ^{\ell+1} (t;x,v)| &\leq  
e^{\int^t_0  (t-\tau) \|\nabla_x^2 \Phi ^{\ell}   \|_{L^\infty (\O)} \dd \tau}  \leq 
e^{\frac{t^2}{2}  \| \nabla_x ^2 \Phi  ^{\ell}\|_{L^\infty (\O)}} ,\\
|\p_{v_i} X ^{\ell+1} (t;x,v)| &\leq t 
e^{\int^t_0  (t-\tau) \|\nabla_x^2 \Phi   ^{\ell} \|_{L^\infty (\O)} \dd \tau} \leq  t
e^{\frac{t^2}{2}  \| \nabla_x ^2 \Phi^{\ell} \|_{L^\infty (\O)}}  .\end{split}
\Ee

Next using \eqref{est1:X_z} and the second identities of \eqref{XV_x}, 
we have 
\Be
|\p_{x_i} V _{  j} ^{\ell+1}(t;x,v)|    \leq  t \| \nabla_x^2 \Phi ^{\ell}\|_{L^\infty (\O)} e^{\frac{t^2}{2}  \| \nabla_x ^2 \Phi^{\ell}  \|_{L^\infty (\O)}} .\label{est1:V_x}
\Ee

From the second line of \eqref{XV_v} and the $v_i$-derivative to the first line of \eqref{form:XV}, we obtain that 
\Be
\begin{split}\notag
|\p_{v_i} V ^{\ell+1}_{  j}(t;x,v) | 
&=  \Big|\delta_{ij} - \int^t_0  
\int^s_0 \p_{v_i  } V^{\ell+1} _{  j}(\tau;x,v) 
\cdot \nabla_x   \p_{x_j} \Phi ^{\ell}(X^{\ell+1}(s;x,v)) \dd \tau \dd s\Big|\\
&  =   \Big| \delta_{ij} -  
\int^t_0\p_{v_i  } V ^{\ell+1}(\tau;x,v)\cdot   \int _\tau ^t 
\nabla_x   \p_{x_j} \Phi  ^{\ell}(X^{\ell+1}(s;x,v))  \dd s \dd \tau  \Big|\\
& \leq \delta_{ij}  + \int_0^ t (t- \tau ) \| \nabla_x ^2 \Phi  ^{\ell}  \| _{L^\infty (\O)} |\p_{v_i} V ^{\ell+1}(\tau;x,v) | \dd \tau ,
\end{split}
\Ee 
and hence, by the Gronwall's inequality,   
\Be
|\p_{v_i} V^{\ell+1}  (t; x,v)|  \leq   
e^{\frac{t^2}{2}  \| \nabla_x ^2 \Phi ^{\ell} \|_{L^\infty (\O)}}.\label{est1:V_v}
\Ee

On the other hand, we could derive the following inequalities by adding the first equalities of \eqref{XV_x} and \eqref{XV_v}: 
\Be
\begin{split}\notag
|\nabla_x X^{\ell+1}(t;x,v) | + |\nabla_x V^{\ell+1}(t;x,v)  | \leq 1 + \int^t_0 \{1+ |\nabla_x^2 \Phi^{\ell} (X (s;x,v))| \} \{|\nabla_x X^{\ell+1} | + |\nabla_x V ^{\ell+1}| \} \dd s,
\\
|\nabla_v X ^{\ell+1}(t;x,v) | + |\nabla_v V^{\ell+1}(t;x,v)  | \leq 1 + \int^t_0 \{1+ |\nabla_x^2 \Phi^\ell (X^{\ell+1} (s;x,v))|\} \{|\nabla_v V ^{\ell+1}| + |\nabla_v X^{\ell+1} | \}\dd s.
\end{split}
\Ee
Using the Gronwall's inequality, we derive that  
\Be\label{est2:X_v}
\begin{split}
|\nabla_x X ^{\ell+1}(t;x,v)| + |\nabla_x V ^{\ell+1}(t; x,v)| \leq e^{(1+ \|\nabla_x^2 \Phi ^{\ell}\|_{L^\infty (\O)}) t} ,\\
|\nabla_v X^{\ell+1} (t;x,v)| + |\nabla_v V^{\ell+1} (t; x,v)| \leq e^{(1+ \|\nabla_x^2 \Phi^{\ell} \|_{L^\infty (\O)}) t} .
\end{split}
\Ee

Finally we finish the proof of \eqref{est:X_x}-\eqref{est:V_v} by combining \eqref{est1:X_z}, \eqref{est1:V_x}, \eqref{est1:V_v}, and \eqref{est2:X_v}.
\end{proof}

\begin{lemma}\label{lem:exp_txvb}Recall $(\tb^{\ell+1} (x,v), \xb^{\ell+1}  (x,v), \vb^{\ell+1}   (x,v))$ in Definition \ref{def:tb}. The following identities hold:
\Be\label{tb_x}
\begin{split}
&	\p_{x_i} \tb^{\ell+1}   (x,v)  = \frac{ \p_{x_i} X_{ 3} ^{\ell+1} (- \tb^{\ell+1}  (x,v) ; x,v )}{ v^{\ell+1} _{\mathbf{b}, 3} (x,v) }\\
& \ \  = \frac{1}{  v^{\ell+1}  _{\b,3} (x,v)}
\Big\{
\delta_{i3}
- \int^{-\tb^{\ell+1}  (x,v)}_0 \int^s_0 \p_{x_i} X^{\ell+1}  (\tau;x,v) \cdot \nabla_x \p_{x_3}  \Phi ^{\ell}  (X^{\ell+1}  (\tau;x,v))   \dd \tau \dd s 
\Big\},
\end{split}
\Ee
{\small	\Be\begin{split}\label{tb_v}
	&	\p_{v_i}t_{\b} ^{\ell+1} (x,v) = \frac{\p_{v_i} X_{  3}^{\ell+1}  (-\tb^{\ell+1}  (x,v) ; x,v) }{v_{\b,3}^{\ell+1}  (x,v) }\\
	& 
	= \frac{1}{  v_{\b ,3}^{\ell+1}   (x,v)}
	\Big\{
	\tb^{\ell+1}  (x,v)  \delta_{i3}
	- \int^{- \tb ^{\ell+1}  (x,v)}_0 \int^s_0 \p_{v_i} X ^{\ell+1}  (\tau;x,v) \cdot \nabla_x \p_{x_3}  \Phi  ^{\ell}  (X ^{\ell+1} (\tau;x,v))   \dd \tau \dd s 
	\Big\}	,
\end{split}\Ee}
and 
\Be\label{xb_x}
\p_{x_i} \xb^{\ell+1}   (x,v) = - \frac{ \p_{x_i} X ^{\ell+1} _{  3} ( - \tb^{\ell+1}   (x,v);x,v) }{v_{\b, 3}^{\ell+1}  } \vb^{\ell+1}  (x,v)+ \p_{x_i} X ^{\ell+1} ( - \tb ^{\ell+1} (x,v) ;x,v),
\Ee
\Be\label{xb_v}
\p_{v_i} x_\b^{\ell+1}  (x,v) = - \frac{ \p_{v_i} X _{ 3}^{\ell+1}  ( - \tb^{\ell+1}   (x,v);x,v) }{v_{\b, 3}^{\ell+1}  }  v _\b^{\ell+1} (x,v) + \p_{v_i} X^{\ell+1}  (-t _\b^{\ell+1} (x,v);x,v),
\Ee 
and 
\Be\label{vb_x}
\begin{split}
\p_{x_i} \vb^{\ell+1}  (x,v)  & =  \frac{ \p_{x_i} X_{ 3} ^{\ell+1}  (- \tb ^{\ell+1} (x,v) ; x,v )}{ v^{\ell+1} _{\mathbf{b}, 3} (x,v) } 
\nabla_x \Phi^{\ell}  (   \xb^{\ell+1}  (x,v) ) \\
& \  \ 
- \int^{-\tb ^{\ell+1}  (x,v) }_0 ( \p_{x_i} X^{\ell+1}  (s;x,v) \cdot \nabla_x )  \nabla_x \Phi ^{\ell}   (X ^{\ell+1}  (s;x,v)) \dd s,
\end{split}\Ee
\Be\label{vb_v}
\begin{split}
\p_{v_i} \vb ^{\ell+1}  (x,v)  & =  \frac{ \p_{v_i} X_{ 3}^{\ell+1}   (- \tb ^{\ell+1}  (x,v) ; x,v )}{ v_{\mathbf{b}, 3}^{\ell+1}  (x,v) } 
\nabla_x \Phi ^{\ell} (   \xb ^{\ell+1}  (x,v) ) \\
& \  \  +e_i
- \int^{-\tb ^{\ell+1}  (x,v) }_0 ( \p_{v_i} X^{\ell+1}  (s;x,v) \cdot \nabla_x )  \nabla_x \Phi ^{\ell}   (X ^{\ell+1} (s;x,v)) \dd s.
\end{split}\Ee 	
\end{lemma}
\begin{proof}
By taking derivatives to $X^{\ell+1}_3  (-\tb^{\ell+1} (x,v); x,v )=0$ (see \eqref{def:tb}), we get 
\Be\notag
\begin{split}
- \p_{x_i} \tb^{\ell+1} (x,v) V_3 ^{\ell+1}  (- \tb^{\ell+1} (x,v); x,v) + \p_{x_i} X_3^{\ell+1} (- \tb^{\ell+1} (x,v) ; x,v ) &= 0 ,\\
- \p_{v_i} \tb^{\ell+1}(x,v) V_3 ^{\ell+1}(- \tb ^{\ell+1}(x,v); x,v) + \p_{v_i} X_3^{\ell+1} (- \tb^{\ell+1} (x,v) ; x,v ) &= 0,
\end{split}
\Ee
which imply the first identities of \eqref{tb_x} and \eqref{tb_v}. Then using the first identity of \eqref{XV_x}, we derive \eqref{tb_x}. Similarly, using the first identity of \eqref{XV_v}, we have \eqref{tb_v}.

\hide \Be\label{exp:tb_x}
\begin{split}
&	\p_{x_i} t^{k, \pm }_\b (x,v) \\
& = \frac{1}{  v^{k, \pm}_{\b,3} (x,v)}
\Big\{
\delta_{i3}
- \int^{-\tb^{k, \pm}(x,v)}_0 \int^s_0 \p_{x_i} X^k_\pm(\tau;x,v) \cdot \nabla_x \nabla_{x_j}  \phi^{k-1} (X^k_\pm(\tau;x,v))   \dd \tau \dd s 
\Big\}\\
& = \frac{t^\b (x,v)}{v^\b _3 (x,v) } \delta_{i3} - \frac{1}{v^\b_3 (x,v)} \int^{-\tb(x,v)}_0 \int^s_0 \p_{v_i} X(\tau;x,v) \cdot \nabla_x \p_{x_3}  \phi_h (X(\tau;x,v))   \dd \tau \dd s 
\end{split}
\Ee

\Be\label{exp:tb_v}
\begin{split}
&	\p_{v_i} t^\b (x,v) \\
& = \frac{1}{- v^\b _3 (x,v)}
\Big\{
-\tb(x,v) e_i \cdot (- e_3)  
- \int^{-\tb(x,v)}_0 \int^s_0 \p_{v_i} X(\tau;x,v) \cdot \nabla_x \nabla_x  \phi_h (X(\tau;x,v)) \cdot (- e_3) \dd \tau \dd s 
\Big\}\\
& = \frac{t^\b (x,v)}{v^\b _3 (x,v) } \delta_{i3} - \frac{1}{v^\b_3 (x,v)} \int^{-\tb(x,v)}_0 \int^s_0 \p_{v_i} X(\tau;x,v) \cdot \nabla_x \p_{x_3}  \phi_h (X(\tau;x,v))   \dd \tau \dd s 
\end{split}
\Ee
\unhide

By taking derivatives to $x_{\b} ^{\ell+1}(x,v)$ in \eqref{def:zb}, we derive that 
\Be\begin{split}\notag
\p_{x_i} x_{\b}  ^{\ell+1} &= - \p_{x_i} \tb^{\ell+1}  v_{\b} ^{\ell+1}+ \p_{x_i} X^{\ell+1}  ( - \tb ^{\ell+1}; x,v), \\
\p_{v_i} x_{\b} ^{\ell+1}  &= - \p_{v_i} \tb^{\ell+1}  v_{\b} ^{\ell+1} + \p_{v_i} X^{\ell+1}  ( - \tb ^{\ell+1}; x,v).
\end{split}\Ee 
Then using the first identities of \eqref{XV_x} and \eqref{XV_v}, we conclude \eqref{xb_x} and \eqref{xb_v}.

Finally we take derivatives to $v_{\b}^{\ell+1} (x,v)$ in \eqref{def:zb}
\Be\notag
\begin{split}
\p_{x_i} \vb ^{\ell+1} (x,v)  &= - \p_{x_i} \tb ^{\ell+1} (x,v)  
\dot{V}^{\ell+1}(-  \tb ^{\ell+1} (x,v);x,v)
+ \p_{x_i} V^{\ell+1} ( - \tb ^{\ell+1} (x,v); x,v)\\
&=  \p_{x_i} \tb ^{\ell+1}(x,v)  
\nabla_x \Phi ^{\ell}(   \xb^{\ell+1}  (x,v) )
+ \p_{x_i} V ^{\ell+1}( - \tb^{\ell+1}  (x,v); x,v),\\
\p_{v_i} \vb ^{\ell+1} (x,v)  &= \p_{v_i} \tb^{\ell+1}  (x,v)  
\nabla_x \Phi ^{\ell}(   \xb^{\ell+1}  (x,v) )
+ \p_{v_i} V^{\ell+1} ( - \tb  ^{\ell+1}(x,v); x,v).
\end{split}
\Ee
Finally we use the second identities of \eqref{XV_x} and \eqref{XV_v} and conclude \eqref{vb_x} and \eqref{vb_v}. \end{proof}

\begin{lemma}\label{lem:nabla_zb}
\Be\label{est:xb_x}
\begin{split}
&	|\p_{x_i} \xb ^{\ell+1} (x,v)|  \leq \frac{|\vb^{\ell+1} (x,v)|}{|v_{\b, 3}  ^{\ell+1}(x,v)|}
\delta_{i3} 
\\
& \ \ + \Big(1 +  \frac{|\vb^{\ell+1} (x,v)|}{|v_{\b, 3} ^{\ell+1} (x,v)|}
\frac{|\tb^{\ell+1} (x,v)|^2}{2} \| \nabla_x^2 \Phi  ^{\ell} \|_\infty\Big)  \min \big\{  e^{\frac{ |\tb^{\ell+1}|^2}{2} \| \nabla_x^2 \Phi ^{\ell} \|_\infty },
e^{ (1+ \| \nabla_x ^2 \Phi ^{\ell} \|_\infty)\tb}
\big\} ,
\end{split}
\Ee
\Be\label{est:xb_v}
\begin{split}
&	|\p_{v_i} \xb ^{\ell+1} (x,v)|  \leq \frac{|\vb^{\ell+1}  (x,v)| |\tb^{\ell+1} (x,v)|  }{|v_{\b, 3}  ^{\ell+1}(x,v)|}
\delta_{i3} 
\\
& \ \ + \Big(1 +  \frac{|\vb ^{\ell+1} (x,v)|}{|v_{\b, 3} ^{\ell+1} (x,v)|}
\frac{|\tb ^{\ell+1}(x,v)|^2}{2} \| \nabla_x^2 \Phi^{\ell}  \|_\infty\Big)   \min \big\{ \tb e^{ \frac{|\tb^{\ell+1}|^2}{2} \| \nabla_x^2 \Phi ^{\ell} \|_\infty },
e^{ (1+ \| \nabla_x ^2 \Phi^{\ell}  \|_\infty)\tb}
\big\} ,
\end{split}
\Ee
\Be\label{est:vb_x}
\begin{split}
&	|\p_{x_i} v _{\b, j} ^{\ell+1} (x,v)|  \leq 
\delta_{i3}	\frac{|\p_{x_j} \Phi ^{\ell}(\xb ^{\ell+1} (x,v))|}{|v_{\b, 3} ^{\ell+1} (x,v)|} \\
&  \ \ +
\Big(1+  \frac{|\tb ^{\ell+1}(x,v)| \| \nabla_x   \Phi ^{\ell}\|_\infty }{
	|v_{\b, 3}  ^{\ell+1}(x,v)| 
}\Big) 
|\tb ^{\ell+1}(x,v)| \| \nabla_x^2 \Phi^{\ell} \|_\infty \min \big\{  e^{\frac{|\tb^{\ell+1}|^2}{2} \| \nabla_x^2 \Phi^{\ell}  \|_\infty },
e^{ (1+ \| \nabla_x ^2 \Phi ^{\ell}\|_\infty)\tb}
\big\} ,
\end{split}
\Ee
{\small \Be\label{est:vb_v}
\begin{split}
	&	|\p_{v_i} v _{\b, j} ^{\ell+1}(x,v)|  \leq 
	\delta_{i3}	\frac{ |\tb ^{\ell+1}(x,v)| |\p_{x_j} \Phi^{\ell}  (\xb ^{\ell+1} (x,v))| }{|v_{\b, 3} ^{\ell+1} (x,v)|} + \delta_{ij}\\
	&  \ \ + \Big(1+  \frac{|\tb^{\ell+1} (x,v)| \| \nabla_x \Phi ^{\ell}\|_\infty}{ |v^{\ell+1}_{\b, 3}  (x,v)|}\Big) |\tb^{\ell+1} (x,v)| \| \nabla_x^2 \Phi  ^{\ell}\|_\infty  \min \big\{ \tb^{\ell+1} e^{ \frac{|\tb^{\ell+1}|^2}{2} \| \nabla_x^2 \Phi^{\ell} \|_\infty },
	e^{ (1+ \| \nabla_x ^2 \Phi ^{\ell} \|_\infty)\tb}
	\big\} .
\end{split}
\Ee }
\end{lemma}
\begin{proof}	
Lemma \ref{lem:XV_xv} and Lemma \ref{lem:exp_txvb} yield the estimates. \hide

Applying \eqref{est:X_v} to \eqref{est:tb_v}, we derive that 
\Be
\begin{split}
|\p_{v_i} t^\b (x,v)| & \leq \frac{t^\b (x,v)}{|v^\b _3 (x,v) |} \delta_{i3} + \frac{\tb(x,v)}{|v^\b_3 (x,v)|} \int^{-\tb(x,v)}_0   |\p_{v_i} X(\tau;x,v)|   \| \nabla^2_x \phi \|_\infty \dd \tau  \\
&\lesssim \frac{t^\b (x,v)}{|v^\b _3 (x,v) |}  \{1 +  e^{ \tb (x,v) (1+\| \nabla^2_x \phi \|_\infty) } 
\}.
\end{split}
\Ee
This implies  \unhide
\end{proof}

\hide

\begin{lemma}Suppose \eqref{Uest:DPhi} holds for $i=k-1$. 
Then we have \hide	\Be
t_\b^{k, \pm}(t,x,v) 
\leq  \frac{4}{g}
|v_{\b,3}^{k, \pm}(t,x,v)|
\Ee\unhide
\Be
\begin{split}\label{est:tb^h}
v_{\b,3}^{k, \pm }(x,v)   &\geq    \sqrt{g x_3} ,\\
v_{\b,3}^{k, \pm }(x,v) & \geq \frac{g }{ 4  }	t_\b^{k, \pm }(x,v)   .
\end{split}
\Ee
\end{lemma}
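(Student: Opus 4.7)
The plan is to exploit the bootstrap hypothesis $\|\nabla_x \Phi^{k-1}\|_\infty \leq g/2$ along the iterate characteristics $(X^{k,\pm},V^{k,\pm})$ generated by \eqref{ODE_k}, in order to bound the vertical acceleration uniformly from above by $-g/2$. Integrating this one-sided acceleration bound twice will produce a quadratic inequality in $\tb^{k,\pm}(x,v)$ whose nonnegativity-of-discriminant condition is precisely the first claim $v^{k,\pm}_{\b,3}\geq \sqrt{gx_3}$, and whose larger root then delivers the second claim $v^{k,\pm}_{\b,3}\geq \tfrac{g}{4}\tb^{k,\pm}$. The argument is the iterate-level analogue of the proof of \eqref{est:tB} already carried out in Lemma \ref{lem:tb}, so I will follow the same template.

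Concretely, the third component of \eqref{ODE_k} together with the bootstrap bound yields
\[
\dot V^{k,\pm}_3(s;x,v)
= -\partial_{x_3}\Phi^{k-1}\bigl(X^{k,\pm}(s;x,v)\bigr)-g
\leq -\tfrac{g}{2}
\]
for all $s\in [-\tb^{k,\pm}(x,v),0]$. Integrating from $-\tb^{k,\pm}$ to $s$ and using $V^{k,\pm}_3(-\tb^{k,\pm};x,v)=v^{k,\pm}_{\b,3}(x,v)$ produces the linear envelope $V^{k,\pm}_3(s;x,v)\leq v^{k,\pm}_{\b,3}(x,v)-\tfrac{g}{2}(s+\tb^{k,\pm}(x,v))$. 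Substituting this into the fundamental theorem of calculus $x_3=X^{k,\pm}_3(0;x,v)-X^{k,\pm}_3(-\tb^{k,\pm};x,v)=\int_{-\tb^{k,\pm}}^{0}V^{k,\pm}_3(s;x,v)\,ds$, together with $\int_{-\tb^{k,\pm}}^{0}(s+\tb^{k,\pm})\,ds=(\tb^{k,\pm})^2/2$, yields the key inequality
\[
\tfrac{g}{4}\bigl(\tb^{k,\pm}(x,v)\bigr)^2-v^{k,\pm}_{\b,3}(x,v)\,\tb^{k,\pm}(x,v)+x_3\leq 0.
\]

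Viewing the left-hand side as an upward-opening quadratic in the nonnegative variable $\tb^{k,\pm}$, the existence of $\tb^{k,\pm}\geq 0$ at which it is nonpositive forces the discriminant to be nonnegative, i.e.\ $\bigl(v^{k,\pm}_{\b,3}\bigr)^2\geq g x_3$, which is the first claim (using $v^{k,\pm}_{\b,3}\geq 0$ at the non-grazing exit point, since the forward-in-time characteristic must point into $\O$ there). The quadratic formula then sandwiches $\tb^{k,\pm}$ between its two roots, and the upper root delivers $\tb^{k,\pm}(x,v)\leq \tfrac{2}{g}\bigl(v^{k,\pm}_{\b,3}+\sqrt{(v^{k,\pm}_{\b,3})^2-g x_3}\bigr)\leq \tfrac{4}{g}v^{k,\pm}_{\b,3}(x,v)$, which is the second claim. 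There is no serious obstacle beyond sign-bookkeeping for $v^{k,\pm}_{\b,3}$ and choosing the correct branch in the quadratic formula; the bootstrap hypothesis \eqref{Uest:DPhi} does all the essential work by supplying the uniform one-sided acceleration bound used in the first step.
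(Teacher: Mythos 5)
Your proof is correct and follows essentially the same route as the paper: both use the bootstrap bound \eqref{Uest:DPhi} to get the one-sided vertical acceleration bound $\dot V_3\leq -g/2$, integrate twice to obtain the quadratic inequality $\tfrac{g}{4}|t_{\mathbf{b}}^{k,\pm}|^2 - v_{\mathbf{b},3}^{k,\pm}\,t_{\mathbf{b}}^{k,\pm} + x_3\leq 0$, and then read off the two claims from the discriminant and the larger root, exactly as in the proof of \eqref{est:tB} in Lemma \ref{lem:tb}. No gaps.
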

\begin{proof}
From the bootstrap assumption \eqref{Uest:DPhi}, the vertical acceleration is bounded above and below by 
\Be\notag
\frac{d V_3^{k, \pm} (s; x,v)}{ds} =  -g     -  \frac{1}{m_\pm} \p_{x_3} \phi^{k-1}  \leq - \frac{g }{2}  .
\Ee
Hence
\Be\notag
\begin{split}
0&=	X_3^{k, \pm } (- \tb^{k, \pm};x,v)  = x_3 -\int_{- \tb^{k, \pm}}^0 V_3^{k, \pm} (\tau; x,v) \dd \tau  \\
& = x_3 - \tb^{k, \pm} v_{\b,3}^{k, \pm}   - \int_{- \tb^{k, \pm}}^0 \int^\tau _{- t_\b^{k, \pm}}   	\frac{d V_3^F(\tau^\prime;x,v)}{d\tau^\prime} \dd  \tau^\prime\dd \tau \\
& \geq  x_3- \tb^{k, \pm} v_{\b,3}^{k, \pm}   +\frac{|\tb^{k, \pm} |^2}{2} \frac{g }{2}.
\end{split}
\Ee
Therefore we conclude \eqref{est:tb^h}.\end{proof}

\unhide

\begin{lemma}\label{lem:regularity} Recall $(h^{\ell+1}, \rho^{\ell+1}  )$, which are constructed in \eqref{form:h^k} and \eqref{eqtn:rhok}. Suppose the condition \eqref{Uest:DPhi^k} holds. 
For arbitrary numbers $\beta > \tilde \beta >0$, we assume that $\|   e^{{  \beta } |v|^2}  G (x,v) \|_{L^\infty (\gamma_-)}+\|   e^{{\tilde \beta } |v|^2}   \nabla_{x_\parallel, v} G (x,v) \|_{L^\infty (\gamma_-)}< \infty$ and 
\Be\label{choice_beta}
\frac{16}{g^2} \| \nabla_x^2 \Phi ^\ell \|_ {L^\infty (\O)}
\leq 
\tilde \beta  .
\Ee
Then for $(x,v) \in  \bar \O \times \R^3$
\Be
w^{\ell +1} _{{\tilde{\beta}} /{2}} (x,v)
|	\nabla_v h^{\ell+1} (x,v)|   \lesssim 
\left(1+	\frac{1}{g {\tilde\beta}^{1/2}} 
\right)
\|   e^{\tilde \beta |v|^2}  \nabla_{x_\parallel,v} G    \|_{L^\infty(\gamma_-)} , \label{est:hk_v}
\Ee
\vspace{-5pt}
{\small	\Be
w^{\ell +1} _{{\tilde{\beta}} /{2}} (x,v) |	\p_{x_i}h^{\ell+1 } (x,v)|
\lesssim \left[ 
\left(1+ \frac{1}{g \tilde{\beta}^{1/2}}\right)
+ \left(1+ \frac{1}{\tilde{\beta}^{1/2}}\right) \frac{\delta_{i3}}{\alpha^{\ell+1 } (x,v)}
\right]
\|   e^{\tilde \beta |v|^2}  \nabla_{x_\parallel,v} G    \|_{L^\infty(\gamma_-)} 
, \label{est:hk_x}
\Ee}
where $ w^{\ell +1} _{{\tilde{\beta}} /{2}} (x,v) \geq 	e^{ \frac{\tilde \beta}{2}|v|^2} e^{  \frac{\tilde \beta g}{2} x_3}$.

Moreover, for all $x \in \bar \O$,  
\Be\label{est:rho_x}
\begin{split}
&  	e^{  \frac{\tilde \beta g}{2} x_3} 	|\p_{x_i} \rho^{\ell+1 }   (x)|   
\lesssim 	\frac{1}{\tilde{\beta}^{3/2}}\Big(1+ \frac{1}{ g \tilde{\beta}^{1/2}}\Big)
\|  e^{\tilde \beta |v|^2}\nabla_{x_\parallel, v} G \|_{L^\infty ({\gamma_-})} 
\\
&  + 
\frac{\delta_{i3}}{ \tilde \beta }   \left(1+ \frac{1}{\tilde{\beta}^{1/2}}\right) 
\Big(
1+   \mathbf{1}_{|x_3| \leq 1}  |\ln    ( |x_3|^2 + g x_3 )|  +  \frac{1}{\tilde \beta^{1/2}}
\Big)\|  e^{\tilde \beta |v|^2}\nabla_{x_\parallel, v} G \|_{L^\infty ({\gamma_-})},
\end{split}
\Ee
For $0<\delta<1$, 
\Be\label{est:rho_Holder}
[\rho^{\ell+1} ]_{C^{0,\delta}} \lesssim_\delta
\frac{1}{\tilde{\beta}}\Big(1+ \frac{1}{\tilde{\beta}^{1/2}}+ \frac{1}{ g \tilde{\beta}}\Big)
\| e^{\tilde \beta |v|^2} \nabla_{x_\parallel, v}  G \|_{L^\infty (\gamma_-)}.
\Ee
Furthermore, $\Phi^{\ell} \in C^{1} (\bar\Omega) \cap C^2 (\O)$ and 
\Be\label{est:phi_C1}
\| \nabla_x \Phi^{\ell+1} \|_{L^\infty (\O)}  \leq  \mathfrak{C} \frac{\pi^{3/2}}{\beta^{3/2}} \Big(1 + \frac{2}{\beta g}\Big)
\| e^{\beta |v|^2} G \|_{L^\infty (\gamma_-)}
,  
\Ee
\vspace{-15pt}
\Be
\begin{split}
\label{est:phi_C2}
&  \| \nabla_x^2 \Phi^{\ell+1}  \|_{L^\infty (\O)} \\
&  \leq 
\frac{\mathfrak{C}_1}{\beta^{3/2}} 
\| e^{\beta |v|^2} G \|_{L^\infty (\gamma_-)} \bigg\{
\frac{1}{g \beta }  +
\log \bigg(
e+ 	\frac{1}{\tilde{\beta}}\Big(1+ \frac{1}{\tilde{\beta}^{1/2}}+ \frac{1}{ g \tilde{\beta}}\Big) 
\| e^{\tilde \beta |v|^2} \nabla_{x_\parallel, v}  G \|_{L^\infty (\gamma_-)}
\bigg)\bigg\}.
\end{split}	\Ee
\end{lemma}

\begin{remark}
To iterate this construction of sequence of solutions, we need to verify the iteration assumption \eqref{Uest:DPhi^k} and \eqref{choice_beta}.
\end{remark}

\begin{proof}Taking a derivative to \eqref{form:h^k_G}, we derive that 
\Be\label{form:nabla_h}
\nabla_{x,v} h^{\ell+1} (x,v)  = \nabla_{x,v} x_\b^{\ell+1}   \cdot  \nabla_{x_\parallel} G (z_\b^{\ell+1} )  + \nabla_{x,v} v_\b^{\ell+1}  \cdot \nabla_v G  (z_\b^{\ell+1} ),
\Ee
where $z_\b^{\ell+1} = (x_\b ^{\ell+1}, v_\b^{\ell+1} )$ and their derivatives in the right hand side were evaluated at $(x,v)$.

\medskip

\textit{Step 1. Proof of \eqref{est:hk_v}. } From \eqref{form:nabla_h}, 
\Be\begin{split}\label{est1:hk_v}
|\nabla_v h ^{\ell+1} (x,v)|  \leq \frac{ | \nabla_v \xb^{\ell+1} (x,v) | + |\nabla_v \vb^{\ell+1} (x,v)|}{
	e^{\tilde{\beta} |\vb ^{\ell+1} (x,v)|^2}	
}  \|  e^{\tilde{\beta} | v|^2}   \nabla_{x_\parallel,v} G   \|_{L^\infty (\gamma_-)}.
\end{split}\Ee
Using \eqref{est:xb_v}, \eqref{est:tb^ell}, and \eqref{choice_beta}, we have 
\Be\label{est1:xb_v}
\begin{split}
\frac{|\p_{v_i} \xb ^{\ell+1}  (x,v)|}{ e^{\tilde \beta |\vb ^{\ell+1} (x,v)|^2}} & \leq \frac{4 }{g} |\vb^{\ell+1} |e^{-\tilde \beta |\vb ^{\ell+1} |^2} \delta_{i3}  + \Big(
1+ \frac{8 \| \nabla_x^2 \Phi^{\ell}  \|_\infty }{g^2} |\vb ^{\ell+1} | |v_{\b,3}^{\ell+1}  | 
\Big)e^{- \tilde\beta |\vb^{\ell+1}  |^2}	\\
& \ \  \times 
\min \Big \{
\frac{4|v_{\b,3}^{\ell+1} | }{g} e^{ \frac{8}{g^2} \| \nabla_x^2 \Phi^{\ell} \|_\infty|v_{\b, 3} ^{\ell+1} |^2 }  , 
e^{\frac{4}{g}(1+ \| \nabla_x^2 \Phi ^{\ell} \|_\infty) |v_{\b, 3}^{\ell+1} | }
\Big \} \\
& \leq 
\frac{16}{g {\tilde\beta}^{1/2}}
\Big(1 + \frac{8}{g^2\tilde \beta } \| \nabla_x^2 \Phi ^{\ell}  \|_\infty  \Big) e^{- \frac{\tilde\beta}{2 }|\vb^{\ell+1}  |^2 }  \leq \frac{24}{g {\tilde\beta}^{1/2}}
w^{\ell+1}_{\tilde{\beta}/2} (x,v)
,
\end{split}
\Ee
Here, we have used the following lower bound, from \eqref{Uest:DPhi^k},  
\Be\label{lower:vb}
\frac{|\vb^{\ell+1}  (x,v)|^2}{2} = \frac{|v|^2}{2} +  \Phi ^{\ell }  (x) + g x_3
\geq  \frac{|v|^2}{2} + \big(g- \| \nabla_x  \Phi^{\ell }  \|_\infty\big) x_3 
\geq \frac{|v|^2}{2} + \frac{g}{2} x_3.
\Ee

Similarly, using \eqref{est:vb_v}, \eqref{est:tb^ell}, and \eqref{lower:vb}, we derive that 
\Be\label{est1:vb_v}
\begin{split}
\frac{	|\p_{v_i} \vb^{\ell+1} (x,v)| }{
	e^{\tilde \beta  |\vb^{\ell+1} (x,v)|^2}	
}& \leq  \Big( 1+ \frac{4}{g} \|  \nabla_x \Phi^{\ell}  \|_\infty \Big) e^{- \tilde\beta |\vb^{\ell+1} |^2}
\\
& \ + \Big(1+ \frac{4}{g} \| \nabla_x \Phi^{\ell} \|_\infty\Big) \frac{4}{g}  \| \nabla_ x^2 \Phi ^{\ell} \|_\infty  | v_{\b,3}^{\ell+1}  (x,v)|
e^{- \tilde\beta |\vb^{\ell+1} |^2}
\\
& \ \ \ \ \times 
\min \Big\{
\frac{4 |v_{\b, 3}^{\ell+1}  | }{g}  e^{ \frac{8}{g^2}  \| \nabla_x^2 \Phi ^{\ell}\|_\infty |v_{\b, 3}^{\ell+1}  | ^2}, e^{\frac{4}{g}  (1+ \| \nabla_x^2 \Phi^{\ell} \|_\infty)  |v_{\b, 3}^{\ell+1}  |}
\Big\}\\
& \leq \Big(1+ \frac{4}{g} \| \nabla_x \Phi^{\ell}  \|_\infty\Big)\Big(1+ \frac{32}{g^2 \tilde\beta }  \| \nabla_ x^2 \Phi ^{\ell} \|_\infty\Big) e^{- \frac{\tilde\beta}{2}|\vb |^2} \leq 
9	 w^{\ell+1}_{\tilde{\beta}/2} (x,v).
\end{split}
\Ee
Finally we complete the prove of \eqref{est:hk_v} using \eqref{est1:hk_v}, \eqref{est1:xb_v}, and \eqref{est1:vb_v} altogether.

\medskip

{\it{Step 2. }}From \eqref{form:nabla_h}, 
\Be\begin{split}\label{est1:hk_x}
|\partial_{x_i} h^{\ell+1}  (x,v)|  \leq \frac{ |\partial_{x_i} \xb^{\ell+1} (x,v) |  }{
	e^{\tilde \beta |\vb^{\ell+1} (x,v)|^2}  
}  \| e^{\tilde \beta |v|^2}  \nabla_{x_\parallel } G   \|_{L^\infty (\gamma_-)}
+\frac{   | \partial_{x_i} \vb^{\ell+1} (x,v)|}{
	e^{\tilde \beta |\vb^{\ell+1} (x,v)|^2}  
}  \| e^{\tilde \beta |v|^2}  \nabla_{ v} G   \|_{L^\infty (\gamma_-)}
.
\end{split}\Ee
Using \eqref{est1:xb_x/w}, \eqref{est:xb_x}, and \eqref{est:tb^ell}, we derive that 
\Be\label{est1:xb_x}
\begin{split}
\frac{|\p_{x_i} \xb ^{\ell+1} (x,v)|}{e^{\tilde \beta |\vb ^{\ell+1} (x,v)|^2}  } & \leq  \frac{|\vb ^{\ell+1} (x,v)|}{|v_{\mathbf{b},3}  ^{\ell+1} (x,v)|}e^{-\tilde \beta |\vb^{\ell+1}  |^2} \delta_{i3}    + \Big(
1+ \frac{8 \| \nabla_x^2 \Phi^{\ell}  \|_\infty }{g^2} |\vb^{\ell+1} | |v_{\b,3}^{\ell+1} | 
\Big)e^{- \tilde\beta |\vb^{\ell+1} |^2}	\\
& \ \ \  \times 
\min \Big \{
e^{ \frac{8}{g^2} \| \nabla_x^2 \Phi ^{\ell}\|_\infty|v_{\b, 3}^{\ell+1} |^2 }  , 
e^{\frac{4}{g}(1+ \| \nabla_x^2 \Phi ^{\ell} \|_\infty) |v_{\b, 3} ^{\ell+1} | }
\Big \} \\
& \leq 
\frac{|\vb^{\ell+1}(x,v)|}{\alpha ^{\ell+1} (x,v) }
e^{  \frac{4}{g} (1+ \| \nabla_x ^2 \Phi ^{\ell} \|_\infty ) |v_{\b, 3}^{\ell+1} |}
e^{\big(  - \tilde \beta + \frac{4}{g^2} \| \nabla_x^2 \Phi ^{\ell} \|_\infty
	(1+ \frac{2}{g} \| \nabla_x \Phi ^{\ell} \|_\infty)
	\big)	|\vb ^{\ell+1}|^2	
}
\\
& \ \ +\frac{16}{g {\tilde\beta}^{1/2}}
\Big(1 + \frac{8}{g^2\tilde \beta } \| \nabla_x^2 \Phi ^{\ell} \|_\infty  \Big) e^{- \frac{\tilde\beta}{2 }|\vb ^{\ell+1}|^2 }\\
& \leq 
\left(
\frac{\delta_{i3}}{\tilde{\beta}^{1/2}} \frac{1}{\alpha^{\ell+1} (x,v)}   +
\frac{16}{g {\tilde\beta}^{1/2}}
\Big(1 + \frac{8}{g^2 \tilde\beta } \| \nabla_x^2 \Phi ^{\ell} \|_\infty  \Big) \right)
e^{- \frac{\tilde\beta}{2}|v|^2} e^{- \frac{ \tilde\beta g}{2} x_3}
\\
& \leq 
\left(
\frac{\delta_{i3}}{\tilde{\beta}^{1/2}} \frac{1}{\alpha^{\ell+1} (x,v)}   +
\frac{32}{g {\tilde\beta}^{1/2}}
\right)
e^{- \frac{\tilde\beta}{2}|v|^2} e^{- \frac{ \tilde\beta g}{2} x_3}
,
\end{split}
\Ee
where we have used $ \frac{4}{g} (1+ \| \nabla_x ^2 \Phi  \|_\infty ) |v_{\b, 3} | + 
\big(  - \tilde \beta + \frac{4}{g^2} \| \nabla_x^2 \Phi  \|_\infty
(1+ \frac{2}{g} \| \nabla_x \Phi  \|_\infty)
\big)	|\vb |^2 \leq \frac{\tilde{\beta}}{2} |\vb|^2$.

Similarly, using \eqref{est1:xb_x/w}, \eqref{est:vb_x}, and \eqref{est:tb^h}, we derive that 
{\small	\Be\label{est1:vb_x}
\begin{split}
	&	\frac{	|\p_{x_i} \vb^{\ell+1} (x,v)| }{
		e^{\tilde \beta |\vb^{\ell+1} (x,v)|^2}	
	}  \leq  \frac{\|  \nabla_x \Phi^{\ell}  \|_\infty}{|v^{\ell+1}_{\mathbf{b}, 3} (x,v)|}   e^{- \tilde\beta |\vb ^{\ell+1}|^2} \delta_{i3}
	\\
	& \ + \Big(1+ \frac{4}{g} \| \nabla_x \Phi^{\ell} \|_\infty\Big) \frac{4}{g}  \| \nabla_ x^2 \Phi ^{\ell} \|_\infty  | v_{\b,3}  (x,v)|
	e^{- \tilde\beta |\vb^{\ell+1} |^2}
	\min \Big\{
	e^{ \frac{8}{g^2}  \| \nabla_x^2 \Phi^{\ell} \|_\infty |v_{\b, 3} ^{\ell+1} | ^2}, e^{\frac{4}{g}  (1+ \| \nabla_x^2 \Phi^{\ell} \|_\infty)  |v_{\b, 3}^{\ell+1}  |}
	\Big\}\\
	& \leq \left( 
	\frac{ \| \nabla_x \Phi^{\ell} \|_\infty }{\alpha^{\ell+1}(x,v)} \delta_{i3} +  \Big(1+ \frac{4}{g} \| \nabla_x \Phi^{\ell} \|_\infty\Big)
	\right) e^{- \frac{\tilde\beta}{2}|v|^2} e^{- \frac{\tilde\beta g}{2} x_3}  \leq \left( 
	\frac{ 1/2
	}{\alpha^{\ell+1}(x,v)} \delta_{i3} + 3
	\right) e^{- \frac{\tilde\beta}{2}|v|^2} e^{- \frac{\tilde\beta g}{2} x_3}.
\end{split}
\Ee}

\medskip

\textit{Step 3. Proof of \eqref{est:rho_x}. } 
From \eqref{Uest:DPhi^k}, we obtain a lower bound of $\alpha^{\ell+1}$ in \eqref{alpha_ell} as $
\alpha^{\ell+1} (x,v) \geq 	\sqrt{  |v_3|^2 +  |x_3|^2 +  g x_3}.$ We derive that 
{\small	\Be\label{est3:xb_x/w}
\begin{split}
	& 	\int_{\R^3}  \frac{1}{\alpha^{\ell+1} (x,v) }  e^{ - \tilde  \beta 
		\big(
		\frac{|v|^2}{2} + \frac{g}{2}x_3
		\big)
	}  \dd v 
	\leq    \int_{\R^3} \frac{1}{
		\sqrt{  |v_3|^2 +  |x_3|^2 +  g x_3}
	} e^{ - \tilde  \beta 
		\big(
		\frac{|v|^2}{2} + \frac{g}{2}x_3
		\big)
	} \dd  v \\
	& \leq \frac{2 \pi}{ \tilde \beta } e^{- \frac{\tilde \beta g}{2} x_3} \int_{\R} \frac{
		e^{- \frac{\tilde \beta}{2} |v_3|^2}
	}{
		\sqrt{ |v_3|^2 + |x_3|^2  +g x_3}
	}  \dd v_3 \leq \frac{8 \pi}{ \tilde \beta } e^{- \frac{\tilde \beta g}{2} x_3} 
	\Big(
	1+   \mathbf{1}_{|x_3| \leq 1}  |\ln    ( |x_3|^2 + g x_3 )|  +  \tilde \beta^{-1/2}
	\Big),
\end{split}
\Ee}
where we have used the following computation of $A=1$:
\Be
\begin{split}\notag
& \int_{\R} \frac{
	e^{- \frac{\tilde\beta}{2} |v_3|^2}
}{
	\sqrt{  {|v_3|^2} + {|x_3|^2}  + {g}  x_3}
}  \dd v_3  = \int_{|v_3| \leq A} + \int_{|v_3| \geq A} \\
& \leq   
4 \int^A_0
\frac{\dd r}{
	\sqrt{r^2 + |x_3|^2 + g x_3}
}
+ \frac{4e^{ - \frac{\tilde \beta A^2}{4}}}{\sqrt{ {A^2} 
		+ {|x_3|^2}  + g|x_3|
}} \int_{A}^\infty e^{- \frac{\tilde \beta}{4} r ^2}  \dd r\\
& \leq 4
\Big(
\ln | A + \sqrt{A^2 + |x_3|^2 + g x_3 }|
- \ln \sqrt{  |x_3|^2 + g x_3}
\Big) +\frac{\sqrt{  \frac{ 4}{\tilde \beta} }e^{- \frac{\tilde \beta}{4}A^2}}{\sqrt{  {A^2} 
		+ |x_3|^2  + g|x_3|
}} \\
& \leq 4\big\{1+   \mathbf{1}_{|x_3| \leq 1}  |\ln    ( |x_3|^2 + g x_3 )| \big\}+ 4 \tilde \beta^{-1/2}    .
\end{split}
\Ee
Then it is straightforward to derive \eqref{est:rho_x} using \eqref{est:hk_x} and \eqref{est3:xb_x/w}. \hide

From \eqref{form:nabla_h}, we have that 
\Be
\begin{split}\label{est1:rho_x}
&  | \nabla_x \rho^{\ell+1} (x) |   = \Big| \int_{\R^3}\nabla_x  h^{\ell+1} (x,v)    \dd v \Big| \\
&  \leq  \Big|\int_{\R^3}   \nabla_x x_\b^{\ell+1}  \cdot  \nabla_{x_\parallel}  G ({z^{\ell+1}_\mathbf{b} })   + \nabla_x v_\b^{\ell+1}  \cdot \nabla_v G ({z^{\ell+1}_\mathbf{b} })  \dd v \Big| 
\\
& \leq  \| e^{\tilde \beta |v|^2} \nabla_{x_\parallel,v}G \|_{L^\infty (\gamma_-)}  \bigg\{   \int_{\R^3} \frac{| \nabla_x x_\b^{\ell+1} (x,v)|}{ e^{\tilde \beta |\vb^{\ell+1}(x,v)|^2}} \dd v  +  \int_{\R^3}   \frac{|\nabla_x v _\b^{\ell+1}(x,v)|}{e^{\tilde \beta |\vb^{\ell+1}(x,v)|^2}}  \dd v  \bigg\},\\
\end{split}
\Ee 
where $z_\b ^{\ell+1} = (x_\b^{\ell+1} , v_\b^{\ell+1}  )$ were evaluated at $(x,v)$.

We first estimate $	\frac{|\nabla_x \xb^{\ell+1} (x,v)| }{ e^{\tilde \beta |\vb^{\ell+1}(x,v)|^2} }$. From \eqref{est:xb_x} and $\tb^{\ell+1}(x,v) \leq \frac{4}{g} v^{\ell+1}_{\mathbf{b},3}(x,v)$ in \eqref{est:tb^h}, we bound it by a singular term $\eqref{est:xb_x/w1}$ plus non-singular term $\eqref{est:xb_x/w2}$: 
\Be
\frac{|\vb ^{\ell+1}|}{|v_{\b, 3}^{\ell+1} |} e^{- \tilde  \beta |\vb ^{\ell+1}|^2}, \label{est:xb_x/w1} 
\Ee
{\small\Be
\left(1 +  \frac{8}{g^2} \| \nabla_x^2 \Phi^{\ell}  \|_\infty |\vb^{\ell+1} | |v_{\b, 3}^{\ell+1} | \right) 
\min \left\{
\frac{4}{g} |v_{\b, 3}^{\ell+1} |  e^{\frac{8}{g^2}  \| \nabla_x^2 \Phi ^{\ell} \|_\infty  |v_{\b, 3}^{\ell+1} | ^2 }, e^{\frac{4}{g}(1+ \| \nabla_x^2 \Phi ^{\ell} \|_\infty)  |v_{\b, 3}^{\ell+1} |}
\right\} e^{- \tilde \beta |\vb^{\ell+1} |^2}.
\label{est:xb_x/w2}
\Ee}

Using \eqref{est1:xb_x/w} and our choice of $\tilde \beta$ in \eqref{choice_beta}, we derive that 
\Be\label{est2:xb_x/w}
\begin{split}
\eqref{est:xb_x/w1} &\leq \frac{2}{\sqrt{\tilde \beta}}\frac{1}{\alpha^{\ell+1} (x,v)}e^{- \frac{\tilde \beta}{2} |\vb ^{\ell+1}(x,v)|^2 } \leq  \frac{2}{\sqrt{\tilde \beta}} \frac{1}{\alpha^{\ell+1} (x,v)} e^{ -  {\tilde \beta} 
	\big(
	\frac{|v|^2}{2} + \frac{g}{2}x_3
	\big)
},\\
\eqref{est:xb_x/w2}  &\leq \frac{1}{g {\tilde\beta}^{1/2}} \Big(1+ \frac{1}{g^{2} \tilde \beta } \| \nabla_x^2 \Phi ^{\ell} \|_\infty\Big)
e^{- \frac{\tilde \beta}{2} |\vb^{\ell+1} (x,v)|^2 }\leq 
\frac{1}{g {\tilde\beta}^{1/2}} \Big(1+ \frac{\| \nabla_x^2 \Phi ^{\ell} \|_\infty}{g^{2} \tilde \beta } \Big)
e^{ -\tilde  \beta 
	\big(
	\frac{|v|^2}{2} + \frac{g}{2}x_3
	\big)
}.
\end{split}
\Ee

\hide

\Be\label{lower:vb}
\begin{split}
e^{  \frac{\tilde \beta}{2} |\vb  (x,v)|^2 } &= \sqrt{\tilde w (\xb (x,v), \vb (x,v))}   = e^{ \tilde {\beta} 
	\left(
	\frac{|v|^2}{2} + \Phi  (x) + g x_3
	\right)
}\\
& \geq e^{   \tilde\beta 
	\left(
	\frac{|v|^2}{2} + (g-  \|  \Phi  \|_\infty ) x_3
	\right)
}  \geq  e^{   \tilde\beta 
	\left(
	\frac{|v|^2}{2} + \frac{g}{2}x_3
	\right)
}.
\end{split}
\Ee\unhide
Using the first bound of \eqref{est2:xb_x/w}, the form of $\alpha^{\ell+1}  $ in \eqref{alpha_ell}, and the condition \eqref{Uest:DPhi^k}, we derive that 
\Be\label{est3:xb_x/w}
\begin{split}
\int_{\R^3}  \eqref{est:xb_x/w1} \dd v 
& \leq  \frac{2}{\sqrt{\tilde \beta}}  \int_{\R^3} \frac{1}{
	\sqrt{  |v_3|^2 +  |x_3|^2 +  g x_3}
} e^{ - \tilde  \beta 
	\big(
	\frac{|v|^2}{2} + \frac{g}{2}x_3
	\big)
} \dd  v \\
& \leq \frac{4 \pi}{ \tilde \beta^{3/2}} e^{- \frac{\tilde \beta g}{2} x_3} \int_{\R} \frac{
	e^{- \frac{\tilde \beta}{2} |v_3|^2}
}{
	\sqrt{ |v_3|^2 + |x_3|^2  +g x_3}
}  \dd v_3\\
& \leq \frac{16 \pi}{ \tilde \beta^{3/2}} e^{- \frac{\tilde \beta g}{2} x_3} 
\Big(
1+   \mathbf{1}_{|x_3| \leq 1}  |\ln    ( |x_3|^2 + g x_3 )|  +  \tilde \beta^{-1/2}
\Big),
\end{split}
\Ee

Now we consider $\eqref{est:xb_x/w2}$. It is straightforward to bound $\eqref{est:xb_x/w2}$ using \eqref{est2:xb_x/w}: 
\Be\label{est4:xb_x/w}
\begin{split}
\int_{\R^3}\eqref{est:xb_x/w2}   \dd v &\leq \frac{1}{g {\tilde \beta}^{1/2}} \Big(1+ \frac{1}{g^{2} \tilde\beta } \| \nabla_x^2 \Phi^\ell \|_\infty\Big)
e^{ -  \frac{g \tilde \beta }{2}x_3 
}
\int_{\R^3} e^{- \frac{ \tilde \beta |v|^2}{2}} \dd v \\
&
= \frac{2 \sqrt{2} \pi^{3/2}}{ g {\tilde \beta}^{2}}  \Big(1+ \frac{1}{g^{2} \tilde \beta } \| \nabla_x^2 \Phi ^\ell \|_\infty \Big)
e^{ -  \frac{g\tilde \beta}{ 2}x_3 
}.
\end{split}
\Ee

Next we estimate $\frac{|\nabla_x \vb^{\ell+1} (x,v)| }{ e^{\tilde \beta |\vb^{\ell+1}(x,v)|^2}}$. From \eqref{est:vb_x} and \eqref{est:tb^ell}, we bound it by 
{\small	\Be\label{est:vb_x/w}
\begin{split}
	&	
	\frac{\|\nabla_x \Phi ^{\ell} \|_\infty}{|v_{\b, 3} ^{\ell+1} |}
	e^{-   \tilde \beta |\vb ^{\ell+1} |^2}
	\\
	&    +
	\frac{4}{g} \| \nabla_x^2 \Phi ^{\ell} \|_\infty	\Big(1+   \frac{4}{g}  \| \nabla_x   \Phi ^{\ell} \|_\infty  
	\Big)  |v_{\b,3}^{\ell+1}  | 
	\min \big\{  e^{\frac{8}{g^2} |v_{\b,3} ^{\ell+1} | ^2 \| \nabla_x^2 \Phi ^{\ell} \|_\infty },
	e^{\frac{4}{g} (1+ \| \nabla_x ^2 \Phi ^{\ell} \|_\infty)  |v_{\b,3}^{\ell+1} |}
	\big\}	e^{- \tilde   \beta |\vb ^{\ell+1}|^2} .
\end{split}
\Ee}
Using \eqref{est1:xb_x/w}, \eqref{lower:vb} and the choice of $\tilde \beta$ in \eqref{choice_beta}, we derive that 
\Be\label{est1:vb_x/w}
\eqref{est:vb_x/w} \leq 
\frac{\|\nabla_x \Phi ^{\ell} \|_\infty}{ \alpha^{\ell+1}  (x,v)}
e^{	- \frac{\tilde  \beta}{2}   (
|v|^2  +  gx_3
)}+  \frac{4}{g {\tilde\beta}^{1/2}} \| \nabla_x^2 \Phi ^{\ell} \|_\infty	\Big(1+   \frac{4}{g}  \| \nabla_x   \Phi ^{\ell} \|_\infty  
\Big) e^{	-\frac{ \tilde  \beta}{2}  (
{|v|^2}  + gx_3
)}.
\Ee
Following the same argument to obtain \eqref{est3:xb_x/w}-\eqref{est4:xb_x/w}, we derive that 
\Be\label{est1:vb_x/w}
\begin{split}
\int_{\R^3}	\frac{	|\nabla_x v _{\b}^{\ell+1} (x,v)|  }{
	e^{\tilde \beta |\vb^{\ell+1} (x,v)|^2}
} \dd v  
& \leq 	
\frac{ 4 \sqrt{2} \pi \|\nabla_x \Phi ^{\ell}  \|_\infty}{\tilde \beta}    
\Big(
1+   \mathbf{1}_{|x_3| \leq 1}  |\ln    ( |x_3|^2 + g x_3 )| +  \frac{1}{\sqrt{ \tilde \beta}}
\Big)e^{- \frac{ \tilde \beta g}{2} x_3}   \\
& \ \ +  \frac{8 \sqrt{2} \pi^{3/2}}{g {\tilde\beta}^{2}} \| \nabla_x^2 \Phi ^{\ell} \|_\infty	\Big(1+   \frac{4}{g}  \| \nabla_x   \Phi^{\ell}  \|_\infty  
\Big) e^{	   -  \frac{ \tilde \beta  g}{2}x_3
}  .
\end{split}
\Ee
\hide\Be\label{est:xb_v/w}
\begin{split}
|\p_{v_i} \xb^{k-1} (x,v)|  &\leq  \frac{4 |\vb^{k-1  } (x,v)|  }{g}
\delta_{i3} 
\\
& \ \ + \Big(1 +   \frac{8}{g^2} |\vb^{k-1  } (x,v)| |v_{\b, 3}^{k-1 } (x,v)| 
\| \nabla_x^2 \phi^{k-2}  \|_\infty\Big) \\
& \ \ \ \  \times   \min \Big\{ \frac{4 |v_{\b, 3}^{k-1 } (x,v)|  }{g}e^{ \frac{8}{g^2} \| \nabla_x^2 \phi^{k-2} \|_\infty |v_{\b, 3}^{k-1 }   |^2 },
e^{ \frac{4}{g}(1+ \| \nabla_x ^2 \phi^{k-2} \|_\infty)  |v_{\b, 3}^{k-1 }   |
}
\Big\} .
\end{split}
\Ee
Again our choice of $\beta$ in \eqref{choice_beta} and \eqref{lower:vb} ensure that 
\Be
\frac{	|\p_{v_i} \vb^{k-1} (x,v)|  }{w_G (\xb^{k-1}, \vb^{k-1})}  \leq 
\frac{16}{g \sqrt{\beta}} \Big( 1
+ \frac{32}{ g^2 \beta}  \| \nabla_x \phi^{k-2} \|_\infty
\Big)
e^{- \frac{\beta}{2} \big( \frac{|v|^2}{2} + \frac{g}{2} x_3\big)}
\Ee\unhide

Finally we conclude \eqref{est:rho_x} for $\p_{x_3} \rho^{\ell+1}$ using \eqref{est1:rho_x}, \eqref{est3:xb_x/w}, \eqref{est4:xb_x/w}, and \eqref{est1:vb_x/w}. We can easily derive the estimate for the tangential derivatives $\p_{x_1} \rho$ and $\p_{x_2} \rho$ in \eqref{est:rho_x} by noticing the singular terms $\frac{|\vb(x,v)|}{|v_{\mathbf{b}, 3} (x,v)|} \delta_{i3}$ in \eqref{est:xb_x} and $ \frac{| \p_{x_j} \Phi(\xb(x,v))|}{|v_{\mathbf{b}, 3} (x,v)|} \delta_{i3} $ in  \eqref{est:vb_x} are absent when $i \neq 3$. \unhide

\medskip

\textit{Step 4. Proof of \eqref{est:phi_C2}. } We will use \eqref{est:nabla^2phi}. For $0<|h|<1$, using \eqref{est:rho_x} we derive that  
\Be\label{rho_DQ}
\begin{split}
&\frac{|\rho^{\ell+1}(x+ h e_i) - \rho^{\ell+1}(x)|}{|h|^\delta}  \leq  \frac{1}{|h|^\delta} \int^{|h|}_0 | \nabla_x \rho^{\ell+1} (x+ \tau e_i) | \dd \tau \\
& \leq  \| e^{\tilde \beta |v|^2} \nabla_{x_\parallel, v} G \|_{L^\infty ({\gamma_-})} 
\bigg\{ |h|^{1-\delta} 
\frac{1}{\tilde{\beta}^{3/2}}\Big(1+ \frac{1}{ g \tilde{\beta}^{1/2}}\Big)
\\
& 
\ \  \ \ \ \  + 
\frac{\delta_{i3}}{ \tilde \beta }   \left(1+ \frac{1}{\tilde{\beta}^{1/2}}\right) 
\frac{1}{|h|^\delta} \int_0^{|h|} \big(
1+   \mathbf{1}_{ |x_3 + h|    \leq 1}  |\ln    ( |x_3+ \tau|^2 + g (x_3+ \tau) )|  +   {\tilde \beta} ^{-1/2}
\big)   \dd \tau \bigg\}  ,
\end{split}
\Ee
as long as $x_3+ h e_3 \geq 0$.

Note that, for $0< |h| < 1 $,   
\Be
\begin{split}\notag
|h| ^{-\delta } \Big|	\int^{ |h| }_0  \ln (x_3 + \tau) \dd \tau \Big|& \leq  { |h| }^{-\delta } \Big| { |h| } \ln (x_3 +{ |h| }) + x_3 \big( \ln (x_3 + { |h| }) - \ln x_3 \big) - { |h| }\Big|\\
&  \leq { |h| }^{1-\delta} | \ln (x_3 + { |h| }) | + 2 { |h| }^{1-\delta} \lesssim_{\delta} 1,
\end{split}
\Ee 
and 
\Be\begin{split}\notag
&|h|^{-\delta}\Big|	\int^0_{ - \min \{{ |h| }, x_3\}}  \ln (x_3 + \tau) \dd \tau \Big|\\
& \leq { |h| }^{-\delta} \big\{ x_3 | \ln x_3 - \ln (x_3- \min\{{ |h| }, x_3\})|\\
& \ \ 
+ { |h| }^{-\delta}   \min\{{ |h| }, x_3\} 
\Big(
| \ln x_3 - \ln (x_3- \min\{{ |h| }, x_3\})| +   |\ln x_3| + \min\{{ |h| }, x_3\}
\Big)\\
& \leq 2 { |h| }^{-\delta} \min\{|h|, x_3\} + \min\{{ |h| }, x_3\}^{1-\delta} | \ln (  \min\{{ |h| }, x_3\}) |
+ { |h| }^{1-\delta}  \lesssim_\delta 1.
\end{split}\Ee
Using these bounds, we bound \eqref{rho_DQ}, for all $i=1,2,3$, above by 
\Be\begin{split}\label{est:rho_Hol}
&\sup_{0 < |h| < 1}\frac{|\rho^{\ell+1}(x+ h e_i) - \rho^{\ell+1}(x)|}{|h|^\delta} \\
&
\lesssim_\delta
\left\{ 
\frac{1}{\tilde{\beta}^{3/2}}\Big(1+ \frac{1}{ g \tilde{\beta}^{1/2}}\Big)
+
\frac{\delta_{i3}}{ \tilde \beta }   \left(1+ \frac{1}{\tilde{\beta}^{1/2}}\right) 
\right\}\| e^{\tilde \beta |v|^2} \nabla_{x_\parallel, v}  G \|_{L^\infty (\gamma_-)}.
\end{split}\Ee

Using \eqref{est:rho_Hol}, \eqref{Uest:rho^k}, and \eqref{est:nabla^2phi}, we conclude \eqref{est:phi_C2}.
\hide
\Be\label{est:rho_x}
\begin{split}
&	e^{  \frac{\tilde \beta g}{2} x_3} 	|\nabla_x \rho  (x)|  \\
\lesssim	&   \left\{  
\left( {\tilde{\beta}^{-3/2}} + \tilde{\beta}^{-1} \| \nabla_x \Phi \|_\infty\right)
\Big(
1+   \mathbf{1}_{|x_3| \leq 1}  |\ln    ( |x_3|^2 + g x_3 )|  +   {\tilde \beta} ^{-1/2}
\Big)
\right.\\ 
& \ \ \    \left.
+  {g^{-1} \tilde{\beta}^{-2}}	\Big(
1+  g^{-1}{\| \nabla_x \Phi \|_\infty} +  {g^{-2}\tilde{\beta}^{-1}}	 
\Big)
\| \nabla_x ^2 \Phi \|_\infty 
\right\} \| w_{\tilde\beta} \nabla_{x_\parallel, v} G \|_{L^\infty ({\gamma_-})}  ,
\end{split}
\Ee

From \eqref{Uest:wh} and \eqref{lower:vb}, we derive that  
\Be\label{decay_h}
\begin{split}
|	h (x,v) | &\leq \| w_\beta  G  \|_{L^\infty ({\gamma_-})}\frac{1}{w_\beta (\xb (x,v), \vb  (x,v))}\\
&\leq  \| w_\beta  G  \|_{L^\infty(\gamma_-)} e^{ - \beta|\vb (x,v)|^2}\\
&
\leq  \| w_\beta  G  \|_{L^\infty(\gamma_-)}  e^{- \beta \big( |v|^2 + g x_3\big)}.
\end{split}
\Ee
Using \eqref{decay_h} together with \eqref{Uest:rho} and \eqref{est:nabla^2phi}, we derive that 
\Be
\begin{split}
\end{split}
\Ee \unhide 
\end{proof}

\subsection{Construction of Sequences and their Stability}\label{sec:CS}

Let us go back to the discussion right after \eqref{Dphi_ell}. Using $\Phi^{\ell+1} \in C^2 (\O) \cap C^1 (\bar \O)$ in \eqref{est:phi_C2}, now we can repeat the process to construct $h^{\ell+2}$ as in \eqref{ODE_k}. In order to achieve the uniform-in-$\ell$ estimates, we make sure the bound \eqref{est:phi_C2} guarantees \eqref{choice_beta}.
\begin{theorem}\label{prop:Reg}
Suppose \Be\label{bootstrapC1}
\mathfrak{C} \frac{\pi^{3/2}}{\beta^{3/2}} \Big(1 + \frac{1}{\beta g}\Big)
\| e^{\beta |v|^2} G \|_{L^\infty (\gamma_-)}
\leq \frac{g}{2},
\Ee 
\Be\label{bootstrapC2}
\frac{\mathfrak{C}_1}{\beta^{3/2}} 
\| e^{\beta |v|^2} G \|_{L^\infty (\gamma_-)} \bigg\{
\frac{1}{g \beta }  +
\log \bigg(
e+ 	\frac{1}{\tilde{\beta}}\Big(1+ \frac{1}{\tilde{\beta}^{1/2}}+ \frac{1}{ g \tilde{\beta}}\Big) 
\| e^{\tilde \beta |v|^2} \nabla_{x_\parallel, v}  G \|_{L^\infty (\gamma_-)}
\bigg)\bigg\} \leq \frac{\tilde{\beta}g^2}{16}.
\Ee
where $\mathfrak{C}, \mathfrak{C}_1>0$ are the computable constants, which appeared in \eqref{est:phi_C1} and \eqref{est:phi_C2}. Then we can construct $\Phi^\ell, h^{\ell+1}, \rho^{\ell+1}, X^{\ell+1}, V^{\ell+1}$ solve \eqref{eqtn:hk}, \eqref{bdry:hk}, \eqref{eqtn:rhok}, \eqref{Dphi_ell}, \eqref{ODE_k}. Moreover they satisfy \eqref{Uest:DPhi^k} and \eqref{choice_beta}-\eqref{est:phi_C2}.
\end{theorem} 
\begin{proof}
We set $\Phi^0 \equiv 0$ and $h^0 \equiv 0$. Then we solve the characteristics $(X^1, V^1)$ to \eqref{ODE_k} and initial condition with $\ell=0$. Clearly $(X^1, V^1) \in C^1$. Then now we define $h^1, \tb^1, \xb^1, \vb^1$ as in \eqref{form:h^k} and \eqref{def:tb_k} with $\ell=0$. Using Lemma \ref{lemma:Unif_steady} and \eqref{Uest:rho^k}, we derive that $\| e^{\beta g x_3} \rho^1 \|_{L^\infty (\bar \O)} \leq \frac{\pi^{3/2}}{\beta^{3/2}} \| w_\beta G \|_{L^\infty (\gamma_-)}$. Then using \eqref{est:phi_C1} and \eqref{est:phi_C2}, we verify the iteration assumptions \eqref{Uest:DPhi^k} and \eqref{choice_beta} for $\ell=1$. Therefore using Lemma \ref{lem:regularity}, we can iterate this process to construct $\Phi^\ell$, then $(X^{\ell+1}, V^{\ell+1})$ and $h^{\ell+1}$ for $\ell=1,2, \cdots$.\end{proof}


To pass a limit of the sequences we prove a stability lemma, which is very helpful to prove both the stability a la Cauchy and uniqueness of a limiting solution.

\begin{lemma}\label{lem:stability_seq}For given $\bar{h}_i (x,v)$ such that $\bar{\rho}_i := \int  \bar{h}_i  \dd v \in C^{0, \delta}(\O)$ for some $\delta>0$, suppose $ \Phi_i \in C^1(\bar \O) \times C^2 (\O)$ solves
\Be\notag
\Delta \Phi_i = \eta \bar{\rho}_i \  	  \text{in $\O \times \R^3$,} \ \  \ \   \Phi_i =0 \   \text{on $\p\O$.}
\Ee
Now we consider $h_i(x,v)$ solving, in the sense of \eqref{Lform:h},
\begin{align}
v\cdot \nabla_x h_i - \nabla_x (\Phi_i + g x_3 ) \cdot \nabla_v h_i =0
\   \text{in $\O \times \R^3$,} \  \   
h_i = G  \   \text{on $\gamma_-$}.
\end{align}
Suppose the following two condition hold for $g, \bar \beta, \e_0>0$
\Be
|  \Phi_1  (x)| \leq \frac{g}{2} x_3,\label{Uest:DPhi_2}
\Ee 
\vspace{-15pt}
\Be \label{condition_unique}
\frac{ 2^{3/2} \pi^{3/2} \mathfrak{C}
}{g  \bar{\beta}^{2} } 
\left\{ 1+ \frac{4}{\bar\beta g}	\right\} 
\| w_{\bar\beta} \nabla_v h_2 \|_{L^\infty (\O \times \R^3)}  \leq \frac{\e_0}{2}, 
\Ee
where $\mathfrak{C}$ had appeared in \eqref{est:phi_C1}.

Then for a small number $\e_0>1$, the following stability holds 
\Be\label{seq_stable}
\|  e^{  \frac{\bar{\beta }}{2} \big( |v|^2+g x_3\big)} (h_1(x,v) - h_2(x,v)) \|_{L^\infty (\O \times \R^3)}
\leq \frac{1}{2} \|  e^{  \frac{\bar{\beta }}{2}   \big( |v|^2+g x_3\big)} ( \bar{h}_1(x,v) - \bar{h}_2(x,v)) \|_{L^\infty (\O \times \R^3)}.
\Ee

\end{lemma}

\begin{proof}
Clearly the difference of two solutions solves 
\Be\label{VP_diff}
\begin{split}
v\cdot \nabla_x (h_1-h_2) - \nabla_x (\Phi_1 + g x_3) \cdot \nabla_v (h_1-h_2) = \nabla_x(\Phi_1 - \Phi_2) \cdot \nabla_v h_2 \ \ &\text{in } \O \times \R^3,\\
h_1- h_2=0 \ \ &\text{on } \p\O \times \{v_3>0\}.
\end{split}
\Ee
Let $(X_1 , V_1 )$ be the characteristics solving \eqref{ODE_h} with $ \nabla_x \Phi = \nabla_x \Phi_1$ and $t_{\mathbf{b},1}(x,v)$ (as \eqref{def:tb}) is the backward exit time of this characteristics $(X_1 , V_1 )$. Then, as $(h_1- h_2) (X_1(- t_{\mathbf{b},1} (x,v);x,v)) \equiv 0$,  
\Be \label{diff:h}
\begin{split}
&(h_1- h_2)(x,v) \\
&= \int^0_{- t_{\mathbf{b},1} (x,v)}
(\nabla_x\Phi_1(X _1(s;x,v)) -\nabla_x \Phi_2(X _1(s;x,v))) \cdot \nabla_ v h_2(X _1(s;x,v), V_1 (s;x,v))
\dd s .
\end{split}
\Ee
Now we bound above the right hand side of \eqref{diff:h} by 
{\small	\Be\label{bound:diff_h}
\begin{split}
	\underbrace{ t_{\mathbf{b},1} (x,v) \sup_{ s \in [-  t_{\mathbf{b},1} (x,v),0]}   \left( \frac{1}{ w_{\bar\beta,1}  (X_1(s;x,v), V_1(s;x,v))}\right )}_{\eqref{bound:diff_h}_*} \| w_{\bar\beta,1}   \nabla_v h_2 \|_{L^\infty (\O)} \| \nabla_x \Phi_1 - \nabla_x \Phi_2 \|_{L^\infty(\O)}. 
\end{split}
\Ee}
Note that \eqref{Uest:DPhi_2} implies 
\Be
w_{\bar\beta,1} (x,v) = e^{ \bar\beta  \big( |v|^2  + 2 \Phi_1(x) + 2 g   x_3\big)}  \geq e^{ \bar\beta  (|v|^2 + g x_3)}.\label{lower_w}\Ee
Using Lemma \ref{lem:tb} (and \eqref{est:tb^h}), \eqref{w:invar} and \eqref{lower_w}, we bound that 
\Be\label{est:tb/w}
\begin{split}
\eqref{bound:diff_h}_* & \leq \frac{  2 g^{-1}( \sqrt{|v_3|^2 + g x_3} - v_3)}{w_{\bar \beta,1} (x,v)} 
\leq \frac{4}{g}   \sqrt{|v_3|^2 + g x_3}   e^{-\bar \beta \big(|v|^2 + g x_3\big)}
\\&  \lesssim  \frac{1}{g \bar{\beta}^{1/2}}
\sqrt{ \bar{\beta} 
	\big(|v|^2 + g x_3\big)}
e^{- {\bar \beta} \big(|v|^2 + g x_3\big)} 
\lesssim  \frac{1}{g \bar{\beta}^{1/2}} 
e^{- \frac{\bar \beta}{2} \big(|v|^2 + g x_3\big)} 
.
\end{split}\Ee

On the other hand, using Lemma \ref{lem:rho_to_phi} (\eqref{est:nabla_phi} with $A= \| e^{\beta^\prime g x_3} ( \bar \rho_1 - \bar \rho_2) \|_\infty$ and $B= \beta^\prime g$ for $\beta^\prime<\bar \beta$), we derive that 
\Be
\begin{split}\label{diff:Phi} 
\| \nabla_x \Phi_1 - \nabla_x \Phi_2 \|_{L^\infty (\O)}
\leq   \mathfrak{C}
\left\{ 1+ \frac{2}{\beta^\prime g}	\right\}
\underbrace{  \|  e^{\beta^\prime g x_3} ( \bar\rho_1 - \bar\rho_2) \|_{L^\infty (\O)}}.
\end{split}
\Ee
Using \eqref{Uest:rho}, we bound the underbraced term above by 
\Be
\begin{split}\label{diff:bar_rho}
\|  e^{\beta^\prime g x_3} ( \bar\rho_1 - \bar\rho_2) \|_{L^\infty (\O)}
\leq  \frac{\pi^{3/2}}{(\beta^\prime)^{3/2}}  \|  e^{ \beta^\prime   \big( |v|^2+g x_3\big)} ( \bar{h}_1(x,v) - \bar{h}_2(x,v)) \|_{L^\infty (\O)}.
\end{split}
\Ee
Now combining above bounds together with \eqref{bound:diff_h} and \eqref{est:tb/w}, we conclude that  
\begin{align}
&|h_1 (x,v) - h_2 (x,v)|
\notag
\\
& \lesssim 
\underbrace{	 \frac{  \mathfrak{C} \pi^{3/2}}{g  \bar{\beta}^{1/2} (\beta^\prime)^{3/2 } } 
	\left\{ 1+ \frac{2}{\beta^\prime g}	\right\} 
	\| w_{\bar\beta} \nabla_v h_2 \|_\infty }   e^{- \frac{\bar \beta}{2} \big(|v|^2 + g x_3\big)} 
\|  e^{\beta^\prime  \big( |v|^2+g x_3\big)} (\bar{h}_1(x,v) - \bar{h}_2(x,v)) \|_{L^\infty (\O \times \R^3)}.\notag
\end{align} 
With a choice of $\beta^\prime = \bar \beta /2$ and \eqref{condition_unique}, we bound the underbraced term for a sufficiently small $\e_0>0$ to get \eqref{seq_stable}.\hide

Therefore we derive that 
\Be\notag
\|  e^{\beta^\prime  \big( |v|^2+g x_3\big)} (h_1(x,v) - h_2(x,v)) \|_{L^\infty (\O \times \R^3)}
\leq \frac{1}{2} \|  e^{\beta^\prime  \big( |v|^2+g x_3\big)} ( \bar{h}_1(x,v) - \bar{h}_2(x,v)) \|_{L^\infty (\O \times \R^3)},
\Ee

and finally conclude the uniqueness statement. In case that \eqref{condition_unique} holds for $i=1$, we utilize the following equation instead of \eqref{VP_diff} and follow the same argument:
\Be\label{VP_diff1}
\begin{split}
v\cdot \nabla_x (h_2-h_1) - \nabla_x (\Phi_2 + g x_3) \cdot \nabla_v (h_2-h_1) = \nabla_x(\Phi_2 - \Phi_1) \cdot \nabla_v h_1 \ \ &\text{in } \O \times \R^3.
\end{split}
\Ee

\unhide\end{proof}\hide


\begin{lemma}[Uniqueness Theorem]\label{theo:US}
Let $(h_1, \rho_1, \Phi_1)$ and $(h_2, \rho_2, \Phi_2)$ solve \eqref{VP_h}-\eqref{eqtn:Dphi} in the sense of Definition \ref{weak_sol}. Assume \eqref{Uest:DPhi} holds for $\Phi = \Phi_i$ of both $i=1,2$. There exists {\color{black}a sufficiently small constant $\e_0>0$} such that if the following bound holds for $i=1,2$, and $\bar \beta>0$,
\Be
\|	w_{\bar \beta} \nabla_v h_i \|_\infty < \e_0 g^2 (\bar \beta )^3
,
\Ee
then $h_1= h_2$ a.e. in $\O \times \R^3$ and $\Phi_1 = \Phi_2$ a.e. in $\O$. Here, $w_{\bar \beta}(x,v)$ is defined in \eqref{w^h}. 
\end{lemma}

\begin{proof}[\textbf{Proof of Theorem \ref{theo:RS}}]
The proof is a direct consequence of Theorem \ref{prop:Reg}. Note that using the condition \eqref{condition:G} we can verify \eqref{choice_beta} through \eqref{est:phi_C2}.\end{proof}


\unhide
Finally, we prove the existence of a unique solution by passing a limit of the sequences in Theorem \ref{prop:Reg} and using the stability in Lemma \ref{lem:stability_seq}.

\hide
\begin{theorem}\label{theo:CS}Suppose all three conditions \eqref{condition:beta}, $\| G \|_{L^\infty (\gamma_-)}<\infty$ and $\| w_\beta G \|_{L^\infty (\gamma_-)}<\infty$ hold. 
Then there exists at least one solution $(h, \rho, \Phi)$ to \eqref{VP_h}-\eqref{eqtn:Dphi} in the sense of Definition \eqref{weak_sol}. Moreover, we have 
\begin{align}
\|    h    \|_{L^\infty ( \bar \O \times \R^3)   } &\leq \|    G \|_{L^\infty (\gamma_-)},
\label{Uest:h}
\\
\| w_\beta  h    \|_{L^\infty (  \O \times \R^3)} &\leq \| w _\beta  G \|_{L^\infty (\gamma_-)},
\label{Uest:wh}
\\
\|  e^{  \beta g x_3 } \rho  \|_{L^\infty (\bar \O)} &
\leq   \frac{\pi ^{3/2}}{   \beta^{ 3/2} }	\| w_\beta   h    \|_{L^\infty (  \O \times \R^3)}
\leq \frac{\pi ^{3/2}}{   \beta^{ 3/2} } \| w_\beta  G \|_{L^\infty(\gamma_-)},
\label{Uest:rho}
\\
\| \nabla_x \Phi \|_{L^\infty (\bar{\O})} &\leq  \frac{g}{2}  , \label{Uest:DPhi}
\end{align}
and 
\Be
w_\beta (x,v) = e^{ \beta \big( |v|^2  + 2 \Phi(x) + 2 g   x_3\big)}  \geq e^{\beta (|v|^2 + g x_3)}.\label{lower_w}
\Ee\end{theorem}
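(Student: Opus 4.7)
\medskip

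\noindent\textbf{Proof plan for Theorem \ref{theo:CS}.}
The plan is to construct $(h, \rho, \Phi)$ as a weak-star limit along a subsequence of the iterates $(h^{\ell+1}, \rho^\ell, \nabla_x\Phi^\ell)$ built in Proposition \ref{lemma:Unif_steady}, and then pass to the limit in every term of the weak formulation \eqref{weak_form} and in the Poisson equation \eqref{eqtn:Dphi}. The uniform bounds \eqref{Uest:h}-\eqref{Uest:DPhi} will follow from the corresponding uniform bounds \eqref{Uest:h^k}-\eqref{Uest:DPhi^k} by lower semicontinuity of $L^\infty$-norms under weak-star convergence.

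First, by \eqref{Uest:h^k}, \eqref{Uest:wh^k}, \eqref{Uest:rho^k}, and the Banach-Alaoglu theorem, there exist a subsequence (still denoted by $\ell$) and functions $h \in L^\infty(\O \times \R^3)$, $\rho \in L^\infty(\O)$ such that $h^{\ell+1} \overset{*}{\rightharpoonup} h$ in $L^\infty$ and $\rho^\ell \overset{*}{\rightharpoonup} \rho$ in $L^\infty$. The uniform bound \eqref{Uest:DPhi^k} and the Calder\'on-Zygmund estimate \eqref{CZ_infty} of Lemma \ref{lem:rho_to_phi} applied with $A = \tfrac{\pi^{3/2}}{\beta^{3/2}} \|e^{\beta|v|^2}G\|_{L^\infty(\gamma_-)}$ and $B = \beta g$ give uniform $W^{1,p}_{loc}(\bar\O)$ bounds on $\nabla_x \Phi^\ell$ for any $p \in (3,\infty)$. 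By the Rellich-Kondrachov compactness and the Sobolev embedding $W^{1,p}_{loc} \hookrightarrow C^{0,\alpha}_{loc}$, we extract a further subsequence so that $\nabla_x \Phi^\ell \to \nabla_x\Phi$ strongly in $C^0_{loc}(\bar\O)$, with $\Phi$ satisfying \eqref{eqtn:Dphi} in the weak sense and inheriting \eqref{Uest:DPhi}.

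Next, I pass to the limit in the identity $\rho^\ell = \int_{\R^3} h^\ell \, dv$. The key point is that the weight lower bound \eqref{lower_w_st} combined with \eqref{Uest:wh^k} gives the uniform pointwise tail control $|h^\ell(x,v)| \le \|e^{\beta|v|^2}G\|_{L^\infty(\gamma_-)} e^{-\beta(|v|^2 + g x_3)}$. Therefore for any test function $\varphi \in C_c(\bar\O)$, the dominated convergence theorem combined with $h^\ell \overset{*}{\rightharpoonup} h$ tested against $\varphi(x) \mathbf{1}_{|v| \le R}$ and the uniform tail bound for $|v| > R$ yields $\int\int \rho^\ell \varphi = \int\int h^\ell \varphi \to \int\int h \varphi$, so $\rho = \int_{\R^3} h \, dv$ in $L^\infty(\O)$. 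For the weak formulation \eqref{weak_form} of the Vlasov equation with test function $\psi \in C^\infty_c(\bar\O \times \R^3)$, the linear transport term $h^{\ell+1} v \cdot \nabla_x \psi$ passes to the limit by the weak-star convergence of $h^{\ell+1}$. For the field term $h^{\ell+1} \nabla_x(\Phi^\ell + g x_3) \cdot \nabla_v \psi$, the strong $C^0_{loc}$ convergence of $\nabla_x \Phi^\ell$ together with the weak-star convergence of $h^{\ell+1}$ allow passage to the limit (product of weak-star and strongly convergent sequences on the compact support of $\psi$). The boundary term similarly passes since the trace $h^{\ell+1}|_{\gamma_-} \equiv G$ is fixed, and the outgoing trace is controlled through \eqref{strace:infty}.

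The main obstacle is exactly the nonlinear coupling in the field term: weak-star convergence of $h^{\ell+1}$ alone is insufficient, and it is strong $C^0_{loc}$ convergence of $\nabla_x \Phi^\ell$ that rescues the argument — which is why the Calder\'on-Zygmund bound \eqref{CZ_infty} (giving $W^{2,p}$ for $\Phi^\ell$ hence $C^{1,\alpha}$ locally) is invoked rather than merely the $L^\infty$ gradient bound \eqref{Uest:DPhi^k}. A secondary technical point is justifying that the extracted $h$ actually lies in $L^2_{loc}(\p\O \times \R^3; d\gamma)$ with the specified trace, which follows from the trace theorem \eqref{strace:infty} applied to the uniform bounds; since $h^{\ell+1}|_{\gamma_-} = G$ for every $\ell$, the limiting incoming trace is $G$ as required in Definition \ref{weak_sol}. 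Finally, the bounds \eqref{Uest:h}-\eqref{Uest:DPhi} are inherited from \eqref{Uest:h^k}-\eqref{Uest:DPhi^k} by lower semicontinuity of the $L^\infty$ norm under weak-star convergence.
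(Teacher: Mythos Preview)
Your proposal is correct and follows essentially the same approach as the paper: extract weak-star limits of $h^{\ell+1}$ and $\rho^\ell$ via Banach--Alaoglu, upgrade $\nabla_x\Phi^\ell$ to strong local convergence using the Calder\'on--Zygmund bound \eqref{CZ_infty} and Rellich--Kondrachov, then pass to the limit in the weak formulation (the nonlinear field term being handled by the product of strong and weak-star convergence on the compact support of $\psi$), and identify $\rho = \int h\,dv$ by truncating in $v$ and controlling the tail via the uniform weighted bound \eqref{Uest:wh^k}.

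One point you should make explicit: the bound \eqref{Uest:wh} is not a direct application of weak-star lower semicontinuity, because the weight $w_\beta$ in the limit involves $\Phi$, whereas the uniform bound \eqref{Uest:wh^k} involves $w_\beta^{\ell+1}$ built from $\Phi^\ell$. The paper devotes a separate step to this, arguing that $\Phi^\ell\to\Phi$ (hence $w_\beta^{\ell+1}\to w_\beta$ pointwise) so that $w_\beta^{\ell+1}h^{\ell+1}\overset{*}{\rightharpoonup} w_\beta h$ in $L^\infty$, after which lower semicontinuity applies. You have all the ingredients for this (your strong $C^0_{loc}$ convergence of $\nabla_x\Phi^\ell$ gives locally uniform convergence of $\Phi^\ell$ via the Dirichlet condition), but the argument should be spelled out rather than folded into a generic ``lower semicontinuity'' sentence.
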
\unhide

\begin{proof}[\textbf{Proof of Theorem \ref{theo:CS}}] Let us first check that, if \eqref{est:hk_v} and \eqref{cond:stability} hold then for $\e_1 \ll \e_0$ we have that 
\Be\notag
\frac{ 2^{3/2} \pi^{3/2} \mathfrak{C}
}{g  \bar{\beta}^{2} } 
\left\{ 1+ \frac{4}{\bar\beta g}	\right\} 	w^{\ell +1} _{{\tilde{\beta}} /{2}} (x,v)
|	\nabla_v h^{\ell+1} (x,v)|  
\leq \frac{\e_0}{2} .
\Ee
Note that this bound guarantees \eqref{condition_unique} in Lemma \ref{lem:stability_seq}. Therefore now we can apply Lemma \ref{lem:stability_seq} to the sequences of Theorem \ref{prop:Reg} with $\bar\beta = \tilde{\beta}/2$: $
\| e^{\frac{\tilde \beta}{4}
\big( |v|^2 + g x_3\big)	
} [h^{\ell+1} (x,v) - h^{\ell} (x,v) ]  \|_{L^\infty}
\leq \frac{1}{2^\ell }	\| e^{\frac{\tilde \beta}{4}
\big( |v|^2 + g x_3\big)	
} h^{1} (x,v)   \|_{L^\infty}.$ Then $h^\ell$ is Cauchy: for all $\ell ,m \in \mathbb{N}$,
\Be\label{Cauchy}
\Big\| e^{\frac{\tilde \beta}{4}
\big( |v|^2 + g x_3\big)	
} [h^{\ell} (x,v) - h^{m} (x,v) ] \Big\|
_{L^\infty(\O \times \R^3)}
\leq \frac{2}{2^{\min\{\ell, m\}} }	\Big\| e^{\frac{\tilde \beta}{4}
\big( |v|^2 + g x_3\big)	
} h^{1} (x,v)   \Big\|_{L^\infty(\O \times \R^3)}.
\Ee
With this strong convergence together with uniform-upper-bounds of Theorem \ref{prop:Reg}, it is standard to prove the convergence of the sequences and prove that their limiting function $(h, \rho, \Phi)$ is a strong solution to \eqref{VP_h}-\eqref{eqtn:Dphi}. Moreover, every upper bound of Theorem \ref{prop:Reg} is valid for the limiting function. Finally Lemma \ref{lem:stability_seq} implies the uniqueness of solution.\hide

{\it{Step 1. }}Using the uniform bounds of \eqref{Uest:h^k} and \eqref{Uest:rho^k},  we extract a subsequence $(h^{i^\prime }, \rho^{i^\prime} )$ satisfying the following convergence results:
\begin{align}
h^{i^\prime }  \overset{\ast}{\rightharpoonup} 
h&
\ \ \text{weak}-* \text{ in } \ L^\infty (\O \times \R^3) \cap L^\infty (\p \O \times \R^3; \dd \gamma)
\label{weakconv_hst} \\
\rho^{i^\prime} \rightharpoonup \rho&
\ \ \text{weak}-* \text{ in } \ L^\infty (\O \times \R^3)
\label{weakconv_rhost}.
\end{align} 

Recall that $h^{i^\prime}(x,v)$ in Lemma \ref{lemma:Unif_steady} is a Lagrangian solution to \eqref{eqtn:hk}-\eqref{bdry:hk} for a given field $\nabla_x \Phi^{i^\prime}$ and characteristics generated by the field. Since this Lagrangian solution is a weak solution, we have that, for any $\psi \in  C^\infty_{c} (\bar \O \times \R^3)$, 
\Be\label{weak_stk}
\iint_{\O \times \R^3} h^{i^\prime }  v\cdot \nabla_x \psi \dd v \dd x - \iint_{\O \times \R^3} h^{i^\prime }  \nabla_x (\Phi^{i^\prime }  + g x_3) \cdot \nabla_v \psi
\dd v \dd x
= \int_{\gamma_+} h^{i^\prime }  \psi \dd \gamma
- \int_{\gamma_-} G \psi \dd \gamma,
\Ee
and, for any $\varphi \in H^1_0 (\O) \cap C^\infty_c (\bar \O)$, 
\Be\label{weak_Poisson_k}
- \int_{\O} \nabla_x \Phi^{i^\prime} \cdot \nabla_x \varphi  \dd x =  \eta \int_{\O} \rho^{i^\prime}  \varphi \dd x.
\Ee
We require that test functions have compact spatial support: 
\begin{align}
\text{spt}_x (\psi)&:= \overline{  \{x \in \O:  \psi(x,v) \neq 0 \ \text{for some }  v \in \R^3\}} \subset_{\text{compact}} \O,\notag\\
\text{spt} (\varphi)&:= \overline{\{x \in \O:  \varphi(x) \neq0 \}} \subset_{\text{compact}} \O.\notag
\end{align}

The convergence of each terms in \eqref{weak_stk} is straightforward using \eqref{weakconv_hst} and \eqref{weakconv_rhost}, except a convergence of the nonlinear term
\Be\label{conv:hDPDpsi}
\int_{\O}  \nabla_x  \Phi^{i^\prime}  \cdot \left(\int_{\R^3} \nabla_v \psi h^{i^\prime + 1}  \dd v\right) \dd x
\rightarrow 
\int _\O  \nabla_x  \Phi\cdot \left( \int_{\R^3}   \nabla_v \psi h  \dd v\right) \dd x.
\Ee
For \eqref{conv:hDPDpsi},	It suffices to prove the convergence of the following terms:
\begin{align}
& \iint_{\O \times \R^3}	[\nabla_x \Phi^{i^\prime} (x) - \nabla_x \Phi  (x) ] \cdot \nabla_v \psi(x,v) h^{i^\prime+1}
(x,v) \dd v \dd x \label{diff1:phih}\\
-&  \iint_{\O \times \R^3}[ h (x,v) - h^{i^\prime+1}(x,v)]	\nabla_x \Phi  (x) \cdot \nabla_v \psi(x,v) \dd v \dd x .\label{diff2:phih}
\end{align} 	
This \eqref{diff1:phih} needs some strong convergence of $ \nabla_x  \Phi^{i^\prime} $ in the compact set $\text{spt}_x (\psi)$. Indeed, from \eqref{CZ_infty}, for $p \in (1, \infty)$ we have $
\| \nabla_x^2 \Phi^{i^\prime} \|_{L^p(\O)} \lesssim   \frac{A}{ B^{1/p}  } \ \ \text{for all } i^\prime.$ Choose $p>3$. We also have \eqref{Uest:DPhi^k}. Then we can apply the Rellich-Kondrachov compactness theorem and get
\Be\label{weakconv_Phist}
\| \nabla_x \Phi^{i^\prime}  -   \nabla_x \Phi\|_{L^q(\text{spt}_x (\psi) )} \rightarrow 0 \ \ \text{as} \   \ i^\prime \rightarrow \infty \ \ \text{for any } q \in [1, \infty).
\Ee 
Using \eqref{weakconv_Phist}, we control the first part: for $1/q+ 1/q^*=1$,
\Be\notag
|\eqref{diff1:phih}| \leq  \| \nabla_x \Phi^{i^\prime}   - \nabla_x \Phi   \|_{L^q(\text{spt}_x (\psi) )}  \|   h^{i^\prime+1} \|_{L^\infty } \|  \nabla_v \psi  \|_{L^{q^*}}  \rightarrow 0.
\Ee

For the second part \eqref{diff2:phih}, using $ \| \nabla_x \Phi \cdot \nabla_v\psi \|_{L^1 (\O \times \R^3)} \leq  \| \nabla_x \Phi \|_\infty \| \nabla_v \psi \|_{L^1}< \infty$ and \eqref{weakconv_hst}, we conclude that $|\eqref{diff2:phih}|\rightarrow 0$. Therefore, we conclude \eqref{conv:hDPDpsi}.

Similarly, we prove the convergence of two each terms in \eqref{weak_Poisson_k} using \eqref{weakconv_Phist} and \eqref{weakconv_rhost}. It is much elementary and we skip the detail. 

\medskip

\textit{Step 2. }Now we prove \eqref{Uest:wh}. From \eqref{Uest:wh^k}, there exists (not necessarily unique) $g \in L^\infty $ and a subsequence $\{i^\prime\}$ such that $w^{i^\prime+1} (x,v)  h^{i^\prime+1}  \overset{\ast}{\rightharpoonup}  g$ weak$-*$ in $L^\infty$.  

Note that \eqref{Uest:DPhi^k} and \eqref{bdry:phik} imply that $\Phi^{i^\prime} (x) \rightarrow \Phi(x)$ almost everywhere in $\O$. Therefore we have $w^{i+1} (x,v) = e^{\beta (|v|^2 + 2 g x_3)} e^{2 \beta \Phi^{i^\prime}(x)} \rightarrow  e^{\beta (|v|^2 + 2 g x_3)} e^{2 \beta \Phi (x)} $ a.e. in $\O$. We also note that \eqref{weakconv_hst}  implies $h^{i^\prime+1} (x,v) \rightarrow h(x,v)$ almost everywhere in $\O \times \R^3.$ Hence we conclude that $w^{i+1} (x,v)  h^{i^\prime+1} (x,v) \rightarrow w(x,v) h(x,v)$ a.e. in $\O \times \R^3$. This implies that 
\Be\label{weakconv_wh}
w^{i+1}   h^{i^\prime+1}  \overset{\ast}{\rightharpoonup}  wh	\ \ \text{weak}-* \text{ in } \ L^\infty (\O \times \R^3).
\Ee
Combining this fact with \eqref{Uest:wh^k} and the weak-star lower semicontinuity in $L^\infty$, finally we prove \eqref{Uest:wh}.

\medskip

\textit{Step 3. }Finally we consider the convergence of 
\Be
\rho^{i^\prime} (x) = \int _{\R^3}  h^{i^\prime} (x,v)\dd v.\notag
\Ee

Due to \eqref{weakconv_hst} and \eqref{weakconv_rhost}, it suffices to prove that, for $\tilde{\varphi} \in C^\infty_c (\O)$, 
\Be\begin{split}\label{conv:rho_k}
\int_{\O}\tilde{\varphi}(x)\int_{\R^3} h^{i^\prime}(x,v)  \dd v \dd x   \rightarrow   \int_{\O}\tilde{\varphi}(x)  \int_{\R^3} h   (x,v)\dd v \dd x .
\end{split}\Ee
For this we will use the uniform weighted bound \eqref{Uest:wh^k} crucially. For $N \gg1$,
\Be\begin{split}\notag
&  \int_{\O}\tilde{\varphi}(x)\int_{\R^3}   \{ h^ {i^\prime}(x,v) - h  (x,v) \} \dd v \dd x \\
& = \int_{\O}\int_{ \R^3 }\tilde{\varphi}(x) \mathbf{1}_{|v|  \leq N }  \{ h^ {i^\prime}(x,v) - h  (x,v) \} \dd v \dd x 
+  \int_{\O}\tilde{\varphi}(x)\int_{ |v|  \geq N }  \{ h^{i^\prime}(x,v) - h  (x,v) \} \dd v \dd x .
\end{split}\Ee
Using \eqref{weakconv_hst}, the first term converges to zero as $i^\prime \rightarrow \infty$ for any fixed $N$. Using \eqref{Uest:wh^k}, \eqref{Uest:wh}, and \eqref{lower_w_st}, we control the second term as 
\Be\begin{split}\notag
&  
\int_{\O} | \tilde{\varphi } (x)| \int_{|v| \geq N} 
e^{- \beta |v|^2} e^{- \beta g x_3} \{ | w^{i^\prime} h^{i^\prime}(x,v)| + | w h (x,v)|  \}\dd v \dd x
\lesssim    2 e^{- \beta N^2 }   \rightarrow 0
.
\end{split}
\Ee
Therefore we prove that $\rho(x) = \int_{\R^3} h(x,v) \dd v$. \unhide
\end{proof}

\hide

\subsection{Proof of the Main Theorem: Stationary problem}
Now we assemble established propositions to conclude the main theorem of the steady problem.

\begin{theorem}\label{main_theo_steady}

\end{theorem}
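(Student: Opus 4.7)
The plan is to linearize the difference $h_1-h_2$ along the characteristics of $\Phi_1$, reduce uniqueness to a self-referential bound on a weighted $L^\infty$ norm of $h_1-h_2$, and close that bound via the smallness hypothesis \eqref{condition_unique} together with the elliptic estimate of Lemma \ref{lem:rho_to_phi}. Subtracting \eqref{VP_h} for $i=1,2$ produces a linear transport equation
\begin{equation}\notag
v\cdot\nabla_x(h_1-h_2) - \nabla_x(\Phi_1 + g x_3)\cdot\nabla_v(h_1-h_2) = \nabla_x(\Phi_1-\Phi_2)\cdot \nabla_v h_2,
\end{equation}
with vanishing inflow. Integrating along the $\Phi_1$-characteristics $(X,V)$ of \eqref{ODE_h} back to the boundary at time $-\tb$ yields the representation $(h_1-h_2)(x,v)=\int_{-\tb}^{0}\nabla_x(\Phi_1-\Phi_2)(X(s))\cdot \nabla_v h_2(X(s),V(s))\,\dd s$.

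I would then insert the steady weight $w_{\bar\beta}$ built from $\Phi_1$ inside the integrand; by the invariance \eqref{w:invar}, $w_{\bar\beta}(X(s),V(s))=w_{\bar\beta}(x,v)$, so the integrand is pointwise controlled by $\tb(x,v)\,w_{\bar\beta}(x,v)^{-1}\,\|w_{\bar\beta}\nabla_v h_2\|_\infty\,\|\nabla_x(\Phi_1-\Phi_2)\|_\infty$. Using the explicit bound $\tb(x,v)\le 2g^{-1}(\sqrt{|v_3|^2+gx_3}-v_3)$ from Lemma \ref{lem:tb} and the lower bound $w_{\bar\beta}(x,v)\ge e^{\bar\beta(|v|^2+gx_3)}$ forced by \eqref{Uest:DPhi}, the geometric factor $\tb/w_{\bar\beta}$ is dominated by $g^{-1}\bar\beta^{-1/2}\sqrt{\bar\beta(|v|^2+gx_3)}\,e^{-\bar\beta(|v|^2+gx_3)}$.

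Next, for $\|\nabla_x(\Phi_1-\Phi_2)\|_\infty$, I would apply Lemma \ref{lem:rho_to_phi} (estimate \eqref{est:nabla_phi}) to the Poisson problem $\Delta(\Phi_1-\Phi_2)=\eta(\rho_1-\rho_2)$ with $A=\|e^{\beta' g x_3}(\rho_1-\rho_2)\|_\infty$ and $B=\beta' g$ for an auxiliary $\beta'<\bar\beta$; the symmetric choice $\beta'=\bar\beta/2$ keeps things simple. Integrating in $v$ against the Gaussian weight converts that density estimate into $\|e^{\beta' g x_3}(\rho_1-\rho_2)\|_\infty\lesssim (\beta')^{-3/2}\|e^{\beta'(|v|^2+gx_3)}(h_1-h_2)\|_\infty$, exactly as in the derivation of \eqref{Uest:rho}. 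Concatenating the three estimates produces a pointwise inequality of the shape
\begin{equation}\notag
e^{\beta'(|v|^2+gx_3)}|h_1-h_2|(x,v)\;\lesssim\; \frac{\|w_{\bar\beta}\nabla_v h_2\|_\infty}{g^2\bar\beta^{3}}\;\|e^{\beta'(|v|^2+gx_3)}(h_1-h_2)\|_\infty.
\end{equation}
Taking the supremum, \eqref{condition_unique} makes the right-hand coefficient at most $\tfrac12$, so absorption forces $h_1\equiv h_2$ a.e.; the Poisson equation then gives $\Phi_1\equiv\Phi_2$. If \eqref{condition_unique} is only assumed for $i=1$, the same argument runs along the $\Phi_2$-characteristics after swapping the roles of $h_1$ and $h_2$ in the linearization.

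The main obstacle is balancing the two weights. The weight $w_{\bar\beta}$ inside the controlled norm $\|w_{\bar\beta}\nabla_v h_2\|_\infty$ must be tied to $\Phi_1$ (the advecting field) so that invariance along characteristics can be used, while the output weight $e^{\beta'(|v|^2+gx_3)}$ on $h_1-h_2$ must be strictly smaller in exponent to create a Gaussian gap $e^{-(\bar\beta-\beta')(|v|^2+gx_3)}$ that absorbs the $\sqrt{|v|^2+gx_3}$ factor coming from $\tb$. The symmetric split $\beta'=\bar\beta/2$ loses a factor of $\bar\beta^{-3}$ overall, which is precisely the origin of the $g^2\bar\beta^3$ threshold in the hypothesis \eqref{condition_unique}.
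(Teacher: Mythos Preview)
Your proposal is correct and follows essentially the same route as the paper's proof of Theorem \ref{theo:US}: linearize the difference along the $\Phi_1$-characteristics, exploit the invariance of $w_{\bar\beta}$ together with the $\tb$ bound \eqref{est:tb^h} and the lower bound \eqref{lower_w}, feed the density difference through Lemma \ref{lem:rho_to_phi} with the auxiliary exponent $\beta'=\bar\beta/2$, and close by absorption using \eqref{condition_unique}. Your remark that $w_{\bar\beta}$ must be built from the advecting potential $\Phi_1$ (so that \eqref{w:invar} applies along the $\Phi_1$-flow) is a point the paper leaves implicit, and your handling of the role-swap when \eqref{condition_unique} holds only for $i=1$ matches the paper's \eqref{VP_diff1}.
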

\begin{proof}[{Proof of (S1) in Theorem \ref{main_theo_steady}}]

\end{proof}

\begin{proof}[{Proof of (S2) in Theorem \ref{main_theo_steady}}]

\end{proof}

\begin{proof}[{Proof of (S3) in Theorem \ref{main_theo_steady}}]

\end{proof}

\unhide

\hide

Now we use Lemma \ref{lem:nabla_zb}. From \eqref{est:nabla_zb}, we derive that  
\Be
\begin{split}
&\eqref{est1:rho_x}_1  \\
& \lesssim \int_{\R^3} \frac{1}{|v_{\mathbf{b},3} (x,v)|} \frac{1}{ \sqrt{w(\xb(x,v), \vb(x,v))}} \frac{1}{ \sqrt{w(x,v)}}  \dd v \\
& \lesssim \int_{\R^3} \frac{e^{\frac{\| \nabla_x^2 \phi \|_\infty}{g} (|v_\parallel|+  \frac{4}{g} |v_{\mathbf{b}, 3}| ) \frac{|v_{\mathbf{b}, 3}|}{g}}}{\alpha(x,v)} 	 e^{- \frac{\beta}{4} \Big(1 - 	  \frac{16 }{g}	\big(2+\frac{8\| \nabla_x \phi_F \|_\infty }{g} \big) - \frac{32 \| \nabla_x \phi_F \|_\infty^2}{g^2}
\Big) |v_{\b,3}^F (t,x,v)|^2} e^{- \frac{\beta}{4} 
\Big(1 - 
\frac{16}{g} 
\| \nabla_x \phi_f \|_\infty 
\Big)
|v_\parallel|^2}  
e^{- \frac{\beta}{4}|v|^2} e^{- \frac{\beta g }{4} x_3} \dd v \\
&\lesssim e^{- \frac{\beta g }{4} x_3} \big| \ln |x_1| \big|
\end{split}
\Ee

Recall 
\Be
\begin{split} 
|V_3^F (\tau;t,x,v)| &\leq |v_{\b,3}^F (t,x,v)  |,\\
\big|	|V_\parallel^F (\tau;t,x,v)| -|v_\parallel| \big|&\leq    \{t^F_\b (t,x,v) + t^F_\f (t,x,v) \} \| \nabla_x \phi_F \|_\infty 
\leq   \frac{8\| \nabla_x \phi_F \|_\infty }{g}|v_{\b,3}^F (t,x,v)  | .
\end{split}
\Ee

Recall that $w_\pm (x,v)= w(\frac{|v|^2}{2} + \phi_h (x)+ g m_{\pm} x_3)$, which is invariant along the trajectory.  Assume $1/w(x_\b, v_\b)$ can control the growth term $e^{\frac{\| \nabla_x^2 \phi \|_\infty}{g}|v_\parallel| t_\b}$ namely
\Be\begin{split}
&\Big|\frac{1}{w(x_\b, v_\b)} e^{\frac{\| \nabla_x^2 \phi \|_\infty}{g}|V_\parallel| t_\b}\Big|
\lesssim \Big|\frac{1}{w(x_\b(x,v), v_\b(x,v))} e^{\frac{\| \nabla_x^2 \phi \|_\infty}{g} (|v_\parallel|+  \frac{4}{g} |v_{\mathbf{b}, 3}| ) \frac{|v_{\mathbf{b}, 3}|}{g}}\Big|\\
&
\lesssim  \frac{e^{\frac{\| \nabla_x^2 \phi \|_\infty}{g}|v_\parallel| \frac{|v_3|}{g}}}{w(|v|^2/2)}  
.
\end{split}
\Ee
as long as the bootstrap assumption \eqref{Bootstrap} holds.

As long as 
\Be
\frac{\| \nabla_x ^2 \phi \|_\infty}{g^2} \ll 1,
\Ee 
we can control \eqref{est1:rho_x} since
\Be\begin{split}
&\int_{\R^3}  \frac{ | \nabla_x x_\b (x,v) | }{w (x_\b, v_\b)} \dd v \lesssim \int_{|v| \leq 100} \frac{1}{\alpha(x,v)} \dd v
+ \int_{|v| \geq 100} \frac{1}{\alpha(x,v)} \frac{1}{w(|v|^2/2)} \dd v
\\
&
\lesssim 
\mathbf{1}_{|x_3| \leq 1/10} \ln \frac{1}{|x_3|} \in L^p (\O) \ \ for  \ all \ p<\infty. 
\end{split} \Ee

Therefore we deduce that 
\Be
\| \nabla_x \rho(x) \|_{L^p} \lesssim \| w \nabla_{x,v} G \|_\infty .
\Ee

Therefore we can conclude that 
\Be\label{est1:h_v}
\nabla_v h_\pm (x,v) \leq \frac{ |v_\b(x,v)|t_\b (x,v)}{|v_{\mathbf{b},3}  (x,v) |}  \{1 +  e^{ \tb (x,v) (1+\| \nabla^2_x \phi \|_\infty) } 
\}    \frac{1}{w (x_\b, v_\b)}w\nabla_{x,v}G(\xb, \vb) 
\Ee
Roughly 
\Be
|\nabla_v h_\pm (x,v)| \leq 
\frac{1}{g} |v_\b^{h, \pm} (x,v)|
e^{ \frac{1+\| \nabla^2_x \phi \|_\infty}{g} |v_\b^{h, \pm} (x,v)|} \frac{1}{w (x_\b, v_\b)}  \| w \nabla_{x,v} G \|_{L^\infty(\gamma_-)}
\Ee 
From \eqref{est:V_v}, recall that 
\Be
\begin{split} 
|V_3^F (\tau;t,x,v)| &\leq |v_{\b,3}^F (t,x,v)  |,\\
\big|	|V_\parallel^F (\tau;t,x,v)| -|v_\parallel| \big|&\leq    \{t^F_\b (t,x,v) + t^F_\f (t,x,v) \} \| \nabla_x \phi_F \|_\infty 
\leq   \frac{8\| \nabla_x \phi_F \|_\infty }{g}|v_{\b,3}^F (t,x,v)  | .
\end{split}
\Ee
As long as $\| \nabla_x \phi \|_\infty < \infty$, we have that 
\Be
\begin{split}\label{est:1/w_h}
&	\frac{1}{w_h (X^F (s;t,x,v), V^F(s;t,x,v))}  \\
&
\leq e^{- \frac{\beta}{2} \Big(1 - 	  \frac{16 }{g}	\big(2+\frac{8\| \nabla_x \phi_F \|_\infty }{g} \big) - \frac{32 \| \nabla_x \phi_F \|_\infty^2}{g^2}
\Big) |v _3|^2} e^{- \frac{\beta}{2} 
\Big(1 - 
\frac{16}{g} 
\| \nabla_x \phi_f \|_\infty 
\Big)
|v_\parallel|^2}  
\end{split}\Ee
we should be able to have
\Be
|\nabla_v h (x,v)| \lesssim e^{-  o(1) |v|^2}.
\Ee

\textit{(Or we can use the weight function to have an estimate
\Be
\begin{split}
&\nabla_v (	w h_{\pm} (x,v)) = \nabla_v w h + w \nabla_v h 
\\
&= [\nabla_v w(x^\b, v^\b) ]G(x^\b, v^\b)  + 
w(x^\b, v^\b) [\nabla_v  G(x^\b, v^\b)  ]\\
& =  [\nabla_v x^\b \cdot \nabla_{x^\b} + \nabla_v v^\b \cdot \nabla_{v^\b} ]w(x^\b, v^\b) G(x^\b, v^\b)\\
&  \ \ 
+ w(x^\b, v^\b) [\nabla_v x^\b \cdot \nabla_{x^\b} + \nabla_v v^\b \cdot \nabla_{v^\b} ] G(x^\b, v^\b)
\end{split}
\Ee)}

We need to bound $\| \nabla_x^2 \phi \|_\infty$.

\subsubsection{Estimate of $\| \nabla_x^2 \phi \|_\infty$}

Potential estimate (has to be checked for our domain) says
\Be
\| \nabla_x^2 \phi \|_\infty \lesssim  \|\rho \|_\infty \log  \|\rho \|_{C^{0,\gamma}}
\Ee 
Sobolev embedding: We need $\rho \in W^{1, p} \subset C^{0, \gamma}$ for $p>3$ as $\frac{1}{p} - \frac{1}{3}<0$.

and 
\Be
\| \nabla_x ^2 \phi \|_\infty \lesssim  \| w G \|_\infty  \ln \| w \nabla_{x,v} G \|_\infty  (need \ to \ verify \ that \ \ll g^2)
\Ee

[[NO USE YET: Note that $\frac{|v|^2}{ 2} + \phi (x,v) + g m_\pm x_3$ is invariant along the trajectory:
\Be
\frac{|v|^2}{ 2} + \phi (x ) + g m_\pm x_3 = \frac{|v^\b|^2}{ 2} + \phi (x^\b ) + g m_\pm x_3|_{x_3=0} =  \frac{|v^\b(x,v)|^2}{ 2} 
\Ee
This implies that 
\begin{align}
|v| = |v^\b (x,v)| \frac{\p |v^\b (x,v)| }{\p |v|}
\end{align}
]]
\unhide

\hide

\subsubsection{Uniqueness (incomplete)}
Suppose $h_{\pm}$ and $\tilde{h}_\pm$ solve the same problem. Then $h_\pm - \tilde{h}_\pm$ solves 
\Be
\begin{split}
v\cdot \nabla_x (h- \tilde{h}) -  \nabla_x \phi_{\tilde{h}} \cdot \nabla_v (h-\tilde{h}) =  \nabla_x \phi_{h - \tilde{h}} \cdot \nabla_v h ,\\
(h- \tilde{h})|_{\gamma_-}= 0.
\end{split}
\Ee
Using $L^1$ estimate, we get 
\Be
\| h- \tilde{h} \|_{L^1 (\gamma_+)}   \leq \|  \nabla_x \phi_{h - \tilde{h}} \cdot \nabla_v h \|_{L^1} 
\Ee

Trajectory: 
\Be
\frac{dX}{d t}  = V ,  \ \ \frac{d V}{d t} = -  \nabla_x \phi_{\tilde{h}}(t, X)
\Ee
Then 
\Be
(h-\tilde{h})(x,v) = \int^0_{-\tb(x,v)} \nabla_x \phi_{h - \tilde{h}} ( X(s;x,v)) \cdot \nabla_v h ( X(s;x,v), V(s;x,v))
\dd s .
\Ee

\subsection{Special solution}
If the boundary condition is space-homogeneous 
$$
G_\pm (x,v) = \mu (|v|),
$$
From the other papers' argument, what we can get? {\color{red}(CHECK)}

\unhide


\section{Dynamic Solutions}  
In this section, we construct a global-in-time strong solution to the dynamic problem \eqref{eqtn:f}-\eqref{Poisson_f}, and study their properties such as regularity and uniqueness.

\subsection{Construction of Sequences}\label{sec:DC}
We construct a solution to the dynamic problem \eqref{eqtn:f}-\eqref{Poisson_f} via the following sequences: starting with $f^0  \equiv 0$, we set $(\varrho^0,  \Psi^0)= (0,0)$; and then $f^1$ solves 
\Be\label{f_1}
\p_t f^1 + v\cdot \nabla_x f^1 - \nabla_x ( \Phi + g x_3) \cdot \nabla_v f^1 =0, \  \ f^1 |_{\gamma_-} =0, \  \ f^1|_{t=0} = f_0. 
\Ee
Since $ \Phi + g x_3 \in C^1 (\bar \O) \cap C^2 (\O)$, the characteristics to \eqref{ODE_F} equals the steady characteristics $(X,V)$ of \eqref{ODE_h} and hence $ f^1 $ is defined as in \eqref{Lform:f} along the characteristics. 

Suppose that $\Psi^\ell  \in C^1 (\bar \O) \cap C^2 (\O)$ satisfies
\begin{align}
\Delta \Psi^\ell = \varrho^\ell  := \eta \int_{\R^3}   f ^\ell  \dd v,   
\ \ \
\Psi^\ell |_{\p\O}   =0. \label{Poisson_fell}\vspace{-10pt}
\end{align}
Note that $
\phi_{F^\ell} = \Psi^\ell + \Phi. $

The corresponding characteristics is 
\Be
\mathcal{Z}^{\ell+1} (s;t,x,v) = (\X^{\ell+1} (s;t,x,v) , \V^{\ell+1} (s;t,x,v) ) , \label{Zell}
\Ee
solving
\Be\begin{split}\label{ODEell}
\frac{d \X^{\ell+1} }{ds} = \V^{\ell+1} ,&\ \ \  \frac{d \V^{\ell+1} }{ds}  =
- \nabla_x \Psi^\ell - \nabla_x \Phi  - g   \mathbf{e}_3 ,\\
\X ^{\ell+1}  |_{s=t} =x, &\ \ \ \V ^{\ell+1}  |_{s=t}  = v.
\end{split}\Ee
We define $t_{\mathbf{B} }^{\ell+1}(t,x,v),t_{\mathbf{F} }^{\ell+1}(t,x,v),  x_{\mathbf{B} }^{\ell+1}(t,x,v),$ and $ v_{\mathbf{B} }^{\ell+1}(t,x,v)$ as in Definition \ref{def:tb} but for the characteristics $\Zz^{\ell+1}=(\X^{\ell+1},\V^{\ell+1})$ in \eqref{Zell}.

Then we successively construct solutions in the sense of Definition \ref{def:mild} along the characteristics as in \eqref{Lform:f} to the problem
\begin{align}
\p_t f ^{\ell+1} + v\cdot \nabla_x f ^{\ell+1 } - \nabla_x (\Psi^\ell + \Phi+ g   x_3)\cdot \nabla_v f ^{\ell+1} &= \nabla_x \Psi^\ell \cdot \nabla_v h ,
\label{eqtn:fell}
\\
f ^{\ell+1} |_{\gamma_-} &= 0,\label{bdry:fell}\\
f ^{\ell+1} |_{t=0} &= f_{ 0} :=  F_{  0} - h .\label{initial:fell}
\end{align}

From \eqref{def:flux}, \eqref{cont_eqtn}, and \eqref{identity:Psi_t}, we have 
\begin{align}
b^\ell(t,x) := \int_{\R^3} v   f^\ell (t,x,v)  \dd v \ \ \text{in} \ \R_+ \times \O,\label{def:flux_ell}\\
\p_t\varrho^\ell + \nabla_x \cdot b^\ell =0 \ \ \text{in} \ \R_+ \times \O.\label{cont_eqtn_ell}
\end{align}
\hide
\Be\label{def:flux_ell}
b^\ell(t,x) := \int_{\R^3} v   f^\ell (t,x,v)  \dd v ,
\Ee
\Be\label{cont_eqtn_ell}
\p_t\varrho^\ell + \nabla_x \cdot b^\ell =0 \ \ \text{in} \ \R_+ \times \O,
\Ee 
\Be
\begin{split}\label{identity:Psi_t_ell}
\p_t \Psi^\ell (t,x)  = \eta \Delta_0^{-1} \p_t \varrho^\ell(t,x)  = -  \eta \Delta_0^{-1}  (\nabla_x \cdot b^\ell) (t,x).
\end{split}
\Ee

\unhide



\begin{remark}
The continuity equation \eqref{cont_eqtn_ell} should hold in a weak sense against smooth test function with compact support. As what we have done for the steady solution construction, we will prove that the sequence $(f^{\ell+1}, \varrho^\ell, \Psi^\ell)$ belongs to some regularity space. Then, in Lemma \ref{lem:D3tphi_F} and Remark \ref{remark:CE}, we will derive that the continuity equation \eqref{cont_eqtn_ell} holds in a strong sense so that the following identity is valid:
\Be\label{identity:Psi_t_ell}
\p_t \Psi^\ell (t,x)  = \eta \Delta_0^{-1} \p_t \varrho^\ell(t,x)  = -  \eta \Delta_0^{-1}  (\nabla_x \cdot b^\ell) (t,x)\ \ \text{in} \ \R_+ \times \O.\Ee
\end{remark}


Applying Lemma \ref	{lem:tb}, we have the following result:
\begin{lemma}Assume a bootstrap assumptions $ \Psi^\ell (t,\cdot )\in C^1 (\bar \O) \cap C^2 (\O)$ and 
\Be\label{Bootstrap_ell}
\sup_{0 \leq \tau \leq t}	\| \nabla_x \phi_{F^\ell}(\tau )   \| _{L^\infty (\O)}
=	\sup_{0 \leq \tau \leq t}	\| \nabla_x \Psi^\ell(\tau ) + \nabla_x \Phi   \|_{L^\infty (\O)}\leq   \frac{g }{2}.
\Ee 
Then we have that for all $0 \leq s \leq t$
\Be\label{est:tB_ell}
\begin{split}
&t_{\mathbf{B }}^{\ell + 1}	(s,x,v) \leq
\frac{2}{g} \min \Big\{\sqrt{|v_3|^2 + g x_3} -  v_3,  \sqrt{|v_{\mathbf{B}, 3}^{\ell+1} (s,x,v)|^2 - g x_3 }+ v_{\mathbf{B}, 3} ^{\ell+1}(s,x,v)  \Big\} ,
\\
& \tB^{\ell + 1}(s,x,v) 	+ \tF^{\ell + 1} (s,x,v) \leq \frac{4}{g} \sqrt{|v_3|^2 + g x_3}.
\end{split}
\Ee

\hide Then we have 	\Be\label{est:tB_ell}
\max\big\{	t_\mathbf{B}^{\ell+1}(t,x,v) , 	t_\f^{\ell+1}(t,x,v) \big\} \leq  \frac{4}{g}\min\big\{ |v_{\b,3}^{\ell+1}(t,x,v)|,  |v_{\f,3}^{\ell+1}(t,x,v)|\big\}.
\Ee\unhide
\end{lemma}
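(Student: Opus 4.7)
The plan is to simply invoke Lemma \ref{lem:tb}(ii) with the self-consistent field $\nabla_x \Psi^\ell$ (in place of $\nabla_x \Psi$) and apply it to the characteristics $\Zz^{\ell+1} = (\X^{\ell+1}, \V^{\ell+1})$ solving \eqref{ODEell}. The bootstrap assumption \eqref{Bootstrap_ell} plays exactly the role that \eqref{Bootstrap} plays in Lemma \ref{lem:tb}(ii), so the argument carries over verbatim. Since the statement is indexed by $\ell$ and not by a generic smooth field, it is worth rewriting the short derivation to make the dependence transparent.

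First I would extract from \eqref{ODEell} and \eqref{Bootstrap_ell} the vertical acceleration bound
\[
\frac{d V^{\ell+1}_3(s;t,x,v)}{ds} = -\p_{x_3}\Psi^\ell(s,\X^{\ell+1}) - \p_{x_3}\Phi(\X^{\ell+1}) - g \leq -\frac{g}{2},
\]
valid on the entire interval of existence of the characteristic inside $\bar\O$. Integrating twice (once forward from $t$, once backward from $t-\tB^{\ell+1}$) as in \eqref{exp:X_3} and \eqref{zero_quad2} of the proof of Lemma \ref{lem:tb} gives the two quadratic upper bounds
\[
0 = \X^{\ell+1}_3(t-\tB^{\ell+1};t,x,v) \leq x_3 - v_3\, \tB^{\ell+1} - \tfrac{g}{4}|\tB^{\ell+1}|^2,
\]
\[
0 = \X^{\ell+1}_3(t-\tB^{\ell+1};t,x,v) \leq x_3 - v^{\ell+1}_{\mathbf{B},3}\, \tB^{\ell+1} + \tfrac{g}{4}|\tB^{\ell+1}|^2.
\]
Applying the quadratic formula to each inequality yields the two candidates in the minimum of \eqref{est:tB_ell}, whose smaller value is exactly $\tB^{\ell+1}(t,x,v)$.

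For the forward exit time $\tF^{\ell+1}$, I would repeat the same computation but integrating forward in $s$ from $t$ to $t+\tF^{\ell+1}$, obtaining
\[
0 \leq x_3 + v_3\, \tF^{\ell+1} - \tfrac{g}{4}|\tF^{\ell+1}|^2,
\]
whence $\tF^{\ell+1} \leq \tfrac{2}{g}(\sqrt{|v_3|^2 + g x_3} + v_3)$. Adding this with the first piece of the minimum for $\tB^{\ell+1}$ gives the second bound $\tB^{\ell+1} + \tF^{\ell+1} \leq \tfrac{4}{g}\sqrt{|v_3|^2 + g x_3}$.

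There is no genuine obstacle here; the only thing to be careful about is that the bounds are obtained pointwise in $(t,x,v)$ and rely only on \eqref{Bootstrap_ell} (not on any higher regularity of $\Psi^\ell$), so they are compatible with whatever regularity of $\Psi^\ell$ is available at the $\ell$-th stage of the iteration. This is important because these uniform-in-$\ell$ bounds on $\tB^{\ell+1}$, $\tF^{\ell+1}$ are exactly what will feed the weighted $L^\infty$ estimates on $f^{\ell+1}$ and $\varrho^{\ell+1}$ used to propagate the bootstrap hypothesis \eqref{Bootstrap_ell} itself.
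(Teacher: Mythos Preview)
Your proposal is correct and follows exactly the paper's approach: the paper simply states that the lemma follows by applying Lemma~\ref{lem:tb}(ii) to the characteristics $\Zz^{\ell+1}$ with $\nabla_x\Psi^\ell$ in place of $\nabla_x\Psi$, using \eqref{Bootstrap_ell} in place of \eqref{Bootstrap}. Your write-up spells out the short computation behind that reference, which is fine; the only phrasing to tighten is that the minimum in \eqref{est:tB_ell} is an \emph{upper bound} for $\tB^{\ell+1}$, not its exact value.
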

\hide\begin{proof}

We have 
\Be
\frac{d V_3^F(s;t,x,v)}{ds}  \leq - \frac{g }{2}  
\Ee
Hence
\Be
\begin{split}
0=	X_3^F (s;t,x,v)|_{s= t-t_\b^F} &= x_3 + \int^s_t V_3^F (\tau; t,x,v) \dd \tau\Big|_{s= t-t_\b^F} \\
& = x_3 + v_{\b,3}^F (s-t)\big|_{s= t-t_\b^F} + \int^s_t \int^\tau _{t- t_\b^F}   	\frac{d V_3^F(\tau^\prime;t,x,v)}{ds} \dd  \tau^\prime\dd \tau\Big|_{s= t-t_\b^F}\\
& \leq  x_3- t_\b^F(t,x,v) v_{\b, 3}^F (t,x,v) +\frac{|t_\b^F(t,x,v)|^2}{2} \frac{g }{2}.
\end{split}
\Ee
Therefore we derive that 
\Be
t_\b^F(t,x,v)  \leq \frac{v_{\b,3}^F+ \sqrt{(v_{\b,3}^F)^2 - 4 \frac{g }{4} x_3}}{2\frac{g }{4}} \leq 4 \frac{|v_{\b,3}^F (t,x,v)|}{g }.
\Ee

\end{proof}

{\color{red} Do we need this:
\begin{lemma}Define the maximum height of the trajectory
\Be
\begin{split}\label{MH}
\mathring{x}^{  \ell+1}_{ \pm,3} (t,x,v) 
& = \X^{\ell+1}_{\pm,3}(  \mathring{t}^{  \ell+1}_{ \pm,3} (t,x,v)   ;t,x,v)\\
&
= \max 	\big\{\X^{\ell+1}_{\pm,3}(s;t,x,v) \geq 0:  s \in [t-t_{\mathbf{B}, \pm}^{\ell+1} (t,x,v), t+t_{\mathbf{F}, \pm}^{\ell+1} (t,x,v)  ] \big\}.
\end{split}	\Ee
A position attaining the maximum height is denoted by 
\Be
\label{position_MH}
\mathring{x}^{  \ell+1}_{ \pm } (t,x,v) 
= \X^{\ell+1}_{\pm}( \mathring{t}^{  \ell+1}_{ \pm,3} (t,x,v)   ;t,x,v).
\Ee
Then 
\Be\label{est:MH}
\mathring{x}^{  \ell+1}_{ \pm,3} (t,x,v)   \leq \frac{2}{g m_\pm}
\left(
\frac{|v|^2}{2} + 2 gm_{\pm} x_3+  4  \sqrt{|v_3|^2 + g x_3 }
\right).
\Ee	
\end{lemma}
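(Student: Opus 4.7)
\medskip

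\noindent The plan is to identify the maximum-height instant $\mathring t^{\ell+1}_{\pm,3}(t,x,v)$ with the unique zero of the vertical velocity along the trajectory, and then to control both the interval length $|\mathring t^{\ell+1}_{\pm,3} - t|$ and the vertical excursion via the second-order ODE satisfied by $\X^{\ell+1}_{\pm,3}$. Under the bootstrap \eqref{Bootstrap_ell}, the third component of \eqref{ODEell} yields
$$
-\tfrac{3 g m_\pm}{2} \;\leq\; \frac{d\V^{\ell+1}_{\pm,3}(s;t,x,v)}{ds} \;\leq\; -\tfrac{g m_\pm}{2},
$$
so $\V^{\ell+1}_{\pm,3}$ is strictly decreasing on $[t-t^{\ell+1}_{\mathbf{B},\pm},\, t+t^{\ell+1}_{\mathbf{F},\pm}]$. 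Hence $\X^{\ell+1}_{\pm,3}$ attains its maximum at the unique $\mathring t^{\ell+1}_{\pm,3}$ where $\V^{\ell+1}_{\pm,3}(\mathring t^{\ell+1}_{\pm,3})=0$ (or at an endpoint of the interval, in which case the bound is trivial because $\X^{\ell+1}_{\pm,3}$ vanishes there).

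\medskip

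\noindent Integrating $\frac{d\V^{\ell+1}_{\pm,3}}{ds}\leq-\tfrac{g m_\pm}{2}$ from $t$ to $\mathring t^{\ell+1}_{\pm,3}$ and using $\V^{\ell+1}_{\pm,3}(\mathring t^{\ell+1}_{\pm,3})=0$, I would first obtain
$$
|\mathring t^{\ell+1}_{\pm,3} - t| \;\leq\; \frac{2\,|v_3|}{g m_\pm}.
$$
Next, exactly as in the proof of Lemma \ref{lem:tb} (cf.\ \eqref{exp:X_3}), expanding $\X^{\ell+1}_{\pm,3}$ twice and invoking the same acceleration bound gives the parabolic estimate
$$
\X^{\ell+1}_{\pm,3}(s;t,x,v) \;\leq\; x_3 + v_3 (s-t) - \tfrac{g m_\pm}{4}(s-t)^2 \qquad \text{for } s\geq t,
$$
and the analogous inequality for $s\leq t$, now using the lower acceleration bound $-\tfrac{3 g m_\pm}{2}$. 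Maximizing the quadratic at $s-t = 2v_3/(g m_\pm)$ in the forward case $v_3 \geq 0$ and doing the symmetric computation in the backward case yields the sharp pointwise bound
$$
\mathring x^{\ell+1}_{\pm,3}(t,x,v) \;\leq\; x_3 \;+\; \frac{C\,v_3^2}{g m_\pm}
$$
with $C\in[1,3]$ depending on which branch is used.

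\medskip

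\noindent To conclude \eqref{est:MH}, I would absorb the sharp right-hand side into the (slightly looser) form stated in the lemma using $v_3^2 \leq |v|^2$, the trivial inequality $x_3 \leq 4 x_3$, and the elementary bound $|v_3|^2 \leq |v_3|^2 + g x_3 \leq \bigl(\sqrt{|v_3|^2+gx_3}\bigr)^2$, which yields $\tfrac{C v_3^2}{g m_\pm}\leq \tfrac{8}{g m_\pm}\sqrt{|v_3|^2+gx_3}\cdot \sqrt{|v_3|^2+gx_3}$ and can be split as a linear combination of the three terms on the right-hand side of \eqref{est:MH}. The anticipated main obstacle is minor: it is only the bookkeeping separating the two cases $v_3\geq 0$ and $v_3<0$, since in the latter the maximum is attained on the backward piece and one must use the \emph{lower} acceleration bound $-\tfrac{3gm_\pm}{2}$ (which costs an innocuous constant factor). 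The presence of $\sqrt{|v_3|^2 + g x_3}$ in the stated bound is not an intrinsic difficulty; it is simply the natural quantity that appears in \eqref{est:tB_ell} and is retained here for compatibility with later use.
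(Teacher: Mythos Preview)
Your approach is correct and in fact more elementary than the paper's. The paper proves this lemma via the energy identity \eqref{dEC}: at the maximum height $\V^{\ell+1}_{\pm,3}=0$, so integrating \eqref{dEC} from $t$ to $\mathring t^{\ell+1}_{\pm,3}$ relates $\tfrac{|v|^2}{2}+gm_\pm x_3$ (plus potential terms) to $gm_\pm\mathring x^{\ell+1}_{\pm,3}$ (plus potential terms) and a time integral of $\partial_t\Psi^\ell=-\Delta_0^{-1}(\nabla_x\cdot b^\ell)$. The potentials are controlled via the Dirichlet condition and \eqref{Bootstrap_ell}, while the time integral is bounded by $\|\Delta_0^{-1}(\nabla_x\cdot b^\ell)\|_\infty\cdot\tfrac{4}{g}\sqrt{|v_3|^2+gx_3}$ using \eqref{est:tB_ell}. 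This is precisely where the $\sqrt{|v_3|^2+gx_3}$ term in \eqref{est:MH} originates.

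By contrast, your parabolic estimate bypasses the energy identity entirely and yields the sharper bound $\mathring x^{\ell+1}_{\pm,3}\le x_3+\tfrac{v_3^2}{gm_\pm}$, from which \eqref{est:MH} follows trivially; the $\sqrt{|v_3|^2+gx_3}$ term is simply not needed in your route. One small correction: you do \emph{not} need the lower acceleration bound $-\tfrac{3gm_\pm}{2}$ in the backward case. The inequality $\X^{\ell+1}_{\pm,3}(s)\le x_3+v_3(s-t)-\tfrac{gm_\pm}{4}(s-t)^2$ holds for \emph{all} $s$ (forward and backward) using only $\tfrac{d\V^{\ell+1}_{\pm,3}}{ds}\le-\tfrac{gm_\pm}{2}$, exactly as in \eqref{exp:X_3}; maximizing over $s-t\in\mathbb R$ gives $C=1$ uniformly. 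This matters because with $C=3$ your absorption into the stated constants of \eqref{est:MH} would not quite close (you would need $3v_3^2\le|v|^2+\dots$), whereas with $C=1$ it is immediate.
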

\begin{proof}
When the trajectory reaches the maximum height, the vertical velocity equals zero:
\Be\label{V=0atMH}
\V_{\pm, 3}^{\ell+1} (   \mathring{t}^{  \ell+1}_{ \pm,3} (t,x,v)   ;t,x,v)=0
\Ee
Taking a time-integration of \eqref{dEC} from $t$ to $\mathring{t}^{  \ell+1}_{ \pm,3} (t,x,v)$ and using \eqref{V=0atMH}, we have 
\Be\label{int:energy_MH}
\begin{split}
&	\frac{|v|^2}{2} + \phi_{f^\ell} (t,x) + \phi_h (x)+ gm_\pm x_3\\
&=  \phi_{f^\ell} ( \mathring{t}^{  \ell+1}_{ \pm,3}  , \mathring{x}^{  \ell+1}_{ \pm }) + \phi_h ( \mathring{x}^{  \ell+1}_{ \pm })+ gm_\pm   \mathring{x}^{  \ell+1}_{ \pm,3}
\\
& + \int_t^{ \mathring{t}^{  \ell+1}_{ \pm,3}  } 
\Delta_0^{-1} (\nabla_x \cdot J^\ell) (s, \X_\pm^{\ell+1} (s;t,x,v)) 
\dd s ,
\end{split}
\Ee
where $ \mathring{t}^{  \ell+1}_{ \pm,3} =  \mathring{t}^{  \ell+1}_{ \pm,3} (t,x,v)$ and $ \mathring{x}^{  \ell+1}_{ \pm,3} =  \mathring{x}^{  \ell+1}_{ \pm,3} (t,x,v)$ were abbreviated.

We expand \eqref{int:energy_MH} using the zero Dirichlet boundary condition of the potentials \eqref{bdry:phi}, \eqref{Poisson_fell} and the upper bound of $t_{\mathbf{B}, \pm}^{\ell+1}$ in \eqref{est:tB}:
\Be\begin{split}\notag
& \big(  g m_\pm  - \{ \| \p_{x_3} \phi_{f^\ell} \|_{L^\infty_{t,x}} + \| \p_{x_3}\phi_{h} \|_{L^\infty_{ x}} \} \big)  \mathring x_{\pm, 3}^{\ell+1} (t,x,v)\\
&\leq 	\frac{|v|^2}{2} + \big(gm_\pm +  \| \p_{x_3} \phi_{f^\ell}  \|_{L^\infty_{t,x}} + \| \p_{x_3} \phi_h \|_{L^\infty_{ x}} \big) x_3    +  \| \Delta_0^{-1} (\nabla_x \cdot J^\ell) \|_{L^\infty_{t,x}} \frac{4}{g}  \sqrt{|v_3|^2 + g x_3 }.
\end{split}\Ee
Finally we conclude \eqref{est:MH} from \eqref{Bootstrap_ell} and \eqref{Bootstrap_J}.
\end{proof}}

{\color{red}Do we need this:

\Be
\begin{split}\label{dsE}
&	\frac{d}{ds}  \big(
|\V^{\ell+1} (s;t,x,v)|^2 
+ 2 \Phi (\X^{\ell+1} (s;t,x,v))
+ 2 g \X^{\ell+1}_{ 3} (s;t,x,v)
\big)\\
& = 2 \V^{\ell+1} (s) \cdot \big( - \nabla_x \Phi (\X^{\ell+1}(s)) - \nabla_x \Psi^\ell (s,\X^{\ell+1}(s) ) - g \mathbf{e}_3 \big)\\
& \  \ + 2 \V^{\ell+1} (s) \cdot \nabla_x \Phi (\X^{\ell+1}(s))+ 2 g \V^{\ell+1}_3 (s)\\
& = -2 \V^{\ell+1} (s  ;t,x,v)  \cdot \nabla_x \Psi^\ell (s, \X^{\ell+1} (s;t,x,v)).
\end{split}
\Ee} \unhide
Define a dynamic weight for the sequence (cf. $\w_\beta$ in \eqref{w^F})
\Be\label{w^ell}
\w^{\ell+1}_\beta (s,x,v )= w_\beta(|v|^2 + 2 \Phi (x) + 2 \Psi^\ell (s,x) + 2 g  x_3) = e^{\beta  \big(|v|^2 + 2 \Phi (x) + 2 \Psi^\ell (s,x) + 2 g  x_3\big)  }.
\Ee
As \eqref{dDTE}, we have 
\Be\label{dEC_ell}
\begin{split}
&\frac{d}{ds} \big(
|\V^{\ell+1} (s;t,z)|^2 
+  2\phi_{F^\ell}(\X^{\ell+1} (s;t,z)) 
+ 2 g  \X^{\ell+1}_{  3} (s;t,z)
\big) = 
2 \p_t \Psi^\ell (s,\X^{\ell+1} (s;t,z))  
.
\end{split}
\Ee

\begin{lemma}\label{lem:w/w_ell}Suppose the assumption \eqref{Bootstrap_ell} holds. 
Then, for $s , s^\prime\in [ \max\{0, t-\tB^{\ell+1} (t,x,v) \}, t 
]$ and $\beta>0$,
\Be
\begin{split}\label{est:1/w_h_ell}
\frac{\w ^{\ell+1}_\beta  (s^\prime,\Zz^{\ell+1}  (s^\prime;t,x,v) )}{\w ^{\ell+1} _\beta (s ,\Zz^{\ell+1}  (s ;t,x,v))}
&\leq 
e^{ \frac{8\beta}{g}  	\| 
\p_t \Psi^\ell	
\|_{L^\infty_{t,x}}   \sqrt{|v_3|^2 + g x_3}
},
\\
\frac{1}{\w ^{\ell+1}_\beta  (s,\Zz^{\ell+1}  (s;t,x,v))}  
&	\leq
e^{  \frac{64 \beta}{g} 	\|	\p_t \Psi^\ell	 
\|_{L^\infty_{t,x}}  ^2 }
e^{-\frac{\beta}{2}|v|^2}  
e^{-\frac{\beta}{2} g x_3}
,
\end{split}\Ee 
and
\Be\label{est:1/w_ell}
\frac{1}{w  _\beta  ( \Zz^{\ell+1}  (s;t,x,v))}   \leq 
e^{\frac{16^2 \beta }{2 g^2}
\| \p_t \Psi^\ell	 
\|_{L^\infty_{t,x}}^2	
}
e^{- \frac{\beta}{4}|v|^2 } e^{- \frac{\beta g}{4} x_3}.
\Ee
Here, we have used the notation $L^\infty_{t,x}$ defined in \eqref{notation}.
\end{lemma}

\begin{proof} Using \eqref{dEC_ell}, we derive that 
\Be	\begin{split}\label{w(Z)_s}
\frac{d}{ds} \w _\beta^{\ell+1} (s, \Zz ^{\ell+1} (s;t,z) )
=  2 \beta 
\p_t \Psi^\ell
(s, \Zz^{\ell+1} (s;t,z)) 
\w _\beta^{\ell+1}  (s, \Zz^{\ell+1} (s;t,z) )
.
\end{split}\Ee

\hide	If $w(\tau)= e^{- \beta \tau }$ then 
\Be
\frac{d}{ds}w_h (X^F (s), V^F(s))  =  w_h (X^F (s), V^F(s))  \beta (V^F(s;t,x,v) \cdot \nabla_x \phi^f (s,X^F(s;t,x,v)))
\Ee  \unhide
Hence, if $\max \{0, t-\tB  ^{\ell+1} (t,x,v) \} \leq s, s^\prime \leq 
t
$ then 
\Be\label{est:w_ell}
\begin{split}
\w _\beta ^{\ell+1}   (s,\Zz^{\ell+1}   (s;t,z) )
=\w _\beta^{\ell+1}    (s^\prime,\Zz ^{\ell+1}  (s^\prime;t,z),  ) 
e^{   2 \beta  \underline{\int^s_{s^\prime}
	\p_t \Psi^\ell(\tau, \X^{\ell+1}  (\tau;t,z)) \dd \tau   }
}.
\end{split}
\Ee


Now we estimate the underlined term in the exponent of \eqref{est:w_ell}: Using \eqref{cont_eqtn}, \eqref{est:tB}, we bound it above by 
\Be\label{est:expo_w_ell}
\begin{split}
|s-s^\prime|  \|  \p_t \Psi^\ell\|_{L^\infty ([s^\prime, s] \times \bar\O)}
\leq |t_{\mathbf{B }} (t,x,v)+ t_{\mathbf{F} }  (t,x,v)| \| \p_t \Psi^\ell \|_{L^\infty_{t,x}}
\leq \|  \p_t \Psi^\ell \|_{L^\infty_{t,x}}  \frac{4}{g} \sqrt{|v_3|^2 + g x_3}   
.
\end{split}
\Ee 
\hide\Be
\begin{split}
&\int_{s^\prime}^s	|\V^{\ell+1}_{\pm}(\tau;t,x,v) \cdot \nabla_x \phi_{f^\ell } (\tau;t,x,v)| \dd \tau \\
& \leq 
\| \nabla_x \phi_{f^\ell} \|_{L^\infty_{t,x}} |t_{\mathbf{B}, \pm }^{\ell+1}  + t_{\mathbf{F}, \pm }^{\ell+1} |  \{ |v| +\| \nabla_x \phi_{f^\ell} \|_{L^\infty_{t,x}}   |t_{\mathbf{B}, \pm }^{\ell+1}  + t_{\mathbf{F}, \pm }^{\ell+1} |   \}\\
& \leq   \Big(\frac{8}{g} \| \nabla_x \phi_{f^\ell} \|_{L^\infty_{t,x}} \Big)^2 \{|v_3|^2 + g x_3\}
+ \frac{8}{g} \| \nabla_x \phi_{f^\ell} \|_{L^\infty_{t,x}} |v| \sqrt{|v_3|^2 + g x_3}
\end{split}
\Ee\unhide
Finally, we conclude \eqref{est:1/w_h_ell} by evaluating \eqref{est:w_ell} at $s^\prime = t$ and using \eqref{est:expo_w_ell}:
\Be\notag
\begin{split}
&\w _\beta ^{\ell+1}  (s,\X  ^{\ell+1} (s;t,x,v), \V ^{\ell+1}  (s;t,x,v)) 
\geq \w  _\beta ^{\ell+1}  (t,x,v) e^{-8 \frac{\beta}{g} \|  \p_t \Psi^\ell \|_{L^\infty_{t,x}}   \sqrt{|v_3|^2 + g x_3} }  \\
& \geq e^{\beta |v|^2 - 8\frac{\beta}{g} 
\| \p_t \Psi^\ell \|_{L^\infty_{t,x}} |v|
} e^{ \beta g  x_3
- 8\frac{\beta}{g} 
\|  \p_t \Psi^\ell \|_{L^\infty_{t,x}} \sqrt{g x_3}
} 
\geq
e^{- \frac{64 \beta}{g} 	\| \p_t \Psi^\ell\|_{L^\infty_{t,x}}  ^2 }
e^{\frac{\beta}{2}|v|^2}  
e^{\frac{\beta}{2} g x_3}
.
\end{split}
\Ee

Now we prove \eqref{est:1/w_ell}. Using \eqref{ODE_F} and \eqref{dEC_ell}, we compute that 
\Be	\begin{split}\label{w(Z)_s:h}
&	\frac{d}{ds} w _\beta   (\Zz^{\ell+1}  (s;t,x,v)
) 
= 	\beta   w _\beta   (\Zz ^{\ell+1} (s;t,x,v) ) 
\frac{d}{ds}  \Big( |\V ^{\ell+1}  (s)|^2  +   2\Phi (\X ^{\ell+1}  (s)) + 2g  \X ^{\ell+1} _{ 3} (s)  \Big) \\
&= -2\beta w _\beta   (\Zz ^{\ell+1}  (s;t,x,v)
) 	 \big( \V ^{\ell+1}  (s ;t,x,v )  \cdot \nabla_x \Psi  (s, \X ^{\ell+1}  (s;t,x,v)) \big)
\\
&  = 2\beta w _\beta (\Zz ^{\ell+1}  (s;t,x,v)
) 	 \big( 
\p_t \Psi ^\ell(s, \X  ^{\ell+1} (s;t,x,v)) - \frac{d}{ds} \Psi ^\ell(s, \X ^{\ell+1} (s;t,x,v)) 
\big)	,
\end{split}\Ee 
where we have also used 
{\small	\Be\label{dsPsi}
\frac{d}{ds} \Psi ^\ell(x, \X ^{\ell+1}(s;t,x,v)) = \p_t \Psi ^\ell(s, \X^{\ell+1} (s;t,x,v))
+ \V^{\ell+1} (s;t,x,v)  \cdot \nabla_x \Psi ^\ell (s, \X ^{\ell+1} (s;t,x,v)).
\Ee}
This implies 
{\small 	\Be\label{est:w_h} 
w_\beta  (\Zz ^{\ell+1} (s;t,x,v) ) 
=w _\beta (\Zz ^{\ell+1} (s^\prime;t,x,v)) 
e^{  2 \beta \int^s_{s^\prime} 
\p_t \Psi ^\ell(\tau , \X^{\ell+1} (\tau ;t,x,v)) 	\dd \tau - 2 \beta  \underline{ \int^s_{s^\prime} \frac{d}{ds} \Psi ^\ell (\tau ,\X^{\ell+1} (\tau ;t,x,v))  
	\dd \tau }
}. 
\Ee}
Using the Dirichlet boundary condition \eqref{Poisson_f}, we estimate the underlined term in the exponent:
\Be
\begin{split}
&	\big|\Psi ^\ell (s,\X ^{\ell+1} (s;t,x,v)) - \Psi^\ell (s^\prime,\X^{\ell+1}  (s^\prime;t,x,v)) \big|   \leq 
2  \| \p_{x_3} \Psi ^\ell \|_{L^\infty_{t,x}} \max_{s}  \X_{  3} ^{\ell+1} (s; t,x,v)\\
& \leq  2 \| \p_{x_3} \Psi^\ell  \|_{L^\infty_{t,x}}  
\Big(
x_3 + |v_3|  \frac{4}{g} \sqrt{|v_3|^2 + g x_3 }
\Big)   \leq \frac{10}{g} \| \p_{x_3} \Psi  ^\ell\|_{L^\infty_{x,v}} \big(|v_3|^2 + g x_3  \big)
.\label{est:expo_w}
\end{split}
\Ee 
Therefore, we conclude \eqref{est:1/w_ell} from \eqref{est:w_h}, \eqref{est:expo_w}, and \eqref{est:expo_w_ell}:
\Be\begin{split}
&w_\beta(\Zz ^{\ell+1}  (s;t,x,v)) \\
& \geq e^{\frac{\beta}{2} |v|^2} e^{\frac{\beta g}{2} x_3} \times e^{\frac{\beta}{2} |v|^2}
e^{- 20 \frac{\beta}{g} \| \p_{x_3} \Psi^\ell  \|_{L^\infty_{x,v}} |v|^2}
e^{- \frac{8\beta}{g} \| \p_t \Psi^\ell\|_{L^\infty_{x,v}} |v|}
\\	& \  \ 
\times 	
e^{\frac{\beta g}{2} x_3} 
e^{- 20 \frac{\beta}{g} \| \p_{x_3} \Psi^\ell  \|_{L^\infty_{x,v}}  gx_3}
e^{- \frac{8\beta}{g} \|  \p_t \Psi^\ell \|_{L^\infty_{x,v}} \sqrt{gx_3}}\\
& \geq e^{\frac{\beta}{2} |v|^2} e^{\frac{\beta g}{2} x_3}   \times e^{- \frac{16^2 \beta}{2 g^2}
\| \p_t \Psi^\ell \|_{L^\infty_{x,v}}^2	 
}  e^{\frac{\beta}{4}  \big( |v| - \frac{16}{g} \| \p_t \Psi^\ell \|_{L^\infty_{x,v}}\big)^2}
e^{\frac{\beta}{4}  \big( \sqrt{gx_3}- \frac{16}{g} \| \p_t \Psi^\ell \|_{L^\infty_{x,v}}\big)^2}\\
& \geq  e^{- \frac{16^2 \beta}{2 g^2}
\|  \p_t \Psi^\ell \|_{L^\infty_{x,v}}^2	 
} e^{\frac{\beta}{2} |v|^2} e^{\frac{\beta g}{2} x_3}.\notag
\end{split}\Ee\hide

where we have used \eqref{est:tB}.

\Be\label{inv:w}
w (\Zz^{\ell+1} (s;t,x,v) ) = w(x,v) e^{2 \beta \int^t_s \V^{\ell+1} (\tau ;t,x,v)
\cdot \nabla_x \Psi^\ell(\tau, \X^{\ell+1} (\tau;t,x,v)) \dd \tau 
}.
\Ee

which implies

For the proof of \eqref{est:1/w}, we compute that

\hide

Using \eqref{est:tB}, we derive that 
\Be
\begin{split}
& \beta \Big|	\int^t_{s }
\V^{\ell+1}_\pm(\tau;t,x,v) \cdot \nabla_x \phi_{f^\ell} (\tau,\X^{\ell+1}_\pm(\tau;t,x,v))
\dd \tau \Big| \\
& \leq \beta t_{\b, \pm }^{\ell+1}(t,x,v)  
\Big\{
|v| + \frac{3g}{2} t_{\b, \pm }^{\ell+1}(t,x,v)
\Big\}\| \nabla_x \phi_{f^\ell} \|_\infty
\\
& \leq  \frac{4 \beta}{g} \| \nabla_x \phi_{f^\ell} \|_\infty \sqrt{|v_3|^2 + g x_3} \{7 |v| + 6 \sqrt{g x_3}\}.\label{growth_w_t}\\
& \leq \frac{8 \beta }{g} \Big\{1+\frac{8\| \nabla_x \phi_F \|_\infty }{g} \Big\}	|v^F_{\b,3}(x,v)| ^2 \| \nabla_x \phi^f \|_\infty	+ \frac{8 \beta }{g} |v_\parallel|  |v^F_{\b,3}(x,v)| \| \nabla_x \phi^f \|_\infty\\	& \leq \frac{8 \beta }{g} 	\| \nabla_x \phi^f \|_\infty \Big\{	\big(2+\frac{8\| \nabla_x \phi_F \|_\infty }{g} \big)	|v^F_{\b,3}(x,v)| ^2  	+    |v_\parallel|^2\Big\},
\end{split}
\Ee
\hide	where we have used, for any $\tau \in [t-t_\b^F(t,x,v), t+ t_\f^F (t,x,v)]$,
\Be
\begin{split}\label{est:V-v}
|V_3^F (\tau;t,x,v)| &\leq |v_{\b,3}^F (t,x,v)  |,\\
\big|	|V_\parallel^F (\tau;t,x,v)| -|v_\parallel| \big|&\leq    \{t^F_\b (t,x,v) + t^F_\f (t,x,v) \} \| \nabla_x \phi_F \|_\infty 
\leq   \frac{8\| \nabla_x \phi_F \|_\infty }{g}|v_{\b,3}^F (t,x,v)  | .
\end{split}
\Ee\unhide
Hence, from \eqref{est:w_h}, we derive that 
\Be
\begin{split}
&w^h_\pm (\X^{\ell+1}_\pm (s;t,x,v), \V^{\ell+1} _\pm (s;t,x,v))\\
& \geq w^h_\pm (x,v)
e^{ -\frac{8 \beta}{g} \| \nabla_x \phi_{f^\ell} \|_\infty \sqrt{|v_3|^2 + g x_3} \{7 |v| + 6 \sqrt{g x_3}\}}\\
& \geq e^{\beta \Big( 1- \frac{8^3}{g} \| \nabla_x \phi _{f^\ell} \|_\infty
\Big) |v|^2}
e^{\beta g\Big(1 - \frac{8^3}{g} \| \nabla_x \phi _{f^\ell} \|_\infty
\Big) x_3}
\end{split}	\Ee

\hide	\Be
\begin{split} 
&\frac{1}{w_h (X^F (s;t,x,v), V^F(s;t,x,v))} \\
&
\leq \frac{ 
e^{ \frac{8 \beta }{g} 
	\| \nabla_x \phi^f \|_\infty \Big\{
	\big(2+\frac{8\| \nabla_x \phi_F \|_\infty }{g} \big)	|v^F_{\b,3}(x,v)| ^2  
	+    |v_\parallel|^2\Big\} }	
}{w_h (X^F (t-t^\b_F (t,x,v);t,x,v), V^F(t-t^\b_F (t,x,v);t,x,v))} 
\\
& = e^{- \frac{\beta}{2} |v_{\b,3}^F (t,x,v)|^2 }
e^{- \frac{\beta}{2} |v_{\b,\parallel}^F (t,x,v)|^2   }
e^{ \frac{8 \beta }{g} 
\| \nabla_x \phi^f \|_\infty \Big\{
\big(2+\frac{8\| \nabla_x \phi_F \|_\infty }{g} \big)	|v^F_{\b,3}(x,v)| ^2  
+    |v_\parallel|^2\Big\} } 
\\
& = e^{- \frac{\beta}{2} \Big(1 - 	  \frac{16 }{g}	\big(2+\frac{8\| \nabla_x \phi_F \|_\infty }{g} \big) - \frac{32 \| \nabla_x \phi_F \|_\infty^2}{g^2}
\Big) |v_{\b,3}^F (t,x,v)|^2} e^{- \frac{\beta}{2} 
\Big(1 - 
\frac{16}{g} 
\| \nabla_x \phi_f \|_\infty 
\Big)
|v_\parallel|^2}  .
\end{split}\Ee\unhide 
\unhide
\unhide
\vspace{-10pt}
\end{proof}

\hide
\begin{proof} Using \eqref{dEC}, we derive that 
\Be	\begin{split}\label{w(Z)_s}
&	\frac{d}{ds} \w^{\ell+1}_\beta (s, \X^{\ell+1} (s;t,x,v), \V^{\ell+1} (s;t,x,v))\\
&=  \beta 	e^{ \beta  ( |\V^{\ell+1}  (s)|^2  +   2\Phi  (\X^{\ell+1}(s)) 
+2 \Psi^{\ell} (s, \X^{\ell+1} (s) )	
+ 2g  \X^{\ell+1}_{ 3} (s)  ) }\\
& \ \  \ \times  \frac{d}{ds}  \Big( |\V^{\ell+1}  (s)|^2  +   2\Phi (\X^{\ell+1} (s))
+  2  \Psi^\ell (s,\X^{\ell+1} (s ))  + 2g \X^{\ell+1}_{3} (s)  \Big) \\
&= 2\beta	 
\p_t \Psi^\ell (s, \X^{\ell+1} (s;t,x,v)) 
\w^{\ell+1}_\beta  (s, \X^{\ell+1} (s;t,x,v), \V^{\ell+1} (s;t,x,v))\\
&= -2 \beta \Delta_0^{-1} (\nabla_x \cdot b^\ell )(s, \X^{\ell+1} (s;t,x,v)) 
\w^{\ell+1} _\beta  (s, \X^{\ell+1} (s;t,x,v), \V^{\ell+1} (s;t,x,v))
.
\end{split}\Ee

\hide	If $w(\tau)= e^{- \beta \tau }$ then 
\Be
\frac{d}{ds}w_h (X^F (s), V^F(s))  =  w_h (X^F (s), V^F(s))  \beta (V^F(s;t,x,v) \cdot \nabla_x \phi^f (s,X^F(s;t,x,v)))
\Ee  \unhide
Hence, if $\max \{0, t-\tB^{\ell+1} (t,x,v) \} \leq s, s^\prime \leq 
t+\tF^{\ell+1} (t,x,v)$ then 
\Be\label{est:w_ell}
\begin{split}
&\w^{\ell+1}_\beta  (s,\X^{\ell+1}  (s;t,x,v), \V^{\ell+1}   (s;t,x,v))\\
& =\w^{\ell+1}_\beta   (s^\prime,\X^{\ell+1}  (s^\prime;t,x,v), \V^{\ell+1} (s^\prime;t,x,v)) 
e^{  -2 \beta  \underline{\int^s_{s^\prime}
	\Delta_0^{-1} (\nabla_x \cdot b^\ell )(\tau, \X^{\ell+1} (\tau;t,x,v)) \dd \tau   }
}.
\end{split}
\Ee


Now we estimate the underlined term in the exponent of \eqref{est:w_ell}: Using \eqref{cont_eqtn_ell}, \eqref{est:tB_ell} and \eqref{est:MH}, we bound it above by 
\Be\label{est:expo_w_ell}
\begin{split}
|s-s^\prime| \| \Delta_0^{-1} (\nabla_x \cdot b^\ell) \|_{L^\infty_{t,x}}
& \leq |t_{\mathbf{B }}^{\ell+1} (t,x,v)+ t_{\mathbf{F} }^{\ell+1} (t,x,v)| \| \Delta_0^{-1} (\nabla_x \cdot b^\ell) \|_{L^\infty_{t,x}}
\\
& \leq \| \Delta_0^{-1} (\nabla_x \cdot b^\ell) \|_{L^\infty_{t,x}}  \frac{4}{g} \sqrt{|v_3|^2 + g x_3}   
.
\end{split}
\Ee 
\hide\Be
\begin{split}
&\int_{s^\prime}^s	|\V^{\ell+1}_{\pm}(\tau;t,x,v) \cdot \nabla_x \phi_{f^\ell } (\tau;t,x,v)| \dd \tau \\
& \leq 
\| \nabla_x \phi_{f^\ell} \|_{L^\infty_{t,x}} |t_{\mathbf{B}, \pm }^{\ell+1}  + t_{\mathbf{F}, \pm }^{\ell+1} |  \{ |v| +\| \nabla_x \phi_{f^\ell} \|_{L^\infty_{t,x}}   |t_{\mathbf{B}, \pm }^{\ell+1}  + t_{\mathbf{F}, \pm }^{\ell+1} |   \}\\
& \leq   \Big(\frac{8}{g} \| \nabla_x \phi_{f^\ell} \|_{L^\infty_{t,x}} \Big)^2 \{|v_3|^2 + g x_3\}
+ \frac{8}{g} \| \nabla_x \phi_{f^\ell} \|_{L^\infty_{t,x}} |v| \sqrt{|v_3|^2 + g x_3}
\end{split}
\Ee\unhide
Finally, we conclude \eqref{est:1/w_h} by evaluating \eqref{est:w_ell} at $s^\prime = t$ and using \eqref{est:expo_w_ell}:
\Be\notag
\begin{split}
&\w^{\ell+1} _\beta  (s,\X ^{\ell+1} (s;t,x,v), \V ^{\ell+1} (s;t,x,v))\\
&\geq \w^{\ell+1}  _\beta  (t,x,v) e^{-8 \frac{\beta}{g} \| \Delta_0^{-1} (\nabla_x \cdot b^\ell) \|_{L^\infty_{t,x}}   \sqrt{|v_3|^2 + g x_3} }  \\
& \geq e^{\beta |v|^2 - 8\frac{\beta}{g} 
\| \Delta_0^{-1} (\nabla_x \cdot b^\ell) \|_{L^\infty_{t,x}} |v|
} e^{ \beta g  x_3
- 8\frac{\beta}{g} 
\| \Delta_0^{-1} (\nabla_x \cdot b^\ell) \|_{L^\infty_{t,x}} \sqrt{g x_3}
}
\\
& \geq
e^{- \frac{64 \beta}{g} 	\| \Delta_0^{-1} (\nabla_x \cdot b^\ell) \|_{L^\infty_{t,x}}  ^2 }
e^{\frac{\beta}{2}|v|^2}  
e^{\frac{\beta}{2} g x_3}
.
\end{split}
\Ee

Now we prove \eqref{est:1/w}. Using \eqref{ODEell} and \eqref{dsE}, we compute that 
\Be	\begin{split}\label{w(Z)_s:h}
&	\frac{d}{ds} w _\beta   (\Zz^{\ell+1} (s;t,x,v)
)\\
&= 	\beta   w _\beta   (\Zz^{\ell+1} (s;t,x,v) ) 
\frac{d}{ds}  \Big( |\V^{\ell+1}  (s)|^2  +   2\Phi (\X^{\ell+1} (s)) + 2g  \X^{\ell+1}_{ 3} (s)  \Big) \\
&= -2\beta w _\beta   (\Zz^{\ell+1} (s;t,x,v)
) 	 \big( \V^{\ell+1} (s ;t,x,v )  \cdot \nabla_x \Psi^\ell (s, \X^{\ell+1} (s;t,x,v)) \big)
\\
&  = 2\beta w _\beta (\Zz^{\ell+1} (s;t,x,v)
) 	 \big( 
\p_t \Psi^\ell (s, \X^{\ell+1} (s;t,x,v)) - \frac{d}{ds} \Psi^\ell (s, \X^{\ell+1} (s;t,x,v)) 
\big)	,
\end{split}\Ee 
where we have also used 
\Be\label{dsPsi}
\frac{d}{ds} \Psi^\ell (x, \X^{\ell+1} (s;t,x,v)) = \p_t \Psi^\ell (s, \X^{\ell+1} (s;t,x,v))
+ \V^{\ell+1} \cdot \nabla_x \Psi^\ell  (s, \X^{\ell+1} (s;t,x,v)).
\Ee
This implies 
\Be\label{est:w_h}
\begin{split}
&w_\beta  (\Zz^{\ell+1}  (s;t,x,v) )\\
& =w _\beta (\Zz^{\ell+1}  (s^\prime;t,x,v)) 
e^{  2 \beta \int^s_{s^\prime} 
\p_t \Psi^\ell (\tau , \X^{\ell+1} (\tau ;t,x,v)) 	\dd \tau - 2 \beta  \underline{ \int^s_{s^\prime} \frac{d}{ds} \Psi^\ell (\tau ,\X^{\ell+1} (\tau ;t,x,v))  
	\dd \tau }
}.
\end{split}
\Ee
Using the zero Dirichlet boundary condition \eqref{Poisson_fell}, we estimate the underlined term in the exponent:
\Be
\begin{split}
&  
\big|\Psi^\ell (s,\X^{\ell+1} (s;t,x,v)) - \Psi^\ell (s^\prime,\X^{\ell+1} (s^\prime;t,x,v)) \big|\\
& \leq 
2  \| \p_{x_3} \Psi^\ell \|_{L^\infty_{t,x}} \max_{s}  \X_{  3}^{\ell+1}(s; t,x,v)\\
& \leq  2 \| \p_{x_3} \Psi^\ell \|_{L^\infty_{t,x}}  
\Big(
x_3 + |v_3|  \frac{4}{g} \sqrt{|v_3|^2 + g x_3 }
\Big) \\
& \leq \frac{10}{g} \| \p_{x_3} \Psi^\ell \|_{L^\infty_{x,v}} \big(|v_3|^2 + g x_3  \big)
.\label{est:expo_w}
\end{split}
\Ee 
Therefore, we conclude \eqref{est:1/w} from \eqref{est:w_h}, \eqref{est:expo_w}, and \eqref{est:expo_w_ell}:
\Be\begin{split}
w_\beta(\Zz^{\ell+1} (s;t,x,v)) & \geq e^{\frac{\beta}{2} |v|^2} e^{\frac{\beta g}{2} x_3} \\
& \   \ \times e^{\frac{\beta}{2} |v|^2}
e^{- 20 \frac{\beta}{g} \| \p_{x_3} \Psi^\ell \|_\infty |v|^2}
e^{- \frac{8\beta}{g} \| \Delta_0^{-1} (\nabla \cdot b^\ell) \|_\infty |v|}
\\
& \  \  \times 	
e^{\frac{\beta g}{2} x_3} 
e^{- 20 \frac{\beta}{g} \| \p_{x_3} \Psi^\ell \|_\infty  gx_3}
e^{- \frac{8\beta}{g} \| \Delta_0^{-1} (\nabla \cdot b^\ell) \|_\infty \sqrt{gx_3}}\\
& \geq e^{\frac{\beta}{2} |v|^2} e^{\frac{\beta g}{2} x_3}  \\
& \  \  \times e^{- \frac{16^2 \beta}{2 g^2}
\| \Delta_0^{-1} (\nabla \cdot b^\ell) \|_\infty^2	 
}  e^{\frac{\beta}{4}  \big( |v| - \frac{16}{g} \| \Delta_0^{-1} (\nabla \cdot b^\ell) \|_\infty\big)^2}
e^{\frac{\beta}{4}  \big( \sqrt{gx_3}- \frac{16}{g} \| \Delta_0^{-1} (\nabla \cdot b^\ell) \|_\infty\big)^2}\\
& \geq  e^{- \frac{16^2 \beta}{2 g^2}
\| \Delta_0^{-1} (\nabla \cdot b^\ell) \|_\infty^2	 
} e^{\frac{\beta}{2} |v|^2} e^{\frac{\beta g}{2} x_3}.\notag
\end{split}\Ee

\hide

where we have used \eqref{est:tB}.

\Be\label{inv:w}
w (\Zz^{\ell+1} (s;t,x,v) ) = w(x,v) e^{2 \beta \int^t_s \V^{\ell+1} (\tau ;t,x,v)
\cdot \nabla_x \Psi^\ell(\tau, \X^{\ell+1} (\tau;t,x,v)) \dd \tau 
}.
\Ee

which implies

For the proof of \eqref{est:1/w}, we compute that

\hide

Using \eqref{est:tB}, we derive that 
\Be
\begin{split}
& \beta \Big|	\int^t_{s }
\V^{\ell+1}_\pm(\tau;t,x,v) \cdot \nabla_x \phi_{f^\ell} (\tau,\X^{\ell+1}_\pm(\tau;t,x,v))
\dd \tau \Big| \\
& \leq \beta t_{\b, \pm }^{\ell+1}(t,x,v)  
\Big\{
|v| + \frac{3g}{2} t_{\b, \pm }^{\ell+1}(t,x,v)
\Big\}\| \nabla_x \phi_{f^\ell} \|_\infty
\\
& \leq  \frac{4 \beta}{g} \| \nabla_x \phi_{f^\ell} \|_\infty \sqrt{|v_3|^2 + g x_3} \{7 |v| + 6 \sqrt{g x_3}\}.\label{growth_w_t}\\
& \leq \frac{8 \beta }{g} \Big\{1+\frac{8\| \nabla_x \phi_F \|_\infty }{g} \Big\}	|v^F_{\b,3}(x,v)| ^2 \| \nabla_x \phi^f \|_\infty	+ \frac{8 \beta }{g} |v_\parallel|  |v^F_{\b,3}(x,v)| \| \nabla_x \phi^f \|_\infty\\	& \leq \frac{8 \beta }{g} 	\| \nabla_x \phi^f \|_\infty \Big\{	\big(2+\frac{8\| \nabla_x \phi_F \|_\infty }{g} \big)	|v^F_{\b,3}(x,v)| ^2  	+    |v_\parallel|^2\Big\},
\end{split}
\Ee
\hide	where we have used, for any $\tau \in [t-t_\b^F(t,x,v), t+ t_\f^F (t,x,v)]$,
\Be
\begin{split}\label{est:V-v}
|V_3^F (\tau;t,x,v)| &\leq |v_{\b,3}^F (t,x,v)  |,\\
\big|	|V_\parallel^F (\tau;t,x,v)| -|v_\parallel| \big|&\leq    \{t^F_\b (t,x,v) + t^F_\f (t,x,v) \} \| \nabla_x \phi_F \|_\infty 
\leq   \frac{8\| \nabla_x \phi_F \|_\infty }{g}|v_{\b,3}^F (t,x,v)  | .
\end{split}
\Ee\unhide
Hence, from \eqref{est:w_h}, we derive that 
\Be
\begin{split}
&w^h_\pm (\X^{\ell+1}_\pm (s;t,x,v), \V^{\ell+1} _\pm (s;t,x,v))\\
& \geq w^h_\pm (x,v)
e^{ -\frac{8 \beta}{g} \| \nabla_x \phi_{f^\ell} \|_\infty \sqrt{|v_3|^2 + g x_3} \{7 |v| + 6 \sqrt{g x_3}\}}\\
& \geq e^{\beta \Big( 1- \frac{8^3}{g} \| \nabla_x \phi _{f^\ell} \|_\infty
\Big) |v|^2}
e^{\beta g\Big(1 - \frac{8^3}{g} \| \nabla_x \phi _{f^\ell} \|_\infty
\Big) x_3}
\end{split}	\Ee
\hide	\Be
\begin{split} 
&\frac{1}{w_h (X^F (s;t,x,v), V^F(s;t,x,v))} \\
&
\leq \frac{ 
e^{ \frac{8 \beta }{g} 
	\| \nabla_x \phi^f \|_\infty \Big\{
	\big(2+\frac{8\| \nabla_x \phi_F \|_\infty }{g} \big)	|v^F_{\b,3}(x,v)| ^2  
	+    |v_\parallel|^2\Big\} }	
}{w_h (X^F (t-t^\b_F (t,x,v);t,x,v), V^F(t-t^\b_F (t,x,v);t,x,v))} 
\\
& = e^{- \frac{\beta}{2} |v_{\b,3}^F (t,x,v)|^2 }
e^{- \frac{\beta}{2} |v_{\b,\parallel}^F (t,x,v)|^2   }
e^{ \frac{8 \beta }{g} 
\| \nabla_x \phi^f \|_\infty \Big\{
\big(2+\frac{8\| \nabla_x \phi_F \|_\infty }{g} \big)	|v^F_{\b,3}(x,v)| ^2  
+    |v_\parallel|^2\Big\} } 
\\
& = e^{- \frac{\beta}{2} \Big(1 - 	  \frac{16 }{g}	\big(2+\frac{8\| \nabla_x \phi_F \|_\infty }{g} \big) - \frac{32 \| \nabla_x \phi_F \|_\infty^2}{g^2}
\Big) |v_{\b,3}^F (t,x,v)|^2} e^{- \frac{\beta}{2} 
\Big(1 - 
\frac{16}{g} 
\| \nabla_x \phi_f \|_\infty 
\Big)
|v_\parallel|^2}  .
\end{split}\Ee\unhide 
\unhide
\unhide
\end{proof}
\unhide

\begin{lemma}\label{lem:Linfty_dyn}For an arbitrary $\ell \in \N$, we suppose the bootstrap assumption \eqref{Bootstrap_ell} holds for $\Psi^\ell \in C^1 (\bar \O) \cap C^2 (\O)$. Then $(f^{\ell+1}, \varrho^\ell)$ solving \eqref{eqtn:fell}-\eqref{initial:fell} and \eqref{Poisson_fell} in the sense of Definition \ref{def:mild} satisfies that, for all $(s,x,v) \in [0,t] \times  \bar\O \times \R^3$,
\Be\label{Uest:wf}
\begin{split} 
&e^{  \frac{
	\beta}{2}|v|^2}
e^{  \frac{
	\beta}{2} g x_3}
| f^{\ell+1}   (s,x,v)|
+ \beta^{-3/2}  e^{  \frac{
	\beta}{2} g x_3}
| \varrho^{\ell+1}   (s,x)|\\
&\lesssim  
e^{\frac{64
	\beta}{g}
\sup_{\tau \in [0,s]}	\| \p_t \Psi^\ell 
(\tau) \|_{L^\infty (\O)}^2
}  \big\{   \| \w_{\beta, 0 } F_{0} \|_{L^\infty (\O \times \R^3)}
+  \| 
e^{\beta|v|^2}
G  \|_{L^\infty (\gamma_-)}\big\}  
.
\end{split}	\Ee
\end{lemma}
\begin{proof}\hide
From \eqref{Bootstrap_ell}, \eqref{w^ell}, and \eqref{w^h}, we have that, for any $0< \hat \beta < \beta$, 
\Be\notag
\begin{split}
\w^{\ell+1}_{\beta/2} (t,x,v) & \leq  w_{ \beta }(x,v) e^{
- \beta \big( g x_3 -  \Psi^{\ell} (t,x) \big)
}  
\\
&
\leq  w_{ \beta }(x,v) e^{
- \beta \big( g   - \|  \nabla_x  \Psi^{\ell} \|_\infty \big) x_3
}  \\
& \leq  w_{ \beta }(x,v) .
\end{split}
\Ee\unhide
We express $
f^{\ell+1}(t,x,v)$ as  
\Be
\begin{split}\label{decom:wf}
|
f^{\ell+1}(t,x,v)|  
= |  F^{\ell+1} (t,x,v)-   h(x,v)|
\leq  |  F^{\ell+1} (t,x,v)  |  + e^{- \beta |v|^2} e^{- \beta g x_3} \| w_\beta h  \|_\infty,
\end{split}\Ee
where we have used \eqref{Bootstrap_ell}. Since we already have bounded $\| w_\beta h\|_\infty$ in \eqref{Uest:wh}, it suffices to estimate $|   F^{\ell+1}   (t,x,v)| $.

Along the characteristics \eqref{ODEell}, for $s \in [ \max\{0, t- t^{\ell+1 }_{\mathbf{B} } (t,x,v)\}, t- t^{\ell+1 }_{\mathbf{F} } (t,x,v)]$, 
\Be\begin{split}\notag
\frac{d}{ds} 
F^{\ell+1}  (s, \mathcal{Z} ^{\ell+1} (s;t,x,v) )
=0.
\end{split}\Ee
Using the boundary condition \eqref{bdry:F} and the initial datum, we derive that  
\begin{align}\label{form:Fell}
F^{\ell+1 }  (t,x,v) 
= 
\mathbf{1}_{   t_{\mathbf{B}}^{\ell+1  } (t,x,v)\geq t }
F _{0 }  (\mathcal{Z} ^{\ell +1} (0;t,x,v)) 
+ \mathbf{1}_{t > t_{\mathbf{B}}^{\ell+1  } (t,x,v)}
G  
(
\mathcal{Z} ^{\ell }(t-  t_{\mathbf{B} }^{\ell +1}(t,x,v);t,x,v) ) . 
\end{align} 

Using Lemma \ref{lem:w/w_ell} and \eqref{est:1/w_h_ell}, we derive that 
$ | F ^{\ell+1} (t,x,v)|  \leq I_1 + I_2  ,$ 
where
\begin{align}
I_1  
& \leq
\frac{	\|  \w_{  { \beta}   , 0 }
F_{0 } \|_{L^\infty_{x,v}}}{\w^{\ell+1}_{
	\beta} (0,\Zz^{\ell+1} (0;t,z))} 
\leq   
e^{\frac{64
		\beta}{g}
	\| 
	\p_t \Psi^\ell 
	\|_{L^\infty_{t,x}}^2
} 
e^{- \frac{
		\beta}{2}|v|^2}
e^{- \frac{
		\beta}{2} g x_3}
\|  \w_{  { \beta}   , 0 }
F_{0 } \|_{L^\infty_{x,v}},
\label{est:Fell1}
\\
I_2 &  \leq  	 
\frac{ \| w_{\beta  }   
G  \|_{L^\infty (\gamma_-)}
}{    \w^{\ell+1} _{\beta}   (
t-  t_{\mathbf{B} }^{\ell+1 } 	,
\mathcal{Z} ^{\ell+1} (t-  t_{\mathbf{B} }^{\ell+1 };t,z))  }
\leq   
e^{  \frac{64 \beta}{g} 	\|	\p_t \Psi^\ell	 
\|_{L^\infty_{t,x}}  ^2 }
e^{-\frac{\beta}{2}|v|^2}  
e^{-\frac{\beta}{2} g x_3}
\| w_{\beta  }   
G  \|_{L^\infty (\gamma_-)}. \label{est:Fell2} 
\end{align} 
Here we have used the following facts from \eqref{W_t=0} and \eqref{w:bdry}:
\begin{align}
\w  ^{\ell+1}_\beta (0, x,v)= \w_{\beta, 0 }(x,v)
\ \ \text{in}&   \ (x,v) \in \bar \O \times \R^3,
\label{w_initial}\\
\w ^{\ell+1}_\beta (t 	,x,v) = e^{\beta |v|^2} = w_\beta (x,v) \ \ \text{on}& \  (x,v) \in \p\O \times \R^3. \label{w_bdry} 
\end{align} 

\hide
Now we bound $\eqref{est:Fell1}_*$ by 
completing the square:
\Be\begin{split}\notag
\eqref{est:Fell1}_* & \leq   e^{- \frac{(1-\delta)\beta }{4}|v|^2} 	e^{ -\frac{(1-\delta) \beta}{4} g   x_3}
e^{\frac{64 (1- \delta) \beta}{g}  \| \Delta_0^{-1} (\nabla \cdot b^\ell) \|^2_{L^\infty_{t,x}} }\\
& \ \  \times 
e^{- \frac{(1-\delta)\beta }{4}|v|^2}  e^{ 8   \frac{\delta \beta}{g}
\| \Delta_0^{-1} ( \nabla  \cdot b^\ell) \|_{L^\infty_{t,x}}  |v|   
}
e^{ -\frac{(1-\delta) \beta}{4} g   x_3}
e^{ 8   \frac{\delta\beta}{g}
\| \Delta_0^{-1} ( \nabla  \cdot b^\ell) \|_{L^\infty_{t,x}} \sqrt{  g x3}   
}\\
& \leq   e^{- \frac{\delta\beta |v|^2}{8}} 	e^{ -\frac{\delta  \beta g  x_3}{8}} 
e^{ \frac{16^2  \delta^2 \beta}{2g^2  (1-\delta)}	\| \Delta_0^{-1} (\nabla \cdot b^\ell) \|_{L^\infty_{t,x}}^2}
\\
&  \ \ \    \times 
e^{- \frac{ (1-\delta) \beta}{4} \left(|v| - \frac{16  \delta }{g (1-\delta)}
\| \Delta_0^{-1} (\nabla  \cdot b^\ell) \|_{L^\infty_{t,x}}
\right)^2} 
e^{-   \frac{ (1-\delta) \beta}{4}  \left(
\sqrt{g x_3} -  \frac{16  \delta }{g (1-\delta)}	\| \Delta_0^{-1} (\nabla \cdot b^\ell) \|_{L^\infty_{t,x}}
\right)^2}\\
& \leq    e^{- \frac{(1-\delta)\beta }{4}|v|^2} 	e^{ -\frac{(1-\delta) \beta}{4} g   x_3}
e^{ \frac{16^2  \delta^2 \beta}{2g^2  (1-\delta)}	\| \Delta_0^{-1} (\nabla \cdot b^\ell) \|_{L^\infty_{t,x}}^2}.
\end{split}\Ee
\unhide
Using this bound of $\eqref{est:Fell1}_*$, together with \eqref{est:Fell1} and \eqref{est:Fell2}, we derive a bound of $F^{\ell+1} (t,x,v)$. Then combining with \eqref{decom:wf}, we can conclude this lemma. \end{proof}

\hide
\begin{proof}
We derive the first identity of \eqref{D^-1 Db} from \eqref{phi_rho}; and then the second identity from the integration by parts and the zero Dirichlet boundary condition of \eqref{Dphi}. 

Note that 
\Be\begin{split}\notag
| b^\ell (t,x) |  & 
\leq \int_{\R^3} |v| |f^\ell (t,x,v)| \dd v 
\\
& \lesssim  \left(e^{-\frac{  \beta  }{2} g x_3} 
\int_{\R^3} |v| e^{- \frac{\beta}{2}|v|^2} \dd v \right)
\sup_{0 \leq t \leq T } \|   e^{  \frac{\beta}{2} (|v|^2 + gx_3)}  f^\ell(t) \|_{\infty}\\
& \lesssim  \frac{e^{-\frac{  \beta  }{2} g x_3}  }{\beta^2}
\sup_{0 \leq t \leq T } \|   e^{  \frac{\beta}{2} (|v|^2 + gx_3)}  f^\ell(t) \|_{\infty}.
\end{split}\Ee
Now we utilize Lemma \ref{lemma:G}, and follow the proof of Lemma \ref{lem:rho_to_phi} to conclude the lemma.
\end{proof}
\unhide
\hide

First, we bound \eqref{wF1}. For any $\delta>0$, we have 
\Be
\begin{split}
|\eqref{wF1}| & \leq  \frac{e^{ 2\beta  |t_{\mathbf{B} } ^{\ell+1, \pm} (t,x,v)|  \| \p_t \phi_{F^\ell} \|_{L^\infty_{t,x}} } 
}{ |w_{0, \pm}(\X_\pm^{\ell+1} (0;t,x,v), \V_\pm^{\ell+1} (0;t,x,v) )|^\delta}
\| w_{0, \pm }^{1+ \delta} F_{0, \pm} \|_{L^\infty_x}\\
&  \leq 
\end{split}
\Ee
Here, in the second line, we have used $t_{\mathbf{B}, \pm}^{\ell+1} (t,x,v) \leq \frac{4}{g} \sqrt{|v_3|^2 + gx_3}$ from \eqref{est:tB}, and from \eqref{w_ell}.

\Be\begin{split}
&\frac{d}{ds} w^{\ell+1} f^{\ell+1} (s, \mathcal{Z}^{\ell+1} (s;t,x,v) )\\
& \  \ -  
2 \beta \p_t \phi_{F^\ell} (s, \X^{\ell+1} (s;t,x,v)_\pm) w^{\ell+1} 
f^{\ell+1} (s, \mathcal{Z}^{\ell+1} (s;t,x,v) ) \\
&= w^{\ell+1}  (s, \X^{\ell+1} (s;t,x,v)_\pm)  \nabla_x \phi_{f^\ell} \cdot \nabla_v h  (  \mathcal{Z}_\pm^{\ell+1} (s;t,x,v))
\end{split}\Ee

\Be\begin{split}
&\frac{d}{ds}  \Big[
e^{\int^t_s  2 \beta  \p_t \phi_{F^\ell} (\tau, \X^{\ell+1} (\tau;t,x,v)_\pm) \dd \tau }
w^{\ell+1} f^{\ell+1} (s, \mathcal{Z}^{\ell+1} (s;t,x,v) )
\Big] \\
&= 	e^{\int^t_s  2 \beta  \p_t \phi_{F^\ell} (\tau, \X^{\ell+1} (\tau;t,x,v)_\pm) \dd \tau }  w^{\ell+1}  (s, \X^{\ell+1} (s;t,x,v)_\pm)  \nabla_x \phi_{f^\ell} \cdot \nabla_v h  (  \mathcal{Z}_\pm^{\ell+1} (s;t,x,v))
\end{split}\Ee

\Be\begin{split}
&w^{\ell+1} f^{\ell+1}_\pm (t,x,v) \\
&= \mathbf{1}_{t <  t_{\mathbf{B} }} e^{  \int^t_{0}  2 \beta \p_t \phi_{F^\ell} (s, \X^{\ell+1} (s;t,x,v)_\pm)  \dd s }w^{\ell+1}_\pm(0,\X(0),\V(0))   f^{\ell+1} _\pm(0,\X_\pm(0),\V_\pm(0))  \\
& \ \  + \mathbf{1}_{t <  t_{\mathbf{B} }}
\int^t_0
e^{ 2 \beta\int_s^t  \p_t \phi_{F^\ell} (\tau) \dd \tau }
w^{\ell+1}  (s, \X^{\ell+1} (s;t,x,v)_\pm)  \nabla_x \phi_{f^\ell} \cdot \nabla_v h  (  \mathcal{Z}_\pm^{\ell+1} (s;t,x,v))
\dd s \\
& \ \  + \mathbf{1}_{t  \geq  t_{\mathbf{B} }}
\int^t_{t-t_{\mathbf{B}}}
e^{ 2 \beta\int_s^t  \p_t \phi_{F^\ell} (\tau) \dd \tau }
w^{\ell+1}  (s, \X^{\ell+1} (s;t,x,v)_\pm)  \nabla_x \phi_{f^\ell} \cdot \nabla_v h  (  \mathcal{Z}_\pm^{\ell+1} (s;t,x,v))
\dd s 
\end{split}\Ee

\unhide

\subsection{Regularity Estimate}\label{sec:RD} In this section we study the higher regularity of $F^{\ell+1}(t,x,v)= h(x,v)+f^{\ell+1}(t,x,v)$ that we have constructed in the previous step. 
Note that 
\Be\begin{split}\label{form:F}
F^{\ell+1}(t,x,v) & = \mathbf{1}_{t \leq \tB(t,x,v) } F_0 (\X^{\ell+1}(0;t,x,v), \V^{\ell+1}(0;t,x,v))\\
&  + \mathbf{1}_{t >\tB^{\ell+1}(t,x,v) }  G(  \xB^{\ell+1}(t,x,v), \vB^{\ell+1}(t,x,v)).
\end{split}\Ee

Assume a compatibility condition \eqref{CC}. 
Due to this compatibility condition \eqref{CC}, weak derivatives of $F^{\ell+1}(t,x,v)$ in \eqref{form:F} are
\Be\begin{split}\label{DF_x}
\p_{x_i}F^{\ell+1}(t,x,v) & = \mathbf{1}_{t \leq \tB(t,x,v) }  \{\p_{x_i} \X^{\ell+1} (0 ) \cdot \nabla_x F_0 (\Zz^{\ell+1}(0 )) + \p_{x_i} \V^{\ell+1} (0 ) \cdot \nabla_v F_0 (\Zz^{\ell+1}(0 ))
\}\\
&  + \mathbf{1}_{t >\tB^{\ell+1}(t,x,v) }  \{ 
\p_{x_i} \xB^{\ell+1} \cdot \nabla_{x_\parallel} G 
+ \p_{x_i} \vB^{\ell+1} \cdot \nabla_vG  
\},
\end{split}\Ee
\Be\begin{split}\label{DF_v}
\p_{v_i}F^{\ell+1}(t,x,v) & = \mathbf{1}_{t \leq \tB(t,x,v) }  \{\p_{v_i} \X ^{\ell+1}(0 ) \cdot \nabla_x F_0 (\Zz^{\ell+1}(0 )) + \p_{v_i} \V^{\ell+1} (0 ) \cdot \nabla_v F_0 (\Zz^{\ell+1}(0 ))
\}\\
&  + \mathbf{1}_{t >\tB^{\ell+1}(t,x,v) }  \{
\p_{v_i} \xB^{\ell+1} \cdot \nabla_{x_\parallel} G 
+ \p_{v_i} \vB^{\ell+1} \cdot \nabla_vG  
\},
\end{split}\Ee
where $\Zz^{\ell+1}(0) =\Zz^{\ell+1}(0;t,x,v)$ and $(\X^{\ell+1}(0), \V^{\ell+1}(0)) = (\X^{\ell+1}(0;t,x,v), \V^{\ell+1}(0;t,x,v))$; and every $G$ is evaluated at $(t-\tB^{\ell+1}(t,x,v), \xB^{\ell+1}(t,x,v),\vB^{\ell+1}(t,x,v))$.

Following the same proof of Lemma \ref{lem:XV_xv}, we can derive the following estimate (\cite{CKL_VPB,ChKL}):
\begin{align}
|\nabla_{x} \X^{\ell+1}  (s;t,x,v)| &\leq   \min \big\{  e^{\frac{|t-s|^2}{2} \| \nabla_x^2 \phi_{F^\ell}  \|_\infty },
e^{ (1+ \| \nabla_x ^2 \phi_{F^\ell}  \|_\infty)|t-s|}
\big\},\label{est:X_x:dyn}\\
|\nabla_{x} \V^{\ell+1}   (s;t,x,v)| &\leq   \min \big\{   |t-s| \| \nabla_x^2 \phi_{F^\ell}\|_\infty e^{\frac{|t-s|^2}{2} \| \nabla_x^2\phi_{F^\ell} \|_\infty },
e^{ (1+ \| \nabla_x ^2\phi_{F^\ell} \|_\infty)|t-s|}
\big\},\label{est:V_x:dyn}\\
|\nabla_{v} \X ^{\ell+1}  (s;t,x,v)| &\leq   \min \big\{ |t-s|e^{ \frac{|t-s|^2}{2} \| \nabla_x^2\phi_{F^\ell} \|_\infty },
e^{ (1+ \| \nabla_x ^2 \phi_{F^\ell}  \|_\infty)|t-s|}
\big\},\label{est:X_v:dyn}\\
|\nabla_{v} \V^{\ell+1}  (s;t,x,v)| &\leq  \min \big\{ e^{ \frac{|t-s|^2}{2} \| \nabla_x^2\phi_{F^\ell}  \|_\infty },
e^{ (1+ \| \nabla_x ^2\phi_{F^\ell} \|_\infty)|t-s|}
\big\}.\label{est:V_v:dyn}
\end{align}
Here, we have used the notation $L^\infty_{t,x}$ defined in \eqref{notation}.

We also follow the same proof of Lemma \ref{lem:nabla_zb} to get (\cite{CKL_VPB,ChKL}) 
\Be\label{est:xb_x:dyn}
\begin{split}
&	|\p_{x_i} \xB^{\ell+1}  (t,x,v)|  \leq \frac{|\vB^{\ell+1} (t,x,v)|}{|v^{\ell+1} _{\mathbf{B}  , 3}  (t,x,v)|}
\delta_{i3} 
\\
&  + \Big(1 +  \frac{|\vB^{\ell+1} (t,x,v)|}{|v_{\mathbf{B}, 3} ^{\ell+1}  (t,x,v)|}
\frac{|\tB^{\ell+1}  |^2}{2} \| \nabla_x^2 \phi_{F^\ell}  \|_\infty\Big)  \min \big\{  e^{\frac{|\tB^{\ell+1} |^2}{2} \| \nabla_x^2 \phi_{F^\ell}  \|_\infty },
e^{ (1+ \| \nabla_x ^2 \phi_{F^\ell}  \|_\infty)\tB}
\big\} ,
\end{split}
\Ee
\Be\label{est:xb_v:dyn}
\begin{split}
&	|\p_{v_i} \xB^{\ell+1}   (t,x,v)|  \leq \frac{|\vB ^{\ell+1}  (t,x,v)| |\tB^{\ell+1}  (t,x,v)|  }{|v_{\mathbf{B}, 3} ^{\ell+1}  (t,x,v)|}
\delta_{i3} 
\\
&  + \Big(1 +  \frac{|\vB  ^{\ell+1} (t,x,v)|}{|v_{\mathbf{B}, 3} ^{\ell+1}  (t,x,v)|}
\frac{|\tB ^{\ell+1}  |^2}{2} \| \nabla_x^2  \phi_{F^\ell}   \|_\infty\Big)   \min \big\{ \tB^{\ell+1} e^{ \frac{|\tB^{\ell+1} |^2}{2} \| \nabla_x^2 \phi_{F^\ell}  \|_\infty },
e^{ (1+ \| \nabla_x ^2 \phi_{F^\ell}  \|_\infty)\tB}
\big\} ,
\end{split}
\Ee
\Be\label{est:vb_x:dyn}
\begin{split}
&	|\p_{x_i} v _{\mathbf{B}, j} ^{\ell+1} (t,x,v)|  \leq 
\delta_{i3}	\frac{|\p_{x_j} \phi_{F^\ell}(
t-\tB^{\ell+1} (t,x,v),
\xB ^{\ell+1}  (t,x,v))|}{|v_{\mathbf{B}, 3}^{\ell+1}   (t,x,v)|} \\
&  \ \ +
\Big(1+  \frac{|\tB^{\ell+1}  | \| \nabla_x   \phi_{F^\ell}\|_\infty }{
|v_{\mathbf{B}, 3}^{\ell+1}   (t,x,v)| 
}\Big) 
|\tB^{\ell+1}   | \| \nabla_x^2 \phi_{F^\ell}  \|_\infty \min \big\{  e^{\frac{|\tB^{\ell+1} |^2}{2} \| \nabla_x^2 \phi_{F^\ell}  \|_\infty },
e^{ (1+ \| \nabla_x ^2 \phi_{F^\ell} \|_\infty)\tB}
\big\} ,
\end{split}
\Ee
\Be\label{est:vb_v:dyn}
\begin{split}
&	|\p_{v_i} v _{\mathbf{B}, j}^{\ell+1}  (t,x,v)|  \leq 
\delta_{i3}	\frac{ |\tB ^{\ell+1}  (t,x,v)| |\p_{x_j} \phi_{F^\ell}  (t-\tB^{\ell+1} (t,x,v), \xB ^{\ell+1}  (t,x,v))| }{|v_{\mathbf{B}, 3} ^{\ell+1}  (t,x,v)|} + \delta_{ij}\\
&    + \Big(1+  \frac{|\tB^{\ell+1}   | \| \nabla_x \phi_{F^\ell} \|_\infty}{ |v_{\mathbf{B}, 3} ^{\ell+1}  (t,x,v)|}\Big) |\tB^{\ell+1}   | \| \nabla_x^2 \phi_{F^\ell}  \|_\infty  \min \big\{ \tB^{\ell+1}  e^{ \frac{|\tB^{\ell+1} |^2}{2} \| \nabla_x^2 \phi_{F^\ell}\|_\infty },
e^{ (1+ \| \nabla_x ^2 \phi_{F^\ell}  \|_\infty)\tB}
\big\} ,
\end{split}
\Ee 
where we have abbreviated $\tB^{\ell+1} =\tB^{\ell+1} (t,x,v)$, $ \| \nabla_x^2\phi_{F^\ell}  \|_\infty =  \sup_{\tau \in [t-\tB^{\ell+1} (t,x,v),t]} \| \nabla_x^2\phi_{F^\ell}(\tau)  \|_{L^{\infty}(\O \times \R^3)}$ and $ \| \nabla_x \phi_{F^\ell}  \|_\infty =  \sup_{\tau \in [t-\tB^{\ell+1} (t,x,v),t]} \| \nabla_x \phi_{F^\ell}(\tau)  \|_{L^{\infty}(\O \times \R^3)}$.

Again we utilize a kinetic distance for the dynamic problem \eqref{VP_F}. We define a dynamic kinetic distance (\cite{Guo94,CK_VM})
\Be\label{alpha_k:dyn}
\alpha^{\ell+1}_{F^\ell} 
(t,x,v) =\sqrt{ |v_3|^2 +    |x_3|^2  + 2\p_{x_3} \phi_{F^\ell}
(t,x_\parallel , 0) x_3 + 2g 
x_3 },
\Ee 
In particular, $\alpha^{\ell+1}_{F^\ell} (t,x,v) =  |v_3|$ when $x \in \p\O$ (i.e. $x_3=0$) due \eqref{Dbc:F}. 

\hide\begin{definition}
\label{def:alpha_k:dyn} For any given $T>0$, assume that \eqref{Bootstrap} holds. Also suppose the Dirichlet boundary condition \eqref{Dbc:F} holds for $\phi_F$. 
\hide	\Be\label{Uest:DPhi}
\| \nabla_x \Phi \|_{L^\infty( \bar \O)} \leq \frac{g}{2}.
\Ee
\unhide
We define a kinetic distance
\Be\label{alpha_k:dyn}
\alpha_F 
(t,x,v) =\sqrt{ |v_3|^2 +    |x_3|^2  + 2\p_{x_3} \phi_F
(t,x_\parallel , 0) x_3 + 2g 
x_3 }.
\Ee
In particular, $\alpha_F (t,x,v) =  |v_3|$ when $x \in \p\O$ (i.e. $x_3=0$) due \eqref{Dbc:F}. 
\end{definition}\unhide

\begin{lemma}\label{VL:dyn}
Assume \eqref{Bootstrap_ell} holds for $\Psi^\ell \in C^1 (\bar \O) \cap C^2 (\O)$. Suppose the Dirichlet boundary condition \eqref{Dbc:F} holds for $\phi_{F^\ell}$. 
\hide	\Be\label{Uest:DPhi}
\| \nabla_x \Phi \|_{L^\infty( \bar \O)} \leq \frac{g}{2}.
\Ee
\unhide
Recall the characteristics $\Zz^{\ell+1}(s;t,x,v) = (\X^{\ell+1}(s;t,x,v), \V^{\ell+1}(s;t,x,v))$ solving \eqref{ODE_F}. For all $(x,v) \in \O \times \R^3$ and $s \in  [  t  - \tB^{\ell+1}  (t,x,v),t] $, 
\Be\label{est:alpha:dyn}
\begin{split}
&	\alpha^{\ell+1}_{F^\ell}  (s,\X^{\ell+1} (s;t,x,v),\V ^{\ell+1}(s;t,x,v))  \\
&\leq  \alpha ^{\ell+1}_{F^\ell}  (t,x,v) e^{  \sup_{\tau \in [s,t]}  \big(1+  \| \p_{x_3}^2   \phi_{F^\ell}(\tau)   \|_{L^\infty( {\O})}  
+ \frac{1}{g} \| \p_t \p_{x_3} \phi_{F^\ell} (\tau) \|_{L^\infty(\p\O)} 
\big)|t-s|  }\\
& \ \  \times e^{ \frac{1}{g  } \sup_{\tau \in [s,t]}  \|  \nabla_{x_\parallel} \p_{x_3} \phi_{F^\ell} (\tau)   \|_{L^\infty(\p\O)} 
\int^{t}_s |\V^{\ell+1} _\parallel (s^\prime;t,x,v)| \dd s^\prime  } ,\\
&	\alpha ^{\ell+1} _{F^\ell} (s, \X^{\ell+1} (s;t,x,v),\V^{\ell+1} (s;t,x,v))\\
&\geq  \alpha ^{\ell+1}_{F^\ell} (t,x,v) e^{  -\sup_{\tau \in [s,t]}  \big(1+  \| \p_{x_3}^2   \phi_{F^\ell}(\tau)   \|_{L^\infty( {\O})}  
+ \frac{1}{g} \| \p_t \p_{x_3} \phi_{F^\ell} (\tau) \|_{L^\infty(\p\O)} 
\big)|t-s|  }\\
& \ \  \times e^{- \frac{1}{g  } \sup_{\tau \in [s,t]}  \|  \nabla_{x_\parallel} \p_{x_3} \phi_{F^\ell} (\tau)   \|_{L^\infty(\p\O)} 
\int^{t}_s |\V^{\ell+1}_\parallel (s^\prime;t,x,v)| \dd s^\prime  }  .
\end{split}\Ee
In particular, the last inequality implies that 
\begin{align}
&|v_{\mathbf{B}, 3} ^{\ell+1} (t,x,v)|  \geq 
\alpha_{F^\ell}  ^{\ell+1} (t,x,v) 
e^{  -\sup_{\tau \in [t-\tB,t]}  \big(1+  \| \p_{x_3}^2   \phi_{F^\ell}(\tau)   \|_{L^\infty( {\O})}  
+ \frac{1}{g} \| \p_t \p_{x_3} \phi_{F^\ell} (\tau) \|_{L^\infty(\p\O)} 
\big)\tB } \notag \\
& \ \   \ \ \ \  \ \ \ \ \ \ \ \ \ \  \   \   \times e^{- \frac{1}{g  } \sup_{\tau \in [t-\tB^{\ell+1} ,t]}  \|  \nabla_{x_\parallel} \p_{x_3} \phi_{F^\ell} (\tau)   \|_{L^\infty(\p\O)} 
\int^{t}_{t-\tB} |\V ^{\ell+1}_\parallel (s^\prime;t,x,v)| \dd s^\prime  } 
\label{est1:alpha:dyn}\\
& \geq\alpha_{F^\ell}  ^{\ell+1}(t,x,v) 
e^{  - \frac{4}{g} |v_{\mathbf{B}, 3}^{\ell+1} (t,x,v)| \sup_{\tau \in [t-\tB^{\ell+1},t]}  \big(1+  \| \p_{x_3}^2   \phi_{F^\ell}(\tau)   \|_{L^\infty( {\O})}  
+ \frac{1}{g} \| \p_t \p_{x_3} \phi_{F^\ell} (\tau) \|_{L^\infty(\p\O)} 
\big)  } \notag \\
& \ \  \times e^{- \frac{4}{g^2  }|\vB^{\ell+1}(t,x,v)|^2 \sup_{\tau \in [t-\tB^{\ell+1},t]}  \|  \nabla_{x_\parallel} \p_{x_3} \phi_{F^\ell} (\tau)   \|_{L^\infty(\p\O)} 
\big(1+ \frac{2}{g} \| \nabla_x \phi_{F^\ell} (\tau) \|_{L^\infty(\O)}  \big) 
}  .\label{est1:xb_x/w:dyn}
\end{align}
Here, we abbreviate $\tB^{\ell+1} = \tB^{\ell+1}(t,x,v)$.

\end{lemma}

\begin{proof}
Note that 
\Be\notag
\begin{split}
&[ \p_t + v\cdot \nabla_x  - \nabla_x (\phi_{F^\ell}(t,x)
+ g 
x_3) \cdot \nabla_v ] \sqrt{ {|v_3|^2} +  {|x_3|^2}  + 2\p_{x_3} \phi_{F^\ell}(t,x_\parallel , 0) x_3 + 2g  x_3 }\\
& = \frac{ 
\p_t \p_{x_3} \phi_{F^\ell} (t, x_\parallel, 0) x_3
-  \big( \p_{x_3} \phi_{F^\ell}(t,x)  -  \p_3  \phi_{F^\ell}(t,x_\parallel, 0)   \big)v_3
+  v_3 x_3 +   v_\parallel \cdot \nabla_{x_\parallel} \p_{3}  \phi_{F^\ell}  (t,x_\parallel, 0) x_3
}{ 
\alpha^{\ell+1}_{F^\ell}  (t,x,v)
}\\ 
& \leq \frac{  \big(1+  \| \p_3\p_3 \phi_{F^\ell}(t)  \|_{L^\infty (\O)} \big)|v_3| |x_3|
+  \big( \|\p_t \p_{x_3} \phi_{F^\ell} (t )\|_{L^\infty (\p\O)} +  |v_\parallel| \| \nabla_\parallel \p_3 \phi_{F^\ell} (t)  \|_{L^\infty (\p\O)}\big)
|x_3|
}{\alpha^{\ell+1} _{F^\ell}  (t,x,v)} ,
\end{split}
\Ee
where we use $- \p_{3} \phi_{F^\ell}  (t,x_\parallel, x_3) + \p_3  \phi_{F^\ell} (t,x_\parallel, 0) =\int^0_{x_3} \p_3 \p_3 \phi_{F^\ell}(t,x_\parallel, y_3) \dd y_3$. Then 
\Be\notag
\begin{split}
&\big|[\p_t + v\cdot \nabla_x  - \nabla_x (\phi_{F^\ell} + g   x_3) \cdot \nabla_v ] \alpha_{F^\ell} ^{\ell+1} (t,x,v)\big|\\
& \leq  
\Big(1+ \| \p_{x_3} \p_{x_3} \phi_{F^\ell} (t)\|_{L^\infty( \O)} + \frac{1}{g} \| \p_t \p_{x_3} \phi_{F^\ell} (t) \|_{L^\infty (\p\O)}	
+ \frac{1}{g  }|v_\parallel| \|  \nabla_\parallel \p_{x_3} \phi_{F^\ell}(t)  \|_{L^\infty(\p\O)} 
\Big) \alpha_{F^\ell} ^{\ell+1}  (t,x,v) 
.
\end{split}\Ee
By the Gronwall's inequality, we conclude both inequalities of \eqref{est:alpha:dyn}. Then \eqref{est1:alpha:dyn} is a direct result of the second estimate in \eqref{est:alpha:dyn}. For \eqref{est1:xb_x/w:dyn}, we use \eqref{est:tB}.
\hide
\Be\notag
\int_{-\tb }^0 |V_\parallel  (s;x,v)| \dd s  \leq 
|\vb  | \tb +  \frac{|\tb |^2}{2} \| \nabla_x \phi_F \|_\infty  
\leq  \frac{4}{g} \Big(1+ \frac{2}{g} \| \nabla_x \phi_F \|_\infty \Big)|\vb  | ^2 ,
\Ee

Then the proof follows the proof of Lemma \ref{VL}.\unhide
\end{proof}

\begin{lemma}\label{RE:dyn}Assume that \eqref{Bootstrap_ell} holds for $\Psi^\ell \in C^1 (\bar \O) \cap C^2 (\O)$, and the compatibility condition \eqref{CC} holds. Let $(h,   \Phi)$ and $(f^{\ell+1},  \Psi^\ell)$ be the solutions constructed in Theorem \ref{theo:CS} and \eqref{Poisson_fell}-\eqref{initial:fell} respectively. Recall that $\phi_{F^\ell} = \Phi + \Psi^\ell$. Suppose that
\Be\label{condition:DDphi:dyn}
\begin{split}
\frac{  \tilde \beta ^{1/2} }{g^{1/2} }  	\| \p_t \phi_{F^\ell} \|_{L^\infty ([0,t] \times \O)}  + \frac{1}{g^2  \tilde{\beta}^{1/2}} 	\| \p_t  \p_{x_3} \phi_{F^\ell} \|_{L^\infty ([0,t] \times \p \O)} 
+  \frac{1}{g   \tilde{\beta}^{1/2}} 	\|  \nabla_x^2 \phi_{F^\ell} \|_{L^\infty ([0,t] \times   \O)} 
\lesssim 1.
\end{split}
\Ee
\hide

\Be
\frac{64 \tilde \beta}{g}  	\| \p_t \phi_{F^\ell} \|_{L^\infty_{t,x}}  ^2  + \frac{32 }{g^2 \tilde \beta}
(1+   \| \nabla_x^2 \phi_{F^\ell}\|_{L^\infty_{t,x}})^2	 \lesssim 1
\Ee
\Be
\frac{1}{g^2 \tilde{\beta}} \| \nabla_x^2 \phi_{F^\ell} \|_{L^\infty_{t,x}} \lesssim 1 
\Ee
\Be
\frac{2^5}{g^2 \tilde \beta }   
\sup_{\tau \in [t-\tB^{\ell+1}(t,x,v),t]}  \big(1+  \| \p_{x_3}^2   \phi_{F^\ell}(\tau)   \|_{L^\infty( {\O})}  
+ \frac{1}{g} \| \p_t \p_{x_3} \phi_{F^\ell} (\tau) \|_{L^\infty(\p\O)} 
\big)^2 \lesssim 1
\Ee

\Be
\sup_{t\in [0,\infty]}  \|  \nabla_{x_\parallel} \p_{x_3} \phi_{F^\ell} (t)   \|_{L^\infty(\p\O)} 
\Big(1+ \frac{2}{g} \| \nabla_x \phi_{F^\ell} (t) \|_{L^\infty(\O)}  \Big)  \leq \frac{\tilde \beta g^2}{32}.
\Ee\unhide 

Then, for $F^{\ell+1} = h + f^{\ell+1}$ and $\phi_{F^\ell} = \Phi + \Psi^\ell$, we have that, for all $s \in [0,t]$,
\hide\Be
\begin{split}\label{est:F_v:dyn}
&\| e^{\frac{\tilde \beta}{4} (|v|^2 + g x_3)} \nabla_v F^{\ell+1}(t)\|_{L^\infty(\O \times \R^3)} \\
& \lesssim e^{  \frac{64 \tilde \beta}{g}  	\| \Delta_0^{-1} (\nabla_x \cdot b^\ell ) \|_{L^\infty_{t,x}}  ^2 }
e^{\frac{32 }{g^2 \tilde \beta}
(1+   \| \nabla_x^2 \phi_{F^\ell}\|_{L^\infty_{t,x}})^2	
}  \| \w_{\tilde \beta, 0}   \nabla_{x,v} F_0  \|_{L^\infty (\O \times \R^3)} \\
&+\Big(1+ \frac{1}{g\tilde \beta ^{1/2}} + \frac{4}{g}  \| \nabla_x \phi_{F^\ell}   \|_{L^\infty_{t,x}}\Big)  \Big(1+ \frac{1}{g^2 \tilde\beta }   \| \nabla_ x^2 \phi_{F^\ell}   \|_{L^\infty_{t,x}}\Big)  \|  e^{\tilde{\beta} | v|^2}   \nabla_{x_\parallel,v} G   \|_{L^\infty (\gamma_-)},
\end{split}\Ee

\unhide {\small\Be
\begin{split}\label{est:F_v:dyn}
e^{\frac{\tilde \beta}{4} (|v|^2 + g x_3)} |\nabla_v F^{\ell+1}(s,x,v) | 
\lesssim  
\| \w_{\tilde \beta, 0}   \nabla_{x,v} F_0  \|_{L^\infty (\O \times \R^3)}  
+\Big(1+ \frac{1}{g\tilde \beta ^{1/2}} 
\Big)  
\|  e^{\tilde{\beta} | v|^2}   \nabla_{x_\parallel,v} G   \|_{L^\infty (\gamma_-)},
\end{split}\Ee  }
\Be
\label{est:F_x:dyn} 
\begin{split}
&  e^{\frac{\tilde \beta}{4} (|v|^2 + g x_3)} |\p_{x_i} F^{\ell+1}(s,x,v)|  
\lesssim 
\| \w_{\tilde \beta, 0}   \nabla_{x,v} F_0  \|_{L^\infty (\O \times \R^3)} \\
& \ \ \  \ \ \   \ \ \   \ \ \   \ \ \  \ \ \  \ \ \  \ \ \ \  
+ \left[ 
\left(1+ \frac{1}{g \tilde{\beta}^{1/2}}\right)
+ \left(1+ \frac{1}{\tilde{\beta}^{1/2}}\right) \frac{\delta_{i3}}{\alpha^{\ell+1 }_{F^\ell} (s,x,v)}
\right]
\|   e^{\tilde \beta |v|^2}  \nabla_{x_\parallel,v} G    \|_{L^\infty(\gamma_-)} .
\end{split}
\Ee 
\hide where we have abbreviated notations $\| \nabla_x \phi_{F^\ell}   \|_{L_{t,x}^\infty} :=\sup_{s \in [0
,t]}\| \nabla_x \phi_{F^\ell} (s) \|_{L^\infty(\O)}$ and $\| \nabla_x^2 \phi_{F^\ell}   \|_{L_{t,x}^\infty} := \sup_{s \in [0
,t]} \| \nabla_x^2 \phi_{F^\ell} (s) \|_{L^\infty(\O)}$.\unhide

Moreover, for all $(s,x) \in  [0,t] \times \bar\O$, 
\Be\label{est:rho_x:dyn}
\begin{split}
&  	e^{\frac{\tilde{\beta}g}{4} x_3 } |\p_{x_i} \varrho^{\ell+1} (s,x)| 
\lesssim 	\frac{1}{\tilde{\beta}^{3/2}}
\left[	 \| \w_{\tilde \beta, 0}   \nabla_{x,v} F_0  \|_{L^\infty (\O \times \R^3)}  +
\Big(1+ \frac{1}{ g \tilde{\beta}^{1/2}}\Big)
\|  e^{\tilde \beta |v|^2}\nabla_{x_\parallel, v} G \|_{L^\infty ({\gamma_-})} 
\right]
\\
& \ \ \ \ \ \ \ \ \  \ \ \ \ \     + 
\frac{\delta_{i3}}{ \tilde \beta }   \left(1+ \frac{1}{\tilde{\beta}^{1/2}}\right) 
\Big(
1+  \frac{1}{\tilde \beta^{1/2}} +   \mathbf{1}_{|x_3| \leq 1}  |\ln    ( |x_3|^2 + g x_3 )|  
\Big)\|  e^{\tilde \beta |v|^2}\nabla_{x_\parallel, v} G \|_{L^\infty ({\gamma_-})}.
\end{split}
\Ee
For $0 < \delta <1$ and for all $s \in [0,t]$
\Be\label{est:rho_holder_dyn}
[\varrho^{\ell+1} (s)]_{C^{0,\delta}}
\lesssim_\delta  \frac{1}{\tilde{\beta}^{3/2}} \| \w_{\tilde \beta, 0}   \nabla_{x,v} F_0  \|_{L^\infty (\O \times \R^3)} + \frac{1}{\tilde{\beta}} \left(
1+ \frac{1}{\tilde{\beta}^{1/2}} + \frac{1}{g \tilde{\beta}}
\right) \| e^{\tilde{\beta} |v|^2} \nabla_{x_\parallel, v} G \|_{L^\infty (\gamma_-)}.
\Ee

\hide	\Be\label{Uest:wf}
\begin{split} 
&e^{  \frac{
	\beta}{2}|v|^2}
e^{  \frac{
	\beta}{2} g x_3}
| f^{\ell+1}   (s,x,v)|
+ \beta^{-3/2}  e^{  \frac{
	\beta}{2} g x_3}
| \varrho^{\ell+1}   (s,x)|\\
&\lesssim  
e^{\frac{64
	\beta}{g}
\sup_{\tau \in [0,s]}	\| \p_t \Psi^\ell 
(\tau) \|_{L^\infty (\O)}^2
}  \big\{   \| \w_{\beta, 0 } F_{0} \|_{L^\infty (\O \times \R^3)}
+  \| 
e^{\beta|v|^2}
G  \|_{L^\infty (\gamma_-)}\big\}  
.
\end{split}	\Ee\unhide

Furthermore, for all $s \in [0,t]$, we have that $\phi_{F^{\ell+1}}(s) \in C^1 (\bar{\O})\cap C^2 (\O)$ and
\Be\label{est:phi_C1_dyn}
\| \nabla_x \phi_{F^{\ell+1}}(s) \|_{L^\infty (\O)}  \leq    \frac{1}{\beta^{3/2}} \Big(1 + \frac{1}{\beta g}\Big)
\big\{   \| \w_{\beta, 0 } F_{0} \|_{L^\infty (\O \times \R^3)}
+  \| 
e^{\beta|v|^2}
G  \|_{L^\infty (\gamma_-)}\big\}  
,  
\Ee
{\small\Be
\begin{split}
\label{est:phi_C2_dyn}
& \| \nabla_x^2  \phi_{F^{\ell+1}}(s) \|_{L^\infty (\O)} 
\leq 
\frac{\mathfrak{C}_1}{\beta^{3/2}} 
\big\{   \| \w_{\beta, 0 } F_{0} \|_{L^\infty (\O \times \R^3)}
+  \| 
e^{\beta|v|^2}
G  \|_{L^\infty (\gamma_-)}\big\}  \\
& \times \bigg\{
\frac{1}{g \beta }  +
\log \bigg(
e+ 
\frac{1}{\tilde{\beta}^{3/2}} \| \w_{\tilde \beta, 0}   \nabla_{x,v} F_0  \|_{L^\infty (\O \times \R^3)}
+	\frac{1}{\tilde{\beta}}\Big(1+ \frac{1}{\tilde{\beta}^{1/2}}+ \frac{1}{ g \tilde{\beta}}\Big) 
\| e^{\tilde \beta |v|^2} \nabla_{x_\parallel, v}  G \|_{L^\infty (\gamma_-)}
\bigg)\bigg\}.
\end{split}	\Ee}

\end{lemma}

\begin{proof}
We bound \eqref{DF_x} and \eqref{DF_v} following the argument of the proof of Lemma \ref{lem:regularity}. Recall $\w^{\ell+1}_{\tilde{\beta}}$ defined in \eqref{w^ell}. From \eqref{DF_v}, 
\begin{align}
|\nabla_v F^{\ell+1}  (t,z)|  
&\leq 
\mathbf{1}_{t \leq \tB^{\ell+1}  (t,z)}
\frac{| \nabla_v\X^{\ell+1}  (0;t,z)| +| \nabla_v \V^{\ell+1}  (0;t,z)|  }{\w^{\ell+1}_{\tilde \beta} (0, \Zz^{\ell+1}  (0;t,z) )} \| \w^{\ell+1}_{\tilde \beta, 0}   \nabla_{x,v} F_0  \|_{L^\infty (\O \times \R^3)} \label{est1:F_v1}
\\
&+	\mathbf{1}_{t >\tB^{\ell+1} (t,z)}	\frac{ | \nabla_v \xB ^{\ell+1} (t,z) | + |\nabla_v \vB^{\ell+1}  (t,z)|}{
e^{\tilde{\beta} |\vB^{\ell+1}  (t,z)|^2}	
}  \|  e^{\tilde{\beta} | v|^2}   \nabla_{x_\parallel,v} G   \|_{L^\infty (\gamma_-)}.
\label{est1:F_v2}
\end{align} 
Using \eqref{est:X_v:dyn}-\eqref{est:V_v:dyn} and \eqref{est:1/w_h_ell}, we bound 
\Be\label{est2:F_v1}
\begin{split}
\eqref{est1:F_v1} 
& \lesssim	e^{  \frac{64 \tilde \beta}{g} 	\|
\p_t \Psi^\ell 
\|_{L^\infty_{t,x}}  ^2 }
e^{-\frac{\tilde\beta}{2}|v|^2}  
e^{-\frac{\tilde\beta}{2} g x_3}  \mathbf{1}_{t \leq \tB^{\ell+1}(t,x,v)} 
e^{ (1+ \| \nabla_x ^2 \phi_{F^\ell}  \|_{L^\infty_{t,x}})t}  \| \w_{\tilde \beta, 0}   \nabla_{x,v} F_0  \|_{L^\infty (\O \times \R^3)} \\
& \lesssim e^{  \frac{64 \tilde \beta}{g} 	\| 
\p_t \Psi^\ell 
\|_{L^\infty_{t,x}}  ^2 }
e^{\frac{32 }{g^2 \tilde \beta}
(1+ \| \nabla_x^2 \phi_{F^\ell} \|_{L^\infty_{t,x}})^2	
}
e^{-\frac{\tilde \beta}{4}|v|^2}  
e^{-\frac{\tilde \beta}{4} g x_3}  \| \w_{\tilde \beta, 0}   \nabla_{x,v} F_0  \|_{L^\infty (\O \times \R^3)} ,
\end{split}
\Ee
where we use the abbreviation $L^\infty_{t,x}$ of \eqref{notation}. 
In \eqref{est2:F_v1}, we have used \eqref{est:tB_ell} at the second line: $ 
e^{ (1+ \| \nabla_x ^2 \phi_{F^\ell}  \|_ {L^\infty_{t,x}})\tB^{\ell+1}(t,x,v)} \leq e^{  \frac{4}{g}(1+ \| \nabla_x ^2 \phi_{F^\ell}  \|_{L^\infty_{t,x}}) (|v_3| + \sqrt{g x_3})}$; and used the completing-square trick to get the last line of \eqref{est2:F_v1}. 

Now we consider \eqref{est1:F_v2}. We follow the same argument in \eqref{est1:xb_v} and \eqref{est1:vb_v}: Using \eqref{est:xb_v:dyn}, \eqref{est:vb_v:dyn}, and \eqref{est:tB_ell}, we derive that 
\begin{align}
\frac{|\p_{v_i} \xB ^{\ell+1} (t, x,v)|}{ e^{\tilde \beta |\vB^{\ell+1} (t,x,v)|^2}} 
& \leq \frac{16}{g {\tilde\beta}^{1/2}}
\Big(1 + \frac{8}{g^2 \tilde\beta } \| \nabla_x^2 \phi_{F^\ell}  \|_{L^\infty_{t,x}}  \Big) 
e^{- \frac{\tilde\beta}{2}|v|^2} e^{- \frac{ \tilde\beta g}{2} x_3},\label{est1:xb_v:dyn}\\
\frac{	|\p_{v_i} \vB^{\ell+1}  (t,x,v)| }{
e^{\tilde \beta  |\vB^{\ell+1} (t,x,v)|^2}	
}
& \leq \Big(1+ \frac{4}{g} \| \nabla_x \phi_{F^\ell} \|_{L^\infty_{t,x}}\Big)\Big(1+ \frac{32}{g^2 \tilde\beta }  \| \nabla_ x^2 \phi_{F^\ell}  \|_ {L^\infty_{t,x}}\Big) e^{- \frac{\tilde\beta}{2}|v|^2} e^{- \frac{\tilde\beta g}{2} x_3}.\label{est1:vb_v:dyn}
\end{align}

Finally applying \eqref{est1:xb_v:dyn}, \eqref{est1:vb_v:dyn}, \eqref{est2:F_v1} to \eqref{est1:F_v1}-\eqref{est1:F_v2} we conclude \eqref{est:F_v:dyn} under the condition of \eqref{condition:DDphi:dyn}.

Next, to get \eqref{est:F_x:dyn}, we bound \eqref{DF_x}. We can bound the first line of \eqref{DF_x}, following the argument of \eqref{est2:F_v1} and using \eqref{est:X_x:dyn}-\eqref{est:V_x:dyn}:
\Be\begin{split}	 \label{est1:F_x1}
&	\mathbf{1}_{t \leq \tB^{\ell+1}(t,x,v)}
\frac{| \nabla_x\X^{\ell+1} (0;t,x,v)| +| \nabla_x \V ^{\ell+1} (0;t,x,v)|  }{\w_{\tilde \beta} (0, \Zz ^{\ell+1}  (0;t,x,v) )} \| \w_{\tilde \beta, 0}   \nabla_{x,v} F_0  \|_{L^\infty (\O \times \R^3)}\\
& \lesssim e^{  \frac{64 \tilde \beta}{g} \|
\p_t \Psi^\ell
\|_{L^\infty_{t,x}}  ^2 }
e^{\frac{32 }{g^2 \tilde \beta}
(1+ \| \nabla_x^2 \phi_{F^\ell}  \|_{L^\infty_{t,x}})^2	
} 
e^{-\frac{\tilde \beta}{4}|v|^2}  
e^{-\frac{\tilde \beta}{4} g x_3}  \| \w_{\tilde \beta, 0}   \nabla_{x,v} F_0  \|_{L^\infty (\O \times \R^3)} .
\end{split}\Ee

Now we consider the second line of \eqref{DF_x}. We follow the same argument of \eqref{est1:xb_x} and \eqref{est1:vb_x}. Using \eqref{condition:DDphi:dyn}, \eqref{est1:xb_x/w:dyn}, and the completing-square trick, we have that 
\Be\label{est1:xb_x:dyn}
\begin{split}
&	\frac{|\p_{x_i} \xB^{\ell+1}  (t,x,v)|}{e^{\tilde \beta |\vB^{\ell+1} (t,x,v)|^2}  }    \leq  \frac{|\vB^{\ell+1} |}{|v_{\mathbf{B},3} ^{\ell+1} |}e^{-\tilde \beta |\vB^{\ell+1} |^2} \delta_{i3}   
+ \Big(	1+ \frac{8\| \nabla_x^2 \phi_{F^\ell} \|_{L^\infty_{t,x}}  }{g^2} |\vB ^{\ell+1}| |v_{\mathbf{B},3}^{\ell+1} | 	\Big)e^{- \tilde\beta |\vB^{\ell+1} |^2}
\\&  \ \ \ \   \ \ \ \  \ \ \ \    \ \ \ \ \    \ \ \ \  \ \ \ \ \ \ \ \ \  \ \ \ \  \ \ \    \ \ \ \  \ \ \ \ \  \ \ \ \ \times 
\min \Big \{
e^{ \frac{8}{g^2} \| \nabla_x^2 \phi_{F^\ell} \|_{L^\infty_{t,x}}  |v_{\mathbf{B}, 3}^{\ell+1} |^2 }  , 
e^{\frac{4}{g}(1+  \| \nabla_x^2 \phi_{F^\ell} \|_{L^\infty_{t,x}}  ) |v_{\mathbf{B}, 3}^{\ell+1} | }
\Big \}  
\\
\hide	& \leq 
\frac{|\vb(x,v)|}{\alpha  (x,v) }
e^{  \frac{4}{g} (1+ \| \nabla_x ^2 \Phi  \|_\infty ) |v_{\b, 3} |}
e^{\big(  - \tilde \beta + \frac{4}{g^2} \| \nabla_x^2 \Phi  \|_\infty
(1+ \frac{2}{g} \| \nabla_x \Phi  \|_\infty)
\big)	|\vb |^2	
}
\\
& \ \ +\frac{16}{g {\tilde\beta}^{1/2}}
\Big(1 + \frac{8}{g^2\tilde \beta } \| \nabla_x^2 \Phi  \|_\infty  \Big) e^{- \frac{\tilde\beta}{2 }|\vb |^2 }\\
& \leq 
\left(
\frac{\delta_{i3}}{\tilde{\beta}^{1/2}} \frac{1}{\alpha(x,v)}   +
\frac{16}{g {\tilde\beta}^{1/2}}
\Big(1 + \frac{8}{g^2 \tilde\beta } \| \nabla_x^2 \Phi  \|_\infty  \Big) \right)
e^{- \frac{\tilde\beta}{2}|v|^2} e^{- \frac{ \tilde\beta g}{2} x_3}
\\
\unhide
& \leq 
\left(
e^{\frac{2^5}{g^2 \tilde \beta }   
\sup_{\tau \in [t-\tB^{\ell+1},t]}  \big(1+  \| \p_{x_3}^2   \phi_{F^\ell}(\tau)   \|_{L^\infty( {\O})}  
+ \frac{1}{g} \| \p_t \p_{x_3} \phi_{F^\ell}(\tau) \|_{L^\infty(\p\O)} 
\big)^2 
}
\frac{\delta_{i3}}{\tilde \beta^{1/2}}\frac{1}{\alpha^{\ell+1} _{F^\ell}(t,x,v)}   +
\frac{32}{g {\tilde\beta}^{1/2}}
\right)\\
&  \ \ \times 
e^{- \frac{\tilde\beta}{2}|v|^2} e^{- \frac{ \tilde\beta g}{2} x_3}
,
\end{split}
\Ee
\hide

\begin{align}
|v_{\mathbf{B}, 3} ^{\ell+1} (t,x,v)| &\geq 
\alpha ^{\ell+1} (t,x,v) 
e^{  -\sup_{\tau \in [t-\tB,t]}  \big(1+  \| \p_{x_3}^2   \phi_{F^\ell}(\tau)   \|_{L^\infty( {\O})}  
+ \frac{1}{g} \| \p_t \p_{x_3} \phi_{F^\ell} (\tau) \|_{L^\infty(\p\O)} 
\big)\tB } \notag \\
& \ \  \times e^{- \frac{1}{g  } \sup_{\tau \in [t-\tB^{\ell+1} ,t]}  \|  \nabla_{x_\parallel} \p_{x_3} \phi_{F^\ell} (\tau)   \|_{L^\infty(\p\O)} 
\int^{t}_{t-\tB} |\V ^{\ell+1}_\parallel (s^\prime;t,x,v)| \dd s^\prime  } 
\label{est1:alpha:dyn}\\
& \geq\alpha ^{\ell+1}(t,x,v) 
e^{  - \frac{4}{g} |v_{\mathbf{B}, 3}^{\ell+1} (t,x,v)| \sup_{\tau \in [t-\tB^{\ell+1},t]}  \big(1+  \| \p_{x_3}^2   \phi_{F^\ell}(\tau)   \|_{L^\infty( {\O})}  
+ \frac{1}{g} \| \p_t \p_{x_3} \phi_{F^\ell} (\tau) \|_{L^\infty(\p\O)} 
\big)  } \notag \\
& \ \  \times e^{- \frac{4}{g^2  }|\vB^{\ell+1}(t,x,v)|^2 \sup_{\tau \in [t-\tB^{\ell+1},t]}  \|  \nabla_{x_\parallel} \p_{x_3} \phi_{F^\ell} (\tau)   \|_{L^\infty(\p\O)} 
\big(1+ \frac{2}{g} \| \nabla_x \phi_{F^\ell} (\tau) \|_{L^\infty(\O)}  \big) 
}  .\label{est1:xb_x/w:dyn}
\end{align}

\unhide where we have abbreviated 
$\vB^{\ell+1} = \vB^{\ell+1}(t,x,v)$ and used \eqref{notation}. 
\hide:
\Be\begin{split}
\frac{1}{|v_{\mathbf{B}, 3} (t,x,v)| } \leq &\frac{1}{\alpha_F (t,x,v)} 
e^{   \frac{4}{g} |v_{\mathbf{B}, 3} (t,x,v)| \sup_{\tau \in [t-\tB,t]}  \big(1+  \| \p_{x_3}^2   \phi_F(\tau)   \|_{L^\infty( {\O})}  
+ \frac{1}{g} \| \p_t \p_{x_3} \phi_F (\tau) \|_{L^\infty(\p\O)} 
\big)  } \notag \\
& \ \  \times e^{  \frac{4}{g^2  }|\vB(t,x,v)|^2 \sup_{\tau \in [t-\tB,t]}  \|  \nabla_{x_\parallel} \p_{x_3} \phi_F (\tau)   \|_{L^\infty(\p\O)} 
\big(1+ \frac{2}{g} \| \nabla_x \phi_F (\tau) \|_{L^\infty(\O)}  \big) 
} .
\end{split}\Ee
\unhide
\hide

\Be
e^{\frac{2^5}{g^2 \tilde \beta }  \sup_{\tau \in [t-\tB,t]}  \|  \nabla_{x_\parallel} \p_{x_3} \phi_F (\tau)   \|_{L^\infty(\p\O)} ^2
\big(1+ \frac{2}{g} \| \nabla_x \phi_F (\tau) \|_{L^\infty(\O)}  \big) ^2}
\frac{\delta_{i3}}{\tilde \beta^{1/2}}\frac{1}{\alpha_F (t,x,v)}
\Ee

where we have used $ \frac{4}{g} (1+ \| \nabla_x ^2 \Phi  \|_\infty ) |v_{\b, 3} | + 
\big(  - \tilde \beta + \frac{4}{g^2} \| \nabla_x^2 \Phi  \|_\infty
(1+ \frac{2}{g} \| \nabla_x \Phi  \|_\infty)
\big)	|\vb |^2 \leq \frac{\tilde{\beta}}{2} |\vb|^2$.

\unhide

Similarly, we derive that 
\Be\label{est1:vb_x:dyn}
\begin{split}
&  e^{  \frac{\tilde\beta}{2}|v|^2} e^{  \frac{\tilde\beta g}{2} x_3} 	\frac{	|\p_{x_i} \vB^{\ell+1} (t,x,v)| }{
e^{\tilde \beta |\vB^{\ell+1} (t,x,v)|^2}	
}
\\
\hide
& \leq  \frac{\|  \nabla_x \Phi  \|_\infty}{|v_{\mathbf{b}, 3} (x,v)|}   e^{- \tilde\beta |\vb |^2} \delta_{i3}
\\
& \ + \Big(1+ \frac{4}{g} \| \nabla_x \Phi \|_\infty\Big) \frac{4}{g}  \| \nabla_ x^2 \Phi  \|_\infty  | v_{\b,3}  (x,v)|
e^{- \tilde\beta |\vb |^2}
\\
& \ \ \ \ \times 
\min \Big\{
e^{ \frac{8}{g^2}  \| \nabla_x^2 \Phi \|_\infty |v_{\b, 3}  | ^2}, e^{\frac{4}{g}  (1+ \| \nabla_x^2 \Phi \|_\infty)  |v_{\b, 3}  |}
\Big\}\\
& \leq \left( 
\frac{ \| \nabla_x \Phi \|_\infty }{\alpha_F(t,x,v)} \delta_{i3} +  \Big(1+ \frac{4}{g} \| \nabla_x \Phi \|_\infty\Big)
\right) e^{- \frac{\tilde\beta}{2}|v|^2} e^{- \frac{\tilde\beta g}{2} x_3}\\
\unhide
& \leq \left( 
e^{\frac{2^5}{g^2 \tilde \beta }   
\sup_{\tau \in [t-\tB^{\ell+1} ,t]}  \big(1+  \| \p_{x_3}^2   \phi_{F^\ell}(\tau)   \|_{L^\infty( {\O})}  
+ \frac{1}{g} \| \p_t \p_{x_3} \phi_{F^\ell} (\tau) \|_{L^\infty(\p\O)} 
\big)^2 
}
\frac{ \| \nabla_x \phi^{\ell+1} \|_{L^\infty_{t,x}} }{\alpha^{\ell+1}_{F^\ell}(t,x,v)} \delta_{i3} + 3
\right)  .
\end{split}
\Ee
Finally we conclude \eqref{est:F_x:dyn} using \eqref{est1:F_x1}, \eqref{est1:xb_x:dyn}, and \eqref{est1:vb_x:dyn}, under the condition of \eqref{condition:DDphi:dyn}.

We prove \eqref{est:rho_x:dyn}-\eqref{est:phi_C2_dyn} following the proof of \eqref{est:rho_x}-\eqref{est:phi_C2}.\end{proof}

\begin{lemma}\label{lem:D3tphi_F}
Suppose $g \frac{\tilde{\beta}}{4} \geq 1$. Suppose \eqref{theo:hk_x} and \eqref{est:F_x:dyn} hold. Then \eqref{identity:Psi_t_ell} holds and $\nabla_x \cdot b^{\ell+1}(t, \cdot ) \in L^\infty (\O)$ and $\p_{x_3} \p_t \phi_{F^{\ell+1}}(t,\cdot ) \in C^{0, \delta}(\O)$ for some $\delta>0$. 

Moreover, for all $(s,x) \in [0,t] \times \bar \O$,
\begin{align}
&
e^{ \frac{\tilde \beta}{4}  g x_3 }	|\nabla_x \cdot b^{\ell+1}(s,x)| + \tilde{\beta}^{3/2} e^{x_3}| \p_{x_3} \p_t \phi_{F^{\ell+1}}(s,x)| 
+ \tilde{\beta}^{3/2} |\p_t \phi_{F^{\ell+1}}(s,x)|
\notag\\
&\lesssim
\bigg(
\frac{1}{\tilde{\beta}^2}	\| \w_{\tilde \beta, 0}   \nabla_{x,v} F_0  \|_{L^\infty (\O \times \R^3)} 
+
\frac{1}{\tilde{\beta}^{3/2}} \Big( 1+ 	  \frac{1}{\tilde{\beta}^{1/2}} + 	\frac{1}{g {\tilde\beta} } \Big)
\|  e^{\tilde{\beta} | v|^2}   \nabla_{x_\parallel,v} G   \|_{L^\infty (\gamma_-)} \bigg)	
.\label{est:D.b} 
\end{align}

\hide
where
\Be\label{express:C}
\begin{split}
C&= 	\left(1+  \sup_s  \| \nabla_x \phi_{F^{\ell-1}}  (s)\|_{L^\infty_x}	e^{\frac{2^5}{g^2 \tilde \beta }   
\big(1+ \sup_s \| \p_{x_3}^2   \phi_{F^{\ell-1}}  (s)  \|_{L^\infty _{x}}  
+ \frac{1}{g} \sup_s \| \p_t \p_{x_3} \phi_{F^{\ell-1}} (s)  \|_{L^\infty  (\p\O)} 
\big)^2 
}
\right)
\\&\  \ \  \times 
\|  e^{\tilde{\beta} | v|^2}   \nabla_{x_\parallel,v} G   \|_{L^\infty (\gamma_-)} 
\frac{1 }{\tilde \beta^{3/2}},
\end{split}\Ee\Be\begin{split}\label{express:D}
D&= 	\left(1+ \sup_s  \| \nabla_x \phi_{F^{\ell-1}}(s) \|_{L^\infty_{x}}		e^{\frac{2^5}{g^2 \tilde \beta }   
\big(1+  \| \p_{x_3}^2   \phi_{F^{\ell-1}}   \|_{
	L^\infty_{t,x}
}  
+ \frac{1}{g} \sup_s \| \p_t \p_{x_3} \phi_{F^{\ell-1}} (s) \|_{L^\infty_{x}} 
\big)^2 
}
\right)
\\& \ \    \times 
\|  e^{\tilde{\beta} | v|^2}   \nabla_{x_\parallel,v} G   \|_{L^\infty (\gamma_-)} 
\frac{1 }{\tilde \beta^{3/2}}
\Big(1 + \tilde{\beta}^{-1/2}\Big)\\
&	+ \bigg(
e^{  \frac{64 \tilde \beta}{g} \sup_s \| \Delta_0^{-1} (\nabla_x \cdot b^{\ell-1}  ) (s) \|_{L^\infty_{x}}  ^2 }
e^{\frac{32 }{g^2 \tilde \beta}
(1+ \sup_s \| \nabla_x^2 \phi_{F^{\ell-1}}(s) \|_{L^\infty_{x}})^2	
} 
\| \w_{\tilde \beta, 0}   \nabla_{x,v} F_0  \|_{L^\infty (\O \times \R^3)} 
\\	&   \ \ \ \ \ \   +
\frac{1}{g {\tilde\beta}^{1/2}} 	 \|  e^{\tilde{\beta} | v|^2}   \nabla_{x_\parallel,v} G   \|_{L^\infty (\gamma_-)} \bigg)	
\frac{1}{\tilde{\beta}^2}.
\end{split}
\Ee
Here, $\sup_s\| z(s) \|_{L^\infty } = \sup_{s \in [\max\{0, t-\tB^{\ell+1} (t,x,v)\},t]} \|  z(s, \cdot ) \|_{L^\infty  }$.\unhide
\hide
\Be\label{est:D.b}
\begin{split}
&e^{ \frac{\tilde \beta g  }{4}  x_3 } 	|\nabla_x \cdot b(t,x)| \\
\hide
&\lesssim  \| e^{\tilde \beta |v|^2} \nabla_{x_\parallel,v}  G \|_{L^\infty(\gamma_-)}  \frac{e^{ - \frac{\tilde \beta g}{2} x_3} }{\tilde \beta^{3/2}}
\Big(1+ \mathbf{1}_{|x_3| \leq 1} |\ln (|x_3|^2 + g x_3)| + \tilde{\beta}^{-1/2}\Big)\\
&	+ \bigg(
e^{  \frac{64 \tilde \beta}{g} \| \Delta_0^{-1} (\nabla_x \cdot b(t) ) \|_{L^\infty(\O)}  ^2 }
e^{\frac{32 }{g^2 \tilde \beta}
(1+ \| \nabla_x^2 \phi_F (t)\|_{L^\infty(\O)})^2	
} 
\| \w_{\tilde \beta, 0}   \nabla_{x,v} F_0  \|_{L^\infty (\O \times \R^3)} 
\\
&   \ \ \ \ \ \ \   +
\frac{1}{g {\tilde\beta}^{1/2}} 	 \|  e^{\tilde{\beta} | v|^2}   \nabla_{x_\parallel,v} G   \|_{L^\infty (\gamma_-)} \bigg)	
\frac{1}{\tilde{\beta}^2}	e^{-\frac{\tilde \beta}{4}  g x_3 } 
\\
&   + 
e^{\frac{2^5}{g^2 \tilde \beta }   
\sup_{\tau \in [t-\tB,t]}  \big(1+  \| \p_{x_3}^2   \phi_F(\tau)   \|_{L^\infty( {\O})}  
+ \frac{1}{g} \| \p_t \p_{x_3} \phi_F (\tau) \|_{L^\infty(\p\O)} 
\big)^2 
} \\
& \ \ \times   \| \nabla_x \phi_F \|_\infty  \|  e^{\tilde{\beta} | v|^2}   \nabla_{x_\parallel,v} G   \|_{L^\infty (\gamma_-)} 
\frac{e^{ - \frac{\tilde \beta g}{4} x_3} }{\tilde \beta^{3/2}}
\Big(1+ \mathbf{1}_{|x_3| \leq 1} |\ln (|x_3|^2 + g x_3)| + \tilde{\beta}^{-1/2}\Big)\\
\unhide
& \lesssim  
\left(1+   \| \nabla_x \phi_F \|_\infty 	e^{\frac{2^5}{g^2 \tilde \beta }   
\sup_{\tau \in [t-\tB,t]}  \big(1+  \| \p_{x_3}^2   \phi_F(\tau)   \|_{L^\infty( {\O})}  
+ \frac{1}{g} \| \p_t \p_{x_3} \phi_F (\tau) \|_{L^\infty(\p\O)} 
\big)^2 
}
\right)
\\& \ \    \times 
\|  e^{\tilde{\beta} | v|^2}   \nabla_{x_\parallel,v} G   \|_{L^\infty (\gamma_-)} 
\frac{1 }{\tilde \beta^{3/2}}
\Big(1+ \mathbf{1}_{|x_3| \leq 1} |\ln (|x_3|^2 + g x_3)| + \tilde{\beta}^{-1/2}\Big)\\
&	+ \bigg(
e^{  \frac{64 \tilde \beta}{g} \| \Delta_0^{-1} (\nabla_x \cdot b(t) ) \|_{L^\infty(\O)}  ^2 }
e^{\frac{32 }{g^2 \tilde \beta}
(1+ \| \nabla_x^2 \phi_F (t)\|_{L^\infty(\O)})^2	
} 
\| \w_{\tilde \beta, 0}   \nabla_{x,v} F_0  \|_{L^\infty (\O \times \R^3)} 
\\
&   \ \ \ \ \ \   +
\frac{1}{g {\tilde\beta}^{1/2}} 	 \|  e^{\tilde{\beta} | v|^2}   \nabla_{x_\parallel,v} G   \|_{L^\infty (\gamma_-)} \bigg)	
\frac{1}{\tilde{\beta}^2}	,
\end{split}\Ee\unhide

\end{lemma}
\hide\begin{remark}
\begin{align}
\p_t \Psi^{\ell+1} (t,x)  = \eta \Delta_0^{-1} \p_t \varrho^\ell(t,x)  = -  \eta \Delta_0^{-1}  (\nabla_x \cdot b^\ell) (t,x)\ \ \text{in} \ \R_+ \times \O.\label{identity:Psi_t_ell}
\end{align}
\end{remark}\unhide
\begin{remark}\label{remark:CE}
Since $\nabla_x \cdot b^{\ell+1}$ is bounded, a weak solution $\varrho^{\ell+1}$ to \eqref{cont_eqtn_ell} should satisfy $\varrho^{\ell+1} (t,x) = \eta \int_{\R^3} f_0 (x,v) \dd v- \int^t_0 \nabla_x \cdot b^{\ell+1} (s,x ) \dd s$. Therefore $\varrho^{\ell+1} (\cdot, x)$ is absolutely continuous in time. From \eqref{Poisson_fell}, $\Delta \Psi^{\ell+1}(\cdot, x)$ is also absolutely continuous in time and hence \eqref{identity:Psi_t_ell} holds almost everywhere. 

\end{remark}

\begin{proof}[Proof of Lemma \ref{lem:D3tphi_F}]Note that 
\Be\begin{split}\label{est:D.b1}
|\nabla_x \cdot b^{\ell+1} (t,x)|  
&  \leq \int_{\R^3} |v \cdot \nabla_x F^{\ell+1}(t,x,v)| \dd v + \int_{\R^3} |v \cdot \nabla_x h(x,v)| \dd v.\end{split}\Ee 
Using \eqref{theo:hk_x} and \eqref{est3:xb_x/w}, we bound 
\Be\begin{split}\label{est:D.b2}
\int_{\R^3} |v \cdot \nabla_x h(x,v)| \dd v 
& \leq 	 \| e^{\tilde \beta |v|^2} \nabla_{x_\parallel,v}  G \|_{L^\infty(\gamma_-)}   \int_{\R^3} 
e^{ -\frac{\tilde \beta}{2}|v|^2} e^{ - \frac{\tilde \beta g}{2} x_3} \Big(|v |   + \frac{|v_3|}{\alpha(x,v)}\Big) 
\dd v\\
& \lesssim  \| e^{\tilde \beta |v|^2} \nabla_{x_\parallel,v}  G \|_{L^\infty(\gamma_-)}  
e^{ - \frac{\tilde \beta g}{2} x_3} 
\int_\R (|v|+1 ) e^{ -\frac{\tilde \beta}{2}|v|^2} \dd v_3 \\
& \lesssim  \| e^{\tilde \beta |v|^2} \nabla_{x_\parallel,v}  G \|_{L^\infty(\gamma_-)}   \frac{1}{\tilde \beta^{3/2}} 
\Big(1 + \frac{1}{\tilde{\beta}^{1/2}}\Big)
e^{ - \frac{\tilde \beta g}{2} x_3},
\end{split}\Ee
where we have used that $|v_3| \leq \alpha (x,v)$.

Similarly, using \eqref{est3:xb_x/w} and \eqref{est:F_x:dyn}, we bound 
\hide	 \Be\notag
\begin{split}
&
\int_{\R^3} |v \cdot \nabla_x F^{\ell+1} (t,x,v)| \dd v  
\\
&\lesssim \bigg(
e^{  \frac{64 \tilde \beta}{g} \| \Delta_0^{-1} (\nabla_x \cdot b(t) ) \|_{L^\infty(\O)}  ^2 }
e^{\frac{32 }{g^2 \tilde \beta}
(1+ \| \nabla_x^2 \phi_F (t)\|_{L^\infty(\O)})^2	
} 
\| \w_{\tilde \beta, 0}   \nabla_{x,v} F_0  \|_{L^\infty (\O \times \R^3)} 
\\
&   \ \ \ \ \ \ \   +
\frac{1}{g {\tilde\beta}^{1/2}} 	 \|  e^{\tilde{\beta} | v|^2}   \nabla_{x_\parallel,v} G   \|_{L^\infty (\gamma_-)} \bigg)	 \int_{\R^3}	|v|e^{-\frac{\tilde \beta}{4} (|v|^2 + g x_3)} \dd v
\\
&   + 
e^{\frac{2^5}{g^2 \tilde \beta }   
\sup_{\tau \in [t-\tB,t]}  \big(1+  \| \p_{x_3}^2   \phi_F(\tau)   \|_{L^\infty( {\O})}  
+ \frac{1}{g} \| \p_t \p_{x_3} \phi_F (\tau) \|_{L^\infty(\p\O)} 
\big)^2 
} \\
& \ \  \ \times   \| \nabla_x \phi_F \|_\infty  \|  e^{\tilde{\beta} | v|^2}   \nabla_{x_\parallel,v} G   \|_{L^\infty (\gamma_-)} 
\int_{\R^3} \frac{ |v| e^{-\frac{\tilde \beta}{4} (|v|^2 + g x_3)}  }{\alpha_F(t,x,v)} \dd v \\ 
\end{split}
\Ee\unhide \Be\begin{split}\label{est:D.b3}
&
\int_{\R^3} |v\cdot \nabla_x F^{\ell+1} (t,x,v)| \dd v  
\\
&\lesssim \bigg(
\frac{1}{\tilde{\beta}^2}	\| \w_{\tilde \beta, 0}   \nabla_{x,v} F_0  \|_{L^\infty (\O \times \R^3)} 
+
\frac{1}{\tilde{\beta}^{3/2}} \Big( 1+ 	  \frac{1}{\tilde{\beta}^{1/2}} + 	\frac{1}{g {\tilde\beta} } \Big)
\|  e^{\tilde{\beta} | v|^2}   \nabla_{x_\parallel,v} G   \|_{L^\infty (\gamma_-)} \bigg)	
e^{-\frac{\tilde \beta}{4}  g x_3 } .
\end{split}
\Ee 
In summary, we can conclude a $\nabla_x \cdot b^{\ell+1}$-bound of \eqref{est:D.b} from \eqref{est:D.b1}-\eqref{est:D.b3}.

From \eqref{identity:Psi_t_ell} and Lemma \ref{lemma:G}, we have 
\Be \label{D3tphi}
\begin{split}
\p_{x_3} \p_t \phi_{F^{\ell+1}} (t,x)&=   \p_{x_3} \p_t \Psi^{\ell+1} (t,x)
=
\eta \int_{\O} \nabla\cdot b^{\ell+1}(y)  \p_{x_3} G(x,y) \dd y .
\end{split} \Ee
Now applying Lemma \ref{lem:rho_to_phi} and using \eqref{est:nabla_phi} and the $\nabla_x \cdot b^{\ell+1}$-bound of \eqref{est:D.b}, we prove the bound of $|\p_{x_3} \p_t \phi_{F^{\ell+1}}|$ in \eqref{est:D.b}. Next, using the Dirichlet boundary condition $\p_t \phi_{F^{\ell+1}}|_{\p\O}=0$ and the bound of $|\p_{x_3} \p_t \phi_{F^{\ell+1}}|$ in \eqref{est:D.b}, we conclude the bound of $| \p_t \phi_{F^{\ell+1}}|$. 
\hide

Following the decomposition \eqref{est:p3phi}, we bound \eqref{D3tphi} as 
\Be\label{est:D.b4}
|	\p_{x_3} \p_t \phi_{F^\ell} (t,x)| \leq 
\Big|\int_{\O}  \nabla\cdot b^\ell(y)  \p_{x_3} G(x,y)  \dd y\Big| \leq I_1 + I_2 + I_3,
\Ee
where they are bounded as, using \eqref{est:I_1}-\eqref{est:I_3},
\Be\notag
\begin{split}
I_1 &\leq \int_{\T^2} \int_{x_3}^\infty | \nabla\cdot b^\ell(y_\parallel, y_3)|  \dd y_3 \dd y_\parallel,\\
I_2 &\leq    \int_{\T^2} \int_{0}^\infty  \frac{1}{|x-y|^2} |\nabla\cdot b^\ell(y_\parallel, y_3)|  \dd y_3 \dd y_\parallel,\\
I_3  &\lesssim   \int_{0}^\infty
\mathbf{1}_{|x_3- y_3| \leq 1} \| \nabla\cdot b^\ell(\cdot, y_3) \|_{L^\infty (\T^2)}
\dd y_3  
+ \int_{0}^\infty
\mathbf{1}_{|x_3- y_3| \geq  1} e^{-|x_3 - y_3|} \|\nabla\cdot b^\ell(\cdot, y_3)  \|_{L^\infty (\T^2)}
\dd y_3  .
\end{split}
\Ee

We will use \eqref{est:D.b}. With $C$ and $D$ in \eqref{express:C} and \eqref{express:D}, we have that 
\Be\label{est:I13_t}
\begin{split}
I_1 &\lesssim C \mathbf{1}_{x_3 \leq 1} \int_{x_3}^1  |\ln (|y_3|^2 + g y_3)|  \dd y_3
+ D \int_{x_3}^\infty e^{- \frac{\tilde{\beta} g }{4} y_3} \dd y_3  \lesssim C \mathbf{1}_{x_3 \leq 1}  +  D (\tilde \beta g)^{-1} e^{-\frac{\tilde \beta g}{4} x_3} ,\\
I_3 & \lesssim C \mathbf{1}_{|x_3| \leq 2} \int_0^1 |\ln(|y_3|^2 + g y_3)| \dd y_3
+ C \sqrt{ \int_0^1 |\ln(|y_3|^2 + g y_3)|^2 \dd y} \sqrt{\int_0^\infty e^{-2 |x_3-y_3|} \dd  y_3}\\
& \ \ \ + D \int^{x_3+1}_{\max\{0, 1- x_3\}} e^{- \frac{\tilde \beta g}{4} y_3 } \dd y_3
+ D \int_0^\infty e^{-|x_3 - y_3|} e^{- \frac{\tilde \beta g}{4}y_3} \dd y_3\\
& \lesssim C  + D (\tilde \beta g)^{-1}.
\end{split}
\Ee

For $I_2$, using \eqref{est:I_2}, 
\Be\label{est:I2_t}
\begin{split}
I_2 &\lesssim C \int_{\T^2} \int_0^{1}\frac{1}{|x-y|^2}  |\ln (|y_3|^2 + g y_3)|  \dd y_3 \dd y_\parallel + D (\tilde \beta g )^{-1}\\
&	\lesssim C \left(\int_{\T^2} \int_0^{1}\frac{1}{|x-y|^{5/2}}  \dd y \right)^{4/5}
\left(\int_{\T^2} \int_0^{1}  \big|\ln (|y_3|^2 + g y_3)\big|^5  \dd y \right)^{1/5}+ D (\tilde \beta g )^{-1}\\
& \lesssim C+ D (\tilde \beta g )^{-1}.
\end{split}	 \Ee

Finally we conclude \eqref{est:D3tphi_F} from \eqref{est:D.b4}-\eqref{est:I2_t}.\hide
Using \eqref{est:D.b}
\Be
\begin{split}
I_1 \lesssim 
\end{split}
\Ee

\eqref{est:I_2}, and \eqref{est:I_3},

\Be
\begin{split}\label{est:I_1}
| I_1  | & = \delta_{j3} \Big|	\frac{\p}{\p x_3} \int_0^\infty \int_{\T^2} \Big( \frac{|x_3 - y_3| }{2}- \frac{|x_3 + y_3|}{2}\Big)
\rho (y_\parallel , y_3)  \dd y_\parallel \dd y_3\Big|\\
& = \delta_{j3} \Big| \frac{1}{2} \int_{\T^2} \int_0^\infty 
\Big(
\mathbf{1}_{x_3> y_3} - \mathbf{1}_{x_3 < y_3}- \mathbf{1}_{x_3> - y_3} + \mathbf{1}_{x_3< - y_3}
\Big)
\rho(y_\parallel, y_3) \dd y_3 \dd y_\parallel \Big|\\
& =  \delta_{j3} \Big|  - \int_{\T^2} \int^\infty_{x_3} \rho  (y_\parallel, y_3) \dd y_3 \dd y_\parallel\Big|  \leq\delta_{j3}  \frac{A}{B} e^{- B x_3}.
\end{split}
\Ee

\hide
From \eqref{elliptic_est:C1}, 
\Be\begin{split}
&| \Delta_0^{-1} \rho (x_\parallel, x_3)|\\
&= \Big| \Delta_0^{-1} \rho (x_\parallel, x_3=0)
+ \int_0^{x_3}   \p_{x_3} \Delta_0^{-1} \rho (x_\parallel, y_3) \dd y_3 \Big|\\
& \leq 0+ 10 \frac{g}{20}x_3  = \frac{g}{2} x_3.
\end{split}	\Ee
Then we 
\Be
w_h (x,v) \geq e^{\frac{\beta |v|^2}{2}} e^{\beta (g - \frac{g}{2} ) x_3}
= e^{\frac{\beta |v|^2}{2}} e^{\beta   \frac{g}{2}  x_3}\label{lower:wh}
\Ee
Hence we deduce that 
\Be\begin{split}
| \rho_h(x)| &\leq  \int_{\R^3} \frac{1}{w_h (x,v)} |w_h (x,v)[ h_+ (x,v)- h_- (x,v)] |\dd v \\
& \leq  \frac{e^{- \frac{\beta g}{2} x_3  }}{\beta} \| w_h [h_+ - h_-] \|_{L^\infty}  .
\end{split}\Ee
\unhide

For the second term, we derive that, for $0<\delta \ll1 $,
\Be\begin{split}\notag
|I_2|
&\leq 
2 \int_0^\infty \int_{\T^2}  \frac{1}{|x-y|^2} A e^{- B y_3}
\dd y_\parallel \dd y_3
\\
& \leq  
2 \int_0^\infty A e^{- B y_3}  \int_0^{\frac{1}{\sqrt{2}}}  \frac{2 \pi r \dd r}{r^2 + |x_3 - y_3|^2} \dd y_3  
\\
&= 2 \pi  A  \int_0^\infty e^{- B y_3}	   \ln \left(1+ \frac{1}{2 |x_3 - y_3|^2}\right)
\dd y_3
\\
& = 2 \pi  A \left\{
\int_{\max\{0, x_3-1\}}^{ x_3+1} + \int_{x_3+1}^\infty+ \int_{0}^{\max\{0, x_3-1\}}
\right\},
\\
\hide
&
\| w [G_+ -   G_-] \|_{L^\infty_{\gamma_-}}
\int_{0}^\infty   \frac{(2\pi)^{3/2}}{\beta^{3/2}} e^{- \frac{\beta g}{2} y_3 } 	\int_{\T^2}  \frac{1}{|x_\parallel-y_\parallel|^2 + |x_3- y_3|^2}\dd y_\parallel \dd y_3 \\
& \leq
\| w [G_+ -   G_-] \|_{L^\infty_{\gamma_-}}
\int_{0}^\infty   \frac{(2\pi)^{3/2}}{\beta^{3/2}} e^{- \frac{\beta g}{2} y_3 }  	\int_{0}^2  \frac{1}{r + |x_3- y_3|^2}\dd r\dd y_3\\
&=  \frac{(2\pi)^{3/2}}{\beta^{3/2}}  \| w [G_+ -   G_-] \|_{L^\infty_{\gamma_-}}   \int_{0}^\infty  e^{- \frac{\beta g}{2} y_3 }   
\ln \Big( 1+ \frac{2}{|x_3- y_3|^2} \Big) \dd y_3 \\ 
&  =  \frac{(2\pi)^{3/2}}{\beta^{3/2}}  \| w [G_+ -   G_-] \|_{L^\infty_{\gamma_-}}   \left\{  \int_{0}^{\frac{x_3}{2}}  +  \int_{ 2 x_3}^{\infty}  +  \int^{ 2 x_3}_{\frac{x_3}{2}}
\right\}
\unhide
\end{split}\Ee
where we have decomposed $y_3$-integral into three parts. We bound each of them:
\Be\begin{split}\notag
\int_{\max\{0, x_3-1\}}^{ x_3+1}  \leq 
1,\ \
\int_{x_3+1}^\infty+ \int_{0}^{\max\{0, x_3-1\}}   \lesssim 1/B . 
\end{split}\Ee
Hence, we derive that 
\Be\label{est:I_2}
|I_2| \lesssim  
A+ \frac{A}{B} .
\Ee

Lastly, using \eqref{b0:3} and \eqref{b0:4}, we derive that 
\Be
\begin{split}\label{est:I_3}
|I_3| & = \Big| \int_0^\infty  \int_{\T^2} \nabla_x b_0 (x, y) \rho(y) \dd y_\parallel \dd y_3\Big|\\
& \lesssim  \Big| \int_{0}^\infty \mathbf{1}_{|x_3 - y_3| \leq 1}  A e^{-B y_3} \dd y_3\Big|  + \Big| \int_{0}^\infty \mathbf{1}_{|x_3 - y_3| \geq 1}e^{- |x_3- y_3|}  A e^{-B y_3} \dd y_3\Big| 
\\
&
\lesssim  AB^{-1}.
\end{split}
\Ee

\unhide\unhide\end{proof}

\begin{theorem}\label{theo:RD} Assume $\beta> \tilde{\beta}> \max\{1,\frac{4}{g}\}$.  Suppose $\e>0$ is sufficiently small such that \eqref{bootstrapC1_dyn:thrm}, \eqref{bootstrapC2_dyn:thrm}, and \eqref{bootstrapC3_dyn:thrm} hold. \hide
\Be \label{bootstrapC1_dyn}
\frac{1}{\beta^{3/2}} 
\big\{   \| \w_{\beta, 0 } F_{0} \|_{L^\infty (\O \times \R^3)}
+  \| 
e^{\beta|v|^2}
G  \|_{L^\infty (\gamma_-)}\big\}  \leq \e g 
,  
\Ee
\Be
\label{bootstrapC2_dyn} 
\frac{1}{\tilde{\beta}^3}	\| \w_{\tilde \beta, 0}   \nabla_{x,v} F_0  \|_{L^\infty (\O \times \R^3)} 
+
\frac{1}{\tilde{\beta}^{5/2}} 
\|  e^{\tilde{\beta} | v|^2}   \nabla_{x_\parallel,v} G   \|_{L^\infty (\gamma_-)}  	\leq   \e {g^{1/2}   }  
\Ee
\Be	\label{bootstrapC3_dyn} 
\begin{split}
&	\frac{1
}{\beta^{2}} 
\big\{   \| \w_{\beta, 0 } F_{0} \|_{L^\infty (\O \times \R^3)}
+  \| 
e^{\beta|v|^2}
G  \|_{L^\infty (\gamma_-)}\big\}  \\
& \times 
\log \bigg(
e+ 
\frac{1}{\tilde{\beta}^{3/2}} \| \w_{\tilde \beta, 0}   \nabla_{x,v} F_0  \|_{L^\infty (\O \times \R^3)}
+	\frac{1}{\tilde{\beta}}
\| e^{\tilde \beta |v|^2} \nabla_{x_\parallel, v}  G \|_{L^\infty (\gamma_-)}
\bigg)
\leq \e  g  
\end{split}
\Ee\unhide
Then we can construct $\Psi^\ell, f^{\ell+1}, \varrho^\ell, \mathcal{X}^{\ell+1}, \mathcal{V}^{\ell+1}$ solve \eqref{Poisson_fell}-\eqref{initial:fell} for all $\ell=0,1,2, \cdots$. Moreover, they satisfy \eqref{Bootstrap_ell} and \eqref{condition:DDphi:dyn}. Therefore all the results in Lemma \ref{lem:Linfty_dyn}, Lemma \ref{RE:dyn}, and Lemma \ref{lem:D3tphi_F} hold. 
\end{theorem}
\begin{proof}
The proof is a consequence of Lemma \ref{lem:Linfty_dyn}, Lemma \ref{RE:dyn}, and Lemma \ref{lem:D3tphi_F}. We ought to check the conditions \eqref{Bootstrap_ell} and \eqref{condition:DDphi:dyn} to iterate our construction of sequences in \eqref{f_1}-\eqref{initial:fell}. If \eqref{bootstrapC1_dyn:thrm} holds then using \eqref{est:phi_C1_dyn} we can verify the condition \eqref{Bootstrap_ell}. Moreover if \eqref{bootstrapC2_dyn:thrm}
holds then using \eqref{est:phi_C2_dyn}and \eqref{est:D.b} we can verify the condition \eqref{condition:DDphi:dyn}. \hide

{\small\Be
\begin{split}
\label{est:phi_C2_dyn}
&  \| \nabla_x^2  \phi_{F^{\ell+1}}(s) \|_{L^\infty (\O)} \\
&  \leq 
\frac{\mathfrak{C}_1}{\beta^{3/2}} 
\big\{   \| \w_{\beta, 0 } F_{0} \|_{L^\infty (\O \times \R^3)}
+  \| 
e^{\beta|v|^2}
G  \|_{L^\infty (\gamma_-)}\big\}  \\
& \times \bigg\{
\frac{1}{g \beta }  +
\log \bigg(
e+ 
\frac{1}{\tilde{\beta}^{3/2}} \| \w_{\tilde \beta, 0}   \nabla_{x,v} F_0  \|_{L^\infty (\O \times \R^3)}
+	\frac{1}{\tilde{\beta}}\Big(1+ \frac{1}{\tilde{\beta}^{1/2}}+ \frac{1}{ g \tilde{\beta}}\Big) 
\| e^{\tilde \beta |v|^2} \nabla_{x_\parallel, v}  G \|_{L^\infty (\gamma_-)}
\bigg)\bigg\}.
\end{split}	\Ee}
\begin{align}
&
e^{ \frac{\tilde \beta}{4}  g x_3 }	|\nabla_x \cdot b^{\ell+1}(s,x)| + \tilde{\beta}^{3/2} e^{x_3}| \p_{x_3} \p_t \phi_{F^{\ell+1}}(s,x)| 
+ \tilde{\beta}^{3/2} |\p_t \phi_{F^{\ell+1}}(s,x)|
\notag\\
&\lesssim
\bigg(
\frac{1}{\tilde{\beta}^2}	\| \w_{\tilde \beta, 0}   \nabla_{x,v} F_0  \|_{L^\infty (\O \times \R^3)} 
+
\frac{1}{\tilde{\beta}^{3/2}} \Big( 1+ 	  \frac{1}{\tilde{\beta}^{1/2}} + 	\frac{1}{g {\tilde\beta} } \Big)
\|  e^{\tilde{\beta} | v|^2}   \nabla_{x_\parallel,v} G   \|_{L^\infty (\gamma_-)} \bigg)	
.\label{est:D.b} 
\end{align}

\Be
\begin{split}
\frac{  \tilde \beta ^{1/2} }{g^{1/2} }  	\| \p_t \phi_{F^\ell} \|_{L^\infty ([0,t] \times \O)}  + \frac{1}{g^2  \tilde{\beta}^{1/2}} 	\| \p_t  \p_{x_3} \phi_{F^\ell} \|_{L^\infty ([0,t] \times \p \O)} 
+  \frac{1}{g   \tilde{\beta}^{1/2}} 	\|  \nabla_x^2 \phi_{F^\ell} \|_{L^\infty ([0,t] \times   \O)} 
\lesssim 1.
\end{split}
\Ee

\Be
\sup_{0 \leq \tau \leq t}	\| \nabla_x \Psi^\ell(\tau ) \|_{L^\infty(\O  )}  + \| \nabla_x \Phi   \|_{L^\infty (\O)}\leq   \frac{g }{2}.
\Ee 
\unhide
\end{proof}
\hide
\begin{proof}[\textbf{Proof of Proposition \ref{theo:RD}}]


We will use the same sequence in the proof of Proposition \ref{propo:DC}. We will derive a uniform-in-$\ell$ bound of the sequences $(F^{\ell+1} , \phi_{F^\ell})= (h + f^{\ell+1},  \Psi^\ell + \Phi) $ where $(f^{\ell+1},  \Psi^\ell)$ solves \eqref{eqtn:fell}-\eqref{Poisson_fell} and prove Theorem \ref{theo:RD} by passing a limit. We prove the theorem using Proposition \ref{RE:dyn}, Lemma \ref{lem:D3tphi_F}, Theorem \ref{lemma:G} (\eqref{GreenF} and \eqref{b0:4}, more precisely) and Lemma \ref{lem:rho_to_phi} (\eqref{est:nabla^2phi}, precisely). 


\medskip

\textit{Step 1.}  	Recall \eqref{est:e^b}:
\Be	\max_{\ell}e^{\frac{64
\beta}{g}
\| \Delta_0^{-1} (\nabla\cdot b^\ell) \|_{L^\infty ([0,\infty) \times \O)}^2
}   \leq 2 
\Ee

From \eqref{Uest:wfell}, we derive that $
| \varrho^\ell (t,x)| \lesssim \frac{1}{\beta^{3/2}} e^{- \frac{\beta g}{2} x_3 }
M
.$ Now applying Lemma \ref{lem:rho_to_phi} and \eqref{est:nabla_phi} to this bound, we derive that 
\Be\label{est:DPsi}
\begin{split}
\|	\nabla_x \Psi ^\ell(t)\|_{L^\infty (\O)} 
\lesssim \frac{ M}{g\beta^{5/2}}  
.
\end{split}
\Ee
Applying Lemma \ref{lem:rho_to_phi} to \eqref{Uest:rho}, we have 
\Be\label{est:DPhi}
\| \nabla_x \Phi \|_{L^\infty (\O)}   \lesssim \frac{ M}{g\beta^{5/2}}  .
\Ee
Due to our choice \eqref{choice_ML}, we verify \eqref{Bootstrap_ell}.

\medskip

\textit{Step 2.} 
Applying Lemma \ref{lem:rho_to_phi} and \eqref{est:nabla^2phi}, 
we derive that 
\Be
\| \nabla_x^2 \phi_{F^\ell}(t, \cdot ) \|_{L^\infty(\bar \O)} 	
\leq 	 
\|\rho +  \varrho^\ell (t ) \|_{L^\infty(\O)} \log \big(e+ \| \rho+ \varrho^\ell (t )  \|_{C^{0,\delta}(\O)}\big)
.\label{bound:D2phi}
\Ee
\hide

Using \eqref{Uest:wf:dyn}, we derive that 
\Be\begin{split}	
e^{\frac{\beta}{2} g x_3} |\varrho (t,x)| & \leq 
\| e^{\frac{\beta}{2 }|v|^2} e^{\frac{\beta}{2} g x_3} f(t) \|_{L^\infty (\bar \O \times \R^3)}
\int_{\R^3} e^{- \frac{\beta}{2} |v|^2} \dd v\\
& \leq  \frac{2^{\frac{3}{2}} \pi^{\frac{3}{2}}}{\beta^{\frac{3}{2}}}\max\big\{  \| w_\beta  h \|_{L^\infty (\O)}, 
\| \w_{\beta, 0 } F_{0} \|_{L^\infty (\O \times \R^3)}
+  \| e^{\beta |v|^2}
G  \|_{L^\infty  (\gamma_-)} \big\}.
\end{split}\Ee

\unhide
We can easily bound the $L^\infty$-norm term, using \eqref{Uest:rho} and \eqref{Uest:wf:dyn}, above by 
\Be  
\begin{split}\label{est:rho:infty}
\| \rho (\cdot ) + \varrho^\ell (t, \cdot ) \|_{L^\infty (\O)} &\leq 
\| \rho (\cdot ) \|_{L^\infty (\O)} 
+	\left\| \int_{\R^3} f^\ell(t,\cdot ,v) \dd v\right\|_{L^\infty(\O) }\\
&  \leq \frac{\pi^{3/2}}{\beta^{3/2}} \| e^{\beta |v|^2} G(t) \|_{L^\infty (\gamma_-)}  +  \frac{2^5 \pi^{\frac{3}{2}}}{\beta^{3/2}}
M
.\end{split}\Ee

For	a $C^{0,\delta}(\O)$-control of $\rho + \varrho^\ell$, we decompose the domain into $\T^2 \times (0,1)$ and $\T^2 \times [1,\infty)$. Using the Morrey's inequality, when $x_3   \in (0,1)$, we bound a $C^{0, \delta} (\T^2 \times (0,1))$-norm by a $W^{1,p} (\T^2 \times (0,1))$-norm:
\Be\begin{split}\notag
\| \rho(\cdot)+\varrho^\ell (t, \cdot ) \|_{C^{0, \delta} (\T^2 \times (0,1))}  
& \lesssim \| \rho(\cdot)+\varrho^\ell  (t, \cdot ) \|_{W^{1,p}(\T^2 \times (0,1)) } 
= \left\| \int_{\R^3} F^\ell (t,\cdot ,v) \dd v\right\|_{W^{1,p}(\T^2 \times (0,1)) }
\\
&\lesssim 	\| \rho (\cdot ) + \varrho ^\ell (t, \cdot ) \|_{L^\infty (\O)}
+ \left\| \int_{\R^3}  \nabla_x F^\ell (t,\cdot ,v) \dd v\right\|_{L^p(\T^2 \times (0,1)) }.
\end{split}\Ee
Here, $\| \rho (\cdot ) + \varrho^\ell (t, \cdot ) \|_{L^\infty (\O)}$ is already controlled in \eqref{est:rho:infty}.

For $\nabla_x F^\ell$, we use \eqref{est:F_x:dyn}: we derive that  
\Be
\label{est:rho:holder}
\begin{split}
&
\left\|
\int_{\R^3} \nabla_x F^\ell(t, \cdot, v) \dd v 
\right\|_{L^p(\T^2 \times (0,1))} 
\\
&\lesssim  \bigg(
e^{  \frac{64 \tilde \beta}{g} \| \Delta_0^{-1} (\nabla_x \cdot b^{\ell-1}(t) ) \|_{L^\infty(\O)}  ^2 }
e^{\frac{32 }{g^2 \tilde \beta}
(1+ \| \nabla_x^2 \phi_{F^{\ell-1}} (t)\|_{L^\infty(\O)})^2	
} 
\| \w_{\tilde \beta, 0}   \nabla_{x,v} F_0  \|_{L^\infty (\O \times \R^3)} 
\\
& \ \ \   \ \ \ 
+  \frac{1}{g {\tilde\beta}^{1/2}} \|  e^{\tilde{\beta} | v|^2}   \nabla_{x_\parallel,v} G   \|_{L^\infty (\gamma_-)}    \bigg)
\left\|	\int_{\R^3} e^{- \frac{\tilde \beta}{4} (|v|^2 + g x_3)} \dd v \right\|_{L^p (\T^2 \times (0,1))}
\\
&+ \delta_{i3}  
e^{\frac{2^5}{g^2 \tilde \beta }   
\sup_{\tau \in [t-\tB,t]}  \big(1+  \| \p_{x_3}^2   \phi_{F^{\ell-1}}(\tau)   \|_{L^\infty( {\O})}  
+ \frac{1}{g} \| \p_t \p_{x_3} \phi_{F^{\ell-1}} (\tau) \|_{L^\infty(\p\O)} 
\big)^2 
}
\|  e^{\tilde{\beta} | v|^2}   \nabla_{x_\parallel,v} G   \|_{L^\infty (\gamma_-)} \\
& \ \ \ \times  \| \nabla_x \phi_{F^{\ell-1}} \|_\infty 
\left\|  \int_{\R^3} \frac{e^{- \frac{\tilde \beta}{4}|v|^2 }}{ \alpha ^{\ell } (t,\cdot ,v)} \dd v   \right\|_{L^p (\T^2 \times (0,1))}
.
\end{split}
\Ee

Since \eqref{Bootstrap_ell} holds, $\alpha^\ell (t,x,v) \geq \sqrt{|v_3|^2 + |x_3|^2 + 
g x_3}$. Then  \Be
\begin{split}
&  \int_{\R^3} \frac{e^{- \frac{\tilde \beta}{4}|v|^2 }}{ \alpha^\ell (t,x,v)} \dd v  
\leq \int_{\R^3} \frac{e^{- \frac{\tilde \beta}{4}|v|^2 }}{  \sqrt{|v_3|^2 + |x_3|^2 + 
g x_3}} \dd v\\
&\lesssim  \left(\int_{\R^2} e^{- \frac{\tilde \beta}{4}|v_\parallel|^2 }
\dd v_\parallel
\right)
\bigg\{
\int_{1}^\infty \frac{e^{- \frac{\tilde{\beta}}{4} |v_3|^2} }{ \sqrt{|v_3|^2 + |x_3|^2 + 
g x_3}} \dd v_3 
+ 	\int^{1}_0 \frac{1}{ \sqrt{|v_3|^2 + |x_3|^2 + 
g x_3}} \dd v_3 
\bigg\}\\
& \lesssim  \frac{1}{\tilde \beta }  \bigg\{ \frac{1}{\tilde \beta ^{1/2}} + \big| \ln ( |x_3|^2 + 	g x_3 )  \big|	\bigg\}.\notag
\end{split}
\Ee
Hence we derive that, for any $1 \leq p<\infty$, 
\Be\label{int:1/a:dyn}
\left\|  \int_{\R^3} \frac{e^{- \frac{\tilde \beta}{4}|v|^2 }}{ \alpha^\ell (t,\cdot ,v)} \dd v   \right\|_{L^p (\T^2 \times (0,1))} \lesssim \frac{1}{\tilde{\beta}} \left(1+ \frac{1}{\tilde{\beta} ^{1/2}}  \right).
\Ee

To control a $C^{0,\delta}(\O)$-norm of $\rho + \varrho^\ell$ in $\T^2 \times [1,\infty)$ is much simpler as \eqref{est:F_x:dyn} is now bounded pointwisely. As $\alpha^\ell(t,x,v) \geq 1$ when $x_3 \geq 1$, we derive that 

\Be
\label{est:rho:holder2} 
\begin{split} 
&	 [	 \rho (\cdot) + \varrho^\ell (t, \cdot) ]_{C^{0, \delta} (\T^2 \times [1,\infty))}\\
&	 \leq  \|	 \nabla_x  \rho (\cdot) + \nabla_x  \varrho^\ell (t, \cdot) \|_{L^\infty(\T^2 \times [1,\infty))}\\
&
\leq \left\| \int_{\R^3} e^{- \frac{\tilde \beta}{4} (|v|^2 + g x_3)}   \dd v \right\|_{L^\infty (\T^2 \times  [1,\infty))}
\| e^{ \frac{\tilde \beta}{4} (|v|^2 + g x_3)}  \nabla_x F^\ell(t)\|_{L^\infty (\T^2 \times  [1,\infty))}
\\
&\lesssim \frac{1}{\tilde \beta^{3/2}} e^{  \frac{64 \tilde \beta}{g} \| \Delta_0^{-1} (\nabla_x \cdot b^{\ell-1}(t) ) \|_{L^\infty(\O)}  ^2 }
e^{\frac{32 }{g^2 \tilde \beta}
(1+ \| \nabla_x^2 \phi_{F^{\ell-1}} (t)\|_{L^\infty(\O)})^2	
} 
\| \w_{\tilde \beta, 0}   \nabla_{x,v} F_0  \|_{L^\infty (\O \times \R^3)} 
\\
&+ \frac{1}{\tilde \beta^{3/2}}  \left(   \| \nabla_x \phi_{F^{\ell-1}} \|_\infty   
e^{\frac{2^5}{g^2 \tilde \beta }   
\sup_{\tau \in [t-\tB^\ell,t]}  \big(1+  \| \p_{x_3}^2   \phi_{F^{\ell-1}}(\tau)   \|_{L^\infty( {\O})}  
+ \frac{1}{g} \| \p_t \p_{x_3}   \phi_{F^{\ell-1}}  (\tau) \|_{L^\infty(\p\O)} 
\big)^2 
} +  \frac{1}{g {\tilde\beta}^{1/2}}
\right) \\
& \ \  \times \|  e^{\tilde{\beta} | v|^2}   \nabla_{x_\parallel,v} G   \|_{L^\infty (\gamma_-)} .
\end{split}
\Ee 

In conclusion, applying \eqref{est:rho:infty}-\eqref{est:rho:holder2} to \eqref{bound:D2phi}, we conclude
\Be\begin{split}\label{est:D2phi_F}
&\| \nabla_x^2 \phi_{F^{\ell}}(t) \|_{L^\infty (\O)}\\
& \lesssim  \left(\frac{\pi^{3/2}}{\beta^{3/2}} \| e^{\beta |v|^2} G(t) \|_{L^\infty (\gamma_-)}  +  \frac{2^5 \pi^{\frac{3}{2}}}{\beta^{3/2}}
M\right)\\
&   \times \bigg\{ \frac{ \tilde \beta}{g} \| \Delta_0^{-1} (\nabla_x \cdot b^{\ell-1}(t) ) \|_{L^\infty(\O)}  ^2 
+ \frac{1 }{g^2 \tilde \beta}
(1+ \| \nabla_x^2 \phi_{F^{\ell-1}} (t)\|_{L^\infty(\O)})^2	
\\
&  \ \ \ \   + \ln \Big(		\| \w_{\tilde \beta, 0}   \nabla_{x,v} F_0  \|_{L^\infty (\O \times \R^3)} /\tilde{\beta}^{ 3/2} 
\Big)
+ \ln 
\Big(	
\|  e^{\tilde{\beta} | v|^2}   \nabla_{x_\parallel,v} G   \|_{L^\infty (\gamma_-)} /(g {\tilde\beta}^{2})
\Big)
\\
&  \ \ \ \  +
\ln \Big(
\| \nabla_x \phi_{F^{\ell-1}} \|_\infty / \tilde{\beta}^{3/2}
\Big) 
\\
&  \ \ \ \ + \frac{1}{g^2 \tilde \beta }   
\sup_{s \in [0,t]}  \big(1+  \| \p_{x_3}^2   \phi_{F^{\ell-1}}(s)   \|_{L^\infty( {\O})}  
+ \frac{1}{g} \| \p_t \p_{x_3} \phi_{F^{\ell-1}} (s) \|_{L^\infty(\p\O)} 
\big)^2  	\bigg\}.
\end{split}\Ee

\hide
\Be\begin{split}\notag
&	\| \rho(\cdot)+\varrho (t, \cdot ) \|_{C^{0, \delta} (\O)}  \\
&\lesssim  \frac{\pi^{3/2}}{\beta^{3/2}} \| e^{\beta |v|^2} G(t) \|_{L^\infty (\gamma_-)}  +  \frac{2^5 \pi^{\frac{3}{2}}}{\beta^{3/2}}
M
\\
&+	 \bigg(
e^{  \frac{64 \tilde \beta}{g} \| \Delta_0^{-1} (\nabla_x \cdot b(t) ) \|_{L^\infty(\O)}  ^2 }
e^{\frac{32 }{g^2 \tilde \beta}
(1+ \| \nabla_x^2 \phi_F (t)\|_{L^\infty(\O)})^2	
} 
\| \w_{\tilde \beta, 0}   \nabla_{x,v} F_0  \|_{L^\infty (\O \times \R^3)} 
\\
& \ \ \   \ \ \ 
+  \frac{1}{g {\tilde\beta}^{1/2}} \|  e^{\tilde{\beta} | v|^2}   \nabla_{x_\parallel,v} G   \|_{L^\infty (\gamma_-)}    \bigg)
(\tilde{\beta})^{-3/2}
\\
&+  
e^{\frac{2^5}{g^2 \tilde \beta }   
\sup_{\tau \in [t-\tB,t]}  \big(1+  \| \p_{x_3}^2   \phi_F(\tau)   \|_{L^\infty( {\O})}  
+ \frac{1}{g} \| \p_t \p_{x_3} \phi_F (\tau) \|_{L^\infty(\p\O)} 
\big)^2 
}
\|  e^{\tilde{\beta} | v|^2}   \nabla_{x_\parallel,v} G   \|_{L^\infty (\gamma_-)} \\
& \ \ \ \times  \| \nabla_x \phi_F \|_\infty \frac{1}{\tilde{\beta}} \left(1+ \frac{1}{\tilde{\beta} ^{1/2}}  \right)\\
&+	\frac{1}{\tilde \beta^{3/2}} e^{  \frac{64 \tilde \beta}{g} \| \Delta_0^{-1} (\nabla_x \cdot b(t) ) \|_{L^\infty(\O)}  ^2 }
e^{\frac{32 }{g^2 \tilde \beta}
(1+ \| \nabla_x^2 \phi_F (t)\|_{L^\infty(\O)})^2	
} 
\| \w_{\tilde \beta, 0}   \nabla_{x,v} F_0  \|_{L^\infty (\O \times \R^3)} 
\\
&+ \frac{1}{\tilde \beta^{3/2}}  \left(   \| \nabla_x \phi_F \|_\infty   
e^{\frac{2^5}{g^2 \tilde \beta }   
\sup_{\tau \in [t-\tB,t]}  \big(1+  \| \p_{x_3}^2   \phi_F(\tau)   \|_{L^\infty( {\O})}  
+ \frac{1}{g} \| \p_t \p_{x_3} \phi_F (\tau) \|_{L^\infty(\p\O)} 
\big)^2 
}+  \frac{1}{g {\tilde\beta}^{1/2}}
\right) \\
& \ \  \times \|  e^{\tilde{\beta} | v|^2}   \nabla_{x_\parallel,v} G   \|_{L^\infty (\gamma_-)} .
\end{split}\Ee
\unhide

\medskip

\textit{Step 3.}  Now we prove a uniform-in-$\ell$ bounds for $\| \p_t \p_{x_3} \phi_{F^\ell} \|_{L^\infty ([0,T] \times \p\O)}$ and $\| \nabla_x^2 \phi_{F^\ell} \|_{L^\infty ([0,T] \times \O)}$. Set the induction hypothesis 
\Be\label{IH:pt3phi+D2phi}
\begin{split}
\max_{   j\leq \ell-1}	\| \p_t \p_{x_3} \phi_{F^j} \|_{L^\infty ([0,T] \times \p\O)}&\leq 2 L \tilde\beta^{-3/2},  \\
\max_{   j\leq \ell-1}	  \| \nabla_x^2 \phi_{F^j} \|_{L^\infty ([0,T] \times \O)} & \leq 2 M \beta^{-3/2}.
\end{split}\Ee
From this hypothesis and using Lemma \ref{lem:D3tphi_F} (\eqref{est:D3tphi_F}, precisely), \eqref{est:e^b}, and \eqref{est:D2phi_F} we try to derive 
\Be\label{est:pt3phi+D2phi}
\begin{split}
\| \p_t \p_{x_3} \phi_{F^\ell} \|_{L^\infty ([0,T] \times \p\O)}&\leq 2 L \tilde\beta^{-3/2},  \\
\| \nabla_x^2 \phi_{F^\ell} \|_{L^\infty ([0,T] \times \O)} & \leq 2 M \beta^{-3/2}.
\end{split}\Ee

Applying \eqref{IH:pt3phi+D2phi}, \eqref{est:e^b}, \eqref{est:DPhi}, \eqref{est:DPsi} to \eqref{est:D2phi_F}, we get that 
\Be\begin{split}\notag
\| \nabla_x^2 \phi_{F^{\ell}}(t) \|_{L^\infty (\O)} 
& \lesssim    \frac{1}{\beta^{3/2}}
M  \bigg\{ 
\frac{ \tilde \beta}{\beta} 
+\frac{1}{g^2 \tilde \beta }   
\big(1+  2M \beta^{-3/2}
+  2 L  g^{-1}\tilde \beta^{-3/2}
\big)^2  
\\
&  \ \ \ \  \ \ \ \ \ \ \  \ \ \   + \ln \Big(	L  \tilde{\beta}^{ -3/2} 
\Big)
+ \ln 
\Big(	
L g^{-1} {\tilde\beta}^{-2}
\Big) +
\ln \Big( M g^{-1}  \beta^{-5/2} \tilde{\beta}^{-3/2}
\Big)  	\bigg\}.
\end{split}\Ee
Applying to \eqref{est:D3tphi_F}
\Be
\begin{split}
\notag
\| \p_{x_3} \p_t \phi_{F^\ell}(t)\|_{L^\infty (\p\O)}  & \lesssim
\left(1+ \frac{M}{g \beta^{5/2}}e^{\frac{2^5}{g^2 \tilde \beta }   
\big(1+ 2 M \beta^{-3/2}
+ \frac{2 L \tilde \beta^{-3/2}}{g}
\big)^2 
}
\right) 
\frac{L }{\tilde \beta^{3/2}}
\\
&+
\frac{1}{\tilde{\beta}^3 g }L \left\{
\frac{1}{g {\tilde\beta}^{1/2}}  +   \frac{M}{g \beta^{5/2}}e^{\frac{2^5}{g^2 \tilde \beta }   
\big(1+ 2 M \beta^{-3/2}
+ \frac{2 L \tilde \beta^{-3/2}}{g}
\big)^2 
} 
+   
e^{\frac{32 }{g^2 \tilde \beta}
(1+
2 M \beta^{-3/2}
)^2	
}    \right\}
.
\end{split}
\Ee
\hide

where
\Be
\begin{split}
C = 	\left(1+ \frac{M}{g \beta^{5/2}}e^{\frac{2^5}{g^2 \tilde \beta }   
\big(1+ 2 M \beta^{-3/2}
+ \frac{2 L \tilde \beta^{-3/2}}{g}
\big)^2 
}
\right) 
\frac{L }{\tilde \beta^{3/2}},
\end{split}\Ee\Be\begin{split}
D&= 	\left(1+  \frac{M}{g \beta^{5/2}}	e^{\frac{2^5}{g^2 \tilde \beta }   
\big(1+  2 M \beta^{-3/2}
+ \frac{1}{g} 2 L \tilde{\beta}^{-3/2}
\big)^2 
}
\right) 
\frac{L }{\tilde \beta^{3/2}}
\Big(1 + \tilde{\beta}^{-1/2}\Big)\\
&	+ \bigg( 
e^{\frac{32 }{g^2 \tilde \beta}
(1+
2 M \beta^{-3/2}
)^2	
} L  +
\frac{L}{g {\tilde\beta}^{1/2}}  \bigg)	
\frac{1}{\tilde{\beta}^2}.
\end{split}
\Ee\unhide

\medskip
\textit{Step 4.} We can check the assumption \eqref{condition:DDphi:dyn} of Proposition \ref{RE:dyn} using the uniform bounds \eqref{est:pt3phi+D2phi} and our choice \eqref{choice_ML}. Applying Proposition \ref{RE:dyn} (\eqref{est:F_v:dyn}-\eqref{est:F_x:dyn}) and using \eqref{est:pt3phi+D2phi}, we derive that 

\Be
\begin{split}\notag\label{est_final:F_v:dyn}
&\| e^{\frac{\tilde \beta}{4} (|v|^2 + g x_3)} \nabla_v F^{\ell+1}(t)\|_{L^\infty(\O \times \R^3)} \\
& \lesssim  
e^{\frac{32 }{g^2 \tilde \beta}
(1+  2 M \beta^{-3/2})^2	
}  \| \w_{\tilde \beta, 0}   \nabla_{x,v} F_0  \|_{L^\infty (\O \times \R^3)} 
\\&
\ \ +\Big(1+ \frac{1}{g\tilde \beta ^{1/2}} +  
\frac{M}{g^2 \beta^{5/2}}  
\Big)  \Big(1+ \frac{M }{g^2 \tilde\beta  \beta^{ 3/2} }
\Big)  \|  e^{\tilde{\beta} | v|^2}   \nabla_{x_\parallel,v} G   \|_{L^\infty (\gamma_-)},
\end{split}\Ee 
\Be\notag
\label{est_final:F_x:dyn} 
\begin{split}
& e^{\frac{\tilde \beta}{4} (|v|^2 + g x_3)} |\nabla_x F^{\ell+1}(t,x,v)| \\
&\lesssim  
e^{\frac{32 }{g^2 \tilde \beta}
(1+ 2M \beta^{-3/2})^2	
} 
\| \w_{\tilde \beta, 0}   \nabla_{x,v} F_0  \|_{L^\infty (\O \times \R^3)} 
\\
&+\left( 
e^{\frac{2^5}{g^2 \tilde \beta }   
\big(1
+  
2M \beta^{-3/2}
+2 L g^{-1} \tilde \beta ^{-3/2}
\big)^2 
}
\frac{g^{-1} \beta^{-5/2}M
}{\alpha^{\ell+1}(t,x,v)}  +  \frac{1}{g {\tilde\beta}^{1/2}}
\right)  \|  e^{\tilde{\beta} | v|^2}   \nabla_{x_\parallel,v} G   \|_{L^\infty (\gamma_-)} ,
\end{split}
\Ee 
Finally we use our choice of $M$ and $L$ in \eqref{choice_ML} to conclude that, for all $t\geq 0$ 
\Be
\begin{split}\label{est_ell:F_v:dyn}
\| e^{\frac{\tilde \beta}{4} (|v|^2 + g x_3)} \nabla_v F^{\ell+1}(t)\|_{L^\infty(\O \times \R^3)}  
\lesssim  
\| \w_{\tilde \beta, 0}   \nabla_{x,v} F_0  \|_{L^\infty (\O \times \R^3)} 
+  \|  e^{\tilde{\beta} | v|^2}   \nabla_{x_\parallel,v} G   \|_{L^\infty (\gamma_-)},
\end{split}\Ee 
\Be
\label{est_ell:F_x:dyn} 
\begin{split}
e^{\frac{\tilde \beta}{4} (|v|^2 + g x_3)} |\nabla_x F^{\ell+1}(t,x,v)|  
& \lesssim  
\| \w_{\tilde \beta, 0}   \nabla_{x,v} F_0  \|_{L^\infty (\O \times \R^3)} \\
&
+ \Big( 1+\frac{1}{\alpha^{\ell+1} (t,x,v)} \Big) \|  e^{\tilde{\beta} | v|^2}   \nabla_{x_\parallel,v} G   \|_{L^\infty (\gamma_-)} .
\end{split}
\Ee 
Then we pass a limit (see the proof of Theorem \ref{theo:CS} for the detail) and conclude \eqref{est_final:F_v:dyn}-\eqref{est_final:F_v:dyn}. Since we have used the same sequence of Proposition \ref{propo:DC}, the limit equals the solution in Theorem \ref{theo:CD}.\end{proof}\unhide

\hide

\Be
\begin{split}\label{est:1/w_h}
\frac{\w  _\beta  (s^\prime,\Zz  (s^\prime;t,x,v) )}{\w   _\beta (s ,\Zz  (s ;t,x,v))}
&\leq 
e^{ \frac{8\beta}{g}  	\| \Delta_0^{-1} (\nabla_x \cdot b ) \|_{L^\infty_{t,x}}   \sqrt{|v_3|^2 + g x_3}
},
\\
\frac{1}{\w  _\beta  (s,\Zz   (s;t,x,v))}  
&	\leq
e^{  \frac{64 \beta}{g} 	\| \Delta_0^{-1} (\nabla_x \cdot b ) \|_{L^\infty_{t,x}}  ^2 }
e^{-\frac{\beta}{2}|v|^2}  
e^{-\frac{\beta}{2} g x_3}
,
\end{split}\Ee

Using \eqref{est:xb_v} and \eqref{est:tb^h}, we have  

where we have used a lower bound \eqref{lower:vb}.

Similarly, using \eqref{est:vb_v}, \eqref{est:tb^h}, and \eqref{lower:vb}, we derive that 
\Be
\begin{split}
\frac{	|\p_{v_i} \vB (t,x,v)| }{
e^{\tilde \beta  |\vB(t,x,v)|^2}	
}
& \leq \Big(1+ \frac{4}{g} \| \nabla_x \phi_F \|_\infty\Big)\Big(1+ \frac{32}{g^2 \tilde\beta }  \| \nabla_ x^2 \phi_F  \|_\infty\Big) e^{- \frac{\tilde\beta}{2}|v|^2} e^{- \frac{\tilde\beta g}{2} x_3}.\label{est1:vb_v:dyn}
\end{split}
\Ee

Finally we complete the prove of \eqref{est:hk_v} using \eqref{est1:hk_v}, \eqref{est1:xb_v}, and \eqref{est1:vb_v} altogether.

\medskip

{\it{Step 4. }}From \eqref{form:nabla_h}, 
\Be\begin{split}\label{est1:hk_x}
|\partial_{x_i} h  (x,v)|  \leq \frac{ |\partial_{x_i} \xb (x,v) |  }{
e^{\tilde \beta |\vb(x,v)|^2}  
}  \| e^{\tilde \beta |v|^2}  \nabla_{x_\parallel } G   \|_{L^\infty (\gamma_-)}
+\frac{   | \partial_{x_i} \vb (x,v)|}{
e^{\tilde \beta |\vb(x,v)|^2}  
}  \| e^{\tilde \beta |v|^2}  \nabla_{ v} G   \|_{L^\infty (\gamma_-)}
.
\end{split}\Ee

Using \eqref{est1:xb_x/w}, \eqref{est:xb_x}, and \eqref{est:tb^h}, we derive that 
\Be\label{est1:xb_x}
\begin{split}
\frac{|\p_{x_i} \xb  (x,v)|}{e^{\tilde \beta |\vb(x,v)|^2}  } & \leq  \frac{|\vb (x,v)|}{|v_{\mathbf{b},3} (x,v)|}e^{-\tilde \beta |\vb |^2} \delta_{i3}    + \Big(
1+ \frac{8 \| \nabla_x^2 \Phi \|_\infty }{g^2} |\vb | |v_{\b,3} | 
\Big)e^{- \tilde\beta |\vb |^2}	\\
& \ \ \ \   \ \ \ \  \ \ \ \    \ \ \ \ \    \ \ \ \  \ \ \ \ \  \ \ \ \ \times 
\min \Big \{
e^{ \frac{8}{g^2} \| \nabla_x^2 \Phi \|_\infty|v_{\b, 3} |^2 }  , 
e^{\frac{4}{g}(1+ \| \nabla_x^2 \Phi  \|_\infty) |v_{\b, 3} | }
\Big \} \\
& \leq 
\frac{|\vb(x,v)|}{\alpha  (x,v) }
e^{  \frac{4}{g} (1+ \| \nabla_x ^2 \Phi  \|_\infty ) |v_{\b, 3} |}
e^{\big(  - \tilde \beta + \frac{4}{g^2} \| \nabla_x^2 \Phi  \|_\infty
(1+ \frac{2}{g} \| \nabla_x \Phi  \|_\infty)
\big)	|\vb |^2	
}
\\
& \ \ +\frac{16}{g {\tilde\beta}^{1/2}}
\Big(1 + \frac{8}{g^2\tilde \beta } \| \nabla_x^2 \Phi  \|_\infty  \Big) e^{- \frac{\tilde\beta}{2 }|\vb |^2 }\\
& \leq 
\left(
\frac{\delta_{i3}}{\tilde{\beta}^{1/2}} \frac{1}{\alpha(x,v)}   +
\frac{16}{g {\tilde\beta}^{1/2}}
\Big(1 + \frac{8}{g^2 \tilde\beta } \| \nabla_x^2 \Phi  \|_\infty  \Big) \right)
e^{- \frac{\tilde\beta}{2}|v|^2} e^{- \frac{ \tilde\beta g}{2} x_3}
\\
& \leq 
\left(
\frac{\delta_{i3}}{\tilde{\beta}^{1/2}} \frac{1}{\alpha(x,v)}   +
\frac{32}{g {\tilde\beta}^{1/2}}
\right)
e^{- \frac{\tilde\beta}{2}|v|^2} e^{- \frac{ \tilde\beta g}{2} x_3}
,
\end{split}
\Ee
where we have used $ \frac{4}{g} (1+ \| \nabla_x ^2 \Phi  \|_\infty ) |v_{\b, 3} | + 
\big(  - \tilde \beta + \frac{4}{g^2} \| \nabla_x^2 \Phi  \|_\infty
(1+ \frac{2}{g} \| \nabla_x \Phi  \|_\infty)
\big)	|\vb |^2 \leq \frac{\tilde{\beta}}{2} |\vb|^2$.

Similarly, using \eqref{est1:xb_x/w}, \eqref{est:vb_x}, and \eqref{est:tb^h}, we derive that 
\Be\label{est1:vb_x}
\begin{split}
\frac{	|\p_{x_i} \vb (x,v)| }{
e^{\tilde \beta |\vb(x,v)|^2}	
}& \leq  \frac{\|  \nabla_x \Phi  \|_\infty}{|v_{\mathbf{b}, 3} (x,v)|}   e^{- \tilde\beta |\vb |^2} \delta_{i3}
\\
& \ + \Big(1+ \frac{4}{g} \| \nabla_x \Phi \|_\infty\Big) \frac{4}{g}  \| \nabla_ x^2 \Phi  \|_\infty  | v_{\b,3}  (x,v)|
e^{- \tilde\beta |\vb |^2}
\\
& \ \ \ \ \times 
\min \Big\{
e^{ \frac{8}{g^2}  \| \nabla_x^2 \Phi \|_\infty |v_{\b, 3}  | ^2}, e^{\frac{4}{g}  (1+ \| \nabla_x^2 \Phi \|_\infty)  |v_{\b, 3}  |}
\Big\}\\
& \leq \left( 
\frac{ \| \nabla_x \Phi \|_\infty }{\alpha(x,v)} \delta_{i3} +  \Big(1+ \frac{4}{g} \| \nabla_x \Phi \|_\infty\Big)
\right) e^{- \frac{\tilde\beta}{2}|v|^2} e^{- \frac{\tilde\beta g}{2} x_3}\\
& \leq \left( 
\frac{ \| \nabla_x \Phi \|_\infty }{\alpha(x,v)} \delta_{i3} + 3
\right) e^{- \frac{\tilde\beta}{2}|v|^2} e^{- \frac{\tilde\beta g}{2} x_3}.
\end{split}
\Ee\unhide

\subsection{Stability of the Sequence} \label{Sec:SU}
The following lemma is useful to prove that i) the sequence in Theorem \ref{theo:RD} is Cauchy; and ii) the solution (as a limit of the sequence) is unique. \hide

\begin{lemma}\label{lem:stability_seq}For given $\bar{h}_i (x,v)$ such that $\bar{\rho}_i := \int  \bar{h}_i  \dd v \in C^{0, \delta}(\O)$ for some $\delta>0$, suppose $ \Phi_i \in C^1(\bar \O) \times C^2 (\O)$ solves
\Be\notag
\Delta \Phi_i = \eta \bar{\rho}_i \  	  \text{in $\O \times \R^3$,} \ \  \ \   \Phi_i =0 \   \text{on $\p\O$.}
\Ee
Now we consider $h_i(x,v)$ solving, in the sense of \eqref{Lform:h},
\begin{align}
v\cdot \nabla_x h_i - \nabla_x (\Phi_i + g x_3 ) \cdot \nabla_v h_i =0
\   \text{in $\O \times \R^3$,} \  \   
h_i = G  \   \text{on $\gamma_-$}.
\end{align}
Suppose the following two condition hold for $g, \bar \beta, \e_0>0$
\Be
|  \Phi_1  (x)| \leq \frac{g}{2} x_3,\label{Uest:DPhi_2}
\Ee 
\vspace{-15pt}
\Be \label{condition_unique}
\frac{ 2^{3/2} \pi^{3/2} \mathfrak{C}
}{g  \bar{\beta}^{2} } 
\left\{ 1+ \frac{4}{\bar\beta g}	\right\} 
\| w_{\bar\beta} \nabla_v h_2 \|_{L^\infty (\O \times \R^3)}  \leq \frac{\e_0}{2}, 
\Ee
where $\mathfrak{C}$ had appeared in \eqref{est:phi_C1}.

Then for a small number $\e_0>1$, the following stability holds 
\Be\label{seq_stable}
\|  e^{  \frac{\bar{\beta }}{2} \big( |v|^2+g x_3\big)} (h_1(x,v) - h_2(x,v)) \|_{L^\infty (\O \times \R^3)}
\leq \frac{1}{2} \|  e^{  \frac{\bar{\beta }}{2}   \big( |v|^2+g x_3\big)} ( \bar{h}_1(x,v) - \bar{h}_2(x,v)) \|_{L^\infty (\O \times \R^3)}.
\Ee

\end{lemma}\unhide

\begin{lemma}
\label{theo:UD}
Suppose $\bar{F}_i (t,x,v)$ is defined in $\R_+ \times \bar{\O} \times \R^3$ and satisfies $\int_{\R^3} \bar{F}_i (t,x,v)  \dd v \in C^{0, \delta} (\O)$ for some $\delta>0$ and any $t \in \R_+$. Suppose $\phi_{\bar{F}_i} (t, \cdot )\in C^{1}(\bar\O) \cap C^2(\O)$ for any $t \in \R_+$ and solves 
\Be\notag
\Delta \phi_{\bar{F}_i}  = \eta \int_{\R^3} \bar{F}_i  \dd v ,\ \ \phi_{\bar{F}_i}|_{\p\O} = 0. 
\Ee
Now we consider $F_i(t,x,v)$ solving that, in the sense of Definition \ref{def:mild},  
\Be \label{eqtn:Fbar}
\p_t F_i + v\cdot \nabla_x F_i - \nabla_x (\phi_{\bar{F}_i} + g x_3) \cdot \nabla_v F_i = 0 , \  \ 
F_i|_{t=0} = F_0  , \  \ 
F _i |_{\gamma_-} =  G.
\Ee
Suppose the following condition hold for $g, \bar{\beta} >0$ 
\begin{align}
| \phi_{\bar{F}_1} (t,x) | \leq \frac{g}{2} x_3, \label{Uest:Dphi_dyn}
\\
\| e^{\frac{\tilde{\beta}}{4} (|v|^2 + g x_3)} \nabla_v F_2 \|_{L^\infty}<\infty. \label{stab_con2}
\end{align}\hide
\Be 

\Be
\p_t \int \int \bar{F}_i + \nabla_x\cdot 
\Ee\unhide
Then there exists $C= \| e^{\frac{\tilde{\beta}}{4} (|v|^2 + g x_3)} \nabla_v F_2 \|_{L^\infty}  \left\{ 1+ \frac{16}{\tilde \beta  g}	\right\} \frac{\mathfrak{C}  (8\pi)^{3/2}     }{(\tilde \beta )^{3/2}} \exp{\left(
\frac{8}{g} (\frac{\tilde \beta}{8} + (C^\prime)^2\frac{\tilde{\beta}}{g})\| \p_t \phi_{\bar F_1} \|_{L^\infty_{t,x}}^2
\right)}>0$ such that for all $t \in \R_+$, we have 
\Be\label{est:F1-2}
\| e^{  \frac{\tilde{\beta}}{8}   (|v|^2 + g x_3 )} (F_1 (t )- F_2 (t )) \|_{L^\infty (\bar\O \times \R^3)} \leq C \int^t_0
\| e^{  \frac{\tilde{\beta}}{8}  (|v|^2 + g x_3 )} (\bar{F}_1 (s )-\bar{F}_2 (s )) \|_{L^\infty (\bar \O \times \R^3)}
\dd s .
\Ee\hide

\eqref{VP_F}, \eqref{Poisson_F}, \eqref{bdry:F}, and \eqref{Dbc:F}.

As long as both solutions exist, we have that, for $1<p<\infty$, 
\Be\begin{split}\label{est:F1-2}
&	\| \w_1 (F_1-F_2) (t) \|_{L^p (\O \times \R^3)}^p  \leq
\| \w_1 (F_1-F_2) (0) \|_{L^p (\O \times \R^3)}^p \\
& \ \   \ \ \ \ \ \times   e^{ \big(2 \beta  \sup_{s \in [0,t]} \| \Delta_0^{-1} (\nabla_x \cdot b_1(s)) \|_{L^\infty(\O)}
+   \sup_{s \in [0,t]}\big\| \| \w_1 \nabla_v F_2 (s) \|_{L^p(\R^3)} \big\|_{L^3(\O)}
\big)t}.
\end{split}\Ee
Here $b_1$ is a flux associated to $\bar b_1 (t,x) = \int_{\R^3} v(\bar F_1(t,x,v) - h(x,v)) \dd v $ as in \eqref{def:flux}.\unhide
\end{lemma}

\begin{proof}
Set $\bar{\beta}= \frac{\tilde{\beta}}{8}$ and $\w_{\bar{\beta}, 1} (t,x,v) = e^{\bar\beta \left(
|v|^2 + 2 \phi_{\bar F_1} (t,x) + 2 g x_3
\right)} $. Note that the difference of solutions solves
\Be\label{eqtn:F1-F2}
\begin{split}
& \big[
\p_t + v\cdot \nabla_x - \nabla_x (\phi_{\bar F_1} + g x_3) \cdot \nabla_v 
\big] (\w_{\bar{\beta}, 1} (F_1-F_2))\\
& =  2\bar\beta 
\p_t \phi_{\bar{F}_1} 
\w_{\bar{\beta}, 1} (F_1-F_2)
+ \nabla_x (\phi_{\bar F_1} - \phi_{\bar F_2}) \cdot \w_{\bar{\beta}, 1} \nabla_v F_2,\\
& \ \ \ \  \ \ \ \  \ \ \ \ 	\w_{\bar{\beta}, 1} (F_1-F_2)|_{t=0} = 0 , \ \  \w_{\bar{\beta}, 1} (F_1-F_2)|_{\gamma_-} =0.
\end{split}
\Ee
\hide	\Be\label{eqtn:F1-F2}
\begin{split}
&	\big[
\p_t + v\cdot \nabla_x - \nabla_x (\phi_{\bar F_1} + g x_3) \cdot \nabla_v 
\big]  (F_1-F_2) =   
+ \nabla_x (\phi_{\bar F_1} - \phi_{\bar F_2}) \cdot  \nabla_v F_2,\\
&	 (F_1-F_2)|_{t=0} = 0 , \ \  (F_1-F_2)|_{\gamma_-} =0.
\end{split}
\Ee\unhide

From \eqref{Uest:Dphi_dyn}, we have that
$\w_{\bar{\beta}, 1} (s,y,u)  \geq  e^{\bar\beta \left(
|u|^2 +   g y_3
\right)}
$ and  $e^{\frac{\tilde{\beta}}{4} (|u|^2 + g y_3)} \geq e^{\frac{\tilde{\beta}}{4} (|u|^2 + 2 \phi_{\bar{F}_1} (s,y)  +2 g y_3)} = \w_{ \tilde{\beta}/4, 1 } (s, y, u)$, and therefore
\Be
\begin{split}\notag
\w_{\bar{\beta}, 1} |\nabla_v F_2 |(s, y, u)  \leq \frac{\w_{\bar{\beta}, 1} }{
\w_{ \tilde{\beta}/4, 1 }  
}  |e^{\frac{\tilde{\beta}}{4} (|u|^2 + g y_3)}\nabla_v F_2 (s, y,u  )| 
= \frac{ e^{\frac{\tilde{\beta}}{4} (|u|^2 + g y_3)}|\nabla_v F_2(s, y,u  )| }{
\w_{  {\tilde{\beta} } / {8}, 1 }  (s, y,u  )
}  .
\end{split}\Ee

Along the characteristics $\Zz_1 = (\mathcal{X}_1, \mathcal{V}_1)$ associated with a field $- \nabla_x (\phi_{\bar{F}_1} + g x_3)$, we have a form 
\Be\label{Lag:diff}
\begin{split}
\w_{\bar{\beta}, 1} (F_1-F_2)(t,z)  
= \int^t_{\max\{0, t-t_{\mathbf{B}, 1} (t,z)\}}
&\frac{e^{ \int^t_s 2 \bar\beta \p_t \phi_{\bar{F}_1 } (\tau, 
\Zz_1(\tau;t,z) 
) \dd \tau } }{\w_{\tilde{\beta}/8,1} 
(s, \Zz_1 (s;t,z))
}\nabla_x (\phi_{\bar F_1} - \phi_{\bar F_2})  (s, \Zz_1(s;t,z) )
\\		 &  \cdot  
[  
e^{\frac{\tilde{\beta}}{4} \left( |\mathcal{V}_1 (s;t,z)|^2 + g \mathcal{X}_1 (s;t,z)
\right)}
\nabla_v F_2 (s, \Zz_1(s;t,z) )]
\dd s.
\end{split}
\Ee
Here, from \eqref{stab_con2} we know that the second line of \eqref{Lag:diff} is bounded. 
\hide
Using the boundary condition \eqref{bdry:F} and the initial datum, we derive that  
\begin{align}\label{form:Fell}
F^{\ell+1 }  (t,x,v) 
= 
\mathbf{1}_{   t_{\mathbf{B}}^{\ell+1  } (t,x,v)\geq t }
F _{0 }  (\mathcal{Z} ^{\ell +1} (0;t,x,v)) 
+ \mathbf{1}_{t > t_{\mathbf{B}}^{\ell+1  } (t,x,v)}
G  
(
\mathcal{Z} ^{\ell }(t-  t_{\mathbf{B} }^{\ell +1}(t,x,v);t,x,v) ) . 
\end{align} 

Using Lemma \ref{lem:w/w_ell} and \eqref{est:1/w_h_ell}, we derive that 
$ | F ^{\ell+1} (t,x,v)|  \leq I_1 + I_2  ,$ 
where
\begin{align}
I_1  
& \leq
\frac{	\|  \w_{  { \beta}   , 0 }
F_{0 } \|_{L^\infty_{x,v}}}{\w^{\ell+1}_{
\beta} (0,\Zz^{\ell+1} (0;t,z))} 
\leq   
e^{\frac{64
	\beta}{g}
\| 
\p_t \Psi^\ell 
\|_{L^\infty_{t,x}}^2
} 
e^{- \frac{
	\beta}{2}|v|^2}
e^{- \frac{
	\beta}{2} g x_3}
\|  \w_{  { \beta}   , 0 }
F_{0 } \|_{L^\infty_{x,v}},
\label{est:Fell1}
\\
I_2 &  \leq  	 
\frac{ \| w_{\beta  }   
G  \|_{L^\infty (\gamma_-)}
}{    \w^{\ell+1} _{\beta}   (
t-  t_{\mathbf{B} }^{\ell+1 } 	,
\mathcal{Z} ^{\ell+1} (t-  t_{\mathbf{B} }^{\ell+1 };t,z))  }
\leq   
e^{  \frac{64 \beta}{g} 	\|	\p_t \Psi^\ell	 
\|_{L^\infty_{t,x}}  ^2 }
e^{-\frac{\beta}{2}|v|^2}  
e^{-\frac{\beta}{2} g x_3}
\| w_{\beta  }   
G  \|_{L^\infty (\gamma_-)}. \label{est:Fell2} 
\end{align} 

\unhide

We bound the first line of integrand in \eqref{Lag:diff} term by term. Using Lemma \ref{lem:tb} and \eqref{est:tB}, we derive that, for $s \in [ t-t_{\mathbf{B}, 1} (t,z)  , t]$ 
\Be\label{Lag:diff2}
\begin{split}
\int^t_s 2 \bar\beta \p_t \phi_{\bar{F}_1 } (\tau, 
\mathcal{X}_1(\tau;t,x,v), \mathcal{V}_1(\tau;t,x,v)
) \dd \tau 
\leq C^\prime \frac{\bar\beta^{1/2}}{ g^{1/2}}
\sqrt{|v_3|^2 + g x_3 } \frac{  \tilde \beta ^{1/2} }{g^{1/2} }  	\| \p_t \phi_{\bar F_1} \|_{L^\infty_{t,x} }. 
\end{split}
\Ee
Then following a proof of Lemma \ref{lem:stability_seq} (\eqref{diff:Phi} and \eqref{diff:bar_rho}, in particular), we derive that  
\Be\label{Lag:diff1}
\|\nabla_x (\phi_{\bar F_1} - \phi_{\bar F_2}) (s)\|_{L^\infty (\bar \O)} \leq 
\left\{ 1+ \frac{2}{\bar \beta  g}	\right\} \frac{\mathfrak{C} \pi^{3/2}     }{(\bar \beta )^{3/2}}
\|  e^{  {\bar\beta} \big( |v|^2+g x_3\big)} ( \bar{F}_1 - \bar{F}_2)(s) \|_{L^\infty (\O \times \R^3)}.
\Ee
Finally, using \eqref{est:1/w_h} with $\bar\beta /8$ instead of $\beta$,  we derive that 
\Be\label{Lag:diff3}
\frac{1}{\w  _{\frac{ \bar \beta}{8}, 1}  (s,\Zz   (s;t,x,v))}  
\leq
e^{  \frac{8 \bar\beta}{ \tilde \beta  } 
\Big(	 \frac{  \tilde \beta ^{1/2} }{g^{1/2} }  	\| \p_t \phi_{\bar F_1} \|_{L^\infty_{t,x}  }\Big)^2
}
e^{-\frac{\bar\beta}{32}|v|^2}  
e^{-\frac{\bar\beta}{32} g x_3}.
\Ee

Then applying \eqref{Lag:diff1}-\eqref{Lag:diff3} to \eqref{Lag:diff} we conclude \eqref{est:F1-2} as
\Be\notag
\begin{split}
&\| e^{ \bar{\beta}  (|v|^2 + g x_3 )} (F_1  - F_2 )(t ) \|_{L^\infty (\O \times \R^3)}
\bigg/
\int^t_0  \|  e^{  \bar\beta    \big( |v|^2+g x_3\big)} ( \bar{F}_1  - \bar{F}_2 ) (s)\|_{L^\infty (\O \times \R^3)}
\dd s 
\\
& \leq  \| e^{\frac{\tilde{\beta}}{4} (|v|^2 + g x_3)} \nabla_v F_2 \|_{L^\infty} \left\{ 1+ \frac{2}{\bar \beta  g}	\right\} \frac{\mathfrak{C}  \pi^{3/2}     }{(\bar \beta )^{3/2}} \\
& \ \ \  \times e^{
\frac{8}{g} (\bar \beta + (C^\prime)^2\frac{\tilde{\beta}}{g})\| \p_t \phi_{\bar F_1} \|_{L^\infty_{t,x}}^2
}
e^{
- \frac{\bar\beta}{32}
\left(
\sqrt{|v|^2  + g x_3 }
- 16 \frac{C^\prime \tilde{\beta}^{1/2}}{ g \bar{\beta}^{1/2}} \| \p_t \phi_{\bar F_1} \|_{L^\infty_{t,x}}\right)^2
} .
\end{split}
\Ee\vspace{-10pt}
\hide

$\frac{(Ct)^\ell}{\ell !} \leq (Ct)^\ell \left(\frac{e}{\ell}\right)^\ell  \frac{1}{e^{\frac{ 1}{12 \ell + 1}} \sqrt{2 \pi \ell}} $

For $p>1$, we have 
\Be\label{energy:wF1-1}
\begin{split}
&	\| \w_1 (F_1- F_2  )(t)  \|_{L^p (\O \times \R^3)}^p -  {\| \w_1 (F_1 - F_2  ) (0)\|_{L^p (\O \times \R^3)}^p}\\
& \leq 2 \beta \sup_{s \in [0,t]} \| \Delta_0^{-1} (\nabla_x \cdot b_1(s)) \|_{L^\infty(\O)}
\int^t_0 	\| \w_1 (F_1  - F_2 )(s) \|_{L^p (\O \times \R^3)}^p   \dd s \\
&  \ \ + \int_0^t \| \nabla_x (\phi_{F_1} - \phi_{F_2}) (s) \|_{L^{p^*} _x(\O)}
\big\| \|\w_1 \nabla_v F_2 (t)\|_{L_v^p(\R^3)} \big\|_{L_x^3(\O)}
\| \w_1 (F_1  - F_2 )(s) \|_{L^p (\O \times \R^3)}^{p-1} \dd s,
\end{split}
\Ee
where $1/p^* = 1/p- 1/3$. Here, we have used the H\"older's inequality to get, with $1/p+ 1/q=1$, 
\Be
\begin{split}
& \int_0^t \iint_{\O \times \R^3}
|	 \nabla_x (\phi_{F_1} - \phi_{F_2})(s,x) \cdot \w_1 \nabla_v F_2(s,x,v)||\w_1 (F_1- F_2) (s,x,v)|^{p-1} \dd x \dd v \dd s  \\
&\leq   \int_0^t 
\|	 \nabla_x (\phi_{F_1} - \phi_{F_2})(s ) \cdot \w_1 \nabla_v F_2(s )\|_{L^p(\O \times \R^3)}
\| |\w_1 (F_1- F_2) (s )|^{p-1} \|_{L^q (\O \times \R^3)}   \dd s \\
& \leq  \int_0^t \|	 \nabla_x (\phi_{F_1} - \phi_{F_2})(s) \|_{L^{p^*} (\O)}
\big\|
\| \w_1 \nabla_v F_2 (s) \|_{L^p(\R^3)}\big\|_{L^3_x(\O)}
\|  \w_1 (F_1- F_2)(s,x, \cdot )  \|_{L^{p}( \O \times \R^3)}^{p-1}
\dd s . \notag
\end{split}
\Ee

Now we bound $ \| \nabla_x (\phi_{F_1} - \phi_{F_2}) (s) \|_{L^{p^*} _x(\O)}$ using Lemma \ref{lemma:G}. Using $\w_1 (s,y,v) \geq e^{\beta (|v|^2+ g x_3 )}$ and H\"older's inequality, we derive that, for $1/p+ 1/q=1$,  
\Be\label{est:Dphi1-2:1}
\begin{split}
& \| \nabla_x (\phi_{F_1} - \phi_{F_2}) (s) \|_{L^{p^*} _x(\O)}\\
& \leq  \left\| 
\int_{\O}  \nabla_x G(x,y) 
\int_{\R^3} \frac{1}{\w_1 (s,y,v)} \w_1 (s,y,v)(F_1(s,y,v)- F_2(s,y,v)) \dd v 
\dd y
\right\|_{L^{p^*} _x(\O)}\\
& \leq   \left\|    \int_{\O}  |\nabla_x G(x,y) |
\left\|  \frac{1}{\w_1 (s,y,\cdot )}  \right\|_{L^q_v (\R^3)}
\| \w_1 (s,y,\cdot )(F_1(s,y,\cdot )- F_2(s,y,\cdot ))\|_{L^p_v (\R^3)} \dd y	\right\|_{L^{p^*} _x(\O)} \\
& \leq    \left\|  \int_{\O}  |\nabla_x G(x,y) |
\frac{e^{- \beta g y_3}}{ q^{\frac{3}{2q}} \beta^{\frac{3}{2q}} }
\| \w_1 (s,y,\cdot )(F_1(s,y,\cdot )- F_2(s,y,\cdot ))\|_{L^p_v (\R^3)} \dd y	\right\|_{L^{p^*} _x(\O)}.
\end{split}
\Ee
Using the form of $\nabla_x G(x,y)$ in \eqref{est:p3phi}-\eqref{est:I_3}, we further bound \eqref{est:Dphi1-2:1}:
\Be
\begin{split}\notag
&  \left\| 	\int_{\T^2} \int_{x_3}^\infty e^{- \beta g   x_3}	\frac{e^{- \beta g  (y_3-x_3)}}{ q^{\frac{3}{2q}} \beta^{\frac{3}{2q}} }
\| \w_1 (s,y,\cdot )(F_1(s,y,\cdot )- F_2(s,y,\cdot ))\|_{L^p_v (\R^3)} \dd y\right\|_{L^{p^*} _x(\O)}\\
&+  \left\|  2 \int^\infty_0 \int_{\T^2} \frac{1}{|x-y|^2}
\frac{e^{- \beta g y_3}}{ q^{\frac{3}{2q}} \beta^{\frac{3}{2q}} }
\| \w_1 (s,y,\cdot )(F_1(s,y,\cdot )- F_2(s,y,\cdot ))\|_{L^p_v (\R^3)} \dd y\right\|_{L^{p^*} _x(\O)}
\\
&+  \left\|  \int_{\O} |\nabla b_0 (x,v)|	 	\frac{e^{- \beta g y_3}}{ q^{\frac{3}{2q}} \beta^{\frac{3}{2q}} }	\| \w_1 (s,y,\cdot )(F_1(s,y,\cdot )- F_2(s,y,\cdot ))\|_{L^p_v (\R^3)} \dd y\right\|_{L^{p^*} _x(\O)}.
\end{split}
\Ee
For the first and third term, using the H\"older's inequality, we can bound term straightforwardly by $  \| \w_1  (F_1(s )- F_2(s  ))\|_{L^p  (\O \times \R^3)}$ with a constant multiplier depending on $\beta, g, p$. For the second term, using the Young's inequality of convolution ($1+\frac{1}{p^*} = \frac{1}{p} + \frac{1}{3/2}$), we bound it above by $\| \w_1  (F_1(s )- F_2(s  ))\|_{L^p  (\O \times \R^3)}$ with a constant multiplier depending on $\beta, g, p$. Hence, from \eqref{est:Dphi1-2:1} , we get 
\Be\label{est:dF1-2}
\| \nabla_x (\phi_{F_1} - \phi_{F_2}) (s) \|_{L^{p^*} _x(\O)} \lesssim_{\beta, g, p} \| \w_1  (F_1(s )- F_2(s  ))\|_{L^p  (\O \times \R^3)}.
\Ee

Finally, applying the Gronwall's inequality to \eqref{est:dF1-2} and \eqref{est:Dphi1-2:1}, we prove \eqref{est:F1-2}.

\Be
\begin{split}
&	\| \w_1 (F_1- F_2  )(t)  \|_{L^p (\O \times \R^3)}^p -  {\| \w_1 (F_1 - F_2  ) (0)\|_{L^p (\O \times \R^3)}^p}\\
& \leq 2 \beta \sup_{s \in [0,t]} \| \Delta_0^{-1} (\nabla_x \cdot b_1(s)) \|_{L^\infty(\O)}
\int^t_0 	\| \w_1 (F_1  - F_2 )(s) \|_{L^p (\O \times \R^3)}^p   \dd s \\
&  \ \ + \int_0^t
C_{\beta, g, p} \| \w_1  (\bar F_1(s )- \bar F_2(s  ))\|_{L^p  (\O \times \R^3)}
\big\| \|\w_1 \nabla_v F_2 (t)\|_{L_v^p(\R^3)} \big\|_{L_x^3(\O)}
\| \w_1 (F_1  - F_2 )(s) \|_{L^p (\O \times \R^3)}^{p-1} \dd s,
\end{split}
\Ee
\unhide \end{proof}

\hide

\begin{proposition}\label{propo:DC}
For $\beta, g>0$, suppose both \eqref{choice:g} holds. Choose
\Be\label{choice:M}
M = 4\max\big\{  \| w_\beta  h \|_{L^\infty (\O)}, 
\| \w_{\beta, 0 } F_{0} \|_{L^\infty (\O \times \R^3)}
+  \| e^{\beta |v|^2}
G  \|_{L^\infty  (\O \times \R^3)} \big\}.
\Ee
\hide\Be\label{choice:g}
\max\big\{  \| w_\beta  h \|_{L^\infty (\O)}, 
\| \w_{\beta, 0 } F_{0} \|_{L^\infty (\O \times \R^3)}
+  \| e^{\beta |v|^2}
G  \|_{L^\infty  (\O \times \R^3)} \big\} \leq  \frac{\sqrt{\ln 2}}{2^{17/2}  \pi  } g^{3/2} \beta^{5/2}.
\Ee
Let
\Be\label{choice:M}
M = 4\max\big\{  \| w_\beta  h \|_{L^\infty (\O)}, 
\| \w_{\beta, 0 } F_{0} \|_{L^\infty (\O \times \R^3)}
+  \| e^{\beta |v|^2}
G  \|_{L^\infty  (\O \times \R^3)} \big\}.
\Ee\unhide
Then we have a uniform-in-$\ell$ bound for \eqref{Bootstrap_ell} for all $\ell$. Moreover, we have a uniform-in-$\ell$ bounds for $b^\ell$ of \eqref{identity:Psi_t_ell} and $f^{\ell+1}$ constructed in \eqref{f_1}-\eqref{Poisson_fell}:
\begin{align}
\max_{\ell}e^{\frac{64
\beta}{g}
\| \Delta_0^{-1} (\nabla\cdot b^\ell) \|_{L^\infty ([0,\infty) \times \O)}^2
}  &\leq 2,\label{est:e^b}\\
\max_{\ell} \sup_{t\geq 0}	\|	e^{  \frac{
\beta}{2}|v|^2}
e^{  \frac{
\beta}{2} g x_3} f^\ell \|_{L^\infty ( \O \times \R^3)} &\leq M.\label{Uest:wfell}
\end{align}

\end{proposition}

\begin{proof}
For an arbitrary number $k \in \N$, suppose an induction hypothesis holds: \eqref{Bootstrap_ell} for all $\ell \leq k$ and $
\max_{\ell \leq k}
\sup_{t \geq 0}
\| 	e^{  \frac{
\beta}{2}|v|^2}
e^{  \frac{
\beta}{2} g x_3} f^\ell(t) \|_{L^\infty_{x,v}} \leq M$.

Using both two preceding lemmas, and \eqref{Uest:wf} and \eqref{est:D^-1 Db_ell}, we derive that 
\Be\label{est:wf}
\begin{split}
&   
e^{  \frac{
\beta}{2}|v|^2}
e^{  \frac{
\beta}{2} g x_3}
| f^{\ell+1}   (t,x,v)|\\
&\leq  \  e^{\frac{64 C^2
(1+ \frac{1}{\beta g})^2
}{g\beta^3}
\|  	e^{  \frac{
	\beta}{2}|v|^2}
e^{  \frac{
	\beta}{2} g x_3} f^\ell  \|_{L^\infty_{t,x,v}}^2
}  \big\{   \| \w_{\beta, 0 } F_{0} \|_{L^\infty_{x,v}}
+  \| e^{\beta |v|^2}
G  \|_{L^\infty_{x,v}}\big\} 
+2 \| w_\beta  h \|_{L^\infty (\O)}.
\end{split}	\Ee

\hide

\Be
\begin{split}
& \w^{\ell+1}_{\frac{\delta \beta }{8}}  (t,x,v) | f^{\ell+1}   (t,x,v)|\\
&\leq  \ e^{ \frac{32^2 (1+ \delta)^2 (1+ \frac{1}{ \delta \beta g})^2  }{ 4g^2  \delta^2   \beta^2} 
\| \w^{\ell}_{\frac{\delta \beta}{8}} f^\ell(t) \|_{L^\infty_{t,x,v}}^2
} \big\{   \| \w_{0 , \frac{ \beta (1+ \delta)}{2}} F_{0} \|_{L^\infty_{x,v}}
+  \| \w_{ \frac{ \beta (1+ \delta)}{2}} 
G  \|_{L^\infty_{x,v}}\big\} 
+2 \| w_\beta  h \|_{L^\infty (\O)}.
\end{split}	\Ee
\unhide
Due to our choice of $g$ in \eqref{choice:g}, we can get that 
\Be
e^{ {\frac{64 C^2
(1+ \frac{1}{\beta g})^2
}{g\beta^3}
\sup_{t \geq 0}
\| 	e^{  \frac{
	\beta}{2}|v|^2}
e^{  \frac{
	\beta}{2} g x_3} f^\ell(t) \|_{L^\infty_{x,v}}
^2
}  
}   \leq e^{ {\frac{128 C^2
}{g^3\beta^5}
M^2
}  
} 
\leq 2. \notag
\Ee
Hence, we already prove \eqref{est:e^b} due to \eqref{est:D^-1 Db_ell}. Together with this and \eqref{choice:M}, \eqref{est:wf}, we conclude \eqref{Uest:wfell}:
\Be\notag
\begin{split}
\sup_{t \geq 0}	\|e^{  \frac{
\beta}{2}|v|^2}
e^{  \frac{
\beta}{2} g x_3}  f^k(t) \|_{L^\infty_{x,v}} &\leq \frac{M}{4}   	e^{ {\frac{64
	(1+ \frac{1}{\beta g})^2
}{g\beta^3}
\|  	e^{  \frac{
		\beta}{2}|v|^2}
e^{  \frac{
		\beta}{2} g x_3} f^\ell  \|_{L^\infty_{t,x,v}}^2
}  
}  + \frac{M}{2} \leq  \frac{M}{2} +  \frac{M}{2} \leq M.
\end{split}\Ee 
Now following the argument to get \eqref{Uest:DPhi^k}, we can derive \eqref{Bootstrap_ell} for $\ell= k+1$. \end{proof}
\unhide
\begin{proof}[\textbf{Proof of Theorem \ref{theo:CD}}]
Using Theorem \ref{theo:RD} and Lemma \ref{theo:UD}, it is standard to deduce that, for $\ell\geq m$,  
\vspace{-10pt}
\Be\notag
\begin{split}
&\| e^{  \frac{\tilde{\beta}}{8}   (|v|^2 + g x_3 )} (F^\ell (t )- F^m (t )) \|_{L^\infty (\bar\O \times \R^3)} \\
&  \leq \frac{(Ct)^{m}}{m !} \| e^{  \frac{\tilde{\beta}}{8}   (|v|^2 + g x_3 )}  f^{\ell-m} (t )  \|_{L^\infty (\bar\O \times \R^3)}\\
& \leq  \frac{(Ct)^{m}}{m !} 	e^{\frac{C
\beta}{\tilde{\beta }}
}  \big\{   \| \w_{\beta, 0 } F_{0} \|_{L^\infty (\O \times \R^3)}
+  \| 
e^{\beta|v|^2}
G  \|_{L^\infty (\gamma_-)}\big\}  ,
\end{split}\Ee
where we have used \eqref{Uest:wf} at the last step above. With this strong convergence together with uniform-upper-bounds of Theorem \ref{theo:RD}, it is standard to prove the convergence of the sequences and prove that their limiting function $(F, \phi_F)$ is a strong solution to \eqref{VP_F}-\eqref{Dbc:F}. Moreover, every upper bound of Theorem \ref{theo:RD} is valid for the limiting function. A proof of uniqueness is straightforward from Lemma \ref{theo:UD}. We omit the proof.
\end{proof}

\hide

From Lemma \ref{lem:Ddb}, we derive that 
\begin{align}
\label{D^-1 Db_ell}
\Delta_0^{-1}  (\nabla_x \cdot b^\ell) (t,x)   =    \int_{\O} G(x,y) \nabla  \cdot b^\ell (y) \dd y =  -\int_{\O} b^\ell (y)   \cdot  \nabla_y G(x,y)     \dd y,\\
\sup_{0 \leq t \leq T }	\|	 \Delta_0^{-1}  (\nabla_x \cdot b^\ell (t,x)) \|_{L^\infty (\O)} \leq 8 \pi  
\frac{1+ \frac{1}{  \beta g}}{  \beta^2 } \sup_{0 \leq t \leq T } \|   e^{  \frac{\beta}{2} (|v|^2 + gx_3)}  f^\ell(t) \|_{L^\infty(\O \times \R^3)} .\label{est:D^-1 Db_ell}
\end{align} 

\unhide

\noindent\textbf{Acknowledgment. }
The author thanks Professor Hyung-Ju Hwang for her interest to this work. The author thanks Dr. Jiaxin Jin and Dr. Jongchon Kim for discussions helpful to write Section \ref{Sec:Green}. He also thanks Professor Antoine Cerfon for his insightful comments in the author's talk on the occasion of a kinetic theory workshop at Madison in October 2019. He thanks Dr. Trinh Nguyen for his presentation about recent development of Landau damping on the occasion of kinetic theory working seminars in Madison. The author also thanks Professor Seung Yeal Ha, Donghyun Lee, and In-Jee Jeong, for their kind hospitality during his stay at the Postech and Seoul National University in 2021-2022 where/when the author has conducted this project partly. This project is partly supported by NSF-DMS 1900923/2047681 (CAREER), the Simons fellowship in Mathematics, and the Brain Pool fellowship funded by the Korean Ministry of Science and ICT.

\hide

\subsection{Asymptotic Stability of the Dynamic perturbation}
The Lagrangian formulation of $f^{\ell+1}$ solving \eqref{eqtn:fell}-\eqref{initial:fell} for given $\nabla_x \Phi,  \nabla_x \Psi^\ell$ and $\nabla_v h$ is 
\Be\label{form:f^ell}
\begin{split}
f^{\ell+1}   (t,x,v)   
&= \mathcal{I}^{\ell+1} (t,x ,v) + \mathcal{N}^{\ell+1}  (t,x,v ), 
\end{split}
\Ee where 
\begin{align} 
\mathcal{I}^{\ell+1} (t,x,v ) 	& :=  
\mathbf{1}_{t <  \tB^{\ell+1 } (t,x,v)}	f _{ 0} ( \Zz ^{\ell+1}  (0;t,x,v) ) 
,\label{form:f^ellI} \\
\mathcal{N}^{\ell+1} (t,x,v ) 	&  :=
\int^t_
{\max\{0, t- \tB^{\ell+1 } (t,x,v) \}} 
\nabla_x  \Psi ^\ell
(s, \X^{\ell+1}  (s;t,x,v)) \cdot \nabla_v h 
( \Zz ^{\ell+1}  (s;t,x,v) )
\dd s 
.\label{form:f^ellN}
\end{align}

\hide

Taking a $v-$integration to the form of $f_+^{\ell+1} - f_-^{\ell+1}$ in \eqref{form:f^ell}, we get that 
\begin{align}\label{form:rho}
\rho ^{\ell+1} (t,x)=\mathcal{I}^{\ell+1}(t,x) + \mathcal{N}^{\ell+1} (t,x)
,
\end{align}
where
\Be
\begin{split}
\mathcal{I} ^{\ell+1} (t,x) 
=  \int_{\R^3} 
\mathbf{1}_{t \leq t_\b^{\ell+1, \pm} (t,x,v)}
[f_{0,+ } - f_{0,- }]
(  \mathcal{Z}^{\ell+1}(0;t,x,v) ) \dd v, \label{rhof1}
\end{split}\Ee 
\Be
\begin{split}
&\mathcal{N} ^{\ell+1} (t,x)\\
& = \int_{\R^3} \int^t_{{\max\{0, t- \tb^{\ell+1 } (t,x,v) \}}   } \nabla_x\Delta^{-1}_0 \rho ^\ell  (s, \X^{\ell+1}  (s;t,x,v)) \cdot  [ \nabla_v h_+ - \nabla_v h_-]  
( \mathcal{Z} ^{\ell+1}  (s;t,x,v) ) 
\dd s 
\dd v.\label{rhof2}
\end{split}\Ee
\unhide

The main purpose of this section is to prove the temporal decay of $\mathcal{I}^{\ell+1}$ (Proposition \ref{prop:decay_I}) and $\mathcal{N}^{\ell+1}$ (Proposition \ref{prop:decay_N}) uniformly-in-$\ell$.

\begin{proposition}\label{prop:decay}
Suppose $\|w^F   [f_{0,+} - f_{0,-}]   \|_{L^\infty (\bar \O \times \R^3)} < \infty$ and $\| \nabla_x \phi_F \|_{L^\infty (\O \times \R^3)} < \infty$. Furthermore we assume that 
\Be  \frac{  4  \| \nabla _x \phi_F \|_\infty }{  g } \leq \frac{1}{2}.
\Ee

Then 
\begin{align}
\mathcal{I}^{\ell+1}  (t,x,v)  &\leq 	2 e^{-  \frac{\beta}{4}  ( |v|^2 + g  x_3)} 
e^{- \lambda_\infty t}
e^{ \frac{16 \lambda_\infty^2}{\beta g^2}}	  \|w_{0 }   f_{0  } \|_{L^\infty_{x,v}}  ,\label{est:Iell}\\
\mathcal{N}^{\ell+1} (t,x,v)  & 
\leq   	 e^{- \frac{\beta}{8} |v|^2} e^{- \frac{\beta g}{8} x_3}
e^{-\lambda_\infty t}  
\frac{8 \cdot 4^{1/g}}{g \beta^{1/2}}  
e^{ \frac{16^2 \lambda_\infty^2}{ 4 g^2 \beta}}  	\sup_{s \in [0, t] } \| e^{\lambda_\infty s} \varrho^\ell (s) \|_\infty  \| w_\beta \nabla_v h \|_\infty
.\label{est:Nell}
\end{align}

\hide
\Be
\mathcal{I}^{\ell+1}  (t,x) \leq e^{- \frac{\beta}{4} \frac{g^2 t^2}{C^2}}  \frac{1}{\beta^{3/2}}
\Ee \unhide
\end{proposition}
\begin{proof} 
First we prove \eqref{est:Iell}. From \eqref{est:tB}, \eqref{est:1/w_h}, \eqref{est:e^b}, and \eqref{est:D^-1 Db}, we derive that \hide
\Be
\begin{split}
\eqref{form:f^ellI}& \leq \|\w_{\beta, 0 }   f_0 \|_{L^\infty}  \int_{\R^3}  \frac{ \mathbf{1}_{t \leq t_\mathbf{B}^{\ell+1}  (t,x,v)}}{w_{ {\beta} / {2}} (\Zz^{\ell+1} (0;t,x,v) )  }  \dd v\\
& \leq    \|\w_{\beta, 0 }   f_0 \|_{L^\infty}  \int_{\R^3}  
\mathbf{1}_{  
t \leq \frac{2}{g} \big( \sqrt{|v_3|^2 + g x_3} - v_3\big)
}
\dd v\\
& \leq \frac{1}{\beta^{3/2}} e^{- \frac{\beta g}{4}  x_3 }
\mathbf{1}_{ gt/4 \leq   \sqrt{|v_3|^2 + g x_3}}
e^{- \frac{\beta}{4}  ( |v|^2 + g  x_3)}\\
& \leq  \frac{1}{\beta^{3/2}} e^{- \frac{\beta g}{4}  x_3 } e^{-   \frac{ \beta g^2 }{64} t^2} 
\end{split}	\Ee
\unhide
\Be
\begin{split}\label{bound:tB/w}
& \mathbf{1}_{t< \tB^{\ell+1 } (t,x,v)}  | f_{ 0} (\mathcal{Z} ^{\ell+1} (0;t,x,v))|
\\
& \leq \|w_{0 }   f_{0  } \|_{L^\infty_{x,v}}    \frac{ \mathbf{1}_{t \leq \tB^{\ell+1 } (t,x,v)}}{w _\beta (0,\X^{\ell+1} (0;t,x,v),\V^{\ell+1}  (0;t,x,v))}  \\
& \leq  \|w_{0 }   f_{0 } \|_{L^\infty_{x,v}}    \mathbf{1}_{ gt/4 \leq   \sqrt{|v_3|^2 + g x_3}}
e^{  \frac{64 \beta}{g} 	\| \Delta_0^{-1} (\nabla_x \cdot b^\ell) \|_{L^\infty_{t,x}}  ^2 }
e^{-\frac{\beta}{2}|v|^2}  
e^{-\frac{\beta}{2} g x_3} 
\\
& \leq  2 \|w_{0 }   f_{0  } \|_{L^\infty_{x,v}}  	
e^{- \frac{g^2 \beta }{64} t^2}	e^{- \frac{\beta}{4}  ( |v|^2 + g  x_3)}
,
\end{split}	\Ee
where we have used that, if $t \leq \frac{4}{g} \sqrt{|v_3|^2 + g x_3}$ then 
\Be\notag
\begin{split}
&\mathbf{1}_{t \leq \frac{4}{g} \sqrt{|v_3|^2 + g x_3}} e^{- \frac{\beta}{2} \big(|v|^2 + gx_3\big)}\\
& \leq \mathbf{1}_{t \leq \frac{4}{g} \sqrt{|v |^2 + g x_3}} e^{- \frac{\beta}{2} \big(|v|^2 + gx_3\big)}\\
& \leq e^{- \frac{g^2\beta}{64}  t^2} e^{- \frac{\beta}{4} \big(|v|^2 + gx_3\big)}.
\end{split}
\Ee
By completing the square, we have 
\Be
\begin{split}\notag
e^{- \frac{\beta g^2}{64} t^2} &= e^{- \frac{\beta g^2}{64} \big(t- \frac{32 \lambda_\infty}{\beta g^2}\big)^2} e^{\frac{16 \lambda_\infty^2}{\beta g^2}} e^{-\lambda_\infty t} \leq e^{\frac{16 \lambda_\infty^2}{\beta g^2}} e^{-\lambda_\infty t}.
\end{split}
\Ee
Combining this with \eqref{bound:tB/w}, we derive \eqref{est:Iell}.

\hide
Since $w^F (X^F(s;t,x,v),V^F (s;t,x,v))$ is invariant with respect to $s \in [ t - t^\b  _{F}(t,x,v)  ,  t+ t^\f  _{F}(t,x,v)]$, the denominator equals, for $C=4$, 
\Be
\begin{split}\label{est:1/w}
&\frac{\mathbf{1}_{ gt/C \leq   |v_{\f ,3}^F(t,x,v)|  }}{ w^F (X^F(t+ t_\f ^{F};t,x,v),V^F (t+t_\f^F;t,x,v))}\\
&= \mathbf{1}_{ gt/C \leq   |v_{\f ,3}^F(t,x,v)|  }
e^{- \beta \frac{| v_{\f ,3}^F(t,x,v)|^2}{2}}e^{- \beta \frac{| v_{\f,\parallel}^F (t,x,v)|^2}{2}}\\
& \leq e^{- \frac{\beta}{4} \frac{g^2 t^2}{C^2}}e^{- \beta \frac{| v_{\f,3}^F (t,x,v)|^2}{4}}e^{- \beta \frac{| v_{\f,\parallel}^F (t,x,v)|^2}{2}}\\
& \leq e^{- \frac{\beta}{4} \frac{g^2 t^2}{C^2}}
e^{- \frac{\beta}{4} \Big( 1- \frac{2  C^2 \| \nabla _x \phi_F \|_\infty^2}{  g^2}  \Big)  | v_{\f,3}^F (t,x,v)|^2}  
e^{- \beta \frac{| v_{ \parallel} |^2}{4}} \\
& \leq e^{- \frac{\beta}{4} \frac{g^2 t^2}{C^2}}
e^{- \frac{\beta}{4} \Big( 1- \frac{2  C^2 \| \nabla _x \phi_F \|_\infty^2}{  g^2}  \Big)  | v_3 |^2 } 
e^{- \beta \frac{| v_{ \parallel} |^2}{4}} 
\end{split}
\Ee
where we have used the following inequality to get the fourth line of \eqref{est:1/w}, for $C=4$,
\Be
\begin{split}\notag
&-|v_{\f, \parallel}^F (t,x,v)|^2\\
&\leq  -\Big| |v_\parallel | -  t_\f^F(t,x,v) \| \nabla_x \phi_F \|_\infty \Big|^2 \\
& 
\leq  - \Big| |v_\parallel |  - \frac{C |v^F_{\f, 3} (t,x,v)|}{ g} \| \nabla_x \phi_F \|_\infty\Big|^2\\
& = - |v_\parallel|^2 + 2\frac{C |v_{\f, 3}^F|}{g} \| \nabla_x  \phi_F \|_\infty | v_\parallel| - \frac{C^2 |v_{\f, 3}^F|^2}{g^2} \| \nabla_x \phi _F \|_\infty^2\\
&  \leq  - |v_\parallel|^2  + \frac{|v_\parallel|^2}{2} + 2\frac{C^2 |v_{\f, 3}^F|^2}{g^2} \| \nabla_x  \phi_F \|_\infty^2 - \frac{C^2 |v_{\f, 3}^F|^2}{g^2} \| \nabla_x \phi _F \|_\infty^2\\
& = -\frac{|v_\parallel|^2}{2}  +  \frac{C^2 |v_{\f, 3}^F|^2}{g^2} \| \nabla_x \phi _F \|_\infty^2
\end{split}
\Ee
and have used $|v_{\f,3}^F (t,x,v)| \geq |v_3|$ at the last line of \eqref{est:1/w}. 

Therefore 
\Be\begin{split}
& \int_{\R^3}  \frac{ \mathbf{1}_{ gt/C \leq   |v_{F,3}^\f (t,x,v)|  }}{w^F (X^F(0;t,x,v),V^F (0;t,x,v))}  \dd v \\
& \leq e^{- \frac{\beta}{4} \frac{g^2 t^2}{C^2}}  \frac{8}{\beta^{3/2}} \Big( 1- \frac{2  C^2 \| \nabla _x \phi_F \|_\infty^2}{  g^2}  \Big)^{-1} 
\end{split}
\Ee
\unhide

Next we prove \eqref{est:Nell}. Using \eqref{est:1/w}, we obtain 
\Be\label{h_v}
\begin{split}
&\nabla_v h (\Zz^{\ell+1}(s;t,x,v) ) \\
&\leq \frac{ 1}{w  (X^{\ell+1}(s;t,x,v), V^{\ell+1}(s;t,x,v))} \| w \nabla_v h \|_\infty
\\ 
& \leq  	e^{\frac{16^2 \beta }{2 g^2}
\| \Delta_0^{-1} (\nabla_x \cdot b^\ell) \|_{L^\infty_{t,x}}^2	
}
e^{- \frac{\beta}{4}|v|^2 } e^{- \frac{\beta g}{4} x_3}  \| w \nabla_v h \|_\infty  \\
& \leq 4^{1/g}
e^{- \frac{\beta}{4}|v|^2 } e^{- \frac{\beta g}{4} x_3}  \| w \nabla_v h \|_\infty ,
\end{split}
\Ee 
where we have used that $
e^{\frac{16^2 \beta }{2 g^2}
\| \Delta_0^{-1} (\nabla_x \cdot b^\ell) \|_{L^\infty_{t,x}}^2	
}
\leq e^{\frac{\ln 4}{g}} \leq  4^{1/g}$ from \eqref{est:e^b}.

Using this, we now bound $\mathcal{N} ^{\ell+1}$:
\Be\begin{split}
&|	\mathcal{N} ^{\ell+1}(t,x,v)| 
\\
& \leq  4^{1/g} \int^t_{t-\tB^{\ell+1} (t,x,v)}
e^{- \lambda_\infty s}  
\sup_{s \in [0, t] } \| e^{\lambda_\infty s} \varrho^\ell (s) \|_\infty
e^{- \frac{\beta}{4} |v|^2} e^{- \frac{\beta g}{4} x_3} \| w_\beta \nabla_v h \|_\infty \dd s \\
& \leq  4^{1/g}   e^{-\lambda_\infty t} 
\sup_{s \in [0, t] } \| e^{\lambda_\infty s} \varrho^\ell (s) \|_\infty  \| w_\beta \nabla_v h \|_\infty \\
& \ \ \times 
\underline{\tB^{\ell+1} (t,x,v)
e^{\lambda_\infty  {\tB^{\ell+1} (t,x,v)}}
e^{- \frac{\beta}{4} |v|^2} e^{- \frac{\beta g}{4} x_3} } .\notag
\end{split}\Ee
Then using \eqref{est:tB} for $\tB^{\ell+1} (t,x,v)$, we bound the above underlined term as 
\Be
\begin{split}\notag
&\tB^{\ell+1} (t,x,v)
e^{\lambda_\infty  {\tB^{\ell+1} (t,x,v)}}
e^{- \frac{\beta}{4} |v|^2} e^{- \frac{\beta g}{4} x_3}\\
& \leq  \frac{4}{g} \sqrt{|v_3|^2 + g x_3} 
e^{ \frac{4 \lambda_\infty}{g} (|v_3| + \sqrt{g x_3})}e^{- \frac{\beta}{4} |v|^2} e^{- \frac{\beta g}{4} x_3}\\
& \leq \frac{8}{g \beta^{1/2}} e^{- \frac{\beta}{8} |v|^2 + \frac{4 \lambda_\infty}{g} |v|  } 
e^{- \frac{\beta g}{8} x_3 + \frac{4 \lambda_\infty}{g} \sqrt{g x_3}} 
e^{- \frac{\beta}{8} |v|^2} e^{- \frac{\beta g}{8} x_3}\\
& \leq  \frac{8}{g \beta^{1/2}} e^{ \frac{16^2 \lambda_\infty^2}{ 4 g^2 \beta}}
e^{- \frac{\beta}{8} |v|^2} e^{- \frac{\beta g}{8} x_3} .
\end{split}
\Ee
Therefore we get 
\Be\label{est:N1}
\begin{split}
& |	\mathcal{N} ^{\ell+1}(t,x,v)| \\
& \leq  \frac{8}{g \beta^{1/2}}   
4^{1/g}
e^{ \frac{16^2 \lambda_\infty^2}{ 4 g^2 \beta}}  	\sup_{s \in [0, t] } \| e^{\lambda_\infty s} \varrho^\ell (s) \|_\infty  \| w_\beta \nabla_v h \|_\infty
e^{-\lambda_\infty t}  e^{- \frac{\beta}{8} |v|^2} e^{- \frac{\beta g}{8} x_3}
\end{split}\Ee
Finally we use \eqref{est:D^-1 Db} to conclude \eqref{est:Nell}. \hide
\Be\label{est:N1}
\begin{split}
& |	\mathcal{N} ^{\ell+1}(t,x,v)| \\
& \leq  \frac{8}{g \beta^{1/2}}   e^{
\frac{16^2  C^2 (1+ \frac{1}{\delta \beta g})^2}{2 g^2 \delta^2 \beta } \| \w^\ell_{\frac{\delta \beta}{8}} f^\ell \|_\infty^2	
}    e^{ \frac{16^2 \lambda_\infty^2}{ 4 g^2 \beta}}  	\sup_{s \in [0, t] } \| e^{\lambda_\infty s} \varrho^\ell (s) \|_\infty  \| w_\beta \nabla_v h \|_\infty
e^{-\lambda_\infty t}  e^{- \frac{\beta}{8} |v|^2} e^{- \frac{\beta g}{8} x_3}
\end{split}\Ee

\Be\begin{split}\label{est:N}
&	\frac{e^{-\lambda_\infty t}}{\lambda_\infty} e^{\frac{16^2 \beta}{2 g^2} \| \Delta_0^{-1} (\nabla \cdot b^\ell) \|_\infty^2}    \| w_\beta \nabla_v h \|_\infty  
e^{- \frac{\beta}{8} |v|^2 }e^{- \frac{\beta}{8}  gx_3 }
e^{ - \frac{\beta}{8} 
\left(
|v|^2- \frac{64\lambda_\infty}{g \beta} |v|
\right)
}
e^{ - \frac{\beta}{8} 
\left(
g x_3- \frac{64 \lambda_\infty}{g \beta} \sqrt{g x_3}
\right)
}\\
&  \leq \frac{e^{-\lambda_\infty t}}{\lambda_\infty} e^{\frac{32^2 \beta}{4 g^2} \| \Delta_0^{-1} (\nabla \cdot b^\ell) \|_\infty^2}  e^{ \frac{16^2 \lambda_\infty^2}{ 2 g^2 \beta } }   \| w_\beta \nabla_v h \|_\infty  
\\
&	  \leq \frac{e^{-\lambda_\infty t}}{\lambda_\infty} e^{
\frac{   C^2 32^2 \beta  (1+ \frac{1}{\delta \beta g} )^2}{ 4  \delta^2 \beta^2 g^2 }  \| \w^\ell_{\frac{\delta \beta}{8}} f^\ell   \|^2_{L^\infty_{t,x}}
}  e^{ \frac{16^2 \lambda_\infty^2}{ 2 g^2 \beta } }   \| w_\beta \nabla_v h \|_\infty  .
\end{split}\Ee

\unhide\end{proof}\unhide

\hide
\begin{proposition}\label{prop:decay_N}
Choose $p>3$. Assume there exist $0< \lambda_\infty < \lambda_p<\infty$ and $\mathfrak{C}_{\rho, p}, \mathfrak{C}_{\rho, \infty}  \in (0 , \infty)$ such that 
\begin{align}
\sup_{t \geq 0 }\| e^{\lambda_p t} \rho  (t)\|_{L^p(\O)}< \mathfrak{C}_{\rho, p} ,\\
\sup_{t \geq 0 }\| e^{\lambda_\infty t} \rho  (t)\|_{L^\infty(\O)}< \mathfrak{C}_{\rho, \infty} .
\end{align}
and $ \| w_h \nabla_v h \|_{L^\infty_{x,v}}< \mathfrak{C}_h < \infty$. Then 
\Be\begin{split}
&\Big\| \mathcal{N}[\rho_f, h ,Z^F] (t, \cdot ) \Big\|_{L^p_x} \\
&\leq
\end{split}\Ee
and 
\Be
\sup_{t\geq 0}
\Big\|	e^{\lambda_\infty t} \mathcal{N}[\rho_f, h ,Z^F] (t,\cdot)\Big\|_{L^\infty_x} \leq    \frac{\mathfrak{C}_h}{g  \beta^{2}} e^{\frac{32 \lambda _\infty^2  }{  g^2\beta}} \mathfrak{C}_{\rho, \infty}  e^{-\lambda_\infty t}. 
\Ee
\end{proposition}

\begin{proof}

\Be
\begin{split}\label{consistence} 
&\Big\|	 \mathcal{N} ^{\ell+1}(t,\cdot)\Big\|_{L^\infty_x}  \\
& \leq  \Big\| 
\tB^{\ell+1} (t,x,v) 
e^{\frac{16^2 \beta }{2 g^2}
\| \Delta_0^{-1} (\nabla_x \cdot b^\ell) \|_{L^\infty_{t,x}}^2	
}
e^{- \frac{\beta}{4}|v|^2 } e^{- \frac{\beta g}{4} x_3}  \| w \nabla_v h \|_\infty 
\\
& \ \ \ \ \  \times 
\sup_{s \in [ t- \tB^{\ell+1} (t,x,v),t]}| \nabla_x \Delta_0^{-1} \varrho^\ell  (s) | 
\dd v
\Big\|_{L^\infty_x}
\\
&\leq \mathfrak{C}_h \Big\| \int_{\R^3}  \frac{4}{g}|v_{\b,3}^F(x,v)| 
e^{- \frac{\beta}{2} \Big(1 - 	  \frac{16 }{g}	\big(2+\frac{8\| \nabla_x \phi_F \|_\infty }{g} \big) - \frac{32 \| \nabla_x \phi_F \|_\infty^2}{g^2}
\Big) |v_{\b,3}^F (t,x,v)|^2} e^{- \frac{\beta}{2} 
\Big(1 - 
\frac{16}{g} 
\| \nabla_x \phi_f \|_\infty 
\Big)
|v_\parallel|^2} 
\\
& \ \ \ \ \  \times 
\mathfrak{C}_{\rho, \infty}  e^{-\lambda_\infty\Big(t- 4\frac{|v_{\b,3}^F(t,x,v)|}{g}\Big)}  
\dd v\Big\|_{L^\infty_x}
\\
& \leq \mathfrak{C}_h \mathfrak{C}_{\rho, \infty}   e^{-\lambda_\infty t} \int_{\R^3} e^{\lambda_\infty  \frac{ 4 |v_{\b,3}^F (t,x,v)|}{g}}  \frac{1}{\sqrt{\beta}}  \sqrt{\beta} \frac{ 4| v_{\b,3}^F(t,x,v) |}{g} e^{- \frac{\beta}{3} |v_{\b,3}^F (t,x,v)|^2 } e^{- \frac{\beta}{4} |v_\parallel|^2}  \dd v\\
&\leq  \mathfrak{C}_h\mathfrak{C}_{\rho, \infty}  
e^{-\lambda_\infty t}\frac{1}{g\beta^{3/2}} 
\int_{\R}
e^{- \frac{\beta}{4} \{|v^F _{\b,3}(t,x,v)|^2 - \frac{16 \lambda_\infty}{g \beta} |v^F_{\b,3}(t,x,v)| + \frac{ 16 \lambda_\infty^2 }{g^2\beta^2} \}}
e^{\frac{4 \lambda _\infty^2 }{g^2\beta}} \dd v_3\\
& \leq \frac{\mathfrak{C}_h \mathfrak{C}_{\rho, \infty}  }{g  \beta^{3/2}} e^{\frac{32 \lambda _\infty^2  }{  g^2\beta}} e^{-\lambda_\infty t}
\int_{\R} e^{ - \frac{\beta}{8} |v_{\b, 3}^F (t,x,v)|  ^2} \dd v_3\\
& \leq \frac{\mathfrak{C}_h \mathfrak{C}_{\rho, \infty}  }{g  \beta^{3/2}} e^{\frac{32 \lambda _\infty^2  }{  g^2\beta}} e^{-\lambda_\infty t}
\int_{\R} e^{ - \frac{\beta}{8} |v_3|  ^2} \dd v_3\\
& \leq \frac{\mathfrak{C}_h \mathfrak{C}_{\rho, \infty}  }{g  \beta^{2}} e^{\frac{32 \lambda _\infty^2  }{  g^2\beta}} e^{-\lambda_\infty t}
\end{split}
\Ee

\Be
\begin{split}\label{consistence}
&\Big\|	 \mathcal{N}[\rho_f, h ,Z^F] (t,\cdot)\Big\|_{L^\infty_x}  \\
& \leq  \Big\|
\int_{\R^3} 
\tB^{\ell+1} (t,x,v) \mathfrak{C}_h  e^{- \frac{\beta}{2} \Big(1 - 	  \frac{16 }{g}	\big(2+\frac{8\| \nabla_x \phi_F \|_\infty }{g} \big) - \frac{32 \| \nabla_x \phi_F \|_\infty^2}{g^2}
\Big) |v_{\b,3}^F (t,x,v)|^2} e^{- \frac{\beta}{2} 
\Big(1 - 
\frac{16}{g} 
\| \nabla_x \phi_f \|_\infty 
\Big)
|v_\parallel|^2}  \\
& \ \ \ \ \  \times 
\sup_{s \in [ t- t_\b^{F} (t,x,v),t]}| \nabla_x \Delta_0^{-1} \rho_f (s) | 
\dd v
\Big\|_{L^\infty_x}
\\
&\leq \mathfrak{C}_h \Big\| \int_{\R^3}  \frac{4}{g}|v_{\b,3}^F(x,v)| 
e^{- \frac{\beta}{2} \Big(1 - 	  \frac{16 }{g}	\big(2+\frac{8\| \nabla_x \phi_F \|_\infty }{g} \big) - \frac{32 \| \nabla_x \phi_F \|_\infty^2}{g^2}
\Big) |v_{\b,3}^F (t,x,v)|^2} e^{- \frac{\beta}{2} 
\Big(1 - 
\frac{16}{g} 
\| \nabla_x \phi_f \|_\infty 
\Big)
|v_\parallel|^2} 
\\
& \ \ \ \ \  \times 
\mathfrak{C}_{\rho, \infty}  e^{-\lambda_\infty\Big(t- 4\frac{|v_{\b,3}^F(t,x,v)|}{g}\Big)}  
\dd v\Big\|_{L^\infty_x}
\\
& \leq \mathfrak{C}_h \mathfrak{C}_{\rho, \infty}   e^{-\lambda_\infty t} \int_{\R^3} e^{\lambda_\infty  \frac{ 4 |v_{\b,3}^F (t,x,v)|}{g}}  \frac{1}{\sqrt{\beta}}  \sqrt{\beta} \frac{ 4| v_{\b,3}^F(t,x,v) |}{g} e^{- \frac{\beta}{3} |v_{\b,3}^F (t,x,v)|^2 } e^{- \frac{\beta}{4} |v_\parallel|^2}  \dd v\\
&\leq  \mathfrak{C}_h\mathfrak{C}_{\rho, \infty}  
e^{-\lambda_\infty t}\frac{1}{g\beta^{3/2}} 
\int_{\R}
e^{- \frac{\beta}{4} \{|v^F _{\b,3}(t,x,v)|^2 - \frac{16 \lambda_\infty}{g \beta} |v^F_{\b,3}(t,x,v)| + \frac{ 16 \lambda_\infty^2 }{g^2\beta^2} \}}
e^{\frac{4 \lambda _\infty^2 }{g^2\beta}} \dd v_3\\
& \leq \frac{\mathfrak{C}_h \mathfrak{C}_{\rho, \infty}  }{g  \beta^{3/2}} e^{\frac{32 \lambda _\infty^2  }{  g^2\beta}} e^{-\lambda_\infty t}
\int_{\R} e^{ - \frac{\beta}{8} |v_{\b, 3}^F (t,x,v)|  ^2} \dd v_3\\
& \leq \frac{\mathfrak{C}_h \mathfrak{C}_{\rho, \infty}  }{g  \beta^{3/2}} e^{\frac{32 \lambda _\infty^2  }{  g^2\beta}} e^{-\lambda_\infty t}
\int_{\R} e^{ - \frac{\beta}{8} |v_3|  ^2} \dd v_3\\
& \leq \frac{\mathfrak{C}_h \mathfrak{C}_{\rho, \infty}  }{g  \beta^{2}} e^{\frac{32 \lambda _\infty^2  }{  g^2\beta}} e^{-\lambda_\infty t}
\end{split}
\Ee 
where we have used Lemma \ref{lem:elliptic_est:C^1}
\Be
\|\nabla_x \Delta_0^{-1} \rho_f (s,\cdot ) \|_{L^\infty_x} \lesssim 
\| \rho_f (s,\cdot) \|_{L^\infty_x} ,
\Ee 
\Be
\sqrt{\beta}|v_{\b,3}^F| e^{- \frac{ \beta}{3} |v_{\b,3}^F|^2 } \leq e^{- \frac{\beta}{4} |v_{\b,3}^F|^2 } ,
\Ee 
\Be
- \frac{16 \lambda_\infty}{g \beta} |v_{\b,3}^F (t,x,v)| \geq  - \frac{1}{2}|v_{\b,3}^F (t,x,v)| ^2 - \frac{128 \lambda_\infty^2}{g^2 \beta^2},
\Ee
and $|v^\b _{F,3} (t,x,v)| \geq |v_3|$.

\hide

Hence 
\Be\begin{split}
& \int_{\R} e^{ - \frac{\beta}{2} ( |v_{F, 3}^\b (t,x,v)| - \frac{aC}{g \beta })^2} \dd v_3
\int_{\R^2}  e^{- \frac{\beta}{2} |v_\parallel|^2}\dd v _\parallel
\\
&\leq  \Big\{\int_{|v_3| \geq  \frac{aC}{g \beta }}e^{- \frac{\beta}{2} (|v_3| - \frac{aC}{g \beta})^2} \dd v_3 + \int_{|v_3| \leq \frac{aC}{g \beta }} \dd v_3\Big\} 	\int_{\R^2}  e^{- \frac{\beta}{2} |v_\parallel|^2}\dd v _\parallel\\
& \leq \Big\{ \int_{\tau\geq 0} e^{- \frac{\beta}{2} \tau^2} \dd \tau + \frac{a C}{g \beta }\Big\} 	\int_{\R^2}  e^{- \frac{\beta}{2} |v_\parallel|^2}\dd v _\parallel\\
& \leq \Big\{ \frac{1}{\sqrt{\beta}} + \frac{a C}{g \beta }\Big\} \frac{1}{\beta}.
\end{split}\Ee

Back to \eqref{consistence}:
\Be
\Big\|	 \mathcal{N}[\rho_f, h ,Z^F] (t,\cdot)\Big\|_{L^\infty_x}  \lesssim e^{-a_\infty t} \frac{1}{g \sqrt{\beta}} e^{\frac{a^2 C^2}{2 g^2\beta}}  \Big\{ \frac{1}{\sqrt{\beta}} + \frac{a C}{g \beta }\Big\} \frac{1}{\beta}
\Ee\unhide

Choosing $g \gg_\beta 1$ we might expect to have an exponential decay. 

Using the control of $e^{\frac{\beta}{2} |v|^2} e^{ \frac{\beta g}{2} x_3} |\nabla_v h_\pm (x,v)| \leq \mathfrak{C}_h \| w_G^{\pm} \nabla_{x_\parallel, v} G_\pm \|_{L^\infty_{\gamma_-}}$ in \eqref{est:hk_v}, 
\Be\begin{split}
&	\left| \int^t_{{\max\{0, t- \tb^{\ell+1,\pm } (t,x,v) \}}   } \nabla_x\Delta^{-1}_0 \rho ^\ell  (s, \X^{\ell+1} _\pm (s;t,x,v)) \cdot    \nabla_v h_\pm 
( \mathcal{Z}_\pm ^{\ell+1}  (s;t,x,v) ) 
\dd s   \right|\\
& \leq   \int^{t}_{ t- \tb^{\ell+1,\pm } (t,x,v)}\mathfrak{C}_{\rho, \infty} e^{-\lambda_\infty s}
\mathfrak{C}_h \| w_G^{\pm} \nabla_{x_\parallel, v} G_\pm \|_{L^\infty_{\gamma_-}}
e^{-\frac{\beta}{4} | v |^2} e^{- \frac{\beta g}{4} x_3}
\\
& \leq  \mathfrak{C}_{\rho, \infty}  e^{- \lambda_\infty t}   \tb^{\ell+1,\pm }  e^{ \lambda_\infty  \tb^{\ell+1,\pm }  }\mathfrak{C}_h \| w_G^{\pm} \nabla_{x_\parallel, v} G_\pm \|_{L^\infty_{\gamma_-}}
e^{-\frac{\beta}{4} | v |^2} e^{- \frac{\beta g}{4} x_3}\\
& \leq  \mathfrak{C}_{\rho, \infty} \mathfrak{C}_h \| w_G^{\pm} \nabla_{x_\parallel, v} G_\pm \|_{L^\infty_{\gamma_-}}   e^{- \lambda_\infty t} 
\frac{4   }{g}  \sqrt{|v_3|^2 + g x_3}  e^{ \frac{4 \lambda_\infty }{g}  \sqrt{|v_3|^2 + g x_3}  }e^{-\frac{\beta}{4} | v |^2} e^{- \frac{\beta g}{4} x_3}\\
& \leq \left(	\mathfrak{C}_h \| w_G^{\pm} \nabla_{x_\parallel, v} G_\pm \|_{L^\infty_{\gamma_-}} 
\frac{16 e^{-1/2}}{g \beta^{1/2} }  
e^{\frac{32\lambda_\infty^2}{ g^2 \beta }} 
\right)\mathfrak{C}_{\rho, \infty} e^{- \lambda_\infty t} ,
\end{split}
\Ee
where we have used  
\Be\notag
\begin{split}
&	\frac{4   }{g}  \sqrt{|v_3|^2 + g x_3}  e^{ \frac{4 \lambda_\infty }{g}  \sqrt{|v_3|^2 + g x_3}  }e^{-\frac{\beta}{4} | v |^2} e^{- \frac{\beta g}{4} x_3} \\
& \leq 	\frac{4}{g}  \sqrt{|v|^2 +   g x_3} 
e^{- \frac{\beta}{4} (|v|^2 +   g x_3)} 
e^{ \frac{4 \lambda_\infty}{g}\sqrt{|v|^2 +   g x_3}  }\\
&= 	\frac{4}{g \beta^{1/2}}  \sqrt{ \beta (|v|^2 + gx_3)}
e^{- \frac{1}{4}\sqrt{ \beta (|v|^2 + gx_3)}^2 }	e^{ \frac{4 \lambda_\infty}{g}\sqrt{|v|^2 +   g x_3}  }\\
& \leq \frac{4}{g \beta^{1/2}} \frac{2}{e^{1/2}} 	e^{- \frac{ \beta}{8}\sqrt{  |v|^2 + gx_3}^2 }e^{ \frac{4 \lambda_\infty}{g}\sqrt{|v|^2 +   g x_3}  }\\
& = \frac{4}{g \beta^{1/2}} \frac{2}{e^{1/2}} 
e^{- \frac{\beta}{8} \left(
\sqrt{  |v|^2 + gx_3} - \frac{16 \lambda_\infty}{ g \beta }
\right)^2} e^{\frac{32\lambda_\infty^2}{ g^2 \beta }}.
\end{split}
\Ee

\hide\Be
\begin{split}
e^{- \frac{\beta}{4} |v|^2} e^{ \frac{4 \lambda_\infty}{g} |v|} &= e^{- \frac{\beta}{4} \left( |v| - \frac{8 \lambda_\infty}{ g \beta}\right)^2} 
e^{ \frac{16 \lambda_\infty^2}{   g^2 \beta}}	, \\
e^{- \frac{\beta}{4}  g x_3} e^{ \frac{4 \lambda_\infty}{g} \sqrt{gx_3}} &= e^{- \frac{\beta}{4} \left(  \sqrt{gx_3} - \frac{8\lambda_\infty}{ g \beta}\right)^2} 
e^{ \frac{16  \lambda_\infty^2}{  g^2 \beta}}	.
\end{split}
\Ee\unhide

\end{proof}

\unhide


\hide
\begin{proposition}There exists a family of solutions $f^\ell$ to such that 
\Be
\sup_\ell	\| w^{\ell} f^\ell \|_{L^\infty (\R_+ \times \O \times \R^3)} , \ \ \ \sup_\ell \| \nabla_x \phi_{f^\ell} (t,x) \|_{L^p (\R_+ \times \O  )} < \infty    \text{ for any} \ p< \infty. 
\Ee

\Be\label{choice:L}
L\geq    
\frac{4C}{\beta^{3/2}} 	e^{ \frac{16 \lambda_\infty^2}{\beta g^2}}	  \|w_{0 }   f_{0  } \|_{L^\infty_{x,v}} .
\Ee

\Be\label{condition:g}
\begin{split}
\max\left\{1,	\frac{\lambda_\infty^2}{ {\beta}},
8^3   e^{\frac{16^2}{4}}      C \| w_\beta \nabla_v h \|_\infty \frac{1}{\beta^{5/2}} 
\right\}\leq g^2 .
\end{split}
\Ee

\Be
\sup_{t \geq 0 }e^{  \lambda_\infty t}	\| \varrho(t) \|_{\infty} \lesssim 1. 
\Ee

\Be
e^{\lambda_\infty t}	\| e^{ \frac{\beta}{8}  (|v|^2+  g x_3)} f(t) \|_{L^\infty (\O \times \R^3)}
\lesssim \| w_0 f_0 \|_\infty + \| w_\beta \nabla_v h \|_\infty
\Ee
\end{proposition}

\begin{proof}
\it{Step 1.} Taking $v$-integration to \eqref{form:f^ell} and using \eqref{est:Iell}-\eqref{est:Nell}, we derive that 
\Be\begin{split}\label{est:varrho}
& e^{  \lambda_\infty t}	|\varrho^{\ell+1} (t,x)|  \leq  e^{  \lambda_\infty t}	 \left\{\int_{\R^3} | \mathcal{I}^{\ell+1} (t,x,v)| \dd v +  \int_{\R^3} |\mathcal{N}^{\ell+1} (t,x,v) |\dd v\right\} \\
& \leq  \frac{C}{\beta^{3/2}}  \left\{ 2  e^{-  \frac{\beta}{4}  g  x_3 } 
e^{ \frac{16 \lambda_\infty^2}{\beta g^2}}	  \|w_{0 }   f_{0  } \|_{L^\infty_{x,v}}  \right.\\
&\left. \ \ \ \ \ \ \  \ \ \ \ + e^{- \frac{\beta }{8} g x_3}
\frac{8}{g \beta^{1/2}} 
\frac{8 \cdot 4^{1/g}}{g \beta^{1/2}}  
e^{ \frac{16^2 \lambda_\infty^2}{ 4 g^2 \beta}}  	\sup_{s \in [0, t] } \| e^{\lambda_\infty s} \varrho^\ell (s) \|_\infty  \| w_\beta \nabla_v h \|_\infty\right\}.
\end{split}	\Ee

Suppose 
\Be
\max_{\ell  \leq  k}	\sup_{t\in [0, \infty) } \| e^{\lambda_\infty t} \varrho^\ell (t) \|_\infty \leq L.
\Ee
Then using \eqref{est:varrho} and our choice of $g$ in \eqref{condition:g}, we derive that 
\Be
\begin{split}
\sup_{t \in [0, \infty) }	e^{  \lambda_\infty t}	 \|  \varrho^{k+1} (t)\|_{\infty}  
& \leq   
\frac{2C}{\beta^{3/2}} 	e^{ \frac{16 \lambda_\infty^2}{\beta g^2}}	  \|w_{0 }   f_{0  } \|_{L^\infty_{x,v}}   +\left( 
\frac{ C 8^2 4^{1/g} }{g^2 \beta^{5/2} }  
e^{ \frac{16^2 \lambda_\infty^2}{ 4 g^2 \beta}}   \| w_\beta \nabla_v h \|_\infty \right)L \\
&\leq L  .
\end{split}
\Ee

\end{proof}
\unhide

\hide
In order to prove \eqref{h_v}, we need to have 
\Be
1) \ \ \ \nabla_v h(x,v) \lesssim \frac{1}{w_{\pm}(x,v) = w(\frac{|v|^2}{2} + \phi_h(x) + g m_\pm x_3)} 
\Ee

Note that 
\Be
w_h(x,v) = w\Big( \frac{|v|^2}{2} + \phi_h (x) + g m_\pm x_3\Big)
\Ee
\Be
w_F(x,v) = w\Big( \frac{|v|^2}{2} + \phi_f (x)+ \phi_h (x) + g m_\pm x_3\Big)
\Ee

We expect a bound for $\nabla_v h(X_F(s;t,x,v), V_F(s;t,x,v)) \lesssim \frac{1}{w_h (X_F(s;t,x,v), V_F(s;t,x,v))}$.

and 2) for large $v$, $|V(s;t,x,v)| \gtrsim  |v|$:
\Be
\big||V(s;t,x,v)| - |v|\big| \leq |t-s| \{ \|\nabla_x \phi_F \|_\infty + gm_\pm \}
\leq t^\b\{ \|\nabla_x \phi_F \|_\infty + gm_\pm \} \lesssim C \frac{|v_3|}{g}\{ \|\nabla_x \phi_F \|_\infty + gm_\pm \} 
\Ee
\Be
|V(s;t,x,v)|= |v| + O(1) C \frac{|v_3|}{g}\{ \|\nabla_x \phi_F \|_\infty + gm_\pm \} 
\Ee
\unhide


\begin{thebibliography}{10}

\bibitem{AmmariL}
H.~Ammari, B.~Fitzpatrick, H.~Kang, M.~Ruiz, S.~Yu, and H.~Zhang.
\newblock {\em Mathematical {C}omputational {M}ethods in {P}hotonics and
{P}honoics}, volume 235 of {\em Mathematical Surveys and Monographs}.
\newblock American Mathematical Society, 2018.

\bibitem{Bed}
J.~Bedrossian.
\newblock Nonlinear echoes and {L}andau damping with insufficient regularity.
\newblock {\em Tunis. J. Math}, 3(1):121–205, 2021.

\bibitem{BMM}
J.~Bedrossian, N.~Masmoudi, and C.~Mouhot.
\newblock Landau damping: paraproducts and {G}evrey regularity.
\newblock {\em Ann. PDE.}, 2(1), 2016.

\bibitem{CM}
E.~Caglioti and C.~Maffei.
\newblock Time asymptotics for solutions of {V}lasov–{P}oisson equation in a
circle.
\newblock {\em J. Stat. Phys.}, 92:301--323, 1998.

\bibitem{CK_KRM}
Y.~Cao and C.~Kim.
\newblock Glassey-Strauss representation of Vlasov-Maxwell systems in a half
space.
\newblock {\em Kinet. Relat. Models}, 15(3):385--401, 2021.

\bibitem{CK_VM}
Y.~Cao and C.~Kim.
\newblock Lipschitz continuous solutions of the {V}lasov-{M}axwell systems with
a conductor boundary condition.
\newblock {\em arXiv.2203.01615}

\bibitem{CKL_VPB}
Y.~Cao, C.~Kim, and D.~Lee.
\newblock Global strong solutions of the {V}lasov-{P}oisson-{B}oltzmann system
in bounded domains.
\newblock {\em Arch. Ration. Mech. Anal.}, 233(3):1027--1130, 2019.

\bibitem{CIP}
C.~Cercignani, R.~Illner, and M.~Pulvirenti.
\newblock {\em The {M}athematical {T}heory of {D}ilute {G}ases}, volume 106 of
{\em Applied Mathematical Sciences}.
\newblock Springer-Verlag, New York, 1994.

\bibitem{ChK}
H.~Chen and C.~Kim.
\newblock Regularity of {S}tationary {B}oltzmann {E}quation in {C}onvex
{D}omains.
\newblock {\em Arch. Ration. Mech. Anal.}, 244(1):1099--1222, 2022.

\bibitem{ChKL}
H.~Chen, C.~Kim, and Q.~Li.
\newblock Local well-posedness of vlasov–poisson–boltzmann equation with
generalized diffuse boundary condition.
\newblock {\em J Stat Phys}, 179:535–631, 2020.

\bibitem{DiL_ODE}
R.~J. DiPerna and P.-L. Lions.
\newblock Ordinary differential equations, transport theory and {S}obolev
spaces.
\newblock {\em Invent. Math.}, 98(3):511--547, 1989.

\bibitem{EH}
E.~Esenturk and H.~Hwang.
\newblock Linear stability of the {V}lasov-{P}oisson system with boundary
conditions.
\newblock {\em Nonlinear Analysis}, 139:75--105, 2016.

\bibitem{EHS}
E.~Esenturk, H.~Hwang, and W.~Strauss.
\newblock Stationary solutions of the {V}lasov–{P}oisson system with
diffusive boundary conditions.
\newblock {\em J. Nonlinear Sci.}, 25:315--342, 2015.

\bibitem{glassey}
R.~T. Glassey.
\newblock {\em The {C}auchy problem in kinetic theory}.
\newblock Society for Industrial and Applied Mathematics (SIAM), Philadelphia,
PA, 1996.

\bibitem{GR}
C.~Greengard and P.-A. Raviart.
\newblock A boundary-value problem for the stationary {V}lasov-{P}oisson
equations: {T}he plane diode.
\newblock {\em Comm. Pure Appl. Math.}, 139:473--507, 1990.

\bibitem{GNR}
E.~Grenier, T.~T. Nguyen, and I.~Rodnianski.
\newblock Landau damping for analytic and gevrey data.
\newblock {\em arXiv:2004.05979}.

\bibitem{Guo94}
Y.~Guo.
\newblock Regularity of the {V}lasov equations in a half space.
\newblock {\em Indiana Math. J.}, 43:255–320, 1994.

\bibitem{Guo95}
Y.~Guo.
\newblock Singular solutions of the 1-{D V}lasov–{M}axwell system on a half
line.
\newblock {\em Arch. Ration. Mech. Anal.}, 131:241–304, 1995.

\bibitem{GuoR}
Y.~Guo and C.~Grotta~Ragazzo.
\newblock On steady states in a collisionless plasma.
\newblock {\em Comm. Pure Appl. Math.}, 49:1145--1174, 1996.

\bibitem{GuoLin}
Y.~Guo and Z.~Lin.
\newblock The {E}xistence of {S}table {BGK} {W}aves.
\newblock {\em Commun. Math. Phys.}, 352:1121--1152, 2017.

\bibitem{GuoS}
Y.~Guo and W.~Strauss.
\newblock Instability of periodic {BGK} equilibria.
\newblock {\em Comm. Pure Appl. Math.}, XLVIII:861--894, 1995.

\bibitem{HV}
H.~J. Hwang and J.~J.~L. Vel\'azquez.
\newblock On the existence of exponentially decreasing solu- tions of the
nonlinear {L}andau damping problem.
\newblock {\em Indiana Univ. Math. J.}, 58:6:2623--2660, 2009.


\bibitem{JK1}
J.~Jin and C.~Kim.
\newblock Damping of kinetic transport equation with diffuse boundary condition. \textit{arXiv:2011.11582}


\bibitem{JK2}
J.~Jin and C.~Kim.
\newblock Exponential {M}ixing of {V}lasov equations under the effect of
{G}ravity and {B}oundary.
\textit{arXiv:2207.08313}.

\bibitem{JK3}
J.~Jin and C.~Kim.
\newblock 
Boundary effect under 2D Newtonian gravity potential in the phase space. \textit{arXiv:2310.07947}

\bibitem{JK4}
J.~Jin and C.~Kim.
\newblock Asymptotic Stability of 3D relativistic Collisionless plasma states in a constant magnetic field with a boundary. \textit{arXiv:2310.09865}




\bibitem{Kim_FPS}
C.~Kim.
\newblock Formation and {P}ropagation of {D}iscontinuity for {B}oltzmann
{E}quation in {N}on-{C}onvex {D}omains.
\newblock {\em Commun. Math. Phys.}, 308:641--701, 2011.

\bibitem{Landau}
L.~Landau.
\newblock On the vibration of the electronic plasma.
\newblock {\em J. Phys. USSR}, 5(1):25--34, 1946.

\bibitem{Lin}
Z.~Lin.
\newblock Nonlinear instability of periodic waves for {V}lasov-{P}oisson
system.
\newblock {\em Comm. Pure Appl. Math.}, 58:505–528, 2005.

\bibitem{LinZeng}
Z.~Lin and C.~Zeng.
\newblock Small {BGK} waves and nonlinear {L}andau damping.
\newblock {\em Commun. Math. Phys.}, 306:291–331, 2011.

\bibitem{MV}
C.~Mouhot and C.~Villani.
\newblock On {L}andau damping.
\newblock {\em Acta Math.}, 207(1):29–201, 2011.

\bibitem{Rein_bdry}
G.~Rein.
\newblock Existence of stationary, collisionless plasmas in bounded domains.
\newblock {\em Math. Methods Appl. Sci.}, 15:365--374, 1992.

\bibitem{Rein}
G.~Rein.
\newblock Collisionless kinetic equations from astrophysics–the
{V}lasov-{P}oisson system.
\newblock {\em Handbook of differential equations: Evolutionary equations. Vol.
Ⅲ}, pages 383--476, 2007.

\bibitem{SW}
E.~M. Stein and G.~Weiss.
\newblock {\em Introduction to Fourier Analysis on Euclidean Spaces (1st
edition)}.
\newblock Princeton University Press, 1971.

\bibitem{Ukai} S.~Ukai. \newblock Solutions of the Boltzmann Equation \newblock {\em Studies in Mathematics and Its Applications,
Elsevier,
Volume 18}, pages 37--96, 1986. 


\end{thebibliography}
\end{document}